\declaretheoremstyle[bodyfont=\sl]{slanted}
\declaretheorem[name=Definition,style=definition,qed=$\dashv$,
numberwithin=section]{dfn}
\declaretheorem[name=Definition,style=definition,numbered=no,qed=$\dashv$]{dfn*}
\declaretheorem[name=Definition,style=definition,numbered=no]{dfnnoqed*}
\declaretheorem[name=Theorem,style=slanted,sibling=dfn]{tm}
\declaretheorem[name=Theorem,style=slanted,numbered=no]{tm*}
\declaretheorem[name=Lemma,style=slanted,sibling=dfn]{lem}
\declaretheorem[name=Corollary,style=slanted,sibling=dfn]{cor}
\declaretheorem[name=Corollary,style=slanted,numbered=no]{cor*}
\declaretheorem[name=Remark,style=definition,sibling=dfn]{rem}
\declaretheoremstyle[headfont=\scshape]{claimstyle}
\declaretheorem[name=Claim,style=claimstyle]{clm}
\declaretheorem[name=Claim,style=claimstyle,numbered=no]{clm*}
\declaretheorem[name=Subclaim,style=claimstyle,numbered=no]{sclm*}
\declaretheorem[name=Subsubclaim,style=claimstyle,numbered=no]{ssclm*}
\declaretheoremstyle[headfont=\scshape]{casestyle}
\declaretheorem[name=Case,style=casestyle]{case}
\declaretheorem[name=Case,style=casestyle]{casetwo}
\declaretheorem[name=Case,style=casestyle]{casethree}
\newcommand{\PP}{\mathbb{P}}
\newcommand{\DC}{\mathrm{DC}}
\newcommand{\RR}{\mathbb{R}}
\newcommand{\cd}{\circledast}
\newcommand{\stk}{\mathrm{st}}
\newcommand{\wt}{\widetilde}
\newcommand{\lgcd}{\mathrm{lgcd}}
\newcommand{\compmode}{1}
\newcommand{\compopt}[2]{\ifthenelse{\equal{\compmode}{0}}{#1}{#2}}
\newcommand{\putamin}{\mathrm{putamin}}
\newcommand{\J}{\mathcal{J}}
\newcommand{\her}{\mathcal{H}}
\newcommand{\sub}{\subseteq}
\newcommand{\inter}{\cap}
\newcommand{\om}{\omega}
\newcommand{\pow}{\mathcal{P}}
\newcommand{\OR}{\mathrm{OR}}
\newcommand{\Hull}{\mathrm{Hull}}
\newcommand{\cut}{\backslash}
\newcommand{\Tt}{\mathcal{T}}
\newcommand{\Uu}{\mathcal{U}}
\newcommand{\Vv}{\mathcal{V}}
\newcommand{\rg}{\mathrm{rg}}
\newcommand{\dom}{\mathrm{dom}}
\newcommand{\ins}{\trianglelefteq}
\newcommand{\pins}{\triangleleft}
\newcommand{\npins}{\ntriangleleft}
\newcommand{\crit}{\mathrm{cr}}
\newcommand{\rest}{\!\upharpoonright\!}
\newcommand{\com}{\circ}
\newcommand{\lh}{\mathrm{lh}}
\newcommand{\Ult}{\mathrm{Ult}}
\newcommand{\HOD}{\mathrm{HOD}}
\newcommand{\HC}{\mathrm{HC}}
\newcommand{\ZF}{\mathsf{ZF}}
\newcommand{\es}{\mathbb{E}}
\newcommand{\eps}{\varepsilon}
\newcommand{\Ttvec}{{\vec{\Tt}}}
\newcommand{\pred}{\mathrm{pred}}
\newcommand{\dirlim}{\mathrm{dir lim}}
\newcommand{\id}{\mathrm{id}}
\newcommand{\sq}{\mathrm{sq}}
\newcommand{\conc}{\ \widehat{\ }\ }
\newcommand{\nutilde}{\wt{\nu}}
\DeclareMathOperator{\card}{card}
\DeclareMathOperator{\cof}{cof}
\newcommand{\rPi}{\mathrm{r}\Pi}
\newcommand{\rSigma}{\mathrm{r}\Sigma}
\newcommand{\bfrSigma}{\utilde{\rSigma}}
\newcommand{\Yyvec}{\vec{\Yy}}
\newcommand{\Yy}{\mathcal{Y}}
\newcommand{\Xx}{\mathcal{X}}
\newcommand{\tu}{\textup}
\renewcommand{\cut}{\backslash}
\newcommand{\dfnemph}{\textbf}
\newcommand{\pvec}{\vec{p}}
\newcommand{\exit}{\mathrm{ex}}
\newcommand{\eqdef}{=_{\mathrm{def}}}
\newcommand{\tembto}{\hookrightarrow}
\newcommand{\clint}{\mathrm{clint}}
\newcommand{\ddd}{\mathrm{ddd}}
\newcommand{\dds}{\mathrm{dds}}
\newcommand{\Pivec}{\vec{\Pi}}
\newcommand{\successor}{\mathrm{succ}}
\newcommand{\passive}{\mathrm{pv}}
\newcommand{\dropset}{\mathscr{D}}
\newcommand{\inflatearrow}{\rightsquigarrow}
\renewcommand{\cut}{\backslash}
\newcommand{\dr}{\mathscr{D}}
\newcommand{\mininflatearrow}{\inflatearrow_{\minim}}
\newcommand{\minim}{\mathrm{min}}
\title{Full normalization for transfinite stacks}
\author{Farmer Schlutzenberg\footnote{Teilweise gef\"ordert durch die Deutsche Forschungsgemeinschaft (DFG) im Rahmen der Exzellenzstrategie des Bundes und der L\"ander EXC 2044--390685587, Mathematik M\"unster: Dynamik–Geometrie–Struktur}\\ farmer.schlutzenberg@gmail.com}
\begin{document}

\maketitle

\begin{abstract}
We describe the extension of normal iteration strategies with appropriate condensation properties to strategies for stacks of normal trees, with full normalization. Given a regular uncountable cardinal $\Omega$ and an $(m,\Omega+1)$-iteration strategy $\Sigma$ for a premouse $M$, such that $\Sigma$ and $M$ both have appropriate condensation properties,  we extend $\Sigma$ to a strategy $\Sigma^*$ for the optimal-$(m,\Omega,\Omega+1)^*$-iteration game such that for all $\lambda<\Omega$ and all stacks $\vec{\mathcal{T}}=\left<\mathcal{T}_\alpha\right>_{\alpha<\lambda}$ via $\Sigma^*$, consisting of normal trees $\mathcal{T}_\alpha$, each of length ${<\Omega}$, there is a corresponding normal tree $\mathcal{X}$ via $\Sigma$ with $M^{\vec{\mathcal{T}}}_\infty=M^{\mathcal{X}}_\infty$. Moreover,
if there are no drops in model or degree along the main branches of these trees then the overall iteration maps $i^{\vec{\Tt}}:M\to M^{\vec{\Tt}}_\infty$ and $i^\Xx:M\to M^\Xx_\infty$ agree. The construction is the result of a combination of work of John Steel and of the author.

We also establish some further useful properties of $\Sigma^*$, and use the methods to analyze the comparison of multiple iterates via a common such strategy.
\end{abstract}

\section{Introduction}

The theory of normalization of iteration trees
has been developed in the last few years by John Steel
\cite{str_comparison}, \cite{steel_local_HOD_comp},
the author \cite{iter_for_stacks}, Ronald Jensen \cite{jensen_book}
and Benjamin Siskind. This has built on results
of various others in this direction,
as discussed more in
 \cite{str_comparison} and  \cite{iter_for_stacks}. In \emph{embedding 
normalization}
or \emph{normal realization},
a stack $\Ttvec$ of normal trees is realized into 
a single normal tree $\Xx$,
with a key outcome that we obtain
a final realization map
\[ \pi:M^{\Ttvec}_\infty\to M^\Xx_\infty, \]
where it is certainly allowed that $M^{\Ttvec}_\infty\neq M^\Xx_\infty$.
In \emph{full normalization} (which we also call just \emph{normalization}
here), it is also demanded that $M^{\Ttvec}_\infty=M^\Xx_\infty$
and $\pi=\id$.

The main aim of this article is to 
describe the analogue of the main results of \cite{iter_for_stacks}
for (full) 
normalization.
That is, we start with an $(m,\Omega+1)$-iteration
strategy $\Sigma$ (hence, for normal trees) for an $m$-sound premouse $M$,
where $\Omega$ is an uncountable regular cardinal,
such that $\Sigma$ has certain condensation properties,
and extend it naturally to an $(m,\Omega,\Omega+1)^*$-strategy
(hence, for stacks of normal trees) with appropriate
 normalization properties.\footnote{Recall that the superscript ``$*$'' in ``$(m,\Omega,\Omega+1)^*$'' means that if the $\alpha$th round of the iteration game produces a normal tree $\Tt_\alpha$ of length $\Omega+1$,
 then the game finishes at that point; the game only proceeds to round $\alpha+1$ if $\Tt_\alpha$ has length $<\Omega$.}

The main results in this paper hold for fine structural
mice $M$, either of the pure $L[\es]$ or  strategy variety (such
as in \cite{str_comparison}),
with either MS- or $\lambda$-indexing,
together with iteration strategies $\Sigma$ for corresponding iteration 
rules (though see \cite{rule_conversion}),
assuming that $M$ and $\Sigma$ satisfy the appropriate
condensation properties.
So whenever we say \emph{premouse} without specifying a restriction on the kind,
we mean any of these.
Some results are more specific to pure $L[\es]$-mice
(when the result involves a traditional comparison argument),
though in those cases the methods of \cite{str_comparison},
and ongoing work of Steel and Siskind
should help with generalization. And some are specific to MS-indexing,
though these would presumably adapt to $\lambda$-indexing,
by adapting the results of
\cite{premouse_inheriting} and \cite{fsfni} in this manner.

We will focus on iteration strategies with two key condensation properties:
\emph{minimal inflation condensation}
(\emph{mic}, Definition \ref{dfn:minimal_inflation_condensation})
and \emph{minimal hull condensation} (\emph{mhc},
Definition \ref{dfn:mhc}).
Minimal inflation condensation is a direct adadptation of inflation condensation
(see \cite{iter_for_stacks})
from normal realization to normalization,
and minimal hull condensation is an analogous adaptation of  strong hull condensation. Note that Steel defines
\emph{very strong hull condensation} to incorporate
something like the conjunction of strong hull condensation
and minimal hull condensation. We don't incorporate
either inflation condensation or strong hull condensation
(the notions relevant to normal realization) in the notions considered here, as they are not relevant to what we do.

The main result to be shown is the following.
It is the result of a combination of work of  Steel and the author.
The terminology \emph{optimal} and \emph{$m$-standard} is explained
somewhat just after the statement of the theorem.

\begin{tm}\label{tm:full_norm_stacks_strategy}
Let $\Omega>\om$ be a regular cardinal.
Let $M$ be an $m$-standard
 premouse. Let $\Sigma$
 be an $(m,\Omega+1)$-strategy for $M$ with minimal inflation
 condensation.
 Then there is an optimal-$(m,\Omega,\Omega+1)^*$-strategy
 $\Sigma^*$ for $M$ such that:
 \begin{enumerate}
  \item $\Sigma\sub\Sigma^*$,
 \item\label{item:X_for_Tvec} for every stack 
$\vec{\Tt}=\left<\Tt_\alpha\right>_{\alpha<\lambda}$ via $\Sigma^*$ with a last 
model
and with $\lambda<\Omega$,
there is an $m$-maximal successor length tree $\Xx$
on $M$ such that:
\begin{enumerate}
\item  $\Xx\rest\Omega+1$ is via $\Sigma$,  
\item if $\lh(\Tt_\alpha)<\Omega$ for all $\alpha<\lambda$
then $\lh(\Xx)<\Omega$ (and hence $\Xx$ is via $\Sigma$ in this case),
\item $M^{\vec{\Tt}}_\infty=M^\Xx_\infty$
and $\deg^{\vec{\Tt}}_\infty=\deg^\Xx_\infty$,
\item $b^{\vec{\Tt}}$ drops in model (degree) iff $b^\Xx$ does,
\item if $b^{\vec{\Tt}}$ does not drop in model or degree
then $i^{\vec{\Tt}}_{0\infty}=i^\Xx_{0\infty}$,
\end{enumerate}
\item $\Sigma^*$ is $\Delta_1(\{\Sigma\})$,
uniformly in $\Sigma$,
\item if $\card(M)<\Omega$ then $\Sigma^*\rest\her_{\Omega}$
is $\Delta_1^{\her_{\Omega}}(\Sigma\rest\her_\Omega)$,
uniformly in $\Sigma$.
\end{enumerate}
\end{tm}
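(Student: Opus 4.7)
The plan is to build $\Sigma^*$ by recursion on the length of the stack, maintaining as the central invariant that each partial stack $\vec{\Tt}\rest\alpha$ via $\Sigma^*$ carries a canonical associated normal $m$-maximal tree $\Xx_\alpha$ on $M$ via $\Sigma$ realizing the full normalization: $M^{\Xx_\alpha}_\infty=M^{\vec{\Tt}\rest\alpha}_\infty$, with matching degree, drop pattern on the main branch, and (in the non-dropping case) identical iteration map. The base case (empty or single-tree stack) forces $\Sigma\sub\Sigma^*$ immediately, so the substance is in the successor and limit stages of the stack recursion.

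For the successor stage, given $\Xx_\alpha$ with last model $N=M^{\Xx_\alpha}_\infty$ and a next normal tree $\Tt_\alpha$ on $N$, I would perform a subrecursion along $\Tt_\alpha$: each exit extender $E$ chosen in $\Tt_\alpha$ corresponds, via the identification of $N$ with the last model of $\Xx_\alpha$, to an extender on the sequence of a model occurring in $\Xx_\alpha$, and I would extend $\Xx_\alpha$ by forming its \emph{minimal inflation} $\Xx_\alpha^E$. Minimal inflation condensation of $\Sigma$ guarantees that $\Xx_\alpha^E$ is again via $\Sigma$, and, crucially, that the cofinal branches $\Sigma$ chooses at newly-appearing limit lengths inside the inflation are exactly the ones induced by the branches $\Sigma^*$ is forced to pick on $\Tt_\alpha$. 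Iterating this inflation through the stages of $\Tt_\alpha$, and taking the natural direct limit at limit stages of $\Tt_\alpha$, produces $\Xx_{\alpha+1}$ via $\Sigma$ satisfying the invariant; the definition of $\Sigma^*$ at each limit of $\Tt_\alpha$ is then forced to be the unique branch whose image in the appropriate $\Xx_\alpha^E$ agrees with $\Sigma$'s choice, and dually for the final main branch of $\Tt_\alpha$ when it has limit length. Here the $m$-standard hypothesis on $M$ and optimality are used to ensure that inflation extenders appear on the right sequences with compatible degrees, so that $\Xx_{\alpha+1}$ is genuinely $m$-maximal.

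At a limit $\lambda<\Omega$ of the stack, the trees $\left<\Xx_\alpha\right>_{\alpha<\lambda}$ sit in a coherent system of tree embeddings $\Xx_\alpha\tembto\Xx_\beta$ (the minimal-inflation partial order), and I would define $\Xx_\lambda$ as the direct limit of this system. Minimal hull condensation of $\Sigma$ then ensures $\Xx_\lambda$ is itself via $\Sigma$, and chasing the direct limit gives $M^{\Xx_\lambda}_\infty=M^{\vec{\Tt}\rest\lambda}_\infty$ with agreement of iteration maps on the main branch, continuing the invariant. The length bound in (2)(b) is maintained because each minimal inflation step adds only boundedly many extenders to $\Xx_\alpha$, so if every $\lh(\Tt_\alpha)<\Omega$ and $\lambda<\Omega$, regularity of $\Omega$ gives $\lh(\Xx_\lambda)<\Omega$.

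The main obstacle is to verify \emph{full} normalization (equality of models and maps, not merely an embedding) at the limit step of the stack, and in particular to show that the branches picked by $\Sigma^*$ on each $\Tt_\alpha$ really are cofinal inside the appropriate minimal inflations of $\Xx_\alpha$; this is precisely the point at which mic and mhc do essentially more work than their non-minimal analogues in \cite{iter_for_stacks}, and a careful analysis of minimal hulls and of the tree-embedding commuting diagrams will be needed. Granted full normalization, the complexity bounds (3) and (4) follow by inspection of the recursion: computing $\Xx_{\alpha+1}$ from $\Xx_\alpha$, $\Tt_\alpha$, and $\Sigma$ is uniformly $\Sigma_1(\Sigma)$, taking direct limits at stack-limits is likewise $\Sigma_1$, and the branch chosen by $\Sigma^*$ at any stage is uniquely characterized by its compatibility with $\Sigma$, yielding $\Sigma^*\in\Delta_1(\{\Sigma\})$; the bounded statement (4) follows by the standard reflection argument under $\card(M)<\Omega$.
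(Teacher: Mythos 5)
Your successor step is essentially the paper's: the second round of $\Sigma^*$ after a normal tree $\Tt$ is defined by requiring that each $\Uu$ via it be the factor tree $\Yy/\Tt$ of a terminal minimal inflation $\Yy$ of $\Tt$ via $\Sigma$, and Lemma \ref{lem:flattening_properties} supplies the model/degree/drop/map agreement. The problems are at the limit stage of the stack.

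First, ``define $\Xx_\lambda$ as the direct limit of this system'' of tree embeddings is not a construction that exists as stated: a coherent system of minimal tree embeddings $\Xx_\alpha\tembto_{\minim}\Xx_\beta$ does not come with a canonical limit normal tree, and even if one wrote one down there would be no reason for it to be via $\Sigma$. The device the paper uses instead is \emph{minimal comparison}: at a limit $\eta$ one forms the unique tree $\Yy_\eta$ via $\Sigma$ that is simultaneously a minimal inflation of every $\Yy_\alpha$, $\alpha<\eta$, with every extender copied from at least one of them (Definition \ref{dfn:min_comp}, Lemma \ref{lem:min_comp}); this is a comparison argument, and it is also what gives $\lh(\Yy_\eta)<\Omega$, not a count of ``boundedly many new extenders per step.'' One then still has to prove that $M^{\Yy_\eta}_\infty$ equals the direct limit $M^{\vec\Tt\rest\eta}_\infty$ with the right maps; this is the content of Lemmas \ref{lem:cont_term_stack_event} and \ref{lem:cont_term_stack_good} (eventual terminal non-dropping along the stack, and exit-/model-goodness of continuous terminal minimal inflation stacks), which in turn rest on the commutativity of minimal inflation (Lemma \ref{lem:inflation_commutativity}). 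None of this machinery appears in your outline, and it is where the real work lies.

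Second, you invoke minimal hull condensation at the limit stage, but the theorem hypothesizes only minimal inflation condensation; mhc implies mic but the converse is open, so your argument as written proves a weaker theorem. The minimal comparison route needs only mic, precisely because the limit tree is built as an inflation (continued at its own limit stages using mic) rather than recognized post hoc as a hull.
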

So if $\Omega=\omega_1$ then $\Sigma^*\rest\HC$
is $\Delta_1^{\HC}(\Sigma\rest\HC)$.

The prefix \emph{optimal}
prevents player I from making artificial drops
(see \cite[\S1.1.5]{iter_for_stacks}).
And \emph{$m$-standard} (for $m\leq\om$,
Definition \ref{dfn:m-standard})
just requires $m$-soundness plus some standard condensation facts
which for pure $L[\es]$-mice
follow from $(m,\omega_1,\omega_1+1)^*$-iterability
(in fact from $(m,\omega_1+1)$-iterability for MS-indexing).

In part
 \ref{item:X_for_Tvec}, note that  $\Xx\rest(\Omega+1)$ is 
uniquely determined
 by the conditions mentioned. If $\lh(\Tt_\alpha)=\Omega+1$
 then $\lambda=\alpha+1$ and player II wins,
 by the rules of the $(m,\Omega,\Omega+1)^*$-iteration game.
 We can't demand that $\Xx$ be fully ``via $\Sigma$''
 in this case, as it can be that $\lh(\Xx)>\Omega+\om$.

 \begin{rem}\label{rem:normalization_different_contexts}
 One should be aware of a key difference between
 the context of Theorem \ref{tm:full_norm_stacks_strategy}
 and Steel's context for normal realization in \cite{str_comparison}, which reflect different goals.
  In Theorem \ref{tm:full_norm_stacks_strategy},
  we begin with a \emph{normal} strategy $\Sigma$ with certain condensation properties,
  and \emph{extend} this to a stacks strategy $\Sigma^*$. Steel starts with a stacks strategy $\Gamma$ with certain properties,
  and deduces further properties of $\Gamma$.
  (In a particular case of interest,
  the $M$ is a (strategy) mouse built by background construction in a background universe $R$, and $\Gamma$ is the strategy induced by a stacks strategy $\Gamma'$ for $R$, where $\Gamma'$ is assumed to have certain properties. Moreover, in the eventually corrected version of Steel's setup, $M$ is not literally a premouse in the sense of this paper, but a pfs-premouse. But we do not need to consider pfs-premice here.)
  These differences aside, at least in the case of finite stacks, many of the calculations involved are the same, irrespective of which context in which one is working.
 \end{rem}

A key  consequence of Theorem \ref{tm:full_norm_stacks_strategy},
together with Theorem \ref{tm:wDJ_implies_cond} (which only applies to pure $L[\es]$-premice), is for example the following, which was observed by Steel (prior to the proof of Theorem \ref{tm:full_norm_stacks_strategy} being worked out in detail):
\begin{cor}
Assume that $M^\#=M_1^\#$ or $M^\#=M_\om^\#$ is fully iterable, and $M=M_1$ or $M=M_\om$
respectively. Let $\delta_0$ be the least Woodin
of $M$.
Let $M_\infty$ be the direct limit of non-dropping countable
 iterates of $M$ via trees based on $M|\delta_0$. 
 Then $M_\infty$ is a normal iterate of $M$
 via its unique strategy.
\end{cor}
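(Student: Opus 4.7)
The plan is to apply Theorem \ref{tm:full_norm_stacks_strategy} (with $\Omega=\om_1$) in order to convert the defining directed system of countable stack-iterates of $M_\infty$ into a cofinal subsystem of normal-tree iterates, and then to assemble these into a single normal tree using the condensation properties of the unique strategy $\Sigma$ of $M$.

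By the cited Theorem \ref{tm:wDJ_implies_cond}, the restriction $\Sigma$ of the unique strategy of $M$ to normal trees has minimal inflation condensation, and $M$ is $m$-standard (for the appropriate $m$). So Theorem \ref{tm:full_norm_stacks_strategy} yields an optimal stack strategy $\Sigma^*\supseteq\Sigma$; by uniqueness this coincides with the canonical stack strategy coming from iterability of $M^\#$. Given any countable non-dropping iterate $N=M^{\vec\Tt}_\infty$ via a stack $\vec\Tt$ based on $M|\delta_0$, part \ref{item:X_for_Tvec} of the theorem produces a normal tree $\Xx$ via $\Sigma$ with $M^\Xx_\infty=N$ and $i^\Xx_{0\infty}=i^{\vec\Tt}_{0\infty}$; moreover $\Xx$ is itself based on $M|\delta_0$, since the normalization procedure only generates extenders from those appearing in $\vec\Tt$, all having critical point ${<}\delta_0$. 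Hence the system of countable non-dropping normal iterates of $M$ based on $M|\delta_0$ is cofinal in the stack-iterate system, and their common direct limit is $M_\infty$.

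To realize $M_\infty$ as a single normal iterate, I would then recursively build an increasing chain $\langle\Xx_\xi\rangle_{\xi<\nu}$ (with $\nu\leq\om_1$) of normal trees on $M$ via $\Sigma$ such that the last models $N_\xi=M^{\Xx_\xi}_\infty$ are cofinal in the directed system, with $\Xx_\xi$ an initial segment of $\Xx_{\xi'}$ whenever $\xi<\xi'$. At successor stages, given $\Xx_\xi$ with last model $N_\xi$ and a target iterate $N$ above $N_\xi$, choose a normal tree $\Uu$ on $N_\xi$ (via the tail strategy $\Sigma_{N_\xi}$) leading to $N$, and take $\Xx_{\xi+1}$ to be the full normalization of the stack $\Xx_\xi\conc\Uu$, which by Theorem \ref{tm:full_norm_stacks_strategy} is a normal tree via $\Sigma$ with last model $N$. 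At limits, take unions. The union tree $\Tt$, together with its cofinal branch, yields a normal tree via $\Sigma$ of length ${\leq}\om_1+1$ with $M^\Tt_\infty=M_\infty$ and iteration map equal to the direct limit map, as desired.

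The main obstacle is the coherence step: that $\Xx_\xi$ appears as a genuine initial segment of $\Xx_{\xi+1}$ after full normalization of $\Xx_\xi\conc\Uu$. This is the natural expectation for a one-sided full-normalization construction---the extenders of $\Xx_\xi$ should survive the normalization unchanged, with only the tree $\Uu$ being reshuffled and inflated against $\Xx_\xi$---but formalizing it rigorously, and in particular propagating it through limit stages, will require careful use of minimal inflation condensation and close tracking of the tree embeddings produced by the proof of Theorem \ref{tm:full_norm_stacks_strategy}.
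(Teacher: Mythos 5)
Your first half is exactly the paper's argument: Dodd--Jensen gives minimal inflation condensation via Theorem \ref{tm:wDJ_implies_cond}, Theorem \ref{tm:full_norm_stacks_strategy} applies, and a standard uniqueness argument identifies $\Sigma^*$ with any stacks strategy on non-dropping iterates, so every iterate in the directed system is a normal $\Sigma$-iterate. The problem is in your assembly step. The coherence claim you rely on --- that $\Xx_\xi$ is an initial segment of $\Xx_{\xi+1}=\Xx(\Xx_\xi\conc\Uu)$, with unions at limits --- is false, not merely delicate. Full normalization produces $\Xx_{\xi+1}$ as a \emph{minimal inflation} of $\Xx_\xi$, interleaving the (inflated copies of the) extenders of $\Uu$ with those of $\Xx_\xi$ in order of index; it is not an end-extension. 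Concretely: let $\Xx_\xi=\left<E\right>$ for a single extender $E$, so $N_\xi=\Ult(M,E)$, and let $\Uu=\left<F\right>$ where $F\in\es_+(N_\xi)$ has $\lh(F)<\lh(E)$ (such $F$ lie on $M$'s sequence as well and are legitimate first extenders of a normal tree on $N_\xi$). Then the normalization $\Xx(\Xx_\xi,\Uu)$ has first extender $F$, not $E$, and the copy of $E$ appears later as the active extender of $\Ult_0(M|\lh(E),F)$. So your chain is not increasing under the initial-segment relation, and the limit stages cannot be unions.

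What replaces this is precisely the machinery of \S\ref{sec:inf_comm}--\ref{sec:normalization}: the trees $\Yy_\alpha$ produced by the proof of Theorem \ref{tm:full_norm_stacks_strategy} form a \emph{continuous terminal minimal inflation stack}, with $\Yy_\eta$ at limit $\eta$ the \emph{minimal comparison} of $\left<\Yy_\alpha\right>_{\alpha<\eta}$ (Lemma \ref{lem:min_comp}), and Lemmas \ref{lem:cont_term_stack_event} and \ref{lem:cont_term_stack_good} supply the eventual non-dropping and the direct-limit identifications you would otherwise have to prove by hand. So the efficient route is not to redo the recursion: pick a cofinal linear chain through the directed system, realize it as a single stack $\vec{\Tt}$ of length $\om_1$ of countable normal trees via $\Sigma^*$ with $M^{\vec{\Tt}}_\infty=M_\infty$, and apply Theorem \ref{tm:full_norm_stacks_strategy}\ref{item:X_for_Tvec} directly --- noting that with $\Omega=\om_1$ the clause requires $\lambda<\Omega$, so one should invoke the theorem with a larger regular $\Omega$ (say $\om_2$; full iterability of $M^\#$ makes $\Sigma$ an $(\om,\Omega+1)$-strategy for any such $\Omega$), which yields a normal tree $\Xx$ via the unique strategy with $M^\Xx_\infty=M_\infty$.
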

\begin{proof}
 The strategy $\Sigma$ for $M$ has Dodd-Jensen.
 By Theorem \ref{tm:wDJ_implies_cond}, $\Sigma$
 therefore has minimal inflation condensation,
 and hence the theorem applies, as witnessed by $\Sigma^*$.
 Moreover, if $\Sigma'$ is any $(\om,\om_1,\om+1)^*$-strategy
 for $M$, then $\Sigma'$ agrees with $\Sigma^*$
 on non-dropping iterates, by a standard
 uniqueness argument. This yields the corollary.
\end{proof}

For the following corollary, the author does not know whether
$\DC$ is necessary, but see \cite[\S10]{iter_for_stacks}.

\begin{cor}
Assume $\DC$. Let $\Omega>\om$ be regular.
Let $M$ be a countable, $m$-sound, $(m,\Omega,\Omega+1)^*$-iterable pure $L[\es]$-premouse.
Then there is an optimal-$(m,\Omega,\Omega+1)^*$-strategy $\Sigma^*$ for $M$,
with first round $\Sigma$, such that $\Sigma,\Sigma^*$
are related as in Theorem \ref{tm:full_norm_stacks_strategy}.
\end{cor}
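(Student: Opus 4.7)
The plan is to reduce this corollary to Theorem~\ref{tm:full_norm_stacks_strategy}: first produce an $(m,\Omega+1)$-strategy $\Sigma$ for $M$ with minimal inflation condensation, and then apply the main theorem to extend $\Sigma$ to the required $\Sigma^*$.

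From the assumed $(m,\Omega,\Omega+1)^*$-iterability one extracts, essentially trivially, some $(m,\Omega+1)$-strategy for $M$ by projecting any winning strategy in the longer game onto its first round. However an arbitrary such strategy need not have \emph{mic}, so it cannot be fed directly into Theorem~\ref{tm:full_norm_stacks_strategy}; we must select carefully. The key step, in which both $\DC$ and the countability of $M$ enter, is the classical Dodd-Jensen selection argument: fix an enumeration in ordertype $\omega$ of the countable putative trees on $M$ together with an enumeration of $M$, and, walking along the former, use $\DC$ to make successive branch choices that lexicographically minimize the images of (initial segments of) the latter under the associated iteration maps. The full $(m,\Omega,\Omega+1)^*$-iterability hypothesis guarantees that at each step a legitimate branch is available, and a standard absoluteness/extension argument promotes the resulting strategy on countable trees to an $(m,\Omega+1)$-strategy $\Sigma$ satisfying weak Dodd-Jensen.

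Since $M$ is a pure $L[\es]$-premouse, Theorem~\ref{tm:wDJ_implies_cond} applies and gives that $\Sigma$ has minimal inflation condensation. Feeding this $\Sigma$ into Theorem~\ref{tm:full_norm_stacks_strategy} then produces an optimal $(m,\Omega,\Omega+1)^*$-strategy $\Sigma^*$ with $\Sigma \sub \Sigma^*$ and satisfying all of clauses (1)--(4) of that theorem. In particular $\Sigma$ is precisely the first round of $\Sigma^*$, which is the content of the corollary.

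The main obstacle is the selection step, specifically promoting the $\DC$-constructed countable-tree strategy to a strategy on trees of length $\leq \Omega+1$ while preserving weak Dodd-Jensen. This is the point at which the full strength of $(m,\Omega,\Omega+1)^*$-iterability (rather than mere countable iterability) is really being used, and it is also where the role of $\DC$ is most delicate; the parenthetical remark above referring to \cite[\S10]{iter_for_stacks} already flags this as the subtle point, and indeed the author does not know whether $\DC$ can be avoided.
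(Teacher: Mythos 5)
Your route is the same as the paper's: use $\DC$ and the countability of $M$ to obtain a weak Dodd--Jensen strategy, invoke Theorem~\ref{tm:wDJ_implies_cond} to get minimal inflation condensation for its first round $\Sigma$, and then apply Theorem~\ref{tm:full_norm_stacks_strategy}. Two small remarks. First, you omit the verification that $M$ is \emph{$m$-standard}: the corollary only assumes $m$-soundness, while both Theorem~\ref{tm:wDJ_implies_cond} and Theorem~\ref{tm:full_norm_stacks_strategy} require $m$-standardness; the paper notes explicitly that this follows from $(m,\Omega,\Omega+1)^*$-iterability together with $M$ being a pure $L[\es]$-premouse (cf.\ Remark~\ref{rem:condensation_def}). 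Second, your sketch of the Dodd--Jensen selection step is more complicated than necessary: the standard Neeman--Steel argument produces an $(m,\Omega,\Omega+1)^*$-strategy $\Gamma$ with weak DJ directly (using $\DC$ to choose a sequence of tails of the given strategy), with no need to build a countable-tree strategy first and then promote it by absoluteness; one then takes $\Sigma$ to be the first round of $\Gamma$, which is what the hypothesis of Theorem~\ref{tm:wDJ_implies_cond} refers to.
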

\begin{proof}
 We may take an $(m,\Omega,\Omega+1)^*$-strategy
 $\Gamma$ for $M$ with weak DJ (using $\DC$). By $(m,\Omega,\Omega+1)^*$-iterability
 (and that $M$ is pure $L[\es]$),
 $M$ is $m$-standard. 
 By Theorem \ref{tm:wDJ_implies_cond},
 the first round $\Sigma$ of
 $\Gamma$ has minimal inflation condensation, and so the theorem applies to $\Sigma$.
\end{proof}

The key observation beyond the methods of normal realization,
which leads from there to (full) normalization,
is due to Steel,
and  was described by him in preprints of
 \cite{str_comparison} in 2015.
 In normal realization, given an embedding
$\pi:M\to N$ 
between premice $M,N$, 
and $E$ in the extender sequence $\es_+^M$ of $M$, 
if one wants to copy $E$
using $\pi$, then one copies to $\pi(E)$
(or to $F^N$, if $E=F^M$).
But for normalization,
this need not be the appropriate copy.
Let $P\ins M$ be such that $E=F^P$.
Assuming $\pi$ satisfies some further properties with respect to $P$
which will be detailed later,
then 
the appropriate copy 
is the active extender $E'$ of $P'=\Ult_0(P,F)$,
where $F$ is a certain extender derived from $\pi$.
(In fact, we will be considering the case that $\pi$ arises
as an ``abstract iteration map'' resulting from a sequence $\left<F_\alpha\right>_{\alpha<\lambda}$
of extenders, and $F$ will be the concatenation of some initial segment of this sequence.)

For example, if $Q$ is a premouse and $E=F^P$ (where $P$ is as above) is a $Q$-extender and $\crit(E)<\rho_m^Q$ and
 $R=\Ult_m(Q,E)$,
 and $F$ is the active extender of a premouse, and is an $R$-extender with $\crit(E)<\crit(F)<\nu(E)$,
 then $\Ult_m(R,F)=\Ult_m(Q,E')$, where $E'=F^{P'}$ is as above.
 (See e.g.~\cite[3.13--3.20]{mim}
 or \cite[\S3]{premouse_inheriting}.)  
But to be able to make this copy, we of course
need that $E'\in\es_+^{N}$.
Steel showed that the latter is true,
assuming that $M$ satisfies some standard
condensation facts.

As partially described in an early draft of \cite{str_comparison}, Steel used this copying process
to introduce and outline a (full) normalization analogue of tree embeddings, and corresponding
adaptation of strong hull condensation,  \emph{very strong hull condensation}, 
and also a procedure for (full) normalization of \emph{finite} stacks adapting
that for normal realization (that is, within his overall context; see Remark \ref{rem:normalization_different_contexts}). This material was not discussed in full detail there, however.
Some time later, some details involved were ironed out independently by Steel and the author
(see Footnote \ref{ftn:dropdown_pres}, Remark \ref{rem:pres_dropdown}). Steel and the author then (in 2016, independently) proceeded to flesh out the finite stack version in detail (working in the respective contexts described in \ref{rem:normalization_different_contexts}), and the author worked out the main ideas for the infinite stack version, in the context of Theorem \ref{tm:full_norm_stacks_strategy}, which we will use in this paper. Steel made his handwritten notes
\cite{steel_local_HOD_comp} available at that time.
We  develop both the finite stack and infinite stack material in detail here (in the context of \ref{tm:full_norm_stacks_strategy}).

{\color{red}}
It turns out that most of the ideas needed for the proof of Theorem \ref{tm:full_norm_stacks_strategy}
are already present in the papers \cite{str_comparison}
and \cite{iter_for_stacks}; those methods 
combined
with a bit more analysis is enough.
But it does take some work to set things up,
so that the further analysis can be carried out.
Conveniently, however, various complications which arise
in normal realization 
are eliminated when dealing with (full) normalization.

The structure of the proof of Theorem \ref{tm:full_norm_stacks_strategy}
we give, and much of the 
detail, is very similar to that of 
\cite[Theorem 9.1]{iter_for_stacks},
and where possible, we will omit details of proofs which
are (essentially) the same. So the reader should
have \cite{iter_for_stacks} available.
If the reader is not familiar with that paper, it seems one
might just consult it as needed (and as mentioned,
certain complications in that paper do not arise here).
The notation  in \cite{iter_for_stacks}
is 
different from that employed by Steel in \cite{str_comparison},
although many notions match up in meaning.
Because this paper is very tightly related to \cite{iter_for_stacks},
and in order to make things easier on the reader, 
we opted to maintain consistency with the notation of \cite{iter_for_stacks}
as far as possible.
We will give 
most key definitions in full (but not all definitions),
even though some of these are  very similar to those in \cite{iter_for_stacks}.

We proceed as follows.  In
\S\ref{sec:fine_structure} we discuss fine structural background.
\S\ref{sec:min_strat_cond} covers the strategy condensation properties we consider.
Adapting notions from \cite{iter_for_stacks},
\S\ref{sec:factor_tree} discusses the minimal version of the factor tree,
and \S\ref{sec:min_comp}  the minimal version of tree comparison.
In \S\ref{sec:inf_comm} we discuss minimal inflation stacks,
which adapts \cite[Lemmas 6.1, 6.2]{iter_for_stacks} on commutativity of inflation,
with extra features developed here regarding infinite stacks.
In \S\ref{sec:normalization} we reach the actual proof of Theorem \ref{tm:full_norm_stacks_strategy}, together with a variant, Theorem \ref{tm:stacks_iterability_2}. In \S\ref{sec:analysis_of_comparison}
we given an analysis of comparison of normal iterates $N_0,N_1$
of a given mouse $M$
via a strategy $\Sigma$ with minimal inflation condensation,
via the strategies $\Sigma_{N_0},\Sigma_{N_1}$
for $N_0,N_1$ given by (the proof of)
Theorem \ref{tm:full_norm_stacks_strategy}. In \S\ref{sec:gen_abs_it} we sketch the adaptation of  \cite[Theorem 7.3]{iter_for_stacks} on generic absoluteness of iterability (with condensation properties for the strategy). Finally  in \S\ref{sec:properties} we establish
a few further useful properties of the stacks strategy $\Sigma^{\stk}$ derived from $\Sigma$
in the proof of Theorem \ref{tm:full_norm_stacks_strategy}.\footnote{The work on the main material in this paper
began in 2015 or early 2016, some time after Steel and the author had communicated
with one another on their respective work  in
 \cite{str_comparison}
and \cite{iter_for_stacks}, and Steel suggested considering full normalization for infinite stacks.
After this we both considered the problem, basically independently.
By the time of the 2016 UC Irvine conference in inner model theory,
the author had sorted out pretty much the proof of full normalization presented in this paper.
The analysis of comparison  in \S\ref{sec:analysis_of_comparison}
was observed by the author in early 2016,
in conjunction with discussions with Steel
regarding $\HOD^{L[x]}$. The material in \S\ref{sec:properties} was worked out a few years later.

The method for dealing with infinite stacks in this paper relies heavily on  \cite{iter_for_stacks},
and of course our goal here is to extend a given normal strategy to a strategy for stacks.
Steel has considered infinite stacks via a strategy induced for a (strategy) premouse constructed by background construction
in a universe for which there is a sufficiently nice coarse strategy. He has (tentative?) results on normalization for infinite stacks in that context, which may rely somewhat on dealing with the complications discussed in \cite{lambda-errors} and its successors.}

Steel and  Siskind have also been developing  a paper with work on full normalization,
combined with more of the theory of strategy mice, and extending Steel's \cite{str_comparison}.

Finally, I would like to thank John Steel, Benjamin Siskind, and Ronald Jensen, for various discussions on the topic over the last few years.

\subsection{Notation}\label{subsec:notation}

See \cite[\S1.1]{iter_for_stacks} and \cite[\S1.1]{premouse_inheriting} for most of the notation and terminology we use.
(However, we use $\lh(E)$ for the index of an extender $E\in\es_+^M$, for a premouse $M$,
which is denoted $\mathrm{ind}(E)$ in \cite{iter_for_stacks}.)
We just mention below a few of the more obscure terminological items that show up in the paper.

We deal with both MS-indexed and $\lambda$-indexed 
pure/strategy premice,
except that we allow extenders of superstrong type on the sequence (for both kinds of indexing). We use MS-fine structure
(that is, $\rSigma_n$, etc), as simplified in \cite{V=HODX}, and also use MS-fine structure for
$\lambda$-indexed premice. 
We use some definitions/facts from \cite{premouse_inheriting},
which are literally stated there for MS-indexed pure-$L[\es]$ premice,
but as long as there are direct translations to other forms, we assume such a translation.
See in particular \cite[\S2]{premouse_inheriting} for the notion \emph{$n$-lifting 
embedding}.

For an active premouse $N$, $\lgcd(N)$ denotes the largest cardinal of $N$,
and $\nutilde(F^N)=\nutilde(N)$ denotes 
$\max(\lgcd(N),\nu(F^N))$. For a passive premouse
$N$, $\nutilde(N)$ denotes $\OR^N$. We write $M\pins_{\card} N$ to say
that $M\pins N$ and $\OR^M$ is a cardinal of $N$.

Let $\Tt$ be an $m$-maximal tree. If $\alpha+1<\lh(\Tt)$,
then $\exit^\Tt_\alpha$ denotes $M^\Tt_\alpha|\lh(E^\Tt_\alpha)$ (\emph{ex} for \emph{exit})
and $\nutilde^\Tt_\alpha=\nutilde(E^\Tt_\alpha)=\nutilde(\exit^\Tt_\alpha)$,
so note $\nutilde^\Tt_\alpha$ is the exchange ordinal associated to $E^\Tt_\alpha$
in $\Tt$ (for either MS-iteration rules or $\lambda$-iteration rules).
If $\Tt$ has successor length $\alpha+1$,
we say $E\in\es_+(M^\Tt_\alpha)$
is \emph{$\Tt$-normal}
iff $\lh(E^\Tt_\beta)\leq\lh(E^\Tt_\alpha)$ for all $\beta<\alpha$.
We say $\Tt$ is \emph{terminally non-dropping}
iff it has successor length $\alpha+1$
and $[0,\alpha]_\Tt$ does not drop in model or degree.
A \emph{putative $m$-maximal tree} $\Tt$ is like an $m$-maximal
tree, except that if $\lh(\Tt)=\alpha+1$ then we do not demand that $[0,\alpha)_\Tt$ has only finitely many drops
(and hence $M^\Tt_\alpha$ might be ill-defined), and if $[0,\alpha)_\Tt$ does have only finitely many drops,
we do not demand that $M^\Tt_\alpha$ is wellfounded.

\section{Fine structural preliminaries}\label{sec:fine_structure}

\subsection{Degree $\om$ and degree $0$}\label{subsec:deg_om_deg_0}

Let $M$ be an $\om$-sound premouse.
 Then there is a natural 1-1 correspondence
 between $\om$-maximal iteration trees $\Tt$ on $M$
 and $0$-maximal iteration trees $\Uu$ on $\J(M)$,
such that for all corresponding pairs $(\Tt,\Uu)$,
we have $\lh(\Tt)=\lh(\Uu)$,  ${<^\Tt}={<^\Uu}$,
$E^\Tt_\alpha=E^\Uu_\alpha$ for each $\alpha+1<\lh(\Tt,\Uu)$,
and for each $\alpha<\lh(\Tt,\Uu)$, we have:
\begin{enumerate}[label=--]
\item $[0,\alpha]_\Tt\inter\dropset^\Tt_{\deg}=\emptyset\iff[0,\alpha]_\Uu\inter\dropset^\Uu=\emptyset$,
\item  if $[0,\alpha]_\Tt\inter\dropset^\Tt_{\deg}=\emptyset$
 then $M^\Uu_\alpha=\J(M^\Tt_\alpha)$ and $i^\Tt_{0\alpha}\sub i^\Uu_{0\alpha}$
 and $i^\Uu_{0\alpha}(M)=M^\Tt_\alpha$,
\item if $[0,\alpha]_\Tt\inter\dropset^\Tt_{\deg}\neq\emptyset$
 then $M^\Tt_\alpha=M^\Uu_\alpha$ and there is the natural agreement of iteration maps.
\end{enumerate}
 This is straightforward to see. Likewise for $\om$-maximal stacks on $M$ and $0$-maximal
 stacks on $\J(M)$. In fact, such a correspondence holds not only for such iteration trees,
 but also for abstract iterations via sequences $\vec{E}$ of extenders considered in what follows.
 
 So throughout the paper, for a little more uniformity, we will ignore iteration trees and strategies
 at degree $\om$ for $\om$-sound premice $M$, by instead considering the corresponding degree $0$ trees and strategies
 for $\J(M)$, assuming $M$ is a set;
 if $M$ is proper class, then of course degree $n$ for $M$ is equivalent for all $n\leq\om$,
 so in this case we just consider degree $0$ for $M$.
 
\subsection{Dropdown preservation}

Just as in \cite[Definition 2.10]{premouse_inheriting}, we abstract out some condensation we need to assume
holds of the base premouse $M$ we will be iterating:

\begin{dfn}\label{dfn:m-standard}
Let $m<\om$ and let $M$ be an $(m+1)$-sound premouse.
We say that $M$ is \emph{$(m+1)$-relevantly-condensing}
iff for all $P,\pi$, if
\begin{enumerate}
 \item $P$ is an $(m+1)$-sound premouse,
 \item $\rho_{m+1}^P$ is an $M$-cardinal,
 \item\label{item:pi_is_m-lifting_etc} $\pi:P\to M$ is a 
$\pvec_{m+1}$-preserving $m$-lifting embedding,
 \item $\crit(\pi)\geq\rho_{m+1}^P$ and
 \item\label{item:pi``rho_m_bounded} $\pi``\rho_m^P$ is bounded in $\rho_m^M$
\end{enumerate}
then $P\pins M$.

Say that $M$ is \emph{$(m+1)$-sub-condensing}
iff for all $\pi:P\to M$ as above,
except that we replace conditions \ref{item:pi_is_m-lifting_etc}
and \ref{item:pi``rho_m_bounded} respectively with
\begin{enumerate}[label=\arabic*'.]
\setcounter{enumi}{2}
 \item  $\pi:P\to M$ is a 
$\pvec_{m+1}$-preserving $m$-embedding,
\setcounter{enumi}{4}
\item\label{item:rho_m+1^P<rho_m+1^M}  $\rho_{m+1}^P<\rho_{m+1}^M$,
\end{enumerate}
then $P\pins M$.

For $n<\om$, a premouse $N$ is  \emph{$n$-standard} iff:
\begin{enumerate}[label=--]
\item $N$ is $n$-sound
and $(m+1)$-relevantly-condensing for every $m<n$, and
\item every $M\pins N$ is $(m+1)$-relevantly-condensing
and $(m+1)$-sub-condensing for each $m<\om$.\footnote{See 
\ref{rem:condensation_def}.}
\end{enumerate}
And $N$ is \emph{$\om$-standard} iff $n$-standard for each $n<\om$.\footnote{By \S\ref{subsec:deg_om_deg_0},
we won't use the notion of \emph{$\om$-standard} in the main calculations; it is only included
as it is used in the statement of some theorems.}
\end{dfn}

\begin{rem}\label{rem:condensation_def}
If $N$ is an 
$n$-sound, $(n,\om_1+1)$-iterable MS-indexed pure $L[\es]$-premouse,
 then $N$ is $n$-standard, by \cite{fsfni}.
The author expects  this should also work  for $\lambda$-indexing,
but has not attempted to work through the details. For this reason
and because we are also considering strategy premice,
we include \emph{$m$-standard} explicitly as one of the hypotheses of the main theorems.
 
Like in \cite[\S2]{premouse_inheriting}, 
\emph{$0$-standard} is an $\rPi_1$ property of premice,
 and \emph{$(m+1)$-standard} is an $\rPi_{m+1}(\pvec_{m+1})$ property over 
$(m+1)$-sound premice $N$;
 therefore, in general, $n$-standardness is preserved by degree $n$ 
 ultrapower maps.
 
 Let $\pi:P\to M$ be as in the definition of
 \emph{$(m+1)$-sub-condensing},
 except that we drop requirement
 that $\rho_{m+1}^P<\rho_{m+1}^M$.
 As also discussed in \cite{premouse_inheriting}, if also 
$\crit(\pi)\geq\rho_{m+1}^M$, then $P=M$ and $\pi=\id$;
 this just follows directly from fine structure.\end{rem}

\begin{dfn}\label{dfn:(M,m)-good} Let $M$ be an $m$-sound premouse.
Let $\vec{E}=\left<E_\alpha\right>_{\alpha<\lambda}$ be a sequence of short 
extenders.
We say that $\vec{E}$ is \emph{$(M,m)$-pre-good} iff
there is a sequence $\left<M_\alpha\right>_{\alpha\leq\lambda}$
such that:
\begin{enumerate}[label=--]
 \item $M_0=M$,
 \item for each $\alpha<\lambda$, $E_\alpha$ is
 a weakly amenable  $M_\alpha$-extender 
with
\[ \crit(E_\alpha)<\min(\rho_m^{M_\alpha},\nutilde(M_\alpha)),\]
 \item for each $\alpha<\lambda$, $M_{\alpha+1}=\Ult_m(M_\alpha,E_\alpha)$, 
 \item for each limit $\gamma\leq\lambda$, 
$M_\gamma=\dirlim_{\alpha<\gamma}M_\alpha$, under the (compositions and direct 
limits of) the ultrapower maps,
 \item for each $\alpha<\lambda$, $M_\alpha$ is wellfounded.
\end{enumerate}
We write $\Ult_m(M,\vec{E})=M_\lambda$ and $i^{M,m}_{\vec{E}}$ for the 
ultrapower map.
We say that $\vec{E}$ is \emph{$(M,m)$-good} iff $\vec{E}$ is $(M,m)$-pre-good
and $M_\lambda$ is wellfounded.
If $\vec{E}$ is $(M,m)$-pre-good, given $\kappa\leq\rho_m^M$, we say that 
$\vec{E}$ is \emph{${<\kappa}$-bounded}
iff
$\crit(E_\alpha)<\sup i^{M,m}_{\vec{E}\rest\alpha}``\kappa$
 for each $\alpha<\lambda$;
 and if $\kappa<\rho_m^M$, say
 $\vec{E}$ is \emph{$\kappa$-bounded}
 iff it is ${<(\kappa+1)}$-bounded.

We say  $\vec{E}$ is \emph{$(M,m)$-pre-pre-good}
iff either $\vec{E}$ is $(M,m)$-pre-good or there is $\gamma<\lh(\vec{E})$
such that $\vec{E}\rest\gamma$ is $(M,m)$-pre-good but not $(M,m)$-good.
\end{dfn}

As a corollary to \ref{rem:condensation_def}, we have:
\begin{lem}\label{lem:ult_pres_standard}
 Let $N$ be $n$-standard and $\vec{E}$ be $(N,n)$-good.
 Then $\Ult_n(N,\vec{E})$ is $n$-standard.
\end{lem}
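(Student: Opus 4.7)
The plan is to prove the lemma by induction on $\lambda = \lh(\vec{E})$, reducing to the single-extender case. The successor step is then immediate, and for limit $\gamma \leq \lambda$ we use that $N_\gamma$ is the direct limit of the $n$-standard premice $N_\alpha$ under degree-$n$ embeddings, which preserves the $\rPi_{m+1}(\pvec_{m+1})$-properties in play, by the standard elementarity of direct limit maps.

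For the single-extender case, let $i = i^{N,n}_{E_0} : N \to N_1 = \Ult_n(N, E_0)$. By Remark \ref{rem:condensation_def}, ``$(m+1)$-relevantly-condensing'' and ``$(m+1)$-sub-condensing'' are each $\rPi_{m+1}(\pvec_{m+1})$-expressible over $(m+1)$-sound premice. Hence the part of $n$-standardness concerning $N_1$ itself---namely $n$-soundness and $(m+1)$-relevantly-condensing for each $m<n$---is of complexity at most $\rPi_{n+1}(\pvec_{n+1})$ over $n$-sound premice, and so transfers from $N$ to $N_1$ via the degree-$n$ embedding $i$, which is $\rPi_{n+1}(\pvec_{n+1})$-elementary and $\pvec_{n+1}$-preserving.

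What remains is the initial-segment clause: for every $P \pins N_1$ and every $m < \omega$, $P$ must be $(m+1)$-relevantly-condensing and $(m+1)$-sub-condensing. I would split based on $\OR^P$ versus $\crit(i)$. If $\OR^P \leq \crit(i)$, then $P \pins N$ and the conclusion is immediate from the hypothesis on $N$. Otherwise, $P$ corresponds via the standard analysis of initial segments of degree-$n$ ultrapowers to some $P_0 \pins N$ with $i \rest P_0 : P_0 \to P$ an $\omega$-embedding between $\omega$-sound premice; since the $\rPi_{m+1}(\pvec_{m+1})$-conditions hold in $P_0$ by hypothesis, they transfer to $P$ for all $m < \omega$.

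The main obstacle is precisely this last bookkeeping---verifying that every $P \pins N_1$ above $\crit(i)$ really is of the form $i(P_0)$ for some $P_0 \pins N$ (with $i \rest P_0$ suitably elementary), so that no ``exotic'' initial segments of $N_1$ escape the analysis. This is routine given the explicit structure of $n$-ultrapowers, and follows from the preservation framework already laid out in \cite[\S2]{premouse_inheriting}, just as in the MS-indexed pure $L[\es]$ setting treated there.
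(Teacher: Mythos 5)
Your overall framework—induction on $\lh(\vec{E})$ with direct limits at limit stages, and elementarity of the degree-$n$ ultrapower map for the clauses about $\Ult_n(N,E_0)$ itself—matches the paper's, which gives essentially no proof beyond citing Remark \ref{rem:condensation_def}: $(m+1)$-standardness is an $\rPi_{m+1}(\pvec_{m+1})$ property (and $0$-standardness is $\rPi_1$), hence preserved by degree-$n$ ultrapower maps. The crucial point there is that this preservation is applied to the \emph{entire} formula at once, including the clause quantifying over all initial segments.

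Your handling of that initial-segment clause, however, rests on a step that fails. It is not true that every $P\pins N_1=\Ult_n(N,E_0)$ with $\OR^P>\crit(i)$ is of the form $i(P_0)$ for some $P_0\pins N$ (equivalently, that $i\rest P_0$ realizes $P$ as an image of a single segment of $N$). A general proper initial segment of the ultrapower is represented as $P=[a,f]^{N,E_0}$ with $f(u)\pins N$ for almost every $u$; only for constant $f$ does $P$ lie in the range of $i$. For example, with $\kappa=\crit(E_0)$, segments $P\pins N_1$ with $(\kappa^+)^N\leq\OR^P<i(\kappa)$ that do not already appear in $N$ by coherence escape your dichotomy entirely, so ``exotic'' initial segments do exist in exactly the sense you were worried about. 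The correct (and still routine) repair is Łoś's theorem applied to the ultraproduct representation: the statement that every $M\pins N$ is $(m+1)$-relevantly-condensing and $(m+1)$-sub-condensing is, suitably coded as in \cite[\S2]{premouse_inheriting}, a $\rPi$-statement over $N$ holding of almost all $f(u)$, and so transfers to $[a,f]$; equivalently, one preserves the universally quantified clause wholesale as one conjunct of the single $\rPi_{m+1}(\pvec_{m+1})$ formula expressing standardness, rather than attempting to realize each segment of the ultrapower individually as the image of a segment of $N$.
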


The following fact was established in the proof of
\cite[Lemma 3.17(11)]{premouse_inheriting}, but that proof is
within a context which makes it a little annoying to
isolate, so we 
repeat the proof here 
for convenience:

\begin{lem}\label{lem:Ult_m_seg_Ult_n}
Let $N$ be $n$-standard and $m<n<\om$ with $\rho_{m+1}^N=\rho_n^N$.
Let  $\vec{E}$ be an $(N,n)$-good sequence.
Then $\vec{E}$ is $(N,m)$-good.
Let $U_k=\Ult_k(N,\vec{E})$ for $k\in\{m,n\}$.
Then $U_m\ins U_n$ and $\rho_{m+1}^{U_m}=\rho_{n}^{U_n}$.
\end{lem}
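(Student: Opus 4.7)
The plan is to induct on $\lh(\vec{E})=\lambda$, running the degree-$m$ and degree-$n$ ultrapower chains $\langle N^m_\alpha\rangle_{\alpha\leq\lambda}$ and $\langle N^n_\alpha\rangle_{\alpha\leq\lambda}$ in parallel, starting from $N^m_0=N^n_0=N$, and maintaining throughout the invariants
\begin{enumerate}[label=(\roman*)]
\item $N^m_\alpha\ins N^n_\alpha$ and $N^n_\alpha$ is $n$-standard;
\item $\rho_{m+1}^{N^m_\alpha}=\rho_n^{N^n_\alpha}$;
\item $i^{N,m}_{\vec{E}\rest\alpha}$ and $i^{N,n}_{\vec{E}\rest\alpha}$ agree on $N|\rho$, where $\rho=\rho_{m+1}^N=\rho_n^N$.
\end{enumerate}
The base case $\alpha=0$ is immediate from the hypotheses. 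The conclusion of the lemma is read off at $\alpha=\lambda$: (i) gives $U_m\ins U_n$, and wellfoundedness of $U_m$ follows from that of $U_n$ via the initial segment inclusion, giving $(N,m)$-goodness of $\vec{E}$; (ii) at $\lambda$ is the desired equality of projecta.

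At a successor step $\alpha+1$, invariant (ii) together with the $(N,n)$-goodness of $\vec{E}$ gives $\crit(E_\alpha)<\rho_n^{N^n_\alpha}=\rho_{m+1}^{N^m_\alpha}$, so $E_\alpha$ is applicable to $N^m_\alpha$ at degree $m$. The successor step then reduces to a standard single-extender fine structural fact: if $P\ins Q$ with $Q$ being $n$-standard, $P$ being $(m+1)$-sound, $\rho_{m+1}^P=\rho_n^Q$, and $F$ is an extender with $\crit(F)<\rho_n^Q$ such that $\Ult_n(Q,F)$ is wellfounded, then $\Ult_m(P,F)\ins\Ult_n(Q,F)$ with $\rho_{m+1}^{\Ult_m(P,F)}=\rho_n^{\Ult_n(Q,F)}$, and the two ultrapower maps agree on $Q|\rho_n^Q$. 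This is routine from comparing the classes of representative functions: when $\rho_{m+1}^P=\rho_n^Q$, the $\rSigma_m^P$-definable and $\rSigma_n^Q$-definable functions from $[\crit(F)]^{<\om}$ into the relevant levels code the same elements. Applied with $P=N^m_\alpha$, $Q=N^n_\alpha$, $F=E_\alpha$, and combined with Lemma \ref{lem:ult_pres_standard} for preservation of $n$-standardness, the invariants propagate to $\alpha+1$.

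At a limit stage $\gamma$, invariant (iii) at all previous stages makes the two direct systems coherent: $N^m_\gamma$ embeds into $N^n_\gamma$ as an initial segment because the initial segment relation was preserved at every stage and the ultrapower maps agree on the common portions of their domains. Wellfoundedness of $N^m_\gamma$ follows from that of $N^n_\gamma$, and the projecta equality passes to the limit by the standard preservation of projecta under ultrapower chains (applied separately to $\rho_{m+1}$ along the $m$-chain and to $\rho_n$ along the $n$-chain, which are images of a common starting $\rho$ under matching cofinal sequences). The main obstacle is the careful verification of the single-extender fact, in particular the tracking of agreement of the ultrapower maps below $\rho_n^Q$, which is precisely what is needed to make the limit-stage invariants cohere; once this is established, the remaining work at limits is routine bookkeeping.
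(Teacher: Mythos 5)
There is a genuine gap at the heart of your successor step. You reduce everything to the ``single-extender fact'' that if $P\ins Q$, $Q$ is $n$-standard, $\rho_{m+1}^P=\rho_n^Q$, etc., then $\Ult_m(P,F)\ins\Ult_n(Q,F)$, and you assert this is ``routine from comparing the classes of representative functions'' because the two function classes ``code the same elements.'' They do not. The functions used to form $\Ult_n(Q,F)$ form a strictly larger class than those used for $\Ult_m(P,F)$ (every good $\rSigma_m$ function is $\rSigma_n$, not conversely), so the natural factor map $\pi:\Ult_m(P,F)\to\Ult_n(Q,F)$ (or into $i^{Q,n}_F(P)$ when $P\pins Q$) is in general neither surjective nor the identity. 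Comparing function classes only yields that the two ultrapowers and their maps agree below $\sup i``\rho$ where $\rho=\rho_{m+1}^P=\rho_n^Q$, i.e.\ that $\crit(\pi)\geq\rho_{m+1}^{\Ult_m(P,F)}$. Two premice can agree up to a common projectum without either being an initial segment of the other; concluding $\Ult_m(P,F)\ins\Ult_n(Q,F)$ requires a condensation argument. Indeed the lemma's conclusion explicitly allows $U_m\pins U_n$ strictly, which already shows the factor map cannot be surjective in general, so no amount of matching representatives can close this step.

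What is missing is precisely the content of the $n$-standardness hypothesis: one checks that $\pi$ is a $\pvec_{m+1}$-preserving $m$-lifting (or $m$-) embedding with $\crit(\pi)\geq\rho_{m+1}^{\Ult_m(P,F)}$, that $\Ult_m(P,F)$ is $(m+1)$-sound with this projectum a cardinal of the target, and then applies the $(m+1)$-relevantly-condensing / $(m+1)$-sub-condensing clauses of Definition \ref{dfn:m-standard} (via Remark \ref{rem:condensation_def} and Lemma \ref{lem:ult_pres_standard}) to conclude $\Ult_m(P,F)\ins\Ult_n(Q,F)$ or equality. This is exactly what the paper does: it forms the factor map $\pi:U_m\to U_n$ on the full ultrapowers by $\vec{E}$ in one step (your parallel-chain induction is a legitimate alternative organization, and is essentially how the paper proves the closely related Lemma \ref{lem:ult_dropdown}), verifies the hypotheses of the condensation clauses, and cites $n$-standardness of $U_n$ to get $U_m\pins U_n$ or $U_m=U_n$. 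You invoke preservation of $n$-standardness only to carry it along as an invariant, but never use the condensation property it provides; without that use, the initial-segment conclusion is unjustified.
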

\begin{proof}
The fact that $\vec{E}$ is $(N,m)$-good will follow by induction on 
$\lh(\vec{E})$ from the rest.
So assume this holds.
 
 Let $i_k:M\to U_k$ be the ultrapower map
and $\pi:U_m\to U_{n}$ be the standard factor map.
By \ref{lem:ult_pres_standard} and calculations as in
\cite[Corollary 2.24]{extmax} and \cite{premouse_inheriting}, we have
\begin{enumerate}[label=--]
 \item $U_n$ is $n$-standard and $U_m$ is $(m+1)$-sound,
 \item $\rho\eqdef\rho_{m+1}^{U_m}=\sup i_m``\rho_{m+1}^N=\sup 
i_{n}``\rho_{n}^N=\rho_{n}^{U_n}$,
  \item if $\rho<\OR^{U_n}$ then $\rho$ is a $U_n$-cardinal
  (using that if $\rho_{m+1}^M=(\theta^+)^{M}$ then there is
  no cofinal $\bfrSigma_{m+1}^M$  function $f:\theta\to\rho_{m+1}^M$),  
 \item $\pvec_{m+1}^{U_k}=i_k(\pvec_{m+1}^N)$ for $k\in\{m,n\}$, and
 \item $\pi$ is $\pvec_{m+1}$-preserving $m$-lifting, with $\crit(\pi)\geq\rho$.
\end{enumerate}
So by \ref{rem:condensation_def} applied to $\pi:U_m\to U_n$ (noting $U_n$ is $n$-standard) either:
\begin{enumerate}[label=--]
 \item $U_m\pins U_n$ (when $\sup\pi``\rho_m^{U_m}<\rho_m^{U_n}$), or
 \item $U_m=U_n$ (when $\sup\pi``\rho_m^{U_m}=\rho_m^{U_n}$),
\end{enumerate}
completing the proof.
\end{proof}

\begin{dfn}
 Let $N$ be an $n$-sound premouse and $(M,m)\ins(N,n)$, where $m<\om$.
 The \emph{extended $((N,n),(M,m))$-dropdown}
 is the sequence $\left<(M_i,m_i)\right>_{i\leq k}$, with $k$ as large as 
possible,
 where $(M_0,m_0)=(M,m)$, and $(M_{i+1},m_{i+1})$ is the least 
$(M',m')\ins(N,n)$ such that either
 \begin{enumerate}[label=--]
  \item  $(M',m')=(N,n)$, or
  \item $(M_i,m_i)\pins(M',m')$ and $\rho_{m'+1}^{M'}<\rho_{m_i+1}^{M_i}$.
  \end{enumerate}
 
 The \emph{reverse extended $((N,n),(M,m))$-dropdown} is 
$\left<(M_{k-i},m_{k-i})\right>_{i\leq k}$.

Abbreviate \emph{reverse extended} with \emph{revex}
and  \emph{dropdown} with 
\emph{dd}.
\end{dfn}

Steel proved the following dropdown-preservation lemma (for $\lambda$-indexing);
a small part of it is independently due to the author:\footnote{\label{ftn:dropdown_pres}Steel first showed (a) in a
2015 preprint of \cite{str_comparison}. He and the author then 
noticed independently that it is also important for full normalization to know how the dropdown sequence is propagated by the ultrapower, and extended (a) to (b). Steel's formulation and proof of this in \cite[Lemma 2.4]{steel_local_HOD_comp} is somewhat different to the author's (which is given here, but which follows
readily from an examination of Steel's original proof of (a)).}

\begin{lem}\label{lem:ult_dropdown}
 Let $N$ be $n$-standard and $(M,m)\ins(N,n)$.
Let $\left<(M_i,m_i)\right>_{i\leq k}$ be the extended $((N,n),(M,m))$-dropdown.
Let $\vec{E}=\left<E_\alpha\right>_{\alpha<\lambda}$ be a
sequence
which is $(M_i,m_i)$-good for each $i\leq k$,
and $(M_i,m_i+1)$-good for each $i<k$. Let $U_i=\Ult_{m_i}(M_i,\vec{E})$ 
and $M'=U_0$
and $N'=U_k$. 
 Then:
 \begin{enumerate}[label=(\alph*)]\item  $(M',m)\ins(U_i,m_i)\pins(U_{i+1},m_{i+1})\ins (N',n)$
 for each $i<k$,
 and in fact, \item  the extended $((N',n),(M',m))$-dropdown is 
$\left<(U_i,m_i)\right>_{i\leq k}$.\footnote{The proof for almost
the same fact was given in \cite[Lemma 10.3]{fsfni}, but we include
the proof here also for self-containment, and since the fact is very central to our purposes.}
\end{enumerate}
\end{lem}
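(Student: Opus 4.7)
The proof proceeds by induction on $\lh(\vec{E})=\lambda$, reducing the problem to the case of a single extender. For $\lambda$ a limit, both (a) and (b) transfer through the direct limit, since $\pins$ and the strict projectum inequalities that define the dropdown are preserved under the standard direct limit maps, and $n$-standardness is preserved by Lemma \ref{lem:ult_pres_standard}. The successor step reduces immediately to the previous case composed with a single-extender application. So the real work is the single-extender case: given the extended dropdown $\left<(M_i,m_i)\right>_{i\leq k}$ of $(N,n)$ from $(M,m)$ and a single extender $E$ satisfying the hypotheses at each $(M_i,m_i)$, show that $\left<(U_i,m_i)\right>_{i\leq k}$ is the extended dropdown of $(N',n)$ from $(M',m)$, where $U_i=\Ult_{m_i}(M_i,E)$.

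First I would verify (a). For each consecutive pair $(M_i,m_i)\pins(M_{i+1},m_{i+1})$ in the dropdown we have $\rho_{m_{i+1}+1}^{M_{i+1}}<\rho_{m_i+1}^{M_i}$, and either $M_i\pins M_{i+1}$ or $M_i=M_{i+1}$ with $m_i<m_{i+1}$. In both subcases an ultrapower-preservation argument essentially identical to Lemma \ref{lem:Ult_m_seg_Ult_n} — applied to $M_{i+1}$ at degree $m_{i+1}$ and to $M_i$ at degree $m_i$, with the canonical factor map $\pi_i:U_i\to U_{i+1}$ — yields $(U_i,m_i)\pins(U_{i+1},m_{i+1})$, together with the projectum bookkeeping $\rho_{m_j+1}^{U_j}=\sup i^{M_j,m_j}_{E}``\rho_{m_j+1}^{M_j}$ for $j\in\{i,i+1\}$. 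Thus the strict inequality $\rho_{m_{i+1}+1}^{U_{i+1}}<\rho_{m_i+1}^{U_i}$ is inherited from $(M_i,m_i),(M_{i+1},m_{i+1})$, and concatenating over $i<k$ produces the chain $(M',m)\ins(U_i,m_i)\pins(U_{i+1},m_{i+1})\ins(N',n)$ required in (a). Throughout, Lemma \ref{lem:ult_pres_standard} keeps the $n$-standardness needed to apply Lemma \ref{lem:Ult_m_seg_Ult_n}.

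The nontrivial point is (b): verifying that no new dropdown elements appear in $(N',n)$ between consecutive $(U_i,m_i)$ and $(U_{i+1},m_{i+1})$. Suppose toward contradiction that there is $(P,p)$ with $(U_i,m_i)\pins(P,p)\pins(U_{i+1},m_{i+1})$ and $\rho_{p+1}^P<\rho_{m_i+1}^{U_i}$. Since $U_{i+1}$ is $n$-standard, each of its proper initial segments is $(p+1)$-sub-condensing. The plan is to pull $(P,p)$ back through the ultrapower embedding $i^{M_{i+1},m_{i+1}}_E:M_{i+1}\to U_{i+1}$ to a pair $(\bar P,p)$ with $(M_i,m_i)\pins(\bar P,p)\pins(M_{i+1},m_{i+1})$ and $\rho_{p+1}^{\bar P}<\rho_{m_i+1}^{M_i}$, contradicting the minimality of $(M_{i+1},m_{i+1})$ in the original dropdown. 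Concretely, $\bar P$ is obtained as the transitive collapse of the hull in $M_{i+1}$ that generates $P$ under this embedding; the resulting map $\bar\pi:\bar P\to P$ is $\pvec_{p+1}$-preserving and $p$-lifting with $\crit(\bar\pi)\geq\rho_{p+1}^{\bar P}$, and sub-condensing applied inside the next larger segment certifies that $\bar P$ is a proper segment of $M_{i+1}$ at the required degree.

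The main obstacle is this pull-back step, and in particular verifying that the hypotheses of sub-condensing are truly met: that the natural factor is $\pvec_{p+1}$-preserving and that $\rho_{p+1}^{\bar P}<\rho_{p+1}^P$ (or $\bar\pi``\rho_p^{\bar P}$ is bounded, in the relevantly-condensing variant) so that the conclusion $\bar P\pins M_{i+1}$ is actually available. Handling the interaction between the two relevant degrees $m_i$ and $m_{i+1}$, together with the requirement that $\crit(E)$ lies below the projecta at all intermediate degrees simultaneously, is the fine-structural core of the argument; once this is in hand, the minimality of $(\bar P,p)$ relative to the original dropdown yields the contradiction. This is essentially the argument of \cite[Lemma 10.3]{fsfni}, adapted to the present dropdown framework and carried out uniformly for both model-drops and degree-drops at each step.
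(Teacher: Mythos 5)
Your overall skeleton (reduce to a single extender, induct on $\lh(\vec{E})$) and your treatment of part (a) match the paper: the paper also runs the argument of Lemma \ref{lem:Ult_m_seg_Ult_n} on the canonical factor map, using $n$-standardness of the ultrapower (via Lemma \ref{lem:ult_pres_standard}) to invoke relevant-/sub-condensing. One imprecision there: when $M_i\pins M_{i+1}$ the factor map goes $U_i\to i^{M_{i+1}}(M_i)\pins U_{i+1}$, not $U_i\to U_{i+1}$, and there is a separate MS-indexing subcase where $M_i\npins M_{i+1}^{\sq}$ and one must use the Shift Lemma map $\widehat{i^{M_{i+1}}}$; but the substance is the same.

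For part (b), however, your route is different from the paper's and has a genuine gap. You assume a new dropdown element $(P,p)$ with $(U_i,m_i)\pins(P,p)\pins(U_{i+1},m_{i+1})$ and propose to pull it back to some $(\bar P,p)$ with $(M_i,m_i)\pins(\bar P,p)\pins(M_{i+1},m_{i+1})$ via ``the transitive collapse of the hull in $M_{i+1}$ that generates $P$.'' No such hull is available: $P$ is an arbitrary proper segment of the ultrapower $U_{i+1}=\Ult_{m_{i+1}}(M_{i+1},E)$, and ultrapowers do create proper segments that are not of the form $i_E(\bar P)$ or $\sup i_E``\bar P$ for any $\bar P\pins M_{i+1}$; there is no embedding from $U_{i+1}$ back into $M_{i+1}$ from which to derive $\bar P$. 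Moreover the condensation hypotheses packaged in $n$-standardness run in the opposite direction — they take a given map $\bar\pi:\bar P\to M_{i+1}$ and conclude $\bar P\pins M_{i+1}$; they do not manufacture $\bar P$ and $\bar\pi$ from a segment of the ultrapower. The paper avoids all of this with a direct, forward argument: writing $\rho\eqdef\rho_{m_i+1}^{M_i}=\rho_\om^{M_i}$, one has $\rho'\eqdef\rho_{m_i+1}^{U_i}=\sup i_E``\rho$, and $\rho'$ is a \emph{cardinal} of $U_{i+1}$ because $\rho$ is a cardinal of $M_{i+1}$ and, if $\rho=(\gamma^+)^{M_{i+1}}$, then $\rho$ is regular (indeed $\bfrSigma$-regular at the relevant degree) there, so its image-sup stays a cardinal. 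Since for any $P\pins U_{i+1}$ there are no $U_{i+1}$-cardinals in $(\rho_\om^P,\OR^P]$, any $P$ with $U_i\pins P\pins U_{i+1}$ satisfies $\rho_{p+1}^P\geq\rho_\om^P\geq\rho'$, so no intermediate dropdown element can exist. You should replace the contradiction/pull-back step with this cardinality-and-regularity computation (and note the analogous computation at the top end $i+1=k$, distinguishing $M_{k-1}=N$ from $M_{k-1}\pins N$, which your sketch omits).
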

\begin{proof}
If $k=0$ it is trivial so suppose $k>0$ and fix $i<k$. It easily suffices to 
prove the following:
\begin{enumerate}
 \item\label{item:U_i,m_i_pins_U_i+1,m_i+1} $(U_i,m_i)\pins (U_{i+1},m_{i+1})$.
 \item\label{item:dropdown_of_consec_pair} The extended 
$((U_{i+1},m_{i+1}),(U_i,m_i))$-dropdown is 
$\left<(U_i,m_i),(U_{i+1},m_{i+1})\right>$,
 \item\label{item:strict_drop_pres}If $i+1<k$ (so 
$\rho_{m_{i+1}+1}^{M_{i+1}}<\rho_{m_i+1}^{M_i}$) then 
$\rho_{m_{i+1}+1}^{U_{i+1}}<\rho_{m_i+1}^{U_i}$.
 \item\label{item:projecta_at_end}If $k>0$ then either:
 \begin{enumerate}
  \item $M_{k-1}=N$ and $n>m_{k-1}$ and $\rho_n^N=\rho_{m_{k-1}+1}^N$ and 
$\rho_n^{N'}=\rho_{m_{k-1}+1}^{U_{k-1}}$, or
  \item $M_{k-1}\pins N$ and 
$\rho_n^N=\rho_{m_{k-1}+1}^{M_{k-1}}=\rho_\om^{M_{k-1}}$
  and $\rho_n^{N'}=\rho_{m_{k-1}+1}^{U_{k-1}}=\rho_\om^{U_{k-1}}$, or
  \item $M_{k-1}\pins N$ and 
$\rho_n^N>\rho_{m_{k-1}+1}^{M_{k-1}}=\rho_{\om}^{M_{k-1}}$ and
  $\rho_n^{N'}>\rho_{m_{k-1}+1}^{U_{k-1}}=\rho_\om^{U_{k-1}}$.
 \end{enumerate}
\end{enumerate}

We just give the proof assuming that $\lambda=\lh(\vec{E})=1$;
the general case is then a straightforward induction on $\lh(\vec{E})$.
Let $E=E_0$ and $\kappa=\crit(E)$. Note that for each $i<k$, we have
$\kappa<\rho_{m_i+1}^{M_i}$ and $(\kappa^+)^N=(\kappa^+)^{M_i}$,
and also $\kappa<\rho_n^N$.
Write $R=M_i$, $r=m_i$, $S=M_{i+1}$, $s=m_{i+1}$,
$R'=U_i$, and $S'=U_{i+1}$. So $(R,r)\pins(S,s)$.

To start with we prove parts 
\ref{item:U_i,m_i_pins_U_i+1,m_i+1}--\ref{item:strict_drop_pres}
assuming that $i+1<k$.

\begin{case} $R\pins S$ and if we are using MS-indexing then $R\pins S^\sq$.
 
 Let $\rho=\rho_{r+1}^R$. So $\rho=\rho_\om^R$ is a cardinal of $S$
and $\rho_{s+1}^S<\rho\leq\rho_s^S$. Therefore the functions 
$[\kappa]^{<\om}\to\alpha$, for $\alpha<\rho$,
which are used in forming $R'=\Ult_r(R,E)$, are exactly those used in forming 
$S'=\Ult_s(S,E)$.
Let $i^R:R\to R'$ and $i^S:S\to S'$ be the ultrapower maps and
$\pi:R'\to i^S(R)\pins S'$
the natural factor map. Then
like in the proof of Lemma \ref{lem:Ult_m_seg_Ult_n},
\[ \rho_{r+1}^{R'}=\sup i^R``\rho=\sup i^S``\rho\leq\crit(\pi)\]
and
$\rho_{r+1}^{R'}\leq\sup i^S``\rho_s^S=\rho_s^{S'}$.
Moreover, $\rho_{r+1}^{R'}$ is a cardinal of $S'$, because $\rho$ is a cardinal 
of $S$,
and if $\rho=(\gamma^+)^S$ then $\rho$ is regular in $S$. 
Note that either $\pi$ satisfies the requirements for $(r+1)$-relevant
(if $\pi``\rho_r^R$ is bounded in $\rho_r^{i^S(R)}$),
or for $(r+1)$-sub-condensing
(if $\pi``\rho_r^R$ is unbounded in $\rho_r^{i^S(R)}$ but $\rho_{r+1}^R<\rho_{r+1}^{i^S(R)}$),
 or $R'=i^S(R)$ and $\pi=\id$ (if $\pi``\rho_r^R$ is unbounded in $\rho_r^{i^S(R)}$ and $\rho_{r+1}^R=\rho_{r+1}^{i^S(R)}$).
But $S'$ is $0$-standard by \ref{lem:ult_pres_standard}, so $R'\ins i^S(R)\pins S'$.
Since also $\rho_{s+1}^{S}<\rho$ and $\rho_{s+1}^{S'}=\sup i^S``\rho_{s+1}^S$,
we have $\rho_{s+1}^{S'}<\rho_{r+1}^{R'}\leq\rho_s^{S'}$.
So parts \ref{item:U_i,m_i_pins_U_i+1,m_i+1}--\ref{item:strict_drop_pres} for 
this case follow.
\end{case}

\begin{case} $R\pins S$ but we are using MS-indexing and $R\npins S^\sq$.\footnote{This case is
a variant of an observation due to the author from a separate context.}
 
 Argue as in the previous case, replacing $i^S(R)$ (which is not defined as 
$\dom(i^S)=S^\sq$)
 with $\widehat{i^S}(R)$,
 noting that $\rho_{r+1}^{R'}\leq\nu(F^{S'})$, so $R'\pins S'$.
 Here
 \[ \widehat{i^S}:\Ult(S,F^S)\to\Ult(S',F^{S'})
 \]
 is the map induced by $i^S$ via the Shift Lemma.
\end{case}

\begin{case} $R=S$.

So $r<s$, and note that $\rho_{s+1}^S<\rho_s^S=\rho_{r+1}^S$.
Lemmas \ref{lem:Ult_m_seg_Ult_n} gives that $R'\ins S'$, and note that
\begin{equation}\label{eqn:R=S_ult_drop} 
\rho_{s+1}^{U_s}<\rho_s^{U_s}=\rho_{r+1}^{U_r}\eqdef\rho'.
\end{equation}
Note that $\left<(U_r,r),(U_s,s)\right>$
is the extended dropdown of $((U_s,s),(U_r,r))$:
For if $U_r=U_s$ this follows from line (\ref{eqn:R=S_ult_drop}) above;
if $U_r\pins U_s$ it is by line (\ref{eqn:R=S_ult_drop}) and because 
$\rho'=\rho_{r+1}^{U_r}=\rho_\om^{U_r}$ is a cardinal of $U_s$
(if $\rho_s^S=(\gamma^+)^S$ then $\rho_s^S$ is $\bfrSigma_s^S$-regular).
\end{case}

This completes the proof of parts 
\ref{item:U_i,m_i_pins_U_i+1,m_i+1}--\ref{item:strict_drop_pres}
assuming that $i+1<k$.

Now suppose that $k>0$. Suppose $M_{k-1}\pins N$. Then 
$\rho\eqdef\rho_{m_{k-1}+1}^{M_{k-1}}=\rho_\om^{M_{k-1}}$
is an $N$-cardinal. We have $\rho_n^N\geq\rho$, because if $\rho_n^N<\rho$
then $n>0$, and letting $n'$ be least such that $\rho_{n'+1}^N<\rho$,
then $(N,n')$ should have been in the dropdown sequence, a contradiction.

Suppose instead that $M_{k-1}=N$. Then by definition, $m_{k-1}<n$, so 
$\rho_n^N\leq\rho_{m_{k-1}+1}^N$.
But if $\rho_n^N<\rho_{m_{k-1}+1}^N$ then again, there should have been another 
element in the dropdown sequence.
So $\rho_n^N=\rho_{m_{k-1}+1}^N$.

Using these observations, one proceeds as before to establish parts 
\ref{item:U_i,m_i_pins_U_i+1,m_i+1},
\ref{item:dropdown_of_consec_pair} and \ref{item:projecta_at_end} for the case 
that $i+1=k$.
\end{proof}

\begin{rem}
 Note that in the context above,  if $M_i\pins M_{i+1}$ then $U_i\pins U_{i+1}$,
 but it is possible that $M_i=M_{i+1}$ and $U_i\pins  U_{i+1}$.
\end{rem}

\subsection{Ultrapower commutativity}

Essentially the following lemma was shown in \cite[\S3]{premouse_inheriting},
and there were related facts in \cite{mim} and \cite[\S2]{extmax}.
We discuss it in detail here though, in order to prepare for a generalization
which we need.

\begin{lem}[Extender commutativity]\label{lem:extender_comm}
 See Figure \ref{fgr:extender_comm}. Let $M$ be $m$-sound and $P$ be active.
 Let
 $G=F^P$ and $\kappa=\crit(G)$. Suppose
 $M||(\kappa^+)^M=P|(\kappa^+)^P$
 and $\kappa<\rho_m^M$. Suppose either $(\kappa^+)^M<\OR^M$
 or $M$ is active
 and $\kappa<\wt{\nu}(M)$.\footnote{This assumption
 is not so important, but will always hold where we use the lemma.} Let 
$U=\Ult_m(M,G)$
and suppose $U$ is wellfounded. 
 
 Let $\vec{E}$ be $(M,m)$-good, $(P,0)$-good
 and $\kappa$-bounded.
  Let
  \[ M_{\cd}=\Ult_m(M,\vec{E})\text{ and 
}P_\cd=\Ult_0(P,\vec{E})\text{ and }
  G_\cd=F^{P_\cd}, \]
so   \[ \kappa_{\cd}\eqdef 
i^{M,m}_{\vec{E}}(\kappa)=i^{P,0}_{\vec{E}}(\kappa)=\crit(G_\cd)<
\min(\rho_m^{M_{\cd}},\rho_0^{P_{\cd}}). \]
If $\vec{E}$ is $(U,m)$-pre-good, also let
$U_{\cd}=\Ult_m(U,\vec{E})$.

  Let $\vec{F}$ be $(P_\cd,0)$-good with
$\kappa_{\cd}<\crit(\vec{F})$.
Let $\vec{D}=\vec{E}\conc\vec{F}$. Let
\[ P^\cd=\Ult_0(P,\vec{E}\conc\vec{F})\text{ and }
G^\cd=F^{P^\cd}.\]
Let 
$\kappa^{\cd}=\crit(G^{\cd})=\crit(G_{\cd})=\kappa_{
\cd}$.

If $\vec{E}\conc\vec{F}$ is $(U,m)$-pre-good, also let
$U^{\cd}=\Ult_m(U,\vec{E}\conc\vec{F})=\Ult_m(U_{\cd},\vec{F})$.

Let
 \[ \wt{U}_{\cd}=\Ult_m(M_{\cd},G_\cd)\text{ and 
}\wt{U}^{\cd}=\Ult_m(M_{\cd},G^\cd)\]
\tu{(}see part \ref{item:M',N'_agmt} below\tu{)},
and suppose $\wt{U}^{\cd}$ is 
wellfounded.  Then:
 \begin{enumerate}
   \item $\vec{E}\conc\vec{F}$ is $(U,m)$-good.
   \item\label{item:M',N'_agmt} $M_{\cd}||
   \kappa_{\cd}^{+{M_{\cd}}}
=P_\cd|\kappa_{\cd}^{+P_\cd}
=P^\cd|(\kappa^{\cd})^{+P^\cd}$
   (so $\wt{U}_{\cd}$ and $\wt{U}^{\cd}$
   are well-defined),
  \item $U_{\cd}=\wt{U}_{\cd}$ and 
$U^\cd=\wt{U}^{\cd}$.
  \item\label{item:maps_commute} The various ultrapower maps commute,
  as indicated in Figure \ref{fgr:extender_comm}; that is,
  \[ i^{U,m}_{\vec{E}\conc\vec{F}}\com 
i^{M,m}_G=
i^{\wt{U}_{\cd},m}_{\vec{F}}\com 
i^{M_{\cd},m}_{G_\cd}\com i^{M,m}_{\vec{E}}=
i^{M_{\cd},m}_{G^\cd}\com 
i^{M,m}_{\vec{E}}.\]
   \item\label{item:ult_map_is_SL_map} The hypotheses for the Shift Lemma hold
with respect to $(M,P)$,
$(M_{\cd},P^\cd)$,
   and the maps
   \[ i^{M,m}_{\vec{E}}:M\to M_{\cd}\text{ and
    }i^{P,0}_{\vec{E}\conc\vec{F}}:P\to P^\cd.\]
    Moreover, $i^{U,m}_{\vec{E}\conc\vec{F}}$ is just the Shift Lemma map.
 \end{enumerate}
\end{lem}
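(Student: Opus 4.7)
The plan is to proceed by induction on $\lh(\vec{E}) + \lh(\vec{F})$, ultimately reducing everything to two single-extender instances of the Shift Lemma (as in \cite[3.13--3.20]{mim} or \cite[\S3]{premouse_inheriting}). First I would dispose of the agreement assertion (2) directly. Since $\vec{E}$ is $\kappa$-bounded, a straightforward induction along $\vec{E}$ shows that $i^{M,m}_{\vec{E}}\rest M|(\kappa^+)^M$ and $i^{P,0}_{\vec{E}}\rest P|(\kappa^+)^P$ coincide (after identifying $M|(\kappa^+)^M = P|(\kappa^+)^P$), and both send this common structure into the initial segment of height $(\kappa_\cd^+)$ of each of $M_\cd$ and $P_\cd$. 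Since $\crit(\vec{F}) > \kappa_\cd$, the map $i^{P_\cd,0}_{\vec{F}}$ fixes $P_\cd|(\kappa_\cd^+)^{P_\cd}$ pointwise, so $P_\cd|(\kappa_\cd^+)^{P_\cd} = P^\cd|(\kappa^\cd)^{+P^\cd}$; combining these, (2) follows.

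For the identifications in (3) and the goodness claim in (1), I would separately handle the $\vec{E}$- and $\vec{F}$-portions. For a single $E_\alpha$ in $\vec{E}$, write $R_\alpha = \Ult_m(M,\vec{E}\rest\alpha)$ and $Q_\alpha = \Ult_0(P,\vec{E}\rest\alpha)$ with $G_\alpha = F^{Q_\alpha}$; the inductively maintained agreement between $R_\alpha$ and $Q_\alpha$ at $(\crit(E_\alpha)^+)$, together with $\crit(E_\alpha)<\min(\rho_m^{R_\alpha},\nutilde(R_\alpha))$ (supplied by $\kappa$-boundedness and the hypothesis on $M$), lets the Shift Lemma apply: the ultrapower map $i^{\Ult_m(R_\alpha,G_\alpha),m}_{E_\alpha}$ is the Shift Lemma map of the pair $(R_\alpha,Q_\alpha)$ via $E_\alpha$, and
\[ \Ult_m(\Ult_m(R_\alpha,G_\alpha),E_\alpha) = \Ult_m(R_{\alpha+1},G_{\alpha+1}). \]
Propagating through $\alpha<\lh(\vec{E})$ (taking direct limits at limits) yields $U_\cd = \wt{U}_\cd$ and the commutativity of the left subdiagram. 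For $\vec{F}$, one instead applies the Shift Lemma ``in the other direction'': since $\crit(\vec{F})>\kappa_\cd$ and $M_\cd$, $P_\cd$ agree past $\kappa_\cd^+$, a single-extender Shift Lemma argument iterated along $\vec{F}$ gives $U^\cd = \Ult_m(U_\cd,\vec{F}) = \Ult_m(M_\cd, G^\cd) = \wt{U}^\cd$. The assumed wellfoundedness of $\wt{U}^\cd$ then yields (1).

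Item (4) is obtained by pasting together the commutative squares supplied by the two inductive arguments, and (5) follows by uniqueness: the iterated Shift Lemma construction produces at the end exactly an $m$-lifting map from $U$ to $\wt{U}^\cd = U^\cd$ that agrees with $i^{U,m}_{\vec{E}\conc\vec{F}}$ on the generators, so the two maps are equal.

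The main obstacle I anticipate is the bookkeeping at each stage of the $\vec{E}$-iteration rather than any single algebraic step: one needs to verify that the Shift Lemma hypotheses (agreement of $R_\alpha$ with $Q_\alpha$ below $\crit(E_\alpha)^+$, preservation of the relevant $\rho_m$ and $\nutilde$) persist through both ultrapower stages and direct limits, and, for MS-indexing, that the ``squash'' issues that arose in Lemma \ref{lem:ult_dropdown} do not cause the copy of $G$ into $P_\cd$ to fall in the wrong place on the sequence. Once the configuration is set up so that at every stage $\vec{E}\rest\alpha$ and $\vec{F}$ only move extenders whose critical points sit in the correct range, the algebraic content is entirely the classical commutativity of extender ultrapowers.
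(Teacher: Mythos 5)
Your proposal is correct in outline, but it takes a genuinely different route from the paper. You decompose $\vec{D}=\vec{E}\conc\vec{F}$ and induct extender-by-extender, invoking at each step a single-extender exactness fact; note that these are really \emph{two different} single-extender lemmas: for the $\vec{F}$-part you use exactly the fact quoted in the introduction (apply $G_\cd$, then $F$ with $\crit(G_\cd)<\crit(F)<\nutilde(G_\cd)$, equals applying the $F$-image of $G_\cd$ directly), whereas for the $\vec{E}$-part (where $\crit(E_\alpha)\leq\crit(G_\alpha)$) you need the dual statement that the Shift Lemma copy of $\Ult_m(R_\alpha,G_\alpha)$ along the ultrapower maps by $E_\alpha$ is \emph{literally} $\Ult_m(R_{\alpha+1},G_{\alpha+1})$ with the copy map equal to $i_{E_\alpha}$. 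Both are standard, but the second is not the one cited in the introduction, and you still owe the limit stages of both inductions (identifying the direct limit of the $\Ult_m(R_\alpha,G_\alpha)$ with $\Ult_m(R_\lambda,G_\lambda)$ via a generators argument). The paper instead argues globally: after propagating by induction the agreement $(P_\beta)^{\passive}\pins_{\card}U_\beta$ (or equality in the superstrong case) and the agreement of the two ultrapower maps over $P$ — which is essentially your part-(2) computation — it derives two long $M$-extenders $H$ and $H'$, each of length $\nutilde(G^\cd)$, from the composite maps $i^{M_\cd,m}_{G^\cd}\com i^{M,m}_{\vec{E}}$ and $i^{U,m}_{\vec{D}}\com i^{M,m}_G$, and proves $H=H'$ by evaluating both on an arbitrary $A\in\pow([\kappa]^{<\om})\inter M$; the remaining parts then fall out by specializing $\vec{E}=\emptyset$ or $\vec{F}=\emptyset$. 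Your approach buys reducibility to citable classical facts at the cost of stagewise bookkeeping (persistence of the Shift Lemma hypotheses, limit stages); the paper's derived-extender comparison handles limits for free and proves the core identity in one shot. Either is acceptable, but if you pursue yours, the limit-stage argument and the $\vec{E}$-direction single-extender fact should be written out rather than waved at.
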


\begin{figure}
\centering
\begin{tikzpicture}
 [mymatrix/.style={
    matrix of math nodes,
    row sep=1.6cm,
    column sep=1.2cm}
  ]
   \matrix(m)[mymatrix]{
 U^{\cd}=\wt{U}^{\cd}&{}&P^\cd\\
 U_{\cd}=\wt{U}_{\cd} &M_{\cd}  &P_\cd   \\
U         &M       & P \\
};

\path[->,font=\scriptsize]
(m-3-1) edge node[left] {$\vec{E},m$} (m-2-1)
(m-2-1) edge node[left] {$\vec{F},m$} (m-1-1)
(m-3-2) edge node[below] {$G,m$} (m-3-1)
(m-3-2) edge node[left] {$\vec{E},m$} (m-2-2)
(m-2-2) edge node[below] {$G_\cd,m$} (m-2-1)
(m-2-2) edge node[right,pos=0.6] {$\ \ G^\cd,m$} (m-1-1)
(m-3-3) edge node[right] {$\vec{E},0$} (m-2-3)
(m-2-3) edge node[right] {$\vec{F},0$} (m-1-3)
(m-3-1) edge[bend left=65] node[left] {$\vec{D},m$} (m-1-1)
(m-3-3) edge[bend left] node[left] {$\vec{D},0$} (m-1-3)
;
\end{tikzpicture}
\caption{Extender commutativity. The diagrams commute,
where $\vec{D}=\vec{E}\conc\vec{F}$,
and a label $\vec{C},k$ denotes
a degree $k$ abstract iteration map given by $\vec{C}$.} 
\label{fgr:extender_comm}
\end{figure}
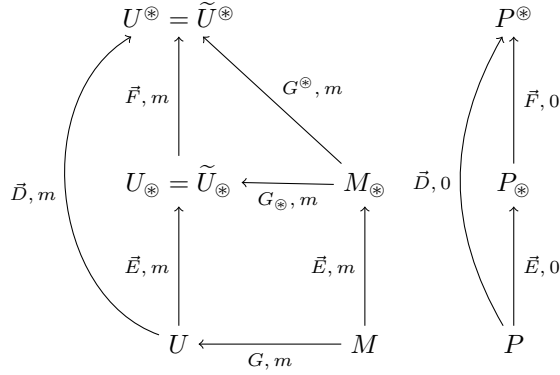

\begin{proof}
Let $\vec{D}=\vec{E}\conc\vec{F}$.

Since $G=F^P$, we have
$\OR^P\leq\rho_m^U$ and either
\begin{enumerate}[label=--]
 \item $P^\passive\pins_{\card}U$, or
\item $P^\passive=U^\passive$,
$F^P$ is of superstrong type
and $M$ is MS-indexed type 2 with largest cardinal $\kappa$.
\end{enumerate}

Write
$P_\alpha=\Ult_0(P,\vec{D}\rest\alpha)$
and  $U_\alpha=\Ult_m(U,\vec{D}\rest\alpha)$, where
$\alpha$ is is large as possible
that $\vec{D}\rest\alpha$ is $(U,m)$-pre-good.
Since $\vec{D}$ is $(P,0)$-good,
we therefore get by induction on $\beta\leq\alpha$
that  $\OR^{P_\beta}\leq\rho_m^{U_\beta}$ and either
\begin{enumerate}[label=--]
 \item $(P_\beta)^\passive\pins_{\card}U_\beta$, with $\OR^{P_\beta}$
 in the 
wellfounded part of $U_\beta$,
or
\item $(P_\beta)^\passive=(U_\beta)^\passive$, which is wellfounded,
\end{enumerate}
and the ultrapower maps agree over $P$,
or over $P^\sq$ if $P$ is MS-indexed type 3.
So either $\vec{D}$ is 
$(U,m)$-good,
 $\vec{D}\rest\alpha$ is $(U,m)$-pre-good
but $U_\alpha$ is illfounded.
So renaming, we may assume that $\alpha=\lh(\vec{D})$, 
so $\vec{D}$ is $(U,m)$-pre-good
and $P_\alpha=P^{\cd}$
and $U_\alpha=U^{\cd}$.

Let $H$ be the (long) $M$-extender measuring $\pow(\kappa)\inter M$,
derived from
\[ j= i^{M_{\cd},m}_{G^\cd}\com i^{M,m}_{\vec{E}}:M\to\wt{U}^{\cd}, \]
of length $\nutilde(G^\cd)$. Clearly $\wt{U}^{\cd}=\Ult_m(M,H)$
and $j=i^{M,m}_{H}$.

Let $H'$ be the (long) $M$-extender measuring $\pow(\kappa)\inter M$,
derived from
\[j'\eqdef i^{U,m}_{\vec{D}}\com i^{M,m}_G:M\to U^{\cd}, \]
of length $\nutilde(G^\cd)$. Let
$\sigma:\Ult_m(M,H')\to U^{\cd}$
be the standard factor map. Then $\Ult_m(M,H')=U^{\cd}$
and $\sigma=\id$,
because $G$ is generated by $\nutilde(G)$ and
$\vec{D}$ is ${<\nutilde(G)}$-bounded and
$H'$ has length $\nutilde(G^\cd)$, and
\[ \nutilde(G^\cd)=\sup i^{P,0}_{\vec{D}}``\nutilde(G)=
 \sup i^{U,m}_{\vec{D}}``\nutilde(G).
\]
Therefore $j'=i^{M,m}_{H'}$.

\begin{clm} $H'=H$.\end{clm}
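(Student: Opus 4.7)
The plan is to verify, for $a\in[\nutilde(G^\cd)]^{<\om}$ and $A\in\pow([\kappa]^{|a|})\cap M$, that $(a,A)\in H\iff(a,A)\in H'$, i.e.\ that $a\in j(A)\iff a\in j'(A)$. By the common-segment agreement $M||(\kappa^+)^M=P|(\kappa^+)^P$, the weak amenability of each $E_\alpha$, and the $\kappa$-boundedness of $\vec{E}$, the map $i^{M,m}_{\vec{E}}$ agrees with $i^{P,0}_{\vec{E}}$ on all subsets of $[\kappa]^{<\om}$ lying in $M$ that arise through the ultrapower construction; write $A_\cd$ for their common image. Since $\crit(\vec{F})>\kappa_\cd$ and $A_\cd\sub[\kappa_\cd]^{|a|}$, we have $i^{P_\cd,0}_{\vec{F}}(A_\cd)=A_\cd$ as a set, so in particular $i^{P,0}_{\vec{D}}(A)=A_\cd$.

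The $H$-side then unpacks directly: $a\in j(A)=i^{M_\cd,m}_{G^\cd}(A_\cd)\iff A_\cd\in(G^\cd)_a$, using part \ref{item:M',N'_agmt} to interpret $A_\cd$ as a subset lying in $P^\cd|(\kappa^\cd)^{+P^\cd}$, where $(G^\cd)_a$ denotes the $a$-th measure of the short extender $G^\cd=F^{P^\cd}$. For the $H'$-side I would interpret $i^{M,m}_G(A)$ via the short-extender measures of $G=F^P$ and then track how the $\vec{D}$-ultrapower on $U$ transports these measures to the $G^\cd$-measures on $P^\cd$. The target is to establish $a\in i^{U,m}_{\vec{D}}(i^{M,m}_G(A))\iff A_\cd\in(G^\cd)_a$, which combined with the $H$-side yields $H=H'$.

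The main obstacle is precisely this transport identification: showing that the action of $i^{U,m}_{\vec{D}}$ on a $G$-measure of $A$ (inside $U$) produces exactly the corresponding $G^\cd$-measure of $A_\cd$ (inside $M_\cd$). I would proceed by induction on $\lh(\vec{D})$, with the base case $\lh(\vec{D})=1$ being the single-extender commutativity essentially established in \cite[\S3]{premouse_inheriting} and \cite{mim}. In the inductive step one splits $\vec{D}$ at an intermediate stage, applies the hypothesis to each factor, and combines via the natural factorization of $G^\cd$ as the pushforward of $G$ through successive ultrapowers, using inductively the common-initial-segment agreement provided by part \ref{item:M',N'_agmt} and the relevant boundedness of critical points to ensure that each stage preserves the generators under consideration.
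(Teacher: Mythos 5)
You set the problem up correctly, and the parts you actually prove are fine: the identification $i^{M,m}_{\vec E}(A)=i^{P,0}_{\vec E}(A)=A_\cd$ (from $M||(\kappa^+)^M=P|(\kappa^+)^P$ together with $\kappa$-boundedness of $\vec E$) and the fixity $i^{P_\cd,0}_{\vec F}(A_\cd)=A_\cd$ (from $\crit(\vec F)>\kappa_\cd$) are exactly the bookkeeping the paper also does. But the step you yourself flag as ``the main obstacle'' --- that applying $i^{U,m}_{\vec D}$ to the $G$-measures of $A$ yields the $G^\cd$-measures of $A_\cd$ --- \emph{is} the content of the claim, and you leave it as a plan (``I would proceed by induction on $\lh(\vec D)$\dots'') rather than a proof. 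As written the proposal is therefore incomplete at its central point.

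The missing ingredient, which turns the claim into a short direct computation rather than a fresh induction, is the agreement $i^{U,m}_{\vec D}\rest P=i^{P,0}_{\vec D}\rest P$ (over $P^\sq$ in the MS type 3 case), which the lemma's proof establishes just before the claim via the induction showing $(P_\beta)^\passive\pins_{\card}U_\beta$ at every stage. Since $i_G(A)\cap\nutilde(G)$ is an element of $P$, and $\nutilde(G^\cd)=\sup i^{U,m}_{\vec D}``\nutilde(G)$ because $\vec D$ is ${<\nutilde(G)}$-bounded, one gets
\[ j'(A)\cap\nutilde(G^\cd)=i^{U,m}_{\vec D}\big(i_G(A)\cap\nutilde(G)\big)=i^{P,0}_{\vec D}\big(i_G(A)\cap\nutilde(G)\big)=i_{G^\cd}\big(i^{P,0}_{\vec D}(A)\big)\cap\nutilde(G^\cd), \]
the last equality being precisely the definition of how $F^P$ shifts to $F^{P^\cd}$ under the $P$-side ultrapower; combined with your computation $i^{P,0}_{\vec D}(A)=A_\cd=i^{M,m}_{\vec E}(A)$ this equals $j(A)\cap\nutilde(G^\cd)$. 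Your proposed induction on $\lh(\vec D)$ would presumably also succeed --- the base case is indeed in the cited sources --- but it requires a limit-stage (direct limit) argument and a precise inductive hypothesis at the intermediate models, none of which you supply; and without passing to the $P$-side via the agreement above, you would be reproving on the $U$-side something that is definitional on the $P$-side.
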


\begin{proof}
Let $A\in\pow([\kappa]^{<\om})\inter M$.
For ease of reading we assume that $A\in\pow(\kappa)$
and that $P$ is not MS-indexed type 3
(hence $\nutilde(G^\cd)=i^{P,0}(\nutilde(G))$), but the other cases
are simple variants. 
We want to see
\[ j'(A)\inter\nutilde(G^\cd)=j(A)\inter\nutilde(G^\cd). \]
But 
\[ 
j'(A)\inter\nutilde(G^\cd)=i^{U,m}_{\vec{D}}
(i_G(A)\inter\nutilde(G))  = i^{P,0}_{\vec{D}}(i_G(A)\inter\nutilde(G)) \]
(the second equality as $i_G(A)\inter\nutilde(G)\in P$, over which 
$i^{U,m}_{\vec{D}}$ agrees with $i^{P,0}_{\vec{D}}$)
\[ = i_{G^\cd}(i^{P,0}_{\vec{D}}(A))\inter\nutilde(G^\cd) \]
(by definition of how $F^P$ shifts to $F^{P^\cd}$ under ultrapowers)
\[ = i_{G^\cd}(i^{P,0}_{\vec{E}}(A))\inter\nutilde(G^\cd)\]
(since $i^{P,0}_{\vec{D}}(A)=i^{P,0}_{\vec{E}}(A)$, since 
$\crit(\vec{F})>\kappa^{\cd}$)
\[ = i^{M_{\cd},m}_{G^\cd}(i^{M,m}_{\vec{E}}(A))\inter\nutilde(G^\cd) = 
j(A)\inter\nutilde(G^\cd)\]
(by agreement of ultrapower maps),
as desired.
\end{proof}

By the claim, $U^{\cd}=\wt{U}^{\cd}$ and the corresponding ultrapower maps 
commute.
The rest of parts \ref{item:M',N'_agmt}--\ref{item:maps_commute} follow from 
this, by considering the special cases
that either $\vec{F}=\emptyset$ or $\vec{E}=\emptyset$.

Part \ref{item:ult_map_is_SL_map} follows from the 
commutativity and agreement between $i^{U,m}_{\vec{D}}$
and $i^{P,0}_{\vec{D}}$, and by the elementarity of the maps
(it is also like in \cite[Lemma 4.20]{iter_for_stacks}).
\end{proof}

We next want to generalize the preceding lemma to deal with the case
of a (normal) sequence $\vec{G}$ of extenders, instead of just a single extender 
$G$.

\begin{dfn}
 Let $P$ be an active premouse and $\vec{F}$ be a sequence of extenders
 which is $(P,0)$-good. Let $P_\eta=\Ult_0(P,\vec{F}\rest\eta)$.
 We say that $\vec{F}$ is:
 \begin{enumerate}[label=--]
  \item  \emph{$(P,0)$-strictly-$\nutilde$-bounded} iff
 ${<\nutilde(P)}$-bounded,
 \item
 \emph{$(P,0)$-critical-bounded}
 iff $\crit(F^P)$-bounded.
\end{enumerate}

 Let $\vec{P}=\left<P_\alpha\right>_{\alpha<\lambda}$ be a sequence of active 
premice.
Say $\vec{P}$ and $\left<F^{P_\alpha}\right>_{\alpha<\lambda}$
 are \emph{normal} iff $\nutilde(P_\alpha)\leq\crit(F^{P_\beta})$
 and $(P_\alpha)^\passive\pins_{\card} P_\beta$
 for $\alpha<\beta<\lambda$.
\end{dfn}

\begin{dfn}\label{dfn:G*F}
 Let $\left<Q_\alpha\right>_{\alpha<\lambda}$
 be a normal sequence of active premice.
  Let $G_\alpha=F^{Q_\alpha}$
  and $\vec{G}=\left<G_\alpha\right>_{\alpha<\lambda}$.
  Let $\left<P_\alpha,F_\alpha\right>_{\alpha<\theta}$ and $\vec{F}$ be 
likewise.
 
 Let $\alpha<\lambda$. Let $\eta_\alpha$ be the largest $\eta\leq\theta$
 such that $\vec{F}\rest\eta$ is
 $(Q_\alpha,0)$-pre-good
 and ${<\nutilde(Q_\alpha)}$-bounded.
 Suppose that $\vec{F}\rest\eta_\alpha$ is $(Q_\alpha,0)$-good.
 Let $\xi_\alpha$ be the largest $\xi\leq\theta$ such that
 $\vec{F}\rest\xi$ is $(Q_\alpha,0)$-pre-good
 and $\crit(F^{Q_\alpha})$-bounded.
 Note that $\xi_\alpha\leq\eta_\alpha$,
 so $\vec{F}\rest\xi_\alpha$ is also $(Q_\alpha,0)$-good.
By normality, $\eta_\beta\leq\xi_\alpha$ for $\beta<\alpha$.

 Write $Q^\cd_\alpha=\Ult_0(Q_\alpha,\vec{F}\rest\eta_\alpha)$ and 
$G^\cd_\alpha=F^{Q^\cd_\alpha}$.
 Given $\beta<\theta$, say that $F_\beta$ is \emph{nested} (with respect to this $\cd$-product) iff 
$\xi_\alpha\leq\beta<\eta_\alpha$ for some $\alpha<\theta$;
 and \emph{unnested} otherwise.
  Then the $\cd$-product $\vec{G}\cd\vec{F}$ denotes the enumeration of
 \[ X=\{G^\cd_\alpha\}_{\alpha<\lambda}\cup\{F_\alpha\mid\alpha<\theta\text{ and 
}F_\alpha\text{ is unnested}\} \]
 in order of increasing critical point.
 And $\vec{Q}\cd\vec{P}$ denotes the corresponding enumeration of
 \[ \{Q^\cd_\alpha\}_{\alpha<\lambda}\cup\{P_\alpha\mid\alpha<\theta\text{ and 
}F_\alpha\text{ is unnested}\}.\]
 
 In this context, we also write 
$Q_{\alpha\cd}=\Ult_0(Q_\alpha,\vec{F}\rest\xi_\alpha)$
 and $G_{\alpha\cd}=F^{Q_{\alpha\cd}}$.
\end{dfn}
\begin{lem}
 Adopt the hypotheses and notation of \ref{dfn:G*F}. Let 
 $M$ be $m$-sound.
 Suppose that $\vec{G}$ is $(M,m)$-pre-good and
 $\vec{G}\cd\vec{F}$ is $(M,m)$-good.
 Let $U=\Ult_m(M,\vec{G})$.
Then:
 \begin{enumerate}
  \item\label{item:crits_disagree} If $E,F\in X$ then $\crit(E)\neq\crit(F)$, so 
the ordering of $\vec{G}\cd\vec{F}$ is well-defined.
  \item\label{item:seqs_are_normal} $\vec{G}\cd\vec{F}$ and $\vec{Q}\cd\vec{P}$ 
are normal sequences.
  \item\label{item:*_matches_comp} $\vec{G}\cd\vec{F}$ is equivalent to 
$\vec{G}\conc\vec{F}$; that is,
 $\vec{G}$ is $(M,m)$-good, $\vec{F}$ is $(U,m)$-good,
  \[ \Ult_m(M,\vec{G}\conc\vec{F})=\Ult_m(M,\vec{G}\cd\vec{F}) \]
  and the associated ultrapower maps \tu{(}and hence derived extenders\tu{)} 
agree.
 \end{enumerate}
\end{lem}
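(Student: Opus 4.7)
The plan is to establish parts \ref{item:crits_disagree} and \ref{item:seqs_are_normal} first from the definitions and the normality hypotheses on $\vec{G}$ and $\vec{F}$, and then prove part \ref{item:*_matches_comp} by induction on $\lambda=\lh(\vec{G})$, with Lemma \ref{lem:extender_comm} driving the successor step.

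For parts \ref{item:crits_disagree} and \ref{item:seqs_are_normal}, the normality inequality $\eta_\beta\leq\xi_\alpha$ for $\beta<\alpha$ partitions the indices $\gamma<\theta$ into the ``nested'' intervals $[\xi_\alpha,\eta_\alpha)$ and their complement. The critical point of $G^\cd_\alpha$ equals $i^{Q_\alpha,0}_{\vec{F}\rest\eta_\alpha}(\crit(G_\alpha))$, and in fact equals $i^{Q_\alpha,0}_{\vec{F}\rest\xi_\alpha}(\crit(G_\alpha))$ since by the choice of $\xi_\alpha$ the nested portion of $\vec{F}$ has critical points strictly above this image. I would then check that this ordinal strictly dominates the critical points of the unnested $F_\gamma$ for $\gamma<\xi_\alpha$ (which are bounded by $\crit(G_\alpha)$'s image), and is strictly less than $\nutilde(Q^\cd_\alpha)$, hence less than $\crit(F_\gamma)$ for unnested $\gamma\geq\eta_\alpha$ and also less than $\crit(G^\cd_{\alpha+1})$ by normality of $\vec{G}$. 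This yields distinct critical points in $X$ and shows the enumeration of $\vec{G}\cd\vec{F}$ is well defined. Normality of $\vec{G}\cd\vec{F}$ and $\vec{Q}\cd\vec{P}$ then follows by tracking $\pins_{\card}$ inheritance under ultrapowers, invoking Lemma \ref{lem:ult_dropdown} where needed for preservation of cardinal/dropdown structure.

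For part \ref{item:*_matches_comp}, I induct on $\lambda$. The case $\lambda=0$ is trivial, and $\lambda=1$ essentially reduces to Lemma \ref{lem:extender_comm}: apply it with ``$G$''$=G_0$, ``$\vec{E}$''$=\vec{F}\rest\xi_0$, and ``$\vec{F}$''$=\vec{F}\rest[\xi_0,\eta_0)$, and then compose with the tail $\vec{F}\rest[\eta_0,\theta)$, which does not interact with $G_0$ by the choice of $\eta_0$. For the successor step from $\lambda$ to $\lambda+1$, let $\vec{G}'=\vec{G}\rest\lambda$; apply the inductive hypothesis to $\vec{G}'$ to obtain $U'=\Ult_m(M,\vec{G}')$ matched with the ultrapower by $\vec{G}'\cd(\vec{F}\rest\xi_\lambda)$, and then apply Lemma \ref{lem:extender_comm} with $U'$ in the role of ``$M$'', the $\vec{G}'$-image of $Q_\lambda$ in the role of ``$P$'', $G_\lambda$ (appropriately shifted) in the role of ``$G$'', and the relevant suffixes of $\vec{F}$ in the roles of ``$\vec{E}$'' and ``$\vec{F}$'' of that lemma. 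The agreement hypothesis ``$M\|(\kappa^+)^M=P|(\kappa^+)^P$'' required there is furnished by the normality of $\vec{G}$ together with agreement preservation through the $\vec{G}'$-iteration. At limit $\lambda$, use that iteration by a sequence is defined as a direct limit, applying the successor case cofinally.

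The main obstacle I anticipate is the bookkeeping in the limit stages and at boundary configurations where $\xi_\alpha=\eta_\alpha$ (no $F_\gamma$ is nested inside $G_\alpha$) or $\eta_\alpha=\xi_{\alpha+1}$ (no unnested $F_\gamma$ separates stages $\alpha$ and $\alpha+1$); in these cases one must verify that no extender is omitted or double-counted in the enumeration of $\vec{G}\cd\vec{F}$. All genuine commutativity content, however, is already housed in Lemma \ref{lem:extender_comm}; what remains is to propagate it carefully through the transfinite induction and track the interleaving of critical points.
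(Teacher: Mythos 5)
Your proposal is correct and takes essentially the same route as the paper: parts (1) and (2) are treated as routine there too, and part (3) is proved by the same induction along $\vec{G}$, applying Lemma \ref{lem:extender_comm} at each successor step (to the square with corners $M_\eps$, $M_{\eps+1}$, $\bar{U}_{\eps+1}$, $U_{\eps+1}$) and taking direct limits with a commutativity check at limit stages. The only quibble is that $Q_\lambda$ needs no ``$\vec{G}'$-image'': by the definition of $(M,m)$-pre-goodness it is already the active premouse whose top extender is applied to $U'=M_\lambda$, and weak amenability supplies the agreement hypothesis of Lemma \ref{lem:extender_comm} directly.
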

\begin{proof}\footnote{The notation in the proof does not match well with the previous lemma;
this will be remedied in a future version.}
Parts \ref{item:crits_disagree} and \ref{item:seqs_are_normal} are routine.
Consider part \ref{item:*_matches_comp}. Its proof is basically a verification 
that the diagram in Figure \ref{fgr:iterated_extender_comm} commutes.
Note that in the diagram, all arrows labelled with extenders or sequences 
thereof
correspond to degree $m$ ultrapowers by those extenders.
In the diagram and in what follows, given a sequence $\vec{E}$ of extenders, we 
write $\vec{E}_{[\alpha,\beta)}$
for $\vec{E}\rest[\alpha,\beta)$.
The maps $\sigma_{\alpha\beta}$
displayed in the diagram are the natural factor maps
between degree $m$ ultrapowers of $M$ by certain natural segments of 
$\vec{G}\cd\vec{F}$,
and will be specified below.
The reader will then happily verify that $U$, in the top right corner of the 
diagram,
 is just $\Ult_m(M,\vec{G}\cd\vec{F})$,
with its ultrapower map derived along the main diagonal of the diagram
(passing from $M$ to $U$); and also that $U$ is just 
$\Ult_m(M,\vec{G}\conc\vec{F})$,
with its ultrapower map derived from the composition
of the maps along the bottom and right side. So this will complete the proof.

\begin{figure}
\centering
\begin{tikzpicture}
 [mymatrix/.style={
    matrix of math nodes,
    row sep=0.5cm,
    column sep=0.7cm}
  ]
   \matrix(m)[mymatrix]{
       {}&              {}&                          {}&                        
{}&                      {}&  {}&                          {}&  U\\
       {}&              {}&                          {}&                       
{}&                      {}&  {}&                          {}&  U_\lambda\\
       {}&              {}&                          {}&                        
{}&                      {}&  {}&                    U_\delta&  {}\\
       {}&              {}&                          {}&                       
{}&                      {}&  {}&                          {}&  {}\\   
       {}&              {}&                          {}&                        
{}&              U_{\eps+1}&  {}&                          {}&  {}\\
       {}&              {}&                          {}&           
\bar{U}_{\eps+1}&         \wt{U}_{\eps+1}&  {}&                          {}&  
{}\\
       {}&              {}&                          {}&                     
U_\eps&                      {}&  {}&                          {}&  {}\\ 
       {}&              {}&                          {}&                       
{}&                      {}&  {}&                          {}&  {}\\
       {}&             U_1&                          {}&                        
{}&                      {}&  {}&                          {}&  {}\\
\bar{U}_1&        \wt{U}_1&                          {}&                       
{}&                      {}&  {}&                          {}&  {}\\
        M&             M_1&                          {}&                     
M_\eps&              M_{\eps+1}&  {}&                    M_\delta&  M_\lambda\\
};
\path[->,font=\scriptsize]
(m-11-1) edge node[below] {$G_0$} (m-11-2)
(m-11-2) edge node[below] {$\vec{G}_{[1,\eps)}$}(m-11-4)
(m-11-4) edge node[below] {$G_\eps$}(m-11-5)
(m-11-5) edge node[below] {$\vec{G}_{[\eps+1,\delta)}$} (m-11-7)
(m-11-7) edge node[below] {$\vec{G}_{[\delta,\lambda)}$}(m-11-8)
(m-11-1) edge node[left] {$\vec{F}_{[0,\xi_0)}$} (m-10-1)
(m-11-2) edge node[right] {$\vec{F}_{[0,\xi_0)}$} (m-10-2)
(m-10-2) edge node[right] {$\vec{F}_{[\xi_0,\eta_0)}$} (m-9-2)
(m-11-4) edge node[right] {$\vec{F}_{[0,\eta_{<\eps})}$} (m-7-4)
(m-7-4) edge node[left] {$\vec{F}_{[\eta_{<\eps},\xi_\eps)}$}(m-6-4)
(m-11-5) edge node[right] {$\vec{F}_{[0,\xi_\eps)}$} (m-6-5)
(m-6-5) edge node[right] {$\vec{F}_{[\xi_\eps,\eta_\eps)}$} (m-5-5)
(m-11-7) edge node[right] {$\vec{F}_{[0,\eta_{<\delta})}$} (m-3-7)
(m-11-8) edge node[right] {$\vec{F}_{[0,\eta_{<\lambda})}$} (m-2-8)
(m-2-8) edge node[left] {$\vec{F}_{[\eta_{<\lambda},\theta)}$} (m-1-8)
(m-10-1) edge node[below] {$G_{0\cd}$} (m-10-2)
(m-10-1) edge node[left,pos=0.8] {$G^\cd_0\ \ $} (m-9-2)
(m-9-2) edge node[below] {$\ \ \ \ \sigma_{1\eps}$} (m-7-4)
(m-6-4) edge node[below] {$G_{\eps\cd}$} (m-6-5)
(m-6-4) edge node[left,pos=0.8] {$G^\cd_{\eps}\ \ $} (m-5-5)
(m-5-5) edge node[below] {$\ \ \ \ \ \ \sigma_{\eps+1,\delta}$} (m-3-7)
(m-3-7) edge node[below] {$\ \ \ \ \sigma_{\delta\lambda}$} (m-2-8);
\end{tikzpicture}
\caption{The diagram commutes. Arrows labelled with (sequences of) extender(s) 
indicate
the degree $m$ ultrapower map determined by that (sequence of) extender(s),
and that the structure at the tip of the arrow is the degree $m$ ultrapower of 
the 
structure at its base.
The unlabelled arrows correspond to ultrapowers by the appropriate middle 
segment
of $\vec{G}\cd\vec{F}$.
Given a sequence $\vec{E}$, $\vec{E}_{[\alpha,\beta)}$ denotes 
$\vec{E}\rest[\alpha,\beta)$.
} \label{fgr:iterated_extender_comm}
\end{figure}
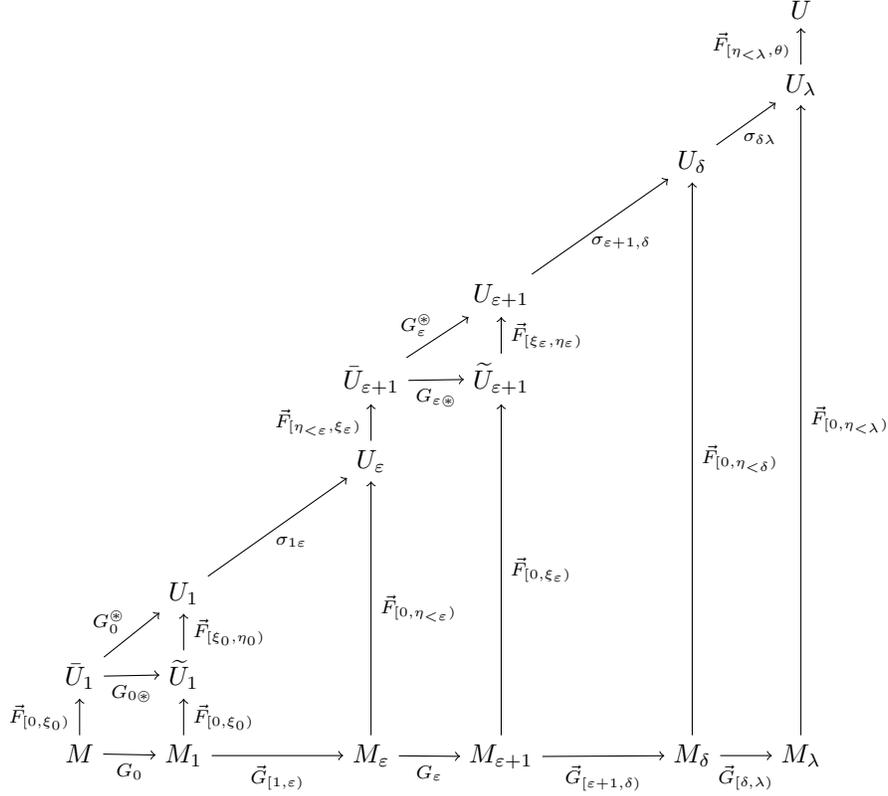

Now for each $\eps\leq\lambda$ let 
$\eta_{<\eps}=\sup_{\alpha<\eps}\eta_{\alpha}$.
Let
\[ M_\eps=\Ult_m(M,\vec{G}\rest\eps)\]
(so $M_0=M$ and $M_\lambda=U=\Ult_m(M,\vec{G})$) and
\[ U_\eps=\Ult_m(M_\eps,\vec{F}_{[0,\eta_{<\eps})})\]
(so $U_0=M$).

Let $\eps<\lambda$. Let $\kappa=\crit(G_\eps)$.
Note that \ref{lem:extender_comm} applies to the sub-diagram
of Figure \ref{fgr:iterated_extender_comm} with corners
$M_\eps$, $M_{\eps+1}$, $\bar{U}_{\eps+1}$ and $U_{\eps+1}$, and
where (by induction)
\[ 
\bar{U}_{\eps+1}=\Ult_m(U_\eps,\vec{F}_{[\eta_{<\eps},\xi_\eps)})=\Ult_m(M_\eps,
\vec{F}_{[0,\xi_\eps)}), \]
\[ 
\wt{U}_{\eps+1}=\Ult_m(\bar{U}_{\eps+1},G_{\eps\cd})=\Ult_m(M_{\eps+1},\vec{F}_{
[0,\xi_\eps)}), \]
and in particular,
\[ U_{\eps+1}=\Ult_m(\bar{U}_{\eps+1},G_\eps^\cd) \]
and the sub-diagram commutes. So let $\sigma_{\eps,\eps+1}:U_\eps\to U_{\eps+1}$ 
be the resulting map, that is,
\[ \sigma_{\eps,\eps+1}=i^{\bar{U}_{\eps+1},m}_{G^\cd_{\eps}}
\com
i^{U_\eps,m}_{\vec{F}_{[\eta_{<\eps},\xi_\eps)}}.\]

Now for $\alpha\leq\beta\leq\lambda$
let $\sigma_{\alpha\beta}:U_\alpha\to U_\beta$ be the commuting map
now induced by composition and direct limits. We claim this makes sense, in that
for each limit $\delta\leq\lambda$,
\begin{equation}\label{eqn:U_delta_dirlim}
U_\delta=\dirlim_{\alpha\leq\beta<\delta}\left(U_\alpha,U_\beta;\sigma_{
\alpha\beta}\right),\end{equation}
and that moreover, for all $\delta\leq\lambda$ and $\alpha\leq\delta$, we have
\[ k\eqdef\sigma_{\alpha\delta}\com
i^{M_\alpha,m}_{\vec{F}_{[0,\eta_{<\alpha})}}=
i^{M_\delta,m}_{\vec{F}_{[0,\eta_{<\delta})}}\com
i^{M_\alpha,m}_{\vec{G}_{[\alpha,\delta)}}\eqdef k'.\]
This is verified by a straightforward induction on $\delta$.
For successor $\delta$ it is as discussed above, and for limit $\delta$,
assuming for simplicity that $m=0$,
letting
\[ \sigma'_{\alpha\delta}:U_\alpha\to U_\delta \]
be
\[ \sigma'_{\alpha\delta}\big(i^{M_\alpha}_{\vec{F}_{[0,\eta_{<\alpha})}}(f)(a)\big)=
i^{M_\delta}_{\vec{F}_{[0,\eta_{<\delta})}}\big(i^{M_\alpha}_{\vec{G}_{[\alpha,\delta)}}(f)\big)(a)\]
for $a\in[\crit(\vec{F}_{[\eta_{<\alpha},\infty)})]^{<\om}$,
then for every $x\in U_\delta$, there is $\alpha<\delta$ with $x\in\rg(\sigma'_{\alpha\delta})$,
and note that this then yields the inductive hypotheses and that
$\sigma_{\alpha\delta}=\sigma_{\alpha\delta}'$.
(One can also argue like in part of the proof of \ref{lem:extender_comm}:
by commutativity, one derives the same extender
from $k$ as from $k'$ (the maps above), and the direct limit is in fact the ultrapower by this 
extender,
since it is a direct limit of smaller ultrapowers by sub-extenders thereof
(note here that on both sides, the derived extenders do have the same 
generators).)

So the diagram commutes, and the lemma easily follows.
\end{proof}

\section{Minimal strategy condensation}\label{sec:min_strat_cond}

We now proceed to adapt much of \cite{iter_for_stacks},
with the most fundamental change being in how extenders
are copied from a tree $\Tt$ into a (now \emph{minimal}) inflation $\Xx$.

\subsection{Minimal tree embeddings}

\begin{dfn}[Tree dropdown]\label{dfn:tree_dropdown}
Let $M$ be a $m$-sound premouse and let $\Tt$ be a putative $m$-maximal 
tree on $M$.

For $\beta+1<\lh(\Tt)$ let 
$(\lambda_\beta,d_\beta)=(\lh(E^\Tt_\beta),0)$. For $\beta+1=\lh(\Tt)$ (if 
$\lh(\Tt)$ is a successor and $M^\Tt_\beta$ well-defined) let 
$(\lambda_\beta,d_\beta)=(\OR(M^\Tt_\beta),\deg^\Tt(\beta))$. Let $\beta<\lh(\Tt)$. Let 
$\left<M_{\beta i},m_{\beta i}\right>_{i\leq k_\beta}$ be the 
\emph{reversed} extended dropdown of
\[ ((M^\Tt_\beta,\deg^\Tt(\beta)),(M^\Tt_\beta|\lambda_\beta,d_\beta))\] (note 
this defines $k_\beta$).
Then  $k_\beta^{\Tt}\eqdef k_\beta$ and $M_{\beta i}^{\Tt}\eqdef M_{\beta i}$ and $m^\Tt_{\beta i}=m_{\beta i}$. Let $\theta\leq\lh(\Tt)$.
We define the \emph{dropdown domain} $\ddd^{(\Tt,\theta)}$ of $(\Tt,\theta)$ by
\[ \Delta=\ddd^{(\Tt,\theta)}\eqdef\{(\beta,i)\mid\beta<\theta\ \&\ i\leq k_\beta\}, \]
and define the \emph{dropdown 
sequence} $\dds^{(\Tt,\theta)}$ of $(\Tt,\theta)$ by
\[ \dds^{(\Tt,\theta)}\eqdef\left<(M_{\beta 
i},m_{\beta i})\right>_{(\beta,i)\in\Delta}. \]

The \emph{dropdown sequence} $\dds^\Tt$ of $\Tt$ is 
$\dds^{(\Tt,\lh(\Tt))}$, and 
the \emph{dropdown domain} $\ddd^\Tt$ of $\Tt$ is $\ddd^{(\Tt,\lh(\Tt))}$.

Given $\kappa<\nutilde^\Tt_\alpha$
for some $\alpha+1<\lh(\Tt)$,
$\alpha^\Tt_\kappa$ denotes the least such $\alpha$,
and $n^\Tt_\kappa$ denotes the largest $n\leq k^\Tt_\alpha$
such that $n=0$ or
$\rho_{m_{\alpha n}+1}(M_{\alpha i})\leq\kappa$.
If instead $\lh(\Tt)=\alpha+1$
and $\kappa\leq\OR(M^\Tt_\alpha)$
but $\nutilde^\Tt_\beta\leq\kappa$
for all $\beta+1<\lh(\Tt)$, then $\alpha^\Tt_\kappa=\alpha$
and $n^\Tt_\kappa=0$.
\end{dfn}

So if 
 $\kappa=\crit(E^\Tt_\beta)$ then
 $\pred^\Tt(\beta+1)=\alpha^\Tt_\kappa$
 and 
$M^{*\Tt}_{\beta+1}=M^\Tt_{\alpha^\Tt_\kappa n^\Tt_\kappa}$.

\begin{rem}
 Recall that for an iteration tree $\Xx$, $\clint^\Xx$ denotes the set of closed $<_\Xx$-intervals.
\end{rem}

We now define the notion of an \emph{minimal tree embedding} $\Pi:\Tt\hookrightarrow_\minim\Xx$ between normal trees 
$\Tt,\Xx$ (actually we allow $\Tt$ to be a putative tree). The definition is just that of
\emph{tree embedding} from \cite{iter_for_stacks},
except that we modify how lift extenders $E^\Tt_\alpha$
of $\Tt$ into $\Xx$, and therefore must also modify how we lift the associated 
dropdown sequence.
In \cite{iter_for_stacks} the lift of $E^\Tt_\alpha$ is just its image 
$\pi(E^\Tt_\alpha)$ under a copy map $\pi$.
Here, associated with our copy maps $\pi$ we will also have a sequence $\vec{E}$ of extenders,
and $\pi$ will just be the ultrapower map associated to $\Ult_n(N,\vec{E})$,
for some $(N,n)$ in the dropdown sequence of $\Tt$, such that
$(\exit^\Tt_\alpha,0)\ins(N,n)\ins(M^\Tt_\alpha,\deg^\Tt(\alpha))$,
and $\vec{E}$ will be $(\exit^\Tt_\alpha,0)$-good.
We will  lift $E^\Tt_\alpha$ to
$\Ult_0(E^\Tt_\alpha,\vec{E})$;
that is, the active extender of $\Ult_0(\exit^\Tt_\alpha,\vec{E})$.
The dropdown sequence is lifted analogously. The rest is just a straightforward
modification of the notion of tree embedding.

\newcommand{\pre}{\mathrm{pre}}
\begin{dfn}[Tree pre-embedding]\label{dfn:tree_embedding}
(Cf.~\cite[Figure 1]{iter_for_stacks}.)
Let $M$ be an $m$-sound premouse, let $\Tt,\Xx$ be putative
$m$-maximal 
trees on $M$,
with $\Xx$ a true tree,
and $\theta\leq\lh(\Tt)$. A \emph{tree pre-embedding} from $(\Tt,\theta)$ 
to $\Xx$,
denoted
\[ \Pi:(\Tt,\theta)\hookrightarrow_{\pre}\Xx, \]
is a sequence $\Pi=\left<I_\alpha\right>_{\alpha<\theta}$
such that (cf.~\cite[Figure 1]{iter_for_stacks}):

\begin{enumerate}
\item\label{item:I_beta_Xx_clint} $I_\beta\in\clint^\Xx$ for each $\beta<\theta$. Let
$[\gamma_\beta,\delta_\beta]_\Xx\eqdef I_\beta$
and $\Gamma:\theta\to\lh(\Xx)$ be $\Gamma(\beta)=\gamma_\beta$.
\item $\gamma_0=0$.
\item $\Gamma$ preserves $<$, is continuous, and sends successors 
to successors.
\item $\beta_0<_\Tt\beta_1\iff\gamma_{\beta_0}<_\Xx\gamma_{\beta_1}$.
\item\label{item:degree_match} $\deg^\Xx(\gamma_\beta)=\deg^\Tt(\beta)$.
\item For $\beta+1<\theta$, we have $\gamma_{\beta+1}=\delta_\beta+1$.
\item\label{item:pred_pres} For $\beta+1<\theta$, letting $\xi=\pred^\Tt(\beta+1)$, we have
$\pred^\Xx(\gamma_{\beta+1})\in I_\xi$
(in \cite[Figure 1]{iter_for_stacks},
$\eta_{\beta+1}=\pred^\Xx(\gamma_{\beta+1})$)
and\footnote{\label{ftn:dropping_error}Version v1 of this document on arXiv stated $\dr^\Xx\inter(\gamma_\xi,\gamma_{\beta+1}]_\Xx=\emptyset\iff \beta+1\notin 
\dr^\Tt$, which is incorrect; it needs the ``sub-$\deg$''s on both. It can be that $\Xx$ drops in model,
but $\Tt$ only drops in degree.}
\[ \dr^\Xx_{\deg}\inter(\gamma_\xi,\gamma_{\beta+1}]_\Xx=\emptyset\iff \beta+1\notin 
\dr^\Tt_{\deg}.\]
\end{enumerate}
We say $\Pi$ has \emph{degree $m$}.
\end{dfn}

\begin{rem}\label{lem:intervals_I_cover_X-branches} It follows that:
\begin{enumerate}[label=(\roman*)]\item the 
$<$-intervals $[\gamma_\beta,\delta_\beta]$ partition  
$\sup_{\beta<\theta}\delta_\beta$, \item for
$\xi,\zeta<\theta$, we have
$(\gamma_\xi,\gamma_\zeta]_\Xx\inter\dropset^\Xx_{\deg}=\emptyset$
iff
$(\xi,\zeta]_\Tt\inter\dropset^\Tt_{\deg}=\emptyset$,\footnote{The same error mentioned in Footnote \ref{ftn:dropping_error} occured here, and in fact persisted to arXiv version v2.}
\item for each limit 
$\beta<\theta$, we 
have $\Gamma``[0,\beta)_\Tt\sub_{\text{cof}}[0,\gamma_\beta)_\Xx$,
\item if $\lh(\Tt)=\alpha+1$ then $M^\Tt_\alpha$ is well-defined, and
\item\label{item:<^X_down_close_I_xi} as in \cite{iter_for_stacks},
 if
 $\alpha\in I_{\xi}$ and $\delta\leq_\Xx\alpha$ then
$\delta\in I_{\zeta}$ for some $\zeta\leq_\Tt\xi$.
\end{enumerate}
\end{rem}

\begin{dfn}
 Let $\Xx$ be an iteration tree and $\alpha\leq_\Xx\beta$.
 Let
 \[ D=\{\gamma\mid\gamma+1\in(\alpha,\beta]_\Xx\}.\]
 Then $\vec{E}^\Xx_{\alpha\beta}$ denotes $\left<E^\Xx_\gamma\right>_{\gamma\in D}$
 (note that when $(\alpha,\beta]_\Xx$ does not drop,
 this extender sequence corresponds to $i^\Xx_{\alpha\beta}$).
\end{dfn}

\begin{dfn}\label{dfn:F-vec_tree_emb}
 Let $\Pi:(\Tt,\theta)\hookrightarrow_{\pre}\Xx$ 
 and write $I_\alpha=I^\Pi_\alpha$ etc.
For $\xi\in\bigcup_{\alpha<\theta}I_\alpha$, define the \emph{inflationary 
extender sequence}
$\vec{F}_\xi=\vec{F}^\Pi_\xi$ by:
\begin{enumerate}[label=--]
\item $\vec{F}_{\gamma_0}=\vec{F}_0=\emptyset$
\item for $\xi\in(\gamma_\alpha,\delta_\alpha]_\Xx$,
$\vec{F}_{\xi}=\vec{F}_{\gamma_\alpha}\conc\vec{E}^\Xx_{\gamma_\alpha\xi}$,
\item for $\alpha+1<\theta$, 
$\vec{F}_{\gamma_{\alpha+1}}=\vec{F}_{\delta_\alpha}$,
\item for limit $\alpha<\theta$,
$\vec{F}_{\gamma_\alpha}=\bigcup_{\xi<^\Xx\gamma_\alpha}\vec{F}_\xi$.\qedhere
\end{enumerate}
\end{dfn}

The kinds of tree embeddings relevant to full normalization are the \emph{minimal} ones,
defined next; the main point of this is captured in the
 \emph{weak hull embeddings}, defined by Steel (see \cite{steel_local_HOD_comp}).
The definition is actually much shorter than the analogous 
definition in \cite{iter_for_stacks}; we will only keep track of embeddings
from $\exit^\Tt_\alpha$ into segments of models of $\Xx$, not from the full models $M^\Tt_\alpha$.
Thus, the definition will not immediately yield that $\Tt$ has wellfounded models.
We will soon see, however, that if $\Pi$ is minimal and $M$ is $m$-standard
where $\Tt$ is $m$-maximal
then there is an embedding
$M^\Tt_\alpha\to M^\Xx_{\gamma_\alpha}$, so $\Tt$ will have wellfounded models.

\begin{dfn}[Minimal tree embedding]\label{dfn:min_tree_emb}
Let $\Pi:(\Tt,\theta)\hookrightarrow_{\pre}\Xx$  be a tree pre-embedding.
We say $\Pi$ is 
\emph{minimal}, denoted 
\[ \Pi:(\Tt,\theta)\hookrightarrow_\minim\Xx,\]
provided writing $\gamma_\alpha=\gamma^\Pi_\alpha$, etc,
for each $\alpha<\theta$
we have:
\begin{enumerate}[ref=(\arabic*)]
 \item\label{item:ult_exit_is_seg} If $\alpha+1<\lh(\Tt)$ then
$\vec{F}_{\delta_\alpha}$ is $(\exit^\Tt_\alpha,0)$-good
and $Q_{\alpha\xi}\ins M^\Xx_{\xi}$,
where $
Q_{\alpha\xi}$ denotes $\Ult_0(\exit^\Tt_\alpha,\vec{F}_{\xi})$,  for
each 
$\xi\in I_\alpha$.\footnote{One might relax the requirement
that $\vec{F}_{\delta_\alpha}$
be $(\exit^\Tt_\alpha,0)$-good, by allowing extenders in $\vec{F}_{\delta_\alpha}$
to measure more sets than those in the model they apply to.
But this leads to complications, and is anyway not relevant to our purposes here.}
 \item if $\alpha+1<\theta$ then 
$E^\Xx_{\delta_\alpha}=F^{Q_{\alpha\delta_\alpha}}$, and
 \item if $\alpha+1=\lh(\Tt)$ then $(\gamma_\alpha,\delta_\alpha]_\Xx$ does not 
drop in model or degree.
\end{enumerate}
If $\Pi$ is a minimal tree embedding,
we say  $\Pi$ is \emph{bounding} iff
 $\lh(E^\Xx_\xi)\leq\OR^{Q_{\alpha\xi}}$ for each $\alpha<\theta$ 
and $\xi\in I_\alpha$ such 
that $\alpha+1<\lh(\Tt)$ and $\xi+1<\lh(\Xx)$,
and \emph{exactly bounding}
iff $\lh(E^\Xx_\xi)<\OR^{Q_{\alpha\xi}}$
for each such $\alpha,\xi$ with $\xi\in[\gamma_\alpha,\delta_\alpha)_\Xx$.

We write $\Pi:\Tt\hookrightarrow_{\minim}\Xx$
iff $\Pi:(\Tt,\lh(\Tt))\tembto_{\minim}\Xx$.
\end{dfn}

We will mostly be interested in
exactly bounding minimal tree embeddings.

\begin{dfn}\label{dfn:min_dagger_tree_emb}
A tree pre-embedding $\Pi:(\Tt,\theta)\hookrightarrow_{\pre}\Xx$ is 
\emph{puta-minimal},
written
$\Pi:(\Tt,\theta)\hookrightarrow_{\putamin}\Xx$,
iff the requirements of minimality hold,
except that 
we replace condition
\ref{dfn:min_tree_emb}\ref{item:ult_exit_is_seg}
with the following, defining $Q_{\alpha\xi}$ as before:
\begin{enumerate}[label=\arabic*'.]
 \item Let $\alpha+1<\lh(\Tt)$. Then:
\begin{enumerate}[label=(\alph*)]
 \item If $\alpha+1<\theta$ then
$\vec{F}_{\delta_\alpha}$ is $(\exit^\Tt_\alpha,0)$-good
and
$Q_{\alpha\xi}\ins M^\Xx_{\xi}$ for each 
$\xi\in I_\alpha$.
\item
If $\alpha+1=\theta$ and
 $\vec{F}_{\gamma_\alpha}$
 is $(\exit^\Tt_\alpha,0)$-good and
 $Q_{\alpha\gamma_\alpha}\ins M^\Xx_{\gamma_\alpha}$ then 
$\vec{F}_{\delta_\alpha}$ is $(\exit^\Tt_\alpha,0)$-pre-good 
and $Q_{\alpha\xi}\ins M^\Xx_{\xi}$
 for each $\xi\in I_\alpha\cut\{\delta_\alpha\}$.\qedhere
\end{enumerate}
\end{enumerate}
\end{dfn}

We will need to define various bookkeeping devices (mice and maps) analogous to 
those used in \cite{iter_for_stacks},
in order to see that minimal tree embeddings make sense, for $m$-standard $M$.
The definition will list a lot of properties of the various objects, and we will verify that they
exist later in Lemma \ref{lem:Pi_is_standard}.

\begin{dfn}[Dropdown lifts]\label{dfn:min_inf_ults_maps}
Let 
$\Pi:(\Tt,\theta)\hookrightarrow_{\putamin}\Xx$
of degree $m$ on an $m$-standard $M$.
Write $I_\alpha=I_\alpha^\Pi$, etc. Let $\Delta=\ddd(\Tt,\theta)$.
For $x=(\beta,i)\in \Delta$ let 
$k_\beta=k^\Tt_\beta$ and\footnote{So $i\leq k_\beta$, $M_{\beta 0}=M_\beta$, $m_{\beta 0}=\deg^\Tt(\beta)$,
and if $\beta+1<\lh(\Tt)$ then $M_{\beta k_\beta}=\exit^\Tt_\beta$
and $m_{\beta k_\beta}=0$.}
\[ (M_{\beta i},m_{\beta i})=(M_x,m_x)=(M_x^\Tt,m_x^\Tt). \]

For $(\beta,i)\in\Delta$ let $\delta_{\beta i}$ be the largest $\delta\in\bigcup_{\alpha<\theta}I_\alpha$
such that $\vec{F}_\delta$ is $(M_{\beta i},m_{\beta i})$-pre-good.
Say that $\Pi$ is \emph{pre-standard} iff
for each $(\beta,i)\in\Delta$,
we have $\delta_{\beta k_\beta}=\delta_\beta$,
$\delta_{\beta i}\in I_\beta$
and if $(\beta,i+1)\in\Delta$ then 
$\delta_{\beta i}\leq\delta_{\beta,i+1}$.

Suppose $\Pi$ is pre-standard. For $\xi\in[\gamma_\beta,\delta_{\beta i}]_\Xx$
define
\begin{enumerate}[label=--]
\item $U_{\beta i\xi}=\Ult_{m_{\beta i}}(M_{\beta i},\vec{F}_\xi)$, and
\item $\pi_{\beta i\xi}:M_{\beta i\xi}\to U_{\beta i\xi}$ is the associated ultrapower map.
\end{enumerate}
Let $U_{\beta i}=U_{\beta i\gamma_\beta}$ and $\pi_{\beta i}=\pi_{\beta 
i\gamma_\beta}$ and $\pi_\beta=\pi_{\beta 0}$. Let $\gamma_{\beta 
0}=\gamma_\beta$ and $\gamma_{\beta,i+1}=\delta_{\beta i}$ for $i+1\leq 
k_\beta$.
Let $I_{\beta i}=[\gamma_{\beta i},\delta_{\beta i}]_\Xx$
and
$J_{\beta i}=[\gamma_\beta,\delta_{\beta 
i}]_\Xx$.
For $\xi\in I_\beta$ let $i_{\beta\xi}=$ the least
$i'$ such that $\xi\in I_{\beta i'}$.
If $\beta\in\lh(\Tt)^-$ then for $\xi\in I_\beta$ let
$Q_{\beta\xi}=U_{\beta k_\beta\xi}$ and
$\om_{\beta\xi}=\pi_{\beta k_\beta\xi}$.
\end{dfn}

\begin{rem}Note that by pre-standardness, for $(\beta,i)\in\Delta$ we have
 have $I_{\beta i}=[\gamma_{\beta i},\delta_{\beta i}]_\Xx\sub I_\beta$.
\end{rem}

\begin{dfn}\label{dfn:min_inf_coherent} (Cf.~\cite[Figures 2, 3, 
4]{iter_for_stacks}.)
Let $M,m,\Tt,\Xx,\Pi,\Delta$ be as in \ref{dfn:min_inf_ults_maps}.
We say $\Pi$ is \emph{standard} iff
\begin{enumerate}[label=T\arabic*.,ref=T\arabic*]
 \item\label{item:pre-standardness}  
$\Pi:(\Tt,\theta)\hookrightarrow_{\min}\Xx$ and $\Pi$ is pre-standard.  
\item\label{item:T_dropdowns_lift} (Dropdowns lift) For $(\alpha,i)\in\Delta$ and $\xi\in J_{\alpha i}$ we have:
\begin{enumerate}
\item\label{item:U_alpha0_is_model} $(U_{\alpha 0},m_{\alpha 0})=(M^\Xx_{\gamma_\alpha},\deg^\Xx(\gamma_\alpha))$,
 \item\label{item:U_alpha,i,xi_is_seg} $(U_{\alpha i\xi},m_{\alpha i\xi})\ins(M^\Xx_\xi,\deg^\Xx(\xi))$,
 \item\label{item:U_alpha,i,xi_is_model} $(U_{\alpha i\xi},m_{\alpha i\xi})=(M^\Xx_\xi,\deg^\Xx(\xi))$ if $\gamma_{\alpha i}<\xi\leq\delta_{\alpha i}$,
 \item\label{item:no_drop_to_delta_alpha,0}  $\dropset^\Xx_{\deg}\inter(\gamma_{\alpha 0},\delta_{\alpha 0}]_\Xx=\emptyset$.
\item\label{item:only_drop_at_eps} Suppose $i>0$ and $\gamma_{\alpha i}<\delta_{\alpha i}$. Let $\eps_{\alpha i}=\successor^\Xx(\gamma_{\alpha i},\delta_{\alpha i})$.
Then:
\begin{enumerate}
 \item $(\gamma_{\alpha i},\delta_{\alpha i}]_\Xx\inter\dropset_{\deg}^\Xx=\{\eps_{\alpha i}\}$,
 \item $(M^{*\Xx}_{\eps_{\alpha i}},\deg^\Xx(\eps_{\alpha i}))=(U_{\alpha i\gamma_{\alpha i}},m_{\alpha i})$.
\end{enumerate}
\item\label{item:dropdown_correspondence} Suppose $\alpha\in\lh(\Tt)^-$.
Then:
\begin{enumerate}
 \item\label{item:dropdown_correspondence_gamma_alpha} $\left<(U_{\alpha j},m_{\alpha j})\right>_{j\leq k_\alpha}$ is
 the revex
$((M^\Xx_{\gamma_\alpha},\deg^\Xx(\gamma_\alpha)),(Q_{\alpha\gamma_\alpha},
0))$-dd.
 \item If $\gamma_{\alpha i}<\xi\leq\delta_{\alpha i}$
 then $\left<(U_{\alpha j\xi},m_{\alpha j})\right>_{i\leq j\leq k_\alpha}$ is
 the revex
 $((M^\Xx_\xi,\deg^\Xx(\xi)),(Q_{\alpha\xi},0))$-dd.
\end{enumerate}
\end{enumerate}
\item\label{item:T_emb_agmt} (Embedding agreement) Let $\alpha<\theta$ with $\alpha+1<\lh(\Tt)$
and  $\kappa<\nutilde(E^\Tt_\alpha)$ 
with $\alpha=\alpha^\Tt_\kappa$ and
$i=n^\Tt_{\kappa}$.
Let $\xi\in J_{\alpha i}$ be least such that,
letting $\mu=\pi_{\alpha i\xi}(\kappa)$,
either $\xi=\delta_{\alpha i}$ or
$\mu<\crit(E^\Xx_\eta)$
where $\eta+1=\successor^\Xx(\xi,\delta_{\alpha i})$; note that
$\xi\geq\gamma_{\alpha i}$. Write $\xi_{\kappa}=\xi$. 
Let $U=U_{\alpha i\xi}$ and $\pi=\pi_{\alpha i\xi}$.

Whenever $(\alpha',i',\xi')\geq(\alpha,i,\xi)$,
 $U'=U_{\alpha'i'\xi'}$ and $\pi'=\pi_{\alpha'i'\xi'}$, we have:
\begin{enumerate}[label=--]
\item $U||(\mu^+)^U=
U'||(\mu^+)^{U'}$,\footnote{We might have $(\mu^+)^{U}=\OR^U$,
but then $i=k^\Tt_\alpha$, $\xi=\delta_\alpha$,
and we are using MS-indexing, $M^\Tt_\alpha$ is active
type 2 and $\kappa=\lgcd(M^\Tt_\alpha)$.}
\item $\pi\rest\pow(\kappa)\sub\pi'$ and
\item if $\alpha<\alpha'$ and $(i,\xi)=(k^\Tt_\alpha,\delta_\alpha)$
then:
\begin{enumerate}[label=--]
\item $\pi\rest\iota^\Tt_\alpha\sub\pi'$ and 
$\widehat{\pi}(\delta)\leq\pi'(\delta)$
for every $\delta<\lh(E^\Tt_\alpha)$,\footnote{$\iota^\Tt_\alpha=\lgcd(\exit^\Tt_\alpha)$
unless  $\exit^\Tt_\alpha$ is MS-indexed type 2, in which case
$\iota^\Tt_\alpha=\lh(E^\Tt_\alpha)$.
And recall that $\widehat{\pi}$ is the map induced
by $\pi$ via the Shift Lemma.}
\item if 
$\lh(E^\Tt_\alpha)<\OR(M_{\alpha'i'})$ then
$\lh(E^\Xx_{\delta_\alpha})\leq\widehat{\pi'}(\lh(E^\Tt_\alpha))$,
\item if 
$\lh(E^\Tt_\alpha)=\OR(M_{\alpha' i'})$ then $\alpha'=\alpha+1$, $i'=0$,
$\lh(E^\Xx_{\delta_\alpha})=\OR(M^\Xx_{\gamma_{\alpha+1}})$
\footnote{So if $\Pi$ is exactly bounding then
$\gamma_{\alpha+1}=\delta_{\alpha+1}$.},
$\pi_{\alpha+1,0}
=\widehat{\pi}\rest M_{\alpha+1,0}$,
and $M^{*\Xx}_{\gamma_{\alpha+1}}=Q_{\eps\delta_\eps}$
where $\eps=\pred^\Tt(\alpha+1)$.\footnote{It follows that 
we are using MS-indexing, 
$E^\Tt_\alpha$ is superstrong and $M^\Tt_{\alpha+1}$ is active type 2, $E^\Xx_{\delta_\alpha}$ is 
superstrong and $M^\Xx_{\gamma_{\alpha+1}}$ is active type 2.}
\end{enumerate}
\end{enumerate}

\item\label{item:T_commutativity} (Commutativity)
Let $(\chi,i),(\beta+1,0),(\eps,0)\in \Delta$ with
$\chi<_\Tt\beta+1\leq_\Tt\eps$ and
$\chi=\pred^\Tt(\beta+1)$ and
\[ (M_{\chi i},m_{\chi i})=(M^{*\Tt}_{\beta+1},\deg^\Tt(\beta+1)).\] 
(So $i>0$ iff $\beta+1\in\dropset_{\deg}^\Tt$.)
Let $\xi=\pred^\Xx(\gamma_{\beta+1})$.
Then:
\begin{enumerate}[label=\tu{(}\alph*\tu{)}]
\item\label{item:xi_in_I_alpha,i} $\xi\in I_{\chi i}$,
and if $i>0$ then [$\xi=\gamma_{\chi i}$ iff $\gamma_{\beta+1}\in 
\dr_{\deg}^\Xx$].
\item\label{item:emb_comm_at_beta+1}
$\pi_{\beta+1, 0}\com i^{*\Tt}_{\beta+1}=i^{*\Xx}_{\gamma_{\beta+1}}
\com\pi_{\chi i\xi}$.
\item\label{item:emb_comm_to_eps} If $(\chi,\eps]_\Tt\inter 
\dr_{\deg}^\Tt=\emptyset$ (so $i=0$ and 
$(\gamma_\chi,\gamma_\eps]_\Xx\inter\dr_{\deg}^\Xx=\emptyset$) then
\[ \pi_{\eps 0}\com 
i^{\Tt}_{\chi\eps}=i^\Xx_{\gamma_\chi\gamma_\eps}\com\pi_{\chi 0}. \]
\item\label{item:T_shift_lemma} (Shift Lemma)
Let $\kappa=\crit(E^\Tt_\beta)$,
so $\kappa<\nutilde(E^\Tt_\chi)$ and $i=n^\Tt_{\kappa}$, so 
\ref{item:T_emb_agmt} applies. Then (i) $\xi$ (defined above)
is also as defined in \ref{item:T_emb_agmt} and
\[ (M^{*\Xx}_{\gamma_{\beta+1}},\deg^\Xx_{\gamma_{\beta+1}})
=(U_{\chi i\xi},m_{\chi i}).\]
So by  \ref{item:T_emb_agmt},
the Shift Lemma applies to the embeddings 
$\pi_{\chi i\xi}$
and
\[ \omega_{\beta\delta_\beta}:\exit^\Tt_\beta\to Q_{\beta\delta_\beta}.\]
Moreover, (ii)
$\pi_{\beta+1,0}$ is just the map given by the Shift Lemma
(this makes sense as $M^\Xx_{\gamma_{\beta+1}}=U_{\beta+1,0}$ by  
\ref{item:T_dropdowns_lift}).\qedhere
\end{enumerate}
\end{enumerate}
\end{dfn}

\begin{lem}\label{lem:Pi_is_standard}
$\Pi:(\Tt,\theta)\hookrightarrow_{\putamin}\Xx$ have degree $m$,
on an $m$-standard $M$.
Then $\Pi:(\Tt,\theta)\hookrightarrow_{\min}\Xx$, $\Pi$ is standard
and $\Tt\rest\theta$ has (well-defined and) wellfounded models.
\end{lem}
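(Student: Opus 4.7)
The plan is to prove the three conclusions simultaneously by induction on $\theta$, establishing at each stage:
(i) $\Tt\rest\theta$ has wellfounded models; (ii) $\Pi$ is standard (T1--T4 hold); and (iii) the goodness in clause \ref{item:ult_exit_is_seg} of minimality is fully achieved (not just the weaker pre-good variant of puta-minimality). The base case $\theta=1$ is trivial since $I_0=\{0\}$ and $\vec{F}_0=\emptyset$, so $U_{0i\gamma_0}=M_{0i}$ and the dropdown of $(M^\Tt_0,m)$ is literally that of $(M^\Xx_0,m)=(M,m)$. Limit stages $\theta=\lambda$ follow by continuity: since $\Gamma$ is continuous and preserves cofinal branches, the desired identifications at $\gamma_\lambda$ and the commutativity T4(c) are direct limits of those below, and wellfoundedness of $M^\Tt_\lambda$ is inherited from $M^\Xx_{\gamma_\lambda}$ via the direct-limit map $\pi_{\lambda 0}$.

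The heart of the argument is the successor stage $\theta=\beta+1$. Applying the IH, $\Tt\rest(\beta+1)$ has wellfounded models and the full standardness holds up through $\beta$; in particular $(U_{\beta 0},m_{\beta 0})=(M^\Xx_{\gamma_\beta},\deg^\Xx(\gamma_\beta))$ and the revex dropdown of this pair lifts $\dds^\Tt$ at $\beta$. Now consider the intermediate ordinals $\xi\in I_\beta$: as $\xi$ traverses $[\gamma_\beta,\delta_\beta]_\Xx$, we invoke Lemma \ref{lem:ult_dropdown} applied to the extender(s) of $\Xx$ used between $\gamma_\beta$ and $\xi$, together with $(N,n)=(M^\Xx_{\gamma_\beta},\deg^\Xx(\gamma_\beta))$ and $(M,m)=(Q_{\beta\gamma_\beta},0)=(\exit^\Tt_\beta,0)$. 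Since $M$ is $m$-standard and standardness propagates through ultrapowers (Lemma \ref{lem:ult_pres_standard}), the dropdown preservation lemma gives exactly items T2\ref{item:U_alpha,i,xi_is_seg}--\ref{item:dropdown_correspondence} at $\xi$: the $U_{\beta i\xi}$ form the revex $((M^\Xx_\xi,\deg^\Xx(\xi)),(Q_{\beta\xi},0))$-dd, with the drop pattern on $(\gamma_{\beta i},\delta_{\beta i}]_\Xx$ dictated by where the projecta decrease, which is precisely T2\ref{item:no_drop_to_delta_alpha,0}--\ref{item:only_drop_at_eps}. The goodness assertion (iii) at $\delta_\beta$ then follows because we have carried the ultrapower through $(\exit^\Tt_\beta,0)$ at every stage and landed inside the wellfounded $M^\Xx_{\delta_\beta}$. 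The embedding agreement T3 is read off by noting that $\pi_{\beta i\xi}$ agrees with $\pi_{\beta i\xi'}$ below $\pi_{\beta i\xi}(\kappa)$ whenever $\kappa<\crit(E^\Xx_\eta)$ for all relevant $\eta$, which is standard for iterated normal ultrapowers.

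To produce $M^\Tt_{\beta+1}$ and verify T4, let $\chi=\pred^\Tt(\beta+1)$, $\kappa=\crit(E^\Tt_\beta)$, $i=n^\Tt_\kappa$, and $\xi=\pred^\Xx(\gamma_{\beta+1})$; clause \ref{item:pred_pres} of pre-embedding gives $\xi\in I_\chi$, and pre-standardness plus T3 pin $\xi$ down to lie in $I_{\chi i}$ as described in T4\ref{item:xi_in_I_alpha,i}. Now apply Lemma \ref{lem:extender_comm}, with $P=\exit^\Tt_\beta$, $G=E^\Tt_\beta$, $M=M^{*\Tt}_{\beta+1}=M_{\chi i}$, and $\vec{E}$ equal to $\vec{F}_\xi$: this is legitimate because (by the IH and T3) $\vec{F}_\xi$ is $(M_{\chi i},m_{\chi i})$-good, $(P,0)$-good, and $\crit(E^\Tt_\beta)$-bounded. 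The lemma produces the Shift-Lemma map and identifies $\Ult_{m_{\chi i}}(M_{\chi i},\vec{F}_\xi\conc G^\Xx)=\Ult_{m_{\chi i}}(\Ult_0(\exit^\Tt_\beta,\vec{F}_\xi),E^\Xx_{\delta_\beta})$ (appropriately using $G^\cd$), which is exactly the identification $U_{\beta+1,0}=M^\Xx_{\gamma_{\beta+1}}$ demanded by T2\ref{item:U_alpha0_is_model} and gives T4\ref{item:emb_comm_at_beta+1}, \ref{item:T_shift_lemma} once we observe that $\pi_{\beta+1,0}$ is forced to be the Shift-Lemma map. The commutativity T4\ref{item:emb_comm_to_eps} along non-dropping branches follows by composing the successor-level commutativities and passing to direct limits. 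Wellfoundedness of $M^\Tt_{\beta+1}$ is then automatic from the existence of the map into the wellfounded $M^\Xx_{\gamma_{\beta+1}}$.

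The main obstacle is bookkeeping rather than a single hard step: one must carefully verify that at each successor stage the application of Lemma \ref{lem:ult_dropdown} produces \emph{precisely} the revex dropdown (including the handling of MS-indexed type 2/superstrong exceptional cases alluded to in the footnotes of T3), and that Lemma \ref{lem:extender_comm} can be invoked in the exact form above. The argument essentially mirrors the corresponding analysis in \cite{iter_for_stacks} for embedding normalization, with the substitution of Steel's copying prescription $E\mapsto F^{\Ult_0(P,\vec{F})}$ in place of $E\mapsto\pi(E)$ being the only conceptual shift.
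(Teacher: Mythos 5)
Your overall architecture — induction on $\theta$, dropdown propagation via Lemma \ref{lem:ult_dropdown} along the intervals $I_\beta$, Lemma \ref{lem:extender_comm} at successors to identify $U_{\beta+1,0}=M^\Xx_{\gamma_{\beta+1}}$ and realize $\pi_{\beta+1,0}$ as the Shift Lemma map, and direct limits at limit stages — is exactly the paper's proof. The successor and limit stages as you describe them are essentially what the paper does.

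However, your base case is wrong as stated: you claim $\theta=1$ is trivial ``since $I_0=\{0\}$ and $\vec{F}_0=\emptyset$.'' The definition of a tree pre-embedding only fixes $\gamma_0=0$; it does not fix $\delta_0$, so $I_0=[0,\delta_0]_\Xx$ is in general a long $<_\Xx$-interval along which $\Tt$-inflationary extenders are applied (this is the whole point of inflation). Consequently the base case is where the paper does real work: one must show, by induction on $\xi\in I_0$, that $\vec{F}_\xi$ is good for each member of the dropdown sequence up to the appropriate stage, that the $\delta_{0i}$ are ordered correctly (pre-standardness), and — crucially — that every drop in model or degree of $\Xx$ along $(\gamma_{00},\delta_0]_\Xx$ lands exactly on one of the lifted dropdown models $U_{0i\xi}$ (the paper's argument comparing the revex dropdowns of $(\exit^\Xx_\xi,0)$ and $(Q_{0\xi},0)$ below $\crit(E^\Xx_\beta)$). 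Your proposal does contain this machinery, but you place it only in the successor stage for the interval $I_\beta$ with $\beta\geq 1$; as written, the induction never establishes T1/T2 for the interval $I_0$. The fix is simply to run your $I_\beta$-traversal argument at $\beta=0$ as well (which is what the paper does, and then says the successor case ``is like the case $\theta=1$''). A second, smaller slip: in the successor stage you write $(Q_{\beta\gamma_\beta},0)=(\exit^\Tt_\beta,0)$, but $Q_{\beta\gamma_\beta}=\Ult_0(\exit^\Tt_\beta,\vec{F}_{\gamma_\beta})$ and $\vec{F}_{\gamma_\beta}=\vec{F}_{\delta_{\beta-1}}$ is nonempty for $\beta>0$; the dropdown lemma must be applied to the already-lifted pair $((M^\Xx_{\gamma_\beta},\deg^\Xx(\gamma_\beta)),(Q_{\beta\gamma_\beta},0))$, whose revex dropdown is $\left<(U_{\beta j},m_{\beta j})\right>_{j\leq k_\beta}$ by T2 at $\gamma_\beta$, not to the original $(\exit^\Tt_\beta,0)$.
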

\begin{proof}
We adopt the notation of \ref{dfn:min_inf_ults_maps}.
The proof is by induction on $\theta$.

Suppose $\theta=1$. If $\lh(\Tt)=1$ then everything is easy.  So suppose $\lh(\Tt)>1$,
so $E^\Tt_0$ exists. Property \ref{item:T_commutativity} 
is trivial. By puta-minimality,
$\vec{F}_{\delta_0}$ is $(\exit^\Tt_0,0)$-pre-good and
$Q_{0\xi}\ins M^\Xx_\xi$ for every $\xi<^\Xx\delta_0$.
Now $\delta_{00}$ is the largest
$\xi\in I_0$ such that $\vec{F}_\xi$ is $(M,m)$-pre-good.
Property \ref{item:T_dropdowns_lift} for $(\alpha,i)=(0,0)$,
and the fact that $\delta_{00}\leq\delta_{0i}$ for each $i\leq k^\Tt_0$, then 
both follow  from Lemma \ref{lem:ult_dropdown},
by induction on $\xi\in I_{00}$. It doesn't matter here whether
$\Pi$ is bounding. In fact, note that
if $\beta+1\leq^\Xx\delta_{0}$
and $\kappa=\crit(E^\Xx_\beta)$ and $\left<(N'_n,m'_n)\right>_{n\leq 
k'}$ is the revex
$((M^\Xx_{\xi},\deg^\Xx_\xi),(\exit^\Xx_\xi,0))$-dd
and $\left<(N_n,m_n)\right>_{n\leq k}$
 the revex $((M^\Xx_\xi,\deg^\Xx_\xi),(Q_{0\xi},0))$-dd,
then $(N_n,m_n)=(N'_n,m'_n)$ whenever $n=0$ or
[$n\leq k$ and $\rho_{m_n+1}^{N_n}\leq\kappa$] or
[$n\leq k'$ and 
$\rho_{m'_n+1}^{N'_n}\leq\kappa$].
Hence, if  $\delta_{00}<\delta_0$
and $\beta+1=\successor^\Xx(\delta_{00},\delta_0)$,
then $(M^{*\Xx}_{\beta+1},\deg^\Xx_{\beta+1})=(N_n,m_n)$
for some $n>0$, which is
one of the $(U_{0i\xi},m_{0i\xi})$.
This gives that $\delta_{00}=\gamma_{0 i'}=\delta_{0i'}=\gamma_{0i}<\delta_{0i}$
and property \ref{item:T_dropdowns_lift}
for $(\alpha,i')$ for all $i'<i$. For $(\alpha,i)$,
property \ref{item:T_dropdowns_lift}
and that $\delta_{0i}\leq\delta_{0i'}$ for all $i'>i$
now follows similarly to before.  Preceding in this way,
we get the full properties \ref{item:pre-standardness}
and \ref{item:T_dropdowns_lift}
(recalling $\theta=1$).
Property \ref{item:T_emb_agmt} is now straightforward.

Now suppose that
$\Pi\rest(\beta+1):(\Tt,\beta+1)\hookrightarrow_{\minim}\Xx$ is standard
and $\beta+1<\lh(\Tt)$.
We prove  $\Pi\rest(\beta+2):(\Tt,\beta+2)\hookrightarrow_{\minim}$
and is 
standard.

Property \ref{item:T_commutativity}: We must just verify
this for $\beta+1$, with $\eps=\beta+1$.
Adopt notation as there (this defines $\xi,i,\kappa$ etc).
Parts \ref{item:xi_in_I_alpha,i} and \ref{item:T_shift_lemma}(i) follow 
routinely from the inductive hypotheses.
Given these, we verify the rest.
We have $\chi=\pred^\Tt(\beta+1)$
and $(M^{*\Tt}_{\beta+1},\deg^\Tt_{\beta+1})=(M_{\chi i},m_{\chi i})$.
Note that Lemma \ref{lem:extender_comm} applies to 
$(M_{\chi i},m_{\chi i})$ and $P=\exit^\Tt_\beta$,
with extender sequences $\vec{E}=\vec{F}_\xi$
and $\vec{F}$ where $\vec{F}_{\delta_\beta}=\vec{F}_\xi\conc\vec{F}$. Moreover,
\begin{enumerate}[label=--]
 \item $\chi=\pred^\Tt(\beta+1)$ and $\xi=\pred^\Xx(\gamma_{\beta+1})$,
 \item  $n\eqdef\deg^\Tt(\beta+1)=m_{\beta+1,0}=m_{\chi 
i}=\deg^\Xx(\gamma_{\beta+1})$,
 \item $M_{\chi i}=M^{*\Tt}_{\beta+1}$ and $U_{\chi 
i\xi}=M^{*\Xx}_{\gamma_{\beta+1}}$,
 \item $M^\Tt_{\beta+1}=\Ult_m(M_{\chi i},E^\Tt_\beta)$ and 
$M^\Xx_{\gamma_{\beta+1}}=\Ult_m(U_{\chi i\xi},E^\Xx_{\delta_\beta})$,
 \item  
$\vec{F}_{\gamma_{\beta+1}}=\vec{F}_{\delta_\beta}=
\vec{E}\conc\vec{F}$
 where $\vec{E}$ is $\kappa$-bounded and 
\[ \mu\eqdef i^{\exit^\Tt_\beta,0}_{\vec{E}}(\kappa)=
i^{\exit^\Tt_\beta,0}_{\vec{F}_{\delta_\beta}}(\kappa)=
\crit(E^\Xx_{\delta_\beta})<\crit(\vec{F}), \]
 \item $U_{\chi i\xi}=\Ult_m(M_{\chi i},\vec{E})$
 and $\pi_{\chi i\xi}$ 
is the ultrapower map,
 \item $Q_{\beta\delta_\beta}=\Ult_0(\exit^\Tt_\beta,\vec{F}_{\delta_\beta})$ and $\om_{\beta\delta_\beta}$ is the ultrapower map,
 \item $U_{\beta+1,0}=\Ult_{m}(M^\Tt_{\beta+1},\vec{F}_{\gamma_{\beta+1}})$ and
 $\pi_{\beta+1,0}$ is the ultrapower map.
\end{enumerate}
So by Lemma \ref{lem:extender_comm}, $U_{\beta+1,0}=M^\Xx_{\gamma_{\beta+1}}$
and $\pi_{\beta+1,0}$ is the Shift Lemma map, giving part 
\ref{item:T_shift_lemma}(ii),
and commutativity holds, giving  \ref{item:emb_comm_at_beta+1} and \ref{item:emb_comm_to_eps}.

Property  \ref{item:T_dropdowns_lift} for 
$\alpha=\beta+1$ and $\xi=\gamma_{\beta+1}$,
and the fact that $\gamma_{\beta+1}\leq\delta_{\beta i}$
for each $i\leq k^\Tt_{\beta+1}$,
follow from the observations above (such as that
$U_{\beta+1,0}=M^\Xx_{\gamma_{\beta+1}}$
and $m_{\beta+1,0}=\deg^\Xx(\gamma_{\beta+1})$),
together with
the $m$-standardness of $M$
and Lemma \ref{lem:ult_dropdown},
and using that $\lh(E^\Tt_\beta)\leq\lh(E^\Tt_{\beta+1})$
when verifying that $\vec{F}_{\gamma_{\beta+1}}=\vec{F}_{\delta_\beta}$
is $(\exit^\Tt_{\beta+1},0)$-good, for example.
The rest of properties
\ref{item:pre-standardness} and
\ref{item:T_dropdowns_lift} for $\alpha=\beta+1$
are then
like in the case that $\theta=1$.

Property \ref{item:T_emb_agmt}: In the main instance of interest, $\alpha=\beta$ and $\alpha'=\beta+1$.
In this instance, the property follows as usual from the Shift Lemma,
using the fact that $\pi_{\beta+1,0}$ is in fact the Shift Lemma map. The rest is routine.

This completes the proof that $\Pi\rest(\beta+2)$ is standard.

Now let $\beta$ be a limit and suppose that $\Pi\rest\beta$ is standard.
We verify that $\Pi\rest(\beta+1)$ is standard.
The main issue is to see that $\vec{F}_{\gamma_\beta}$ is 
$(M^\Tt_\beta,m_{\beta 0})$-good,
$M^\Xx_{\gamma_\beta}=U_{\beta 0}$
and
$\pi_{\beta 0}\com 
i^\Tt_{\alpha\beta}=i^\Xx_{\gamma_\alpha\gamma_\beta}\com\pi_\alpha$
for sufficiently large $\alpha<_\Tt\beta$; the rest is as before.
Let $\pi^*:M^\Tt_\beta\to M^\Xx_{\gamma_\beta}$ be the map commuting 
in this way (by induction,
$\pi^*$ exists and is a $\deg^\Tt_\beta=\deg^\Xx_{\gamma_\beta}$-embedding).
Let $\delta=\delta(\Tt\rest\beta)$ and $\delta'=\delta(\Xx\rest\gamma_\beta)$.
By commutativity,
$\vec{F}_{\gamma_\beta}$ is equivalent to the
$(\delta,\delta')$-extender 
derived from $\pi^*$ (and 
$\delta'=\sup\pi^*``\delta\leq\pi^*(\delta)$).
It easily follows that $U_{\beta 0}=M^\Xx_{\gamma_\beta}$
and $\pi^*=\pi_{\beta+1,0}$, as desired.
\end{proof}

\begin{rem}\label{rem:pres_dropdown}
The basic observation which made the lemma above possible --
the fact that extenders in $\es_+(M^\Tt_\alpha)$
lift to extenders in $\es_+(M^\Xx_\beta)$
for the appropriate $\alpha,\beta$, under degree $0$ ultrapower maps --
and the ensuing idea for full normalization (as opposed to embedding 
normalization), weak hull embeddings and minimal hull condensation -- was due to Steel.
The fact that one must also keep track of how the dropdown
sequence is shifted all along the entire interval $I_\alpha$
(in order to see that the $\Tt$ and $\Xx$ drop to corresponding
segments) was noticed somewhat later,
independently by both the author and Steel.
\end{rem}

\begin{dfn}\label{dfn:Pi_subscript_notation}
Let $\Pi$ be a minimal tree embedding and adopt notation as before. We use 
notation analogous to that of \cite{iter_for_stacks}; the subscript ``$\Pi$'' 
indicates objects  associated to $\Pi$.
That is, $I_{\Pi\alpha}=I_\alpha$, $\gamma_{\Pi\alpha}=\gamma_\alpha$, etc. 
\end{dfn}

\begin{dfn} \label{dfn:pi_kappa}
(Cf.~\cite[Figure 4]{iter_for_stacks}.)
 Let
$\Pi:(\Tt,\theta)\tembto_{\minim}\Xx$
and $\gamma_\beta=\gamma_{\Pi\beta}$, 
etc. 
Let $\beta<\theta$
and $\kappa\leq\OR(M^\Tt_\beta)$ with
$\beta=\alpha^\Tt_\kappa$,
and let $n=n^\Tt_\kappa$ (Definition \ref{dfn:tree_dropdown}).
Let  $N^\Tt_{\kappa}=M^\Tt_{\beta n}$ and
$\xi=\xi_{\Pi\kappa}\in I_{\beta n}$ be defined
as $\xi_\kappa$ in
\ref{dfn:min_inf_coherent}(\ref{item:T_emb_agmt}),
or if $\lh(\Tt)=\beta+1$ and $\kappa=\OR(M^\Tt_\beta)$,
then $\xi=\xi_{\Pi\kappa}=\delta_\beta$.
Also let $U_{\Pi\kappa}=U_{\beta n\xi}$ and
$\pi_{\Pi\kappa}:N^\Tt_{\kappa}\to U_{\Pi\kappa}$
the corresponding ultrapower map.
\end{dfn}

\begin{dfn}
Let $\Pi:(\Tt,\theta)\tembto_{\minim}\Xx$. Let $\beta\in\theta\inter\lh(\Tt)^-$ 
and $\xi\in I_{\Pi\beta}$. Then $E_{\Pi\xi}$ denotes
$F^{Q_{\beta\xi}}$ 
(the lift of $E^\Tt_\beta$ in $\es_+(M^\Xx_\xi)$).
\end{dfn}

\begin{dfn}\label{dfn:trivial_tree_embedding} Given $\Tt,\Xx$ two putative 
$m$-maximal trees, $\Xx$  a true tree,
the \emph{trivial}  tree 
embedding
$\Pi:(\Tt,1)\tembto_{\minim}\Xx$ is that with $I_{\Pi 0}=[0,0]$.

If $\Tt=\Xx$, the \emph{identity} embedding
$\Pi:(\Tt,\lh(\Tt))\hookrightarrow(\Tt,\lh(\Tt))$ is that
with $I_{\Pi\beta}=[\beta,\beta]$ for all $\beta<\lh(\Tt)$.
\end{dfn}

\begin{lem}\label{lem:Pi_min_copy_normal}Let $\Pi:(\Tt,\alpha+1)\tembto_{\minim}\Xx$
where $\alpha+1<\lh(\Tt)$. Then $E_{\Pi\delta_{\Pi\alpha}}$ is 
$\Xx\rest(\delta_{\Pi\alpha}+1)$-normal.\end{lem}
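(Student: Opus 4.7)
Write $\delta=\delta_{\Pi\alpha}$ and $E^\star=E_{\Pi\delta}=F^{Q_{\alpha\delta}}$, so $\lh(E^\star)=\OR(Q_{\alpha\delta})$, and by Lemma \ref{lem:Pi_is_standard} we have $Q_{\alpha\delta}\ins M^\Xx_\delta$. The task is to show $\lh(E^\Xx_\eta)\leq\OR(Q_{\alpha\delta})$ for every $\eta<\delta$ with $E^\Xx_\eta$ defined. Since $\Xx$ is $m$-maximal its extender lengths are strictly increasing, so it suffices to control the supremum. I would split into cases on whether $\gamma^\Pi_{\alpha k^\Tt_\alpha}<\delta$ or $\gamma^\Pi_{\alpha k^\Tt_\alpha}=\delta$.

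\textbf{Case A} ($\gamma^\Pi_{\alpha k^\Tt_\alpha}<\delta$): The final sub-interval $I_{\alpha k^\Tt_\alpha}$ is non-trivial, so standardness clause T2\ref{item:U_alpha,i,xi_is_model} applied to $(\alpha,k^\Tt_\alpha,\delta)$ yields $(Q_{\alpha\delta},0)=(M^\Xx_\delta,\deg^\Xx(\delta))$, hence $\OR(Q_{\alpha\delta})=\OR(M^\Xx_\delta)$. Tree agreement in $\Xx$ then gives $\lh(E^\Xx_\eta)<\OR(M^\Xx_\delta)=\lh(E^\star)$ for every $\eta<\delta$, finishing this case.

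\textbf{Case B} ($\gamma^\Pi_{\alpha k^\Tt_\alpha}=\delta$): Now $I_{\alpha k^\Tt_\alpha}=[\delta,\delta]_\Xx$ is trivial, and $Q_{\alpha\delta}$ sits as a proper initial segment of $M^\Xx_\delta$, with the dropdown between them governed by T2\ref{item:dropdown_correspondence}(ii); in particular $\OR(Q_{\alpha\delta})$ is a cardinal of $M^\Xx_\delta$. For those $\eta$ with $\eta+1\in[0,\delta]_\Xx$, the extender $E^\Xx_\eta$ enters $\vec{F}^\Pi_\delta$, and Lemma \ref{lem:extender_comm} (iteratively applied along the branch, as licensed by $\vec{F}^\Pi_\delta$ being $(\exit^\Tt_\alpha,0)$-good by minimality) identifies $\OR(Q_{\alpha,\eta+1})=\sup\om_{\alpha,\eta+1}``\lh(E^\Tt_\alpha)$, and gives $\lh(E^\Xx_\eta)<\OR(Q_{\alpha,\eta+1})\leq\OR(Q_{\alpha\delta})$, the second inequality because the chain $\xi\mapsto\OR(Q_{\alpha\xi})$ is monotone increasing. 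For $\eta+1$ off the branch to $\delta$, strict monotonicity of extender lengths in $\Xx$ reduces to an on-branch index. Finally, for $\eta<\gamma_{\Pi\alpha}$ lying in some $I_{\Pi\beta}$ with $\beta<\alpha$, the clause of T3 (embedding agreement) with $(i,\xi)=(k^\Tt_\beta,\delta_\beta)$ and $\alpha'=\alpha$ gives $\lh(E^\Xx_{\delta_\beta})\leq\widehat{\om_{\alpha k^\Tt_\alpha\delta}}(\lh(E^\Tt_\beta))$, and then normality of $\Tt$ (whence $\lh(E^\Tt_\beta)<\lh(E^\Tt_\alpha)=\OR(\exit^\Tt_\alpha)$) and elementarity of the ultrapower map $\om_{\alpha k^\Tt_\alpha\delta}$ yield the desired bound on $\lh(E^\Xx_\eta)$.

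The main obstacle will be the on-branch analysis in Case B --- namely, verifying that $\OR(Q_{\alpha,\eta+1})$ strictly dominates $\lh(E^\Xx_\eta)$ at each branch stage. This requires combining Lemma \ref{lem:extender_comm} with $m$-maximality of $\Xx$ and careful bookkeeping of how the composed ultrapower map $\om_{\alpha,\xi}$ interacts with the tree-indexing of $\vec{F}^\Pi_\delta$; the dropdown-preservation Lemma \ref{lem:ult_dropdown} will be needed to maintain the segment relation $Q_{\alpha,\eta+1}\ins M^\Xx_{\eta+1}$ through each ultrapower step.
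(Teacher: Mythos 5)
Your plan can be pushed through, but it is much heavier than what the lemma needs, and two of your intermediate claims are wrong as stated. The paper's proof is a direct consequence of clause \ref{item:ult_exit_is_seg} of Definition \ref{dfn:min_tree_emb}: since $\vec{F}^\Pi_{\delta_{\Pi\alpha}}$ is $(\exit^\Tt_\alpha,0)$-good, each inflationary extender $E^\Xx_\eta$ occurring in it is a \emph{weakly amenable} extender over the corresponding partial ultrapower $P$ of $\exit^\Tt_\alpha$ (it measures exactly the subsets of its critical point lying in $P$, no more), so $\Ult_0(P,E^\Xx_\eta)$ agrees with $M^\Xx_{\eta+1}$ out to $\lh(E^\Xx_\eta)$, giving $\lh(E^\Xx_\eta)\leq\OR(Q_{\alpha,\eta+1})\leq\OR(Q_{\alpha\delta_{\Pi\alpha}})$; the copied extenders $E^\Xx_{\delta_{\Pi\beta}}=F^{Q_{\beta\delta_{\Pi\beta}}}$ for $\beta<\alpha$ are bounded because $(\exit^\Tt_\beta)^\passive$ is a cardinal initial segment of $\exit^\Tt_\alpha$ and this persists under ultrapowers by the common good sequence (as in the first part of the proof of Lemma \ref{lem:extender_comm}); normality of $\Xx$ handles all remaining indices. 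In particular no appeal to standardness is needed, whereas your Case A (via T2) and your use of T3 invoke Lemma \ref{lem:Pi_is_standard} and hence implicitly require $M$ to be $m$-standard --- a hypothesis not present in the statement of this lemma.

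The concrete errors: first, it is false that every $E^\Xx_\eta$ with $\eta+1\in[0,\delta_{\Pi\alpha}]_\Xx$ ``enters $\vec{F}^\Pi_{\delta_{\Pi\alpha}}$''. By Definition \ref{dfn:F-vec_tree_emb}, $\vec{F}^\Pi_{\delta_{\Pi\alpha}}$ is the concatenation of the $\vec{E}^\Xx_{\gamma_{\Pi\beta}\delta_{\Pi\beta}}$ over \emph{all} $\beta\leq\alpha$ in ordinal order; it therefore omits the copied extenders $E^\Xx_{\delta_{\Pi\beta}}$ ($\beta<\alpha$) even when $\gamma_{\Pi,\beta+1}$ lies on the branch to $\delta_{\Pi\alpha}$, and it includes extenders from intervals $I_{\Pi\beta}$ that lie entirely off that branch. (Your third sub-case does catch the copied extenders via T3, so the case analysis is recoverable, but the justification you give for the on-branch case does not apply to them, and $Q_{\alpha,\eta+1}$ is only defined for $\eta+1\in I_{\Pi\alpha}$ in any case.) Second, the identity $\OR(Q_{\alpha,\eta+1})=\sup\om_{\alpha,\eta+1}``\lh(E^\Tt_\alpha)$ is not correct in general, since degree-$0$ ultrapower maps need not be continuous at the ordinal height of an active premouse; what is needed, and what weak amenability delivers, is only the inequality $\lh(E^\Xx_\eta)\leq\OR(Q_{\alpha,\eta+1})$. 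The claim that $\OR(Q_{\alpha\delta})$ is a cardinal of $M^\Xx_\delta$ is likewise unjustified, though unused. With these repairs your argument works, but the goodness-based argument is both shorter and free of the extra hypothesis.
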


The lemma follows easily from the fact that
$\vec{F}_{\delta_{\Pi\alpha}}$ is $(\exit^\Tt_\alpha,0)$-good.
(It is important here that in particular,
$F_\xi$ does not measure \emph{more} subsets of its critical point
than those in $\Ult_0(\exit^\Tt_\alpha,\vec{F}_\xi)$.)

We can propagate minimal tree embeddings 
$\Tt\hookrightarrow_{\minim}\Xx$ via ultrapowers 
analogously to in \cite[4.23, 4.24]{iter_for_stacks},
so there are two possibilities: an extender is either \emph{$\Tt$-copying} or \emph{$\Tt$-inflationary}.
We first consider the $\Tt$-copying case.

\begin{dfn}\label{dfn:one-step}
 Let $\Pi:(\Tt,\alpha+1)\tembto_{\minim}\Xx$ be of degree $m$, with $\alpha+1<\lh(\Tt)$.
 Let $\gamma_\alpha=\gamma_{\Pi\alpha}$ etc.
Let
\[ \Xx'=\text{ the }m\text{-maximal tree }\Xx\rest(\delta_\alpha+1)\conc\left<E^\Pi_{\delta_\alpha}\right>\]
(by \ref{lem:Pi_min_copy_normal},  $E^\Pi_{\delta_\alpha}$ is $\Xx\rest(\delta_\alpha+1)$-normal).
Suppose that $M^{\Xx'}_{\delta_\alpha+1}$ is wellfounded.
Let
\[ \Pi':(\Tt,\alpha+2)\hookrightarrow_{\pre}\Xx \]
be such that $\Pi'\rest(\alpha+1)=\Pi$ and 
$I_{\alpha+1}^{\Pi'}=[\delta_\alpha+1,\delta_\alpha+1]_{\Xx'}$.
We say  $(\Xx',\Pi')$ (or just $\Pi'$ for short) is the 
\emph{one-step copy extension} of $(\Xx,\Pi)$ (or of $\Pi$).
\end{dfn}
\begin{lem}\label{lem:one-step_copy_extension_is_standard}
 Adopt the hypotheses of \ref{dfn:one-step}.
 Suppose $\Tt,\Xx$ are on an $m$-standard
 $M$.
 Then $\Pi'$ is standard, so $\Tt\rest(\alpha+2)$ has wellfounded 
models.
\end{lem}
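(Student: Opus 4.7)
The plan is to verify that $\Pi'$ is a puta-minimal tree pre-embedding from $(\Tt,\alpha+2)$ into $\Xx'$, and then invoke Lemma \ref{lem:Pi_is_standard}; that lemma will simultaneously deliver that $\Pi'$ is (genuinely) minimal, that it is standard, and that $\Tt\rest(\alpha+2)$ has wellfounded models. All clauses at indices ${\leq}\alpha$ are inherited directly from $\Pi$ being standard, using $\Xx'\rest(\delta_\alpha+1)=\Xx\rest(\delta_\alpha+1)$, so the real content lies at the new index $\alpha+1$.

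For the tree pre-embedding axioms of Definition \ref{dfn:tree_embedding} at $\alpha+1$, the substantive check is predecessor preservation \ref{item:pred_pres}. Set $\chi=\pred^\Tt(\alpha+1)$, $\kappa=\crit(E^\Tt_\alpha)$, and $n=n^\Tt_\kappa$. Using embedding agreement T3 of $\Pi$ and Definition \ref{dfn:pi_kappa}, the copy extender $E^\Pi_{\delta_\alpha}=F^{Q_{\alpha\delta_\alpha}}$ has critical point $\mu\eqdef\pi_{\Pi\kappa}(\kappa)$; moreover, the minimal choice of $\xi_{\Pi\kappa}\in I_{\chi n}$ in T3, combined with $\Xx$-normality propagating critical points along the $\Xx$-branch from $\delta_{\chi n}$ up to $\delta_\alpha$, forces $\mu<\crit(E^\Xx_\eta)$ for every $\eta$ with $\eta+1\in(\xi_{\Pi\kappa},\delta_\alpha]_\Xx$, and $\mu$ falls below the appropriate exchange ordinal at $\xi_{\Pi\kappa}$. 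Hence $\pred^{\Xx'}(\delta_\alpha+1)=\xi_{\Pi\kappa}\in I_{\chi n}\sub I_\chi=I^{\Pi'}_\chi$. The degree match \ref{item:degree_match} and drop correspondence then follow from the dropdown correspondence T2 of $\Pi$: the $m$-maximal rules for $\Xx'$ at $\delta_\alpha+1$, combined with the revex dropdown of $(M^\Xx_{\xi_{\Pi\kappa}},\deg^\Xx(\xi_{\Pi\kappa}))$ supplied by T2, select $(M^{*\Xx'}_{\delta_\alpha+1},\deg^{\Xx'}(\delta_\alpha+1))=(U_{\chi n\xi_{\Pi\kappa}},m_{\chi n})$, exactly matching $(M^{*\Tt}_{\alpha+1},\deg^\Tt(\alpha+1))=(M^\Tt_{\chi n},m_{\chi n})$ with the appropriate drop/no-drop behavior.

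For puta-minimality at $\alpha+1$: since $I^{\Pi'}_{\alpha+1}=\{\delta_\alpha+1\}$ is a singleton with $\gamma^{\Pi'}_{\alpha+1}=\delta^{\Pi'}_{\alpha+1}$, condition 1'(b) of Definition \ref{dfn:min_dagger_tree_emb} has vacuous conclusion (the set $I^{\Pi'}_{\alpha+1}\cut\{\delta^{\Pi'}_{\alpha+1}\}$ is empty, and pre-goodness is weaker than goodness), and condition (iii) of Definition \ref{dfn:min_tree_emb} is vacuous since $(\gamma^{\Pi'}_{\alpha+1},\delta^{\Pi'}_{\alpha+1}]_{\Xx'}=\emptyset$. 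So $\Pi'$ is puta-minimal, and Lemma \ref{lem:Pi_is_standard} applies. The substantive computational content is already handled inside that lemma's inductive successor step at index $\alpha+1$: Lemma \ref{lem:extender_comm} is invoked with $M=M_{\chi n}$, $P=\exit^\Tt_\alpha$ (so $G=E^\Tt_\alpha$), $\vec{E}=\vec{F}_{\xi_{\Pi\kappa}}$, and $\vec{F}$ the portion of $\vec{F}_{\delta_\alpha}$ beyond $\vec{F}_{\xi_{\Pi\kappa}}$, which is $\mu$-bounded precisely by the minimal choice of $\xi_{\Pi\kappa}$; this yields $M^{\Xx'}_{\delta_\alpha+1}=\Ult_n(M^\Tt_{\alpha+1},\vec{F}_{\delta_\alpha})$ with the associated ultrapower map arising as the Shift Lemma map. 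The main potential obstacle is ensuring the correct fit of extender commutativity in this setup, but this is handled verbatim by the successor-step argument already carried out in the proof of Lemma \ref{lem:Pi_is_standard}.
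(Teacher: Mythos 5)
Your proposal is correct and follows essentially the same route as the paper: the paper's proof is the one-line observation that $\Pi'$ is puta-minimal because $\gamma^{\Pi'}_{\alpha+1}=\delta^{\Pi'}_{\alpha+1}$ (so the nontrivial clauses of Definitions \ref{dfn:min_tree_emb} and \ref{dfn:min_dagger_tree_emb} are vacuous at the new index), followed by an appeal to Lemma \ref{lem:Pi_is_standard}. Your additional verifications of the pre-embedding axioms at $\alpha+1$ and your recapitulation of the extender-commutativity step are accurate but are exactly the content the paper delegates to Definition \ref{dfn:one-step} and to the successor step of the proof of Lemma \ref{lem:Pi_is_standard}.
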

\begin{proof}
$\Pi'$ is puta-minimal as $\gamma_{\alpha+1}=\delta_{\alpha+1}$,
so is minimal and standard by Lemma \ref{lem:Pi_is_standard}.
\end{proof}

We next consider the $\Tt$-inflationary case.

\begin{dfn}\label{dfn:inflationary_extender}
Let $\Pi:(\Tt,\theta)\tembto_{\minim}\Xx$, of degree $m$,
with $\lh(\Xx)=\eta+1$.
Let $\gamma_\alpha=\gamma_{\Pi\alpha}$, etc.
Let $E\in\es_+(M^\Xx_\eta)$ be $\Xx$-normal and $\Xx'$ be the putative 
$m$-maximal tree $\Xx\conc\left<E\right>$.
Let $\xi=\pred^{\Xx'}(\eta+1)$. 
Suppose that:
\begin{enumerate}[label=--]
\item $M^{\Xx'}_\infty$ is wellfounded,
\item $\xi\in I_\beta$ for some $\beta<\theta$,
 \item if $\beta+1<\lh(\Tt)$ then $E$ is a $Q_{\Pi\beta\xi}$-extender
and $\crit(E)<\nutilde(Q_{\Pi\beta\xi})$, and
 \item if $\beta+1=\lh(\Tt)$ then
 $\eta+1\notin\dropset_{\deg}^{\Xx'}$.
\end{enumerate}

The \emph{minimal $E$-inflation of $(\Xx,\Pi)$} is
$(\Xx',\Pi')$, where
$\Pi':(\Tt,\beta+1)\tembto_{\pre}\Xx'$ is such that 
$I_{\Pi'\beta}=(I_\beta\inter(\xi+1))\cup\{\eta+1\}$ and
$I_{\Pi'\alpha}=I_\alpha$ for every 
$\alpha<\beta$.
\end{dfn}

\begin{lem}\label{lem:E-inflation_is_minimal}
   Adopt the hypotheses of \ref{dfn:inflationary_extender}.
   Suppose $\Tt,\Xx$ are on an $m$-standard $M$.
   Then $\Pi'$ is minimal and standard.
\end{lem}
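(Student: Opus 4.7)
The plan is to reduce everything to Lemma \ref{lem:Pi_is_standard} by verifying that $\Pi'$ is puta-minimal in the sense of Definition \ref{dfn:min_dagger_tree_emb}. Once puta-minimality is established, Lemma \ref{lem:Pi_is_standard} automatically yields that $\Pi'$ is a standard minimal tree embedding, and that $\Tt\rest(\beta+1)$ has wellfounded models. All data of $\Pi'$ below $\beta$ coincides with that of $\Pi$, so the conditions of Definitions \ref{dfn:tree_embedding} and \ref{dfn:min_dagger_tree_emb} at indices $\alpha<\beta$ are literally inherited from the minimality of $\Pi$; the whole content lies at the index $\alpha=\beta$.

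The key geometric observation is that, because $\xi=\pred^{\Xx'}(\eta+1)$ lies in $I_\beta=[\gamma_\beta,\delta_\beta]_\Xx$, the new interval
\[ I_{\Pi'\beta}=(I_\beta\cap(\xi+1))\cup\{\eta+1\}=[\gamma_\beta,\xi]_\Xx\cup\{\eta+1\}=[\gamma_\beta,\eta+1]_{\Xx'} \]
is a legitimate closed $<_{\Xx'}$-interval, so condition \ref{item:I_beta_Xx_clint} of Definition \ref{dfn:tree_embedding} holds. The order-preservation, continuity, degree, and successor clauses for $\Pi'$ are inherited from $\Pi$ (noting $\Gamma'\rest\beta=\Gamma$ and $\deg^{\Xx'}(\gamma_\beta)=\deg^\Xx(\gamma_\beta)=\deg^\Tt(\beta)$), and clause \ref{item:pred_pres} is vacuous since $\theta'=\beta+1$.

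For the puta-minimality condition at $\alpha=\beta$, the key computation is
\[ \vec{F}^{\Pi'}_{\eta+1}=\vec{F}^{\Pi'}_{\gamma_\beta}\conc\vec{E}^{\Xx'}_{\gamma_\beta,\eta+1}=\vec{F}^{\Pi}_\xi\conc\langle E\rangle, \]
which uses that $[\gamma_\beta,\eta+1]_{\Xx'}=[\gamma_\beta,\xi]_\Xx\cup\{\eta+1\}$ together with $\pred^{\Xx'}(\eta+1)=\xi$, and that $\Pi'$ agrees with $\Pi$ below $\beta$. In the case $\beta+1<\lh(\Tt)$, we apply clause (b) of Definition \ref{dfn:min_dagger_tree_emb}: by minimality of $\Pi$, $\vec{F}^\Pi_{\gamma_\beta}$ is $(\exit^\Tt_\beta,0)$-good with $Q_{\Pi\beta\gamma_\beta}\ins M^\Xx_{\gamma_\beta}$, and by the hypothesis that $E$ is a $Q_{\Pi\beta\xi}$-extender with $\crit(E)<\nutilde(Q_{\Pi\beta\xi})$, the concatenation $\vec{F}^\Pi_\xi\conc\langle E\rangle$ is $(\exit^\Tt_\beta,0)$-pre-good; wellfoundedness of the resulting ultrapower follows from that of $M^{\Xx'}_{\eta+1}$, since $\Ult_0(Q_{\Pi\beta\xi},E)$ is an initial segment of $M^{\Xx'}_{\eta+1}$. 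For $\xi'\in I_{\Pi'\beta}\setminus\{\eta+1\}=[\gamma_\beta,\xi]_\Xx$, the segment clause $Q^{\Pi'}_{\beta\xi'}\ins M^{\Xx'}_{\xi'}$ is inherited, since nothing has changed there. In the case $\beta+1=\lh(\Tt)$, clause 1 of puta-minimality is vacuous, and clause 3 requires $(\gamma_\beta,\eta+1]_{\Xx'}$ to avoid drops in model and degree; the sub-interval $(\gamma_\beta,\xi]_\Xx$ is drop-free by minimality of $\Pi$, and the hypothesis $\eta+1\notin\dropset_{\deg}^{\Xx'}$ supplies the remaining step. Clause 2, which demands $E^{\Xx'}_{\delta^{\Pi'}_\alpha}=F^{Q_{\alpha\delta^{\Pi'}_\alpha}}$ when $\alpha+1<\theta'$, only concerns $\alpha<\beta$ and is inherited.

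The only non-routine step is identifying $\vec{F}^{\Pi'}_{\eta+1}$ with $\vec{F}^\Pi_\xi\conc\langle E\rangle$ and recognizing $I_{\Pi'\beta}$ as a closed $\Xx'$-interval; both are immediate from the fact that $\pred^{\Xx'}(\eta+1)=\xi\in I_\beta$. Everything else is direct inheritance or the transfer of $M^{\Xx'}_{\eta+1}$-wellfoundedness to the relevant sub-ultrapower, so the argument should go through without significant difficulty once the data at $\beta$ has been set up correctly.
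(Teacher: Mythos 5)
Your proposal is correct and follows exactly the route the paper intends: the paper's own proof is the one-line remark that the lemma is a direct consequence of the definitions together with Lemma \ref{lem:Pi_is_standard}, and your argument simply fills in the routine verification of puta-minimality at the index $\beta$ (the identification $I_{\Pi'\beta}=[\gamma_\beta,\eta+1]_{\Xx'}$ and $\vec{F}^{\Pi'}_{\eta+1}=\vec{F}^\Pi_\xi\conc\langle E\rangle$, plus pre-goodness from the hypothesis on $E$) before invoking that lemma. The only cosmetic point is that the segment fact $\Ult_0(Q_{\Pi\beta\xi},E)\ins M^{\Xx'}_{\eta+1}$ is really an output of Lemma \ref{lem:Pi_is_standard} rather than something needed for pre-goodness, but since pre-goodness does not require wellfoundedness of the final ultrapower this does not affect the argument.
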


The lemma is a direct consequence of the definitions
and Lemma \ref{lem:Pi_is_standard}.

\subsection{Minimal inflation}

We now proceed to the definition of a \emph{minimal inflation} of a normal 
iteration tree $\Tt$.
This is almost the exact definition of \emph{inflation} from \cite{iter_for_stacks};
the only differences are that here we are not considering wcpms (coarse structures),
and we use the minimal one-step copy extension and minimal-$E$-inflation
at successor steps, instead of the non-minimal versions.
But we will write out the definition explicitly, for convenience.
An intuitive introduction can be seen in \cite[\S4.3]{iter_for_stacks}.
We will use notation much like there.

\begin{dfn}[Minimal inflation]\label{dfn:min_inflation}
Let $M$ be $m$-standard
and $\Tt,\Xx$ be putative $m$-maximal trees on $M$,
$\Xx$ a true tree.
We say that $\Xx$ is a \emph{minimal inflation} of $\Tt$,
written $\Tt\mininflatearrow\Xx$, iff there is
$\left(t,C,C^-,f,\left<\Pi_\alpha\right>_{\alpha\in C}\right)$
with the following properties (which unique the tuple); we will also define further 
notation:
\begin{enumerate}[label=\arabic*.,ref=\arabic*]
\item 
\index{type}\index{$\Tt$-copying}\index{copying}\index{$\Tt$-inflationary}
\index{inflationary}We have $t:\lh(\Xx)^-\to\{0,1\}$. The value of $t(\alpha)$ 
indicates the 
\emph{type} of 
$E^\Xx_\alpha$, either \emph{$\Tt$-copying} (if $t(\alpha)=0$) or 
\emph{$\Tt$-inflationary} (if 
$t(\alpha)=1$).
\item\label{item:C_nature} $C\sub\lh(\Xx)$
and $C\inter[0,\alpha]_\Xx$ is a closed\footnote{One could consider dropping the closure 
requirement here; cf.~\cite[Footnote q/17]{iter_for_stacks}.}
initial segment of $[0,\alpha]_\Xx$.
\item\label{item:def_C^-} We have $f:C\to\lh(\Tt)$ and
$C^-=\{\alpha\in C\bigm|   f(\alpha)+1<\lh(\Tt)\}$.
\item For $\alpha\in C$ we have 
$\Pi_\alpha:(\Tt, f(\alpha)+1)\tembto_{\minim}\Xx\rest(\alpha+1)$, with 
$\delta_{\alpha; f(\alpha)}=\alpha$, where 
we write $\delta_{\alpha;\beta}=\delta_{\Pi_\alpha\beta}$, etc.
\index{$\gamma_{\alpha;\beta}$, $\delta_{\alpha;\beta}$, etc}
\item $0\in C$ and $f(0)=0$ and
$\Pi_0:(\Tt,1)\tembto_{\minim}\Xx\rest 1$
is trivial (see \ref{dfn:trivial_tree_embedding}).
\item Let $\alpha+1<\lh(\Xx)$. Then:\footnote{One might
also consider weakening these conditions;
cf.~\cite[Footnotes r/18, s/19]{iter_for_stacks}.}
\begin{enumerate}[label=--]
 \item If $\alpha\in C^-$ then 
$\lh(E^\Xx_\alpha)\leq\lh(E_{\Pi_\alpha\alpha})$.
\item $t(\alpha)=0$ iff [$\alpha\in C^-$ and
$E^\Xx_\alpha=E^{\Pi_\alpha}_\alpha$].
\end{enumerate}
\item Let $\alpha+1<\lh(\Xx)$ be such that $t(\alpha)=0$. Then we interpret 
$E^\Xx_\alpha=E_{\Pi_\alpha\alpha}$ as a copy from $\Tt$, as follows:
\begin{enumerate}[label=--]
 \item  $\alpha+1\in C$ and $f(\alpha+1)= f(\alpha)+1$.
 \item $(\Xx\rest\alpha+2,\Pi_{\alpha+1})$
is the minimal one-step copy  extension of 
$(\Xx\rest\alpha+1,\Pi_\alpha)$.
\end{enumerate}
\item Let $\alpha+1<\lh(\Xx)$ be such that $t(\alpha)=1$.
Then we interpret $E^\Xx_\alpha$ as $\Tt$-inflationary, as follows.
Let $\xi=\pred^\Xx(\alpha+1)$. Then:
\begin{enumerate}[label=--]
\item $\alpha+1\in C$ iff:
\begin{enumerate}[label=--]
 \item $\xi\in C^-$ and $Q_{\xi;f(\xi)}\ins M^{*\Xx}_{\alpha+1}$, or
\item $\xi\in C\cut C^-$ and
$\alpha+1\notin\dropset_{\deg}^\Xx$.
\end{enumerate}
\item If $\alpha+1\in C$ then:
\begin{enumerate}[label=--]
\item $f(\alpha+1)=f(\xi)$.
\item $(\Xx\rest\alpha+2,\Pi_{\alpha+1})$
is the minimal $E^\Xx_\alpha$-inflation of
$(\Xx\rest\alpha+1,\Pi_\xi)$.
\end{enumerate}
\end{enumerate}
\item\label{item:inflation_internal_agreement} Let $\alpha\in C$ and $\beta\in 
I_{\alpha;\gamma}$ for some $\gamma\leq f(\alpha)$. Then:
\begin{enumerate}[label=--]
\item $\beta\in C$ and $f(\beta)=\gamma$.
\item  $I_{\alpha;\eps}=I_{\beta;\eps}$ for all $\eps<f(\beta)=\gamma$,
\item $I_{\beta;f(\beta)}=I_{\alpha;f(\beta)}\inter(\beta+1)$.
\end{enumerate}
\item\label{item:inflation_limit_ordinal} If $\alpha\in C$ is a 
limit\footnote{By condition \ref{item:C_nature},
if $\alpha$ is a limit then $\alpha\in C$ iff $[0,\alpha)_\Xx\sub C$.} then 
$ f(\alpha)=\sup_{\beta<^\Xx\alpha}f(\beta)$.\qedhere
\end{enumerate}

\end{dfn}

\begin{rem}
Note that in the definition of minimal inflation,
we assume that $M^\Tt_0$ is $m$-standard,
where $\Tt,\Xx$ are $m$-maximal.

Adopt the hypotheses and notation
of condition \ref{item:inflation_internal_agreement} above.
Note that
\[ U_{\alpha;f(\beta)0}=M^\Xx_{\gamma_{\alpha;f(\beta)}}
=M^\Xx_{\gamma_{\beta;f(\beta)}}=U_{\beta;f(\beta)0}\]
and
$\pi_{\alpha;f(\beta)0}=\pi_{\beta;f(\beta)0}$.
By \ref{lem:intervals_I_cover_X-branches}\ref{item:<^X_down_close_I_xi},
if $\widetilde{\beta}\leq_\Xx\alpha$
then $\widetilde{\beta}\in I_{\alpha;\widetilde{\gamma}}$
for some $\widetilde{\gamma}\leq^\Tt f(\alpha)$, so condition 
\ref{item:inflation_internal_agreement}
applies to $\widetilde{\beta},\widetilde{\gamma}$,
and therefore $f(\widetilde{\beta})\leq_\Tt f(\alpha)$.

Let $\alpha\in C$ be a limit.
As in \cite{iter_for_stacks}, 
$\Pi_\alpha$
 is determined by $\left<\Pi_\beta\right>_{\beta<\alpha}$ and $\Tt$ and 
$\Xx\rest(\alpha+1)$.
For suppose
$f(\alpha)>f(\beta)$ for all $\beta<_\Xx\alpha$.
From condition \ref{item:inflation_internal_agreement}, for $\xi<f(\alpha)$, it 
follows that
\[ I_{\alpha;\xi}=(\lim_{\beta<_\Xx\alpha}I_{\beta;\xi})=\text{unique value of }I_{\beta;\xi}\text{ for sufficiently large }\beta<_\Xx\alpha.\]
So
$\alpha=\left(\lim_{\xi<f(\alpha)}\gamma_{\alpha;\xi}\right)=
\gamma_{\alpha;f(\alpha)}=
\delta_{\alpha;f(\alpha)}$,
so $I_{\alpha;f(\alpha)}=[\alpha,\alpha]$,
determining $\pi_{\alpha;f(\alpha)0}$, 
etc. Suppose now 
$f(\alpha)=f(\beta)$ for some $\beta<_\Xx\alpha$.
For such $\beta$ we have $\gamma_{\alpha;f(\alpha)}=\gamma_{\beta;f(\alpha)}$,
and $\delta_{\alpha;f(\alpha)}=\alpha$. This determines
the remaining objects ($I_{\alpha;f(\alpha)}$, etc); they are just the
natural direct limits.
\end{rem}

Using the preceding remark, 
the reader will verify the following uniqueness:

\begin{lem}
If $\Tt\mininflatearrow\Xx$, and
  $w=(t,C,C^-,f,\Pivec)$ and  $w'=(t',C',(C^-)',f',\Pivec')$
both witness this fact,
then $w=w'$.
\end{lem}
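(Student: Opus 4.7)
The plan is a transfinite induction on $\alpha\leq\lh(\Xx)$ with inductive hypothesis $P(\alpha)$: the tuples $w,w'$ agree at every stage $<\alpha$, i.e.\ for each $\beta<\alpha$, $t(\beta)=t'(\beta)$ (when defined), $\beta\in C\iff\beta\in C'$, $\beta\in C^-\iff\beta\in(C^-)'$, and, when $\beta\in C$, $f(\beta)=f'(\beta)$ and $\Pi_\beta=\Pi'_\beta$. The base case $\alpha=1$ is forced by the requirements that $0\in C$, $f(0)=0$, and $\Pi_0$ is trivial, with membership in $C^-$ then determined by $\lh(\Tt)$.

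For the successor step from $\alpha$ to $\alpha+1$, I would first pin down $t(\alpha)$. If $\alpha\in C$ then by induction $\Pi_\alpha=\Pi'_\alpha$, hence $E_{\Pi_\alpha\alpha}$ is determined, and the rule $t(\alpha)=0\iff[\alpha\in C^-$ and $E^\Xx_\alpha=E_{\Pi_\alpha\alpha}]$ settles $t(\alpha)$; if $\alpha\notin C$ then $\alpha\notin C^-$, forcing $t(\alpha)=1$. In the copying case, the defining clause forces $\alpha+1\in C$, $f(\alpha+1)=f(\alpha)+1$, and $\Pi_{\alpha+1}$ to be the minimal one-step copy extension of Definition~\ref{dfn:one-step}, which is intrinsically unique given $\Pi_\alpha$. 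In the inflationary case, let $\xi=\pred^\Xx(\alpha+1)$; the dichotomy on $\xi$ in Definition~\ref{dfn:min_inflation} (applied to $\Pi_\xi$, already fixed by induction) decides whether $\alpha+1\in C$, and if so, $f(\alpha+1)=f(\xi)$ and $\Pi_{\alpha+1}$ is the unique minimal $E^\Xx_\alpha$-inflation from Definition~\ref{dfn:inflationary_extender}. Membership in $C^-$ at $\alpha+1$ is then read off from $f(\alpha+1)$ and $\lh(\Tt)$.

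For the limit step at $\alpha$, the closure condition \ref{dfn:min_inflation}(\ref{item:C_nature}) forces $\alpha\in C\iff[0,\alpha)_\Xx\sub C$, a condition symmetric between $w$ and $w'$ and thus settled by induction. If $\alpha\in C$, clause~\ref{dfn:min_inflation}(\ref{item:inflation_limit_ordinal}) fixes $f(\alpha)=\sup_{\beta<_\Xx\alpha}f(\beta)=f'(\alpha)$, and the remark following Definition~\ref{dfn:min_inflation} isolates that $\Pi_\alpha$ is uniquely determined by $\Tt$, $\Xx\rest(\alpha+1)$, and $\left<\Pi_\beta\right>_{\beta<_\Xx\alpha}$. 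The argument splits into the sub-cases where $f$ is strictly cofinal in $f(\alpha)$ along $[0,\alpha)_\Xx$ (so the intervals $I_{\alpha;\xi}$ for $\xi<f(\alpha)$ are eventual values and $I_{\alpha;f(\alpha)}=[\alpha,\alpha]$) and where $f$ stabilises on a tail of $[0,\alpha)_\Xx$ (so the relevant objects are natural direct limits along the $\Pi_\beta$'s).

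The main obstacle I anticipate is the limit step: one must verify that the direct-limit construction coincides on both sides when $f$ stabilises, and that the ``eventual value'' construction of intervals does too. However, the cited remark already pins down $\Pi_\alpha$ from the antecedent data; since the inductive hypothesis delivers identical antecedent data for $w$ and $w'$, the two constructions have identical inputs and therefore identical outputs. Granted this, the successor and limit steps together close the induction and yield $w=w'$.
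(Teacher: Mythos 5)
Your proof is correct and follows exactly the route the paper intends: the paper omits the argument, saying only ``Using the preceding remark, the reader will verify the following uniqueness,'' and your transfinite induction — with successor stages forced by the uniqueness of the one-step copy extension and the minimal $E$-inflation, and limit stages handled by the remark's determination of $\Pi_\alpha$ from $\left<\Pi_\beta\right>_{\beta<_\Xx\alpha}$, $\Tt$, and $\Xx\rest(\alpha+1)$ — is precisely that verification.
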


\begin{dfn}
 If $\Tt\mininflatearrow\Xx$, then write
$(t,C,C^-,f,\Pivec)^{\Tt\mininflatearrow\Xx}$
for the unique witness $w$. For $\alpha\in C^-$  write
$E^{\Tt\mininflatearrow\Xx}_\alpha\eqdef E_{\Pi_\alpha\alpha}$.
\end{dfn}

As in \cite{iter_for_stacks}, we may freely extend inflations at successor stages, given wellfoundedness:

\begin{lem}
Let $\Tt\mininflatearrow\Xx$, of degree $m$ \tu{(}so $M^\Tt_0$ is $m$-standard\tu{)},
with $\Xx$ of successor length $\beta+1$.
Let $C^-=(C^-)^{\Tt\mininflatearrow\Xx}$. Then:
\begin{enumerate}[label=\arabic*.,ref=\arabic*]
\item\label{item:copy_is_Xx-normal} If $\beta\in C^-$ then $E^{\Tt\mininflatearrow\Xx}_\beta$ is $\Xx$-normal.

\item\label{item:extend_inflation} Let $E\in\es_+(M^\Xx_\beta)$ be $\Xx$-normal, 
with $\lh(E)\leq\lh(E^{\Tt\mininflatearrow\Xx}_\beta)$ if $\beta\in C^-$.
Let $\Xx'$ be the putative $m$-maximal tree $\Xx\conc\left<E\right>$, and suppose $M^{\Xx'}_\infty$
is wellfounded.
Then $\Xx'$ is a minimal inflation of $\Tt$.
\end{enumerate}
\end{lem}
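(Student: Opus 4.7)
For part \ref{item:copy_is_Xx-normal}, the hypothesis $\beta\in C^-$ unpacks, by clause \ref{item:def_C^-} of Definition \ref{dfn:min_inflation}, to $f(\beta)+1<\lh(\Tt)$, and we have $\Pi_\beta:(\Tt,f(\beta)+1)\tembto_{\minim}\Xx$ with $\delta_{\Pi_\beta\,f(\beta)}=\beta$. The plan is to invoke Lemma \ref{lem:Pi_min_copy_normal} directly on $\Pi_\beta$, taking its $\alpha$ to be $f(\beta)$: this yields that $E_{\Pi_\beta\beta}=E^{\Tt\mininflatearrow\Xx}_\beta$ is $\Xx\rest(\beta+1)$-normal, which is exactly $\Xx$-normality since $\lh(\Xx)=\beta+1$.

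For part \ref{item:extend_inflation}, the plan is to produce the witnessing tuple $w'=(t',C',(C^-)',f',\vec{\Pi}')$ for $\Tt\mininflatearrow\Xx'$ by extending $w=(t,C,C^-,f,\vec{\Pi})$ at index $\beta$ (and possibly $\beta+1$). Decide $t'(\beta)\in\{0,1\}$ by the dichotomy built into Definition \ref{dfn:min_inflation}: set $t'(\beta)=0$ if $\beta\in C^-$ and $E=E^{\Tt\mininflatearrow\Xx}_\beta$, otherwise $t'(\beta)=1$. In the first (\emph{copying}) case, Definition \ref{dfn:one-step} applies, using part \ref{item:copy_is_Xx-normal} for $\Xx$-normality together with the hypothesized wellfoundedness of $M^{\Xx'}_\infty$; put $\beta+1\in C'$, $f'(\beta+1)=f(\beta)+1$, and let $\Pi_{\beta+1}$ be the one-step copy extension, which is minimal and standard by Lemma \ref{lem:one-step_copy_extension_is_standard}. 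In the second (\emph{inflationary}) case, let $\xi=\pred^{\Xx'}(\beta+1)$; since $\xi\leq_\Xx\beta$, Remark \ref{lem:intervals_I_cover_X-branches}\ref{item:<^X_down_close_I_xi} applied to $\Pi_\beta$ locates $\gamma\leq_\Tt f(\beta)$ with $\xi\in I_{\beta;\gamma}$, and clause \ref{item:inflation_internal_agreement} of the inflation witness then gives $\xi\in C$ and $f(\xi)=\gamma$. Decide whether $\beta+1\in C'$ by testing the two alternatives of the inflationary clause of Definition \ref{dfn:min_inflation}; if so, set $f'(\beta+1)=f(\xi)$ and let $\Pi_{\beta+1}$ be the minimal $E$-inflation of $(\Xx,\Pi_\beta)$ of Definition \ref{dfn:inflationary_extender} (applied with $\eta=\beta$ and index $f(\xi)$), which is minimal and standard by Lemma \ref{lem:E-inflation_is_minimal}; otherwise $\beta+1\notin C'$ and nothing further is defined.

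It remains to verify the clauses of Definition \ref{dfn:min_inflation} for $w'$; most are either inherited from $w$ or built in by the case definitions. The closure requirement \ref{item:C_nature} at $\beta+1$ is immediate, since the only candidate new member is $\beta+1$ and its $\Xx'$-predecessor is $\xi\in C$. Condition \ref{item:inflation_internal_agreement} at index $\beta+1$ reduces, for subindices $\eps<f'(\beta+1)$, to the equalities $I_{\beta+1;\eps}=I_{\beta;\eps}=I_{\xi;\eps}$ given by \ref{item:inflation_internal_agreement} for $w$ at $\xi\in I_{\beta;f(\xi)}$, and for $\eps=f'(\beta+1)$ to the truncation $I_{\beta+1;f(\xi)}=\bigl(I_{\beta;f(\xi)}\cap(\xi+1)\bigr)\cup\{\beta+1\}$ built into the inflation construction. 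The main technical obstacle I anticipate is verifying the preconditions of Definition \ref{dfn:inflationary_extender} on $E$ in the sub-case $\xi\in C^-$: namely, that $E$ is a $Q_{\xi;f(\xi)}$-extender with $\crit(E)<\nutilde(Q_{\xi;f(\xi)})$. This should come from $\Xx$-normality of $E$ (so $\crit(E)<\lh(E^\Xx_\xi)$), the hypothesis $\lh(E)\leq\lh(E_{\Pi_\beta\beta})$ when $\beta\in C^-$, and the segment inclusion $Q_{\xi;f(\xi)}\ins M^{*\Xx'}_{\beta+1}$ forced by the $\beta+1\in C'$ criterion; the agreements between $\pi_{\Pi_\beta}$ and the natural $\Xx$-embeddings, encoded in clause \ref{item:T_emb_agmt} of the standardness of $\Pi_\beta$, then pin down that $E$ has the required form.
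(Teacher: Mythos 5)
Your overall route is the paper's own: part \ref{item:copy_is_Xx-normal} is exactly the intended application of Lemma \ref{lem:Pi_min_copy_normal} to $\Pi_\beta$ with $\alpha=f(\beta)$ and $\delta_{\beta;f(\beta)}=\beta$, and part \ref{item:extend_inflation} is the case split into the one-step copy extension (Lemma \ref{lem:one-step_copy_extension_is_standard}) and the minimal $E$-inflation (Lemma \ref{lem:E-inflation_is_minimal}); the paper's proof is literally just this citation, so your write-up is a legitimate expansion of it.

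One step in your inflationary case is wrong as written, though it is not load-bearing. You assert that $\xi=\pred^{\Xx'}(\beta+1)$ satisfies $\xi\leq_\Xx\beta$ and then apply Remark \ref{lem:intervals_I_cover_X-branches}\ref{item:<^X_down_close_I_xi} to $\Pi_\beta$ to place $\xi$ in some $I_{\beta;\gamma}$ and conclude $\xi\in C$. Neither claim holds in general: the $\Xx'$-predecessor of the new node is computed from $\crit(E)$ and need not lie on $[0,\beta]_\Xx$ (already with three extenders one can have $\pred(3)=1$ while $1\not<_\Xx 2$), and moreover $\beta$ itself need not belong to $C$, in which case $\Pi_\beta$ is not even defined. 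The correct reading of the inflationary clause of Definition \ref{dfn:min_inflation} is that membership of $\beta+1$ in $C'$ is decided directly by whether $\xi\in C^-$ (together with $Q_{\xi;f(\xi)}\ins M^{*\Xx'}_{\beta+1}$) or $\xi\in C\setminus C^-$ (together with $\beta+1\notin\dropset_{\deg}^{\Xx'}$); if $\xi\notin C$ then $\beta+1\notin C'$ and nothing further is defined. When $\beta+1\in C'$, the minimal $E$-inflation must be taken of $(\Xx,\Pi_\xi)$, not of $(\Xx,\Pi_\beta)$; the precondition of Definition \ref{dfn:inflationary_extender} that $\xi$ lie in some interval of the tree embedding being inflated is then automatic, since $\xi=\delta_{\xi;f(\xi)}\in I_{\xi;f(\xi)}$. (In the special situation where $\beta\in C$ and $\xi$ does happen to lie in some $I_{\beta;\gamma}$, clause \ref{item:inflation_internal_agreement} makes your version agree with this one.) With that substitution the rest of your verification goes through.
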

\begin{proof}
 Part \ref{item:copy_is_Xx-normal} follows from \ref{lem:Pi_min_copy_normal},
 and part \ref{item:extend_inflation} from \ref{lem:one-step_copy_extension_is_standard}
and \ref{lem:E-inflation_is_minimal}.
\end{proof}

However, just as in \cite{iter_for_stacks},
at limit stages we need to assume some condensation holds of $\Sigma$,
in order to extend. See \cite[\S4.4]{iter_for_stacks}
for some introduction.

\subsection{Strategy condensation}

\begin{dfn}\index{inflation condensation, $\lambda$-indexed}\label{dfn:minimal_inflation_condensation}
Let $\Omega>\om$ be regular.
Let $\Sigma$ be an $(m,\Omega+1)$-strategy for an $m$-standard pm $M$.
Then $\Sigma$ has \dfnemph{minimal inflation condensation} (\dfnemph{mic}) or is \dfnemph{minimal-inflationary}
iff for all trees $\Tt,\Xx$, if
\begin{enumerate}[label=--]
 \item $\Tt,\Xx$ are via $\Sigma$,
 \item $\Xx$ is a minimal inflation of $\Tt$, as witnessed by $(f,C,\ldots)$,
 \item $\Xx$ has limit length and $\lh(\Xx)\leq\Omega$,
 \item $b\eqdef\Sigma(\Xx)\sub C$ and $f``b$ has limit ordertype,
\end{enumerate}
then letting  $\eta=\sup f``b$, we have 
$f``b=\Sigma(\Tt\rest\eta)$.
\end{dfn}

Like in \cite{iter_for_stacks},
the definition immediately gives that minimal inflations via minimal-inflationary $\Sigma$ can be continued at limit stages:

\begin{lem}\label{lem:inflationary_limit_continuation}
Let $\Omega>\om$ be regular. Let $\Sigma$ be a minimal-inflationary 
$(m,\Omega+1)$-strategy for an $m$-standard $M$.
Let $\Tt,\Xx$ be such that $\Xx$ is via $\Sigma$, $\Xx$ is a minimal inflation of $\Tt$,
as witnessed by $(f,C,\ldots)$, and
$\lh(\Tt)=\sup_{\alpha\in C}(f(\alpha)+1)$.
Then $\Tt$ is via $\Sigma$.

Suppose also that $\Xx$ has limit length $\lambda$ and
 let $\Xx'=(\Xx,\Sigma(\Xx))$.
Then there is $\Tt'$ via $\Sigma$ such that $\Tt\ins\Tt'$ and $\Xx'$ is an inflation of $\Tt'$, as witnessed by $(C',f',\ldots)$.
 Moreover,  we may take $\Tt'$ such that either:
 \begin{enumerate}[label=--]
 \item $\Tt'=\Tt$ and if $\lambda\in C'$ then $f'(\lambda)<\lh(\Tt)$, or
 \item $\Tt$ has limit length $\bar{\lambda}$,  
  $\Tt'=(\Tt,\Sigma(\Tt))$, $\lambda\in C'$, $f'(\lambda)=\bar{\lambda}$
 and $\gamma'_{\lambda;\bar{\lambda}}=\lambda$.
 \end{enumerate}
 Further, the choice of $\Tt'$ is uniqued by adding these requirements.
\end{lem}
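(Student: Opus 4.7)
The plan is to prove the two assertions separately: first that $\Tt$ is via $\Sigma$, by applying mic to initial segments of $\Xx$ corresponding to each limit $\beta<\lh(\Tt)$; then, in the limit-$\lambda$ case, to analyze whether $b=\Sigma(\Xx)$ is contained in $C$ and whether $f``b$ is cofinal in $\lh(\Tt)$, with mic handling the only non-trivial case.

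For the first assertion, fix a limit $\beta<\lh(\Tt)$; we show $[0,\beta)_\Tt=\Sigma(\Tt\rest\beta)$. By the hypothesis $\lh(\Tt)=\sup_{\alpha\in C}(f(\alpha)+1)$, pick $\alpha\in C$ with $f(\alpha)\geq\beta$, and let $\gamma=\gamma_{\alpha;\beta}\in[0,\alpha]_\Xx\cap C$. Since $f$ preserves successors (Definition \ref{dfn:tree_embedding}), $\gamma$ is a limit. By the internal-agreement condition \ref{dfn:min_inflation}(\ref{item:inflation_internal_agreement}) and the subsequent remark, $[0,\gamma)_\Xx\sub C$, and the observation that for $\alpha\in C$ one has $f``[0,\alpha]_\Xx=[0,f(\alpha)]_\Tt$ (using that each $\gamma_{\alpha;\eta}$ lies in $C$ with $f$-value $\eta$) gives $f``[0,\gamma)_\Xx=[0,\beta)_\Tt$, of limit ordertype and cofinal in $\beta$. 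Since $\Xx\rest\gamma$ is via $\Sigma$ and is a minimal inflation of $\Tt\rest(\beta+1)$, mic applied with this initial segment yields $\Sigma(\Tt\rest\beta)=f``[0,\gamma)_\Xx=[0,\beta)_\Tt$, as required.

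For the second assertion, suppose $\Xx$ has limit length $\lambda$ and set $b=\Sigma(\Xx)$. There are two cases. If either $b\not\sub C$, or $b\sub C$ but $\eta\eqdef\sup f``b<\lh(\Tt)$, take $\Tt'=\Tt$; then one extends $(t,C,C^-,f,\Pivec)$ to a witness $(t',C',(C^-)',f',\Pivec')$ for $\Tt\mininflatearrow\Xx'$, adding $\lambda$ to $C'$ precisely when $b\sub C$, with $f'(\lambda)=\eta$ and with $\Pi'_\lambda$ defined as the direct limit of the $\Pi_\beta$ for $\beta\in b$ as in the remark following Definition \ref{dfn:min_inflation}. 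Otherwise $b\sub C$ and $\eta=\lh(\Tt)$, forcing $\lh(\Tt)$ to be a limit $\bar\lambda$ (since $f``b$ has no maximum); here mic applied directly to $\Xx$ gives $f``b=\Sigma(\Tt\rest\bar\lambda)$, so the first assertion tells us $\Tt$ is via $\Sigma$ and we may set $\Tt'=(\Tt,\Sigma(\Tt))$. One then extends the inflation witness by placing $\lambda\in C'$ with $f'(\lambda)=\bar\lambda$ and $\gamma'_{\lambda;\bar\lambda}=\lambda$, and $\Pi'_\lambda:(\Tt',\bar\lambda+1)\tembto_\minim\Xx'\rest(\lambda+1)$ defined by the natural direct limit at $\bar\lambda$; pre-standardness propagates by Lemma \ref{lem:Pi_is_standard}.

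The hard part is the bookkeeping to verify that the extended witness $(t',C',(C^-)',f',\Pivec')$ actually satisfies all clauses of Definition \ref{dfn:min_inflation}, particularly the limit-stage construction of $\Pi'_\lambda$ as a direct limit and the identification $\gamma'_{\lambda;\bar\lambda}=\lambda$ in the second case. However, these details are essentially parallel to the analogous passage in \cite[\S4]{iter_for_stacks}, and follow once one checks that the direct-limit definition cohered with the $I_{\alpha;\gamma}$ agreement from condition \ref{dfn:min_inflation}(\ref{item:inflation_internal_agreement}). Uniqueness follows because the dichotomy above is exhaustive and each alternative is forced by the two conditions spelled out in the lemma, so no two distinct choices of $\Tt'$ can satisfy them.
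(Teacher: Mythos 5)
The paper offers no written proof of this lemma (it is asserted to follow "immediately" from Definition \ref{dfn:minimal_inflation_condensation}, as in \cite{iter_for_stacks}), and your argument is exactly the intended definition-chase: induct on limit stages of $\Tt$ using mic to identify $[0,\beta)_\Tt$ with $f``[0,\gamma_{\alpha;\beta})_\Xx$, then split on whether $b\subseteq C$ and whether $\sup f``b$ reaches $\lh(\Tt)$. One small correction to your bookkeeping: mic is \emph{not} confined to the $\eta=\lh(\Tt)$ alternative. In your first alternative, when $b\subseteq C$ and $f``b$ has limit ordertype with $\eta=\sup f``b<\lh(\Tt)$, the direct-limit recipe from the remark after Definition \ref{dfn:min_inflation} only tells you what $\Pi'_\lambda$ must be \emph{if} it exists; for $\Pi'_\lambda$ to actually be a tree pre-embedding you need the order-preservation clause $\zeta<_\Tt\eta\iff\gamma'_{\lambda;\zeta}\in b$, i.e.\ $f``b=[0,\eta)_\Tt$, and a priori $f``b$ could be a different $\Tt\rest\eta$-cofinal branch. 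This is exactly the same mic application as in your third case, now combined with the first assertion ($\Tt$ via $\Sigma$, so $[0,\eta)_\Tt=\Sigma(\Tt\rest\eta)=f``b$); only when $f``b$ has a maximum is the coherence automatic from the internal-agreement condition. With that repair the case analysis is exhaustive and the uniqueness claim stands.
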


Also as in \cite{iter_for_stacks}, we have a simple characterization
of when $\Tt\mininflatearrow\Xx$, given that $\Tt,\Xx$ are via a common minimal-inflationary strategy:
\begin{lem}
Let $\Omega>\om$ be regular. Let $\Sigma$ be a minimal-inflationary 
$(m,\Omega+1)$-strategy for an $m$-standard $M$
and $\Tt,\Xx$ be via $\Sigma$.

Then \tu{(}i\tu{)} 
$\Tt\mininflatearrow\Xx$ iff:
\begin{enumerate}[label=--]
 \item  $\Xx$ satisfies the bounding requirements on extender indices imposed by $\Tt$;
that is, for each $\alpha+1<\lh(\Xx)$, if
\[ \Tt\mininflatearrow\Xx\rest(\alpha+1)\text{ and }\alpha\in(C^-)^{\Tt\mininflatearrow\Xx\rest(\alpha+1)}\]
then
$\lh(E^\Xx_\alpha)\leq\lh(E^{\Tt\mininflatearrow(\Xx\rest\alpha+1)}_\alpha)$,
and
\item if $\Tt$ has limit length then $\Xx$ does not determine a $\Tt$-cofinal branch; that is,
for each limit $\eta<\lh(\Xx)$, if
\[ \Tt\mininflatearrow\Xx\rest\eta\text{ and }
(f,C)=(f,C)^{\Tt\mininflatearrow\Xx\rest\eta}\text{ and }[0,\eta)_\Xx\sub C \]
then $\lh(\Tt)>\sup_{\alpha<_\Xx\eta}f(\alpha)$.
\end{enumerate}

Moreover, \tu{(}ii\tu{)} suppose $\Tt\mininflatearrow\Xx$ and
$\lh(\Xx)$ is a limit. Let $\Xx'=(\Xx,\Sigma(\Xx))$.
Then either $\Tt\mininflatearrow\Xx'$
or \tu{[}$\Tt$ has limit length and $(\Tt,\Sigma(\Tt))\mininflatearrow\Xx'$\tu{]}.
\end{lem}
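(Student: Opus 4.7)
The plan is to reduce (ii) directly to Lemma \ref{lem:inflationary_limit_continuation}, and to prove (i) by induction on $\lh(\Xx)$, with limit stages again reduced to that lemma via mic. The forward direction of (i) is a matter of inspection: the bounding requirement is the clause ``$\lh(E^\Xx_\alpha)\leq\lh(E_{\Pi_\alpha\alpha})$'' in Definition \ref{dfn:min_inflation}, while the non-cofinal requirement follows because if $[0,\eta)_\Xx\sub C^{\Tt\mininflatearrow\Xx}$ then closure of $C$ gives $\eta\in C$, clause \ref{item:inflation_limit_ordinal} of Definition \ref{dfn:min_inflation} gives $f(\eta)=\sup_{\alpha<_\Xx\eta}f(\alpha)$, and $f(\eta)\in\lh(\Tt)$, yielding the strict inequality.

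For the backward direction of (i), I induct on $\beta\leq\lh(\Xx)$, showing $\Tt\mininflatearrow\Xx\rest\beta$. The base case $\beta=1$ uses the trivial tree embedding from Definition \ref{dfn:trivial_tree_embedding}. At a successor step $\beta=\alpha+2$, the earlier extension result applies: wellfoundedness of $M^\Xx_{\alpha+1}$ comes from $\Xx$ being via $\Sigma$, and the index bound required in the $\Tt$-copying case is exactly the bounding hypothesis, while no such bound is needed in the $\Tt$-inflationary case. At a limit step $\beta=\lambda$, I take the natural direct limit of the witness data $(t_\eta,C_\eta,f_\eta,\Pivec_\eta)$ from $\eta<\lambda$, using the remark after Definition \ref{dfn:min_inflation} to specify $\Pi_\lambda$ when $\lambda\in C$; the non-cofinal hypothesis ensures $\sup_{\alpha<_\Xx\lambda}f(\alpha)<\lh(\Tt)$ in that case, so $f(\lambda)$ can be placed consistently within $\lh(\Tt)$, and mic of $\Sigma$ (fed through Lemma \ref{lem:inflationary_limit_continuation} on the pieces already built) pins down the branch choice.

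Part (ii) is essentially a restatement of Lemma \ref{lem:inflationary_limit_continuation} applied to $(\Tt,\Xx)$ at the limit $\lh(\Xx)$: that lemma produces $\Tt'\in\{\Tt,(\Tt,\Sigma(\Tt))\}$ with $\Xx'=(\Xx,\Sigma(\Xx))$ a minimal inflation of $\Tt'$, and these are precisely the two alternatives asserted in (ii).

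The main obstacle is the limit step in the backward direction of (i), specifically verifying that the direct-limit witness data $\Pi_\lambda$ really satisfies all clauses of Definitions \ref{dfn:min_tree_emb} and \ref{dfn:min_inf_coherent} --- dropdown lift, embedding agreement, and Shift Lemma commutativity --- rather than merely gluing together as a formal amalgamation. This reduces to continuity of the minimality properties under direct limits of tree pre-embeddings, which one checks by combining the uniqueness discussion following Definition \ref{dfn:min_inflation} with the commutativity identities established in Lemma \ref{lem:extender_comm}.
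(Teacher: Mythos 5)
The paper states this lemma without proof, presenting it as the direct analogue of the corresponding fact in \cite{iter_for_stacks}, and your argument is the intended one: the forward direction is read off from Definition \ref{dfn:min_inflation} (clause 6 for the bounding requirement; closure of $C$ along branches plus clause 10 and $f:C\to\lh(\Tt)$ for the non-cofinality requirement), and the backward direction is the induction you describe, using the unlabelled extension lemma at successors and minimal inflation condensation together with Lemma \ref{lem:inflationary_limit_continuation} at limits, with Lemma \ref{lem:Pi_is_standard} doing the work of verifying that the glued-together limit data is genuinely minimal and standard. One small correction of phrasing: the index bound in Definition \ref{dfn:min_inflation} is imposed for \emph{every} $\alpha\in C^-$, not only when $E^\Xx_\alpha$ is $\Tt$-copying (copying versus inflationary is decided by whether $E^\Xx_\alpha$ equals the lifted extender $E^{\Pi_\alpha}_\alpha$, not by the index comparison); this does not affect your proof, since the lemma's bounding hypothesis covers all of $C^-$.
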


We can also define the \emph{minimal} analogue of strong hull condensation.
It easily implies minimal inflation condensation; we do not know whether the converse holds.

\begin{dfn}\label{dfn:mhc}\index{strong hull condensation}
Let $\Omega>\om$ be regular. 
Let $\Sigma$ be an $(m,\Omega+1)$-strategy for an $m$-standard $M$.
We say that $\Sigma$ has \dfnemph{minimal hull condensation} (\dfnemph{mhc})
iff whenever $\Xx$ is via $\Sigma$ and $\Pi:\Tt\hookrightarrow_\minim\Xx$ is a minimal tree embedding,
then $\Tt$ is also via $\Sigma$.
\end{dfn}

One can also define the minimal analogue
of \emph{extra inflationary} from \cite{iter_for_stacks},
but we don't need it.
We now give some important examples of strategies with minimal hull condensation.
The proofs are just direct translations of
\cite[Lemma 4.45, Theorem 4.47]{iter_for_stacks}.

\begin{lem}\label{lem:unique_strat_has_inf_cond}
Let $\Sigma$ be an $(m,\Omega+1)$-strategy
for an $m$-standard $M$.
Suppose that $\Sigma$ is the unique $(m,\Omega+1)$-strategy for $M$.
Then $\Sigma$ has minimal hull condensation.
\end{lem}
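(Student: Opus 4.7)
The argument proceeds by induction on $\eta\le\lh(\Tt)$ to show that $\Tt\rest\eta$ is via $\Sigma$; successor steps are automatic, so I fix a limit $\eta<\lh(\Tt)$ with $\Tt\rest\eta$ via $\Sigma$ and verify $[0,\eta)_\Tt=\Sigma(\Tt\rest\eta)$.

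The key observation, leveraging uniqueness of $\Sigma$, is a ``unique iterable branch'' principle: for any tree $\Uu$ of limit length $<\Omega$ via $\Sigma$, at most one cofinal branch $b$ of $\Uu$ can admit an $(m,\Omega+1)$-tail-strategy for $(\Uu,b)$. Otherwise, given two such branches $b_1\neq b_2$ with corresponding tail strategies $\Sigma_1,\Sigma_2$, splicing each $\Sigma_i$ into $\Sigma$ at $\Uu$ -- that is, defining $\Sigma^i$ to output $b_i$ at $\Uu$, follow $\Sigma_i$ on trees properly extending $(\Uu,b_i)$, and otherwise follow $\Sigma$ -- produces two coherent total $(m,\Omega+1)$-strategies for $M$ differing at $\Uu$, contradicting uniqueness of $\Sigma$.

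It therefore suffices to exhibit a tail strategy for the continuation of $\Tt\rest\eta$ through the branch $[0,\eta)_\Tt$. Apply Lemma \ref{lem:Pi_is_standard} to $\Pi\rest(\eta+1):(\Tt,\eta+1)\tembto_\minim\Xx$ to obtain its standardness; clause \ref{item:U_alpha0_is_model} of Definition \ref{dfn:min_inf_coherent} then supplies the ultrapower map $\pi_{\Pi\eta}:M^\Tt_\eta\to M^\Xx_{\gamma_{\Pi\eta}}$ of degree $\deg^\Tt(\eta)=\deg^\Xx(\gamma_{\Pi\eta})$. Since $\Xx$ is via $\Sigma$, $M^\Xx_{\gamma_{\Pi\eta}}$ is iterable at that degree via the tail of $\Sigma$; the standard copying construction pulls this strategy back through $\pi_{\Pi\eta}$ to an iteration strategy for $M^\Tt_\eta$, which, concatenated with the history $(\Tt\rest\eta,[0,\eta)_\Tt)$, furnishes the required tail strategy. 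The unique iterable branch principle above then forces $[0,\eta)_\Tt=\Sigma(\Tt\rest\eta)$, completing the induction.

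The principal technical point is the iterability transfer through $\pi_{\Pi\eta}$: this map arises as an ultrapower by the extender sequence $\vec{F}_{\gamma_{\Pi\eta}}$ rather than by a single extender, so the copying construction must be justified in that generality, which is where the commutativity and wellfoundedness facts of Section \ref{sec:fine_structure} are used.
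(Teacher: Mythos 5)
Your overall architecture is the right one and matches the paper's intended argument (the paper simply defers to the direct translation of the uniqueness lemma of \cite{iter_for_stacks}): induct to the first bad limit $\eta$, observe that by uniqueness of $\Sigma$ at most one cofinal branch of $\Tt\rest\eta$ can be completed to a winning continuation (the splicing argument), and then manufacture such a continuation for $[0,\eta)_\Tt$ from the minimal tree embedding. The splicing step is fine as you state it.

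The gap is in how you produce the tail strategy. A strategy for $M^\Tt_\eta$ obtained by pulling back the tail of $\Sigma$ through the single map $\pi_{\eta 0}:M^\Tt_\eta\to M^\Xx_{\gamma_{\Pi\eta}}$, ``concatenated with the history,'' is not a tail strategy for the $(m,\Omega+1)$-game position after $(\Tt\rest\eta,[0,\eta)_\Tt)$. In an $m$-maximal continuation $\Tt'$ of $\Tt\rest(\eta+1)$, a later extender $E^{\Tt'}_\beta$ is applied to $M^{*\Tt'}_{\beta+1}\ins M^{\Tt'}_{\alpha}$ for the \emph{least} $\alpha$ with $\crit(E^{\Tt'}_\beta)<\nutilde(E^{\Tt'}_\alpha)$, which may well satisfy $\alpha<\eta$; the models $M^\Tt_\alpha$ for $\alpha<\eta$ are not segments of $M^\Tt_\eta$, so iterating $M^\Tt_\eta$ alone says nothing about these moves. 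What is needed is to copy the entire continuation $\Tt'$ onto a continuation $\Xx'$ of $\Xx\rest(\delta_{\Pi\eta}+1)$ via iterated one-step copy extensions (Definition \ref{dfn:one-step}, Lemma \ref{lem:one-step_copy_extension_is_standard}), using the whole system of maps $\pi_{\beta i\xi}$ and the agreement/commutativity clauses of Definition \ref{dfn:min_inf_coherent} supplied by Lemma \ref{lem:Pi_is_standard} — not just the map on the last model — and to read off branches at limit stages of $\Tt'$ from $\Sigma(\Xx'\rest\lambda')$ (the correspondence is clean since every new extender is copying, so the new intervals $I_\alpha$ are singletons). Relatedly, your closing diagnosis is off: the issue is not that $\pi_{\eta0}$ is an ultrapower by a sequence $\vec{F}_{\gamma_{\Pi\eta}}$ rather than a single extender (its elementarity is already what Lemma \ref{lem:Pi_is_standard} gives), but that copying a normal continuation is a copying of the whole tree embedding, not a pullback through one embedding of the last model.
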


\begin{rem}\label{rem:wDJ_implies_cond}
The second result
 deals with strategies with the
(weak) Dodd-Jensen property. We abbreviate \emph{Dodd-Jensen} with 
\emph{DJ}.
Note that only the first part
of the proof of \cite[Theorem 4.47]{iter_for_stacks},
which regards $\lambda$-indexing, is
relevant here;
in our setting it adapts immediately to give the proof for both indexings.

For this result, we assume that $M$ is a \emph{pure} extender mouse,
thus, not a strategy mouse.
This is important because the proof involves a comparison.
\end{rem}

\begin{tm}\label{tm:wDJ_implies_cond}
Let $\Omega>\om$ be regular.
Let $M$ be an $m$-standard pure $L[\es]$-premouse
with $\card(M)<\Omega$. Let $\Sigma$ be an $(m,\Omega+1)$-strategy for $M$ such that either
$\Sigma$ has the DJ property, or $M$ is countable and $\Sigma$ has weak DJ.
Then $\Sigma$ has minimal hull condensation.
\end{tm}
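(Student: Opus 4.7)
The plan is by contradiction, directly adapting the first part of the proof of \cite[Theorem 4.47]{iter_for_stacks}. Assume $\Xx$ is via $\Sigma$, $\Pi\colon\Tt\hookrightarrow_\minim\Xx$ is a minimal tree embedding, and yet $\Tt$ is not via $\Sigma$. Let $\eta<\lh(\Tt)$ be least (necessarily a limit) with $\Tt\rest\eta$ via $\Sigma$ but $b\eqdef[0,\eta)_\Tt\neq c\eqdef\Sigma(\Tt\rest\eta)$. If only weak DJ is assumed, a standard L\"owenheim--Skolem reflection using $\card(M)<\Omega$ reduces to the case that $M,\Tt,\Xx,\Pi$ are all countable; under the full DJ hypothesis no reduction is needed.

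The key observation is that the image of the branch $b$ under $\Pi$ is forced to equal the $\Sigma$-branch of $\Xx$ at the corresponding stage. Write $\gamma_\alpha=\gamma_{\Pi\alpha}$ for $\alpha\leq\eta$; by continuity of $\Pi$, $\zeta\eqdef\gamma_\eta=\sup_{\alpha<\eta}\gamma_\alpha$, and since $\Pi$ is a tree embedding, $[0,\zeta)_\Xx$ is exactly the $<_\Xx$-downward closure $d_b$ of $\{\gamma_\alpha\mid\alpha\in b\}$. Since $\Xx$ is via $\Sigma$, we have $d_b=\Sigma(\Xx\rest\zeta)$. By the standardness of $\Pi\rest(\eta+1)$ (Lemma \ref{lem:Pi_is_standard}) together with the fact that the final realization map for a minimal tree embedding is the identity along non-dropping main branches (this is the defining feature of \emph{full} rather than merely embedding normalization),
\[ M^{(\Tt\rest\eta)\conc b}_\eta=M^\Xx_\zeta, \]
with the iteration map from $M$ on the $\Xx$-side being $i^\Xx_{0\zeta}$, a $\Sigma$-iteration map.

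The contradiction via DJ now proceeds along the standard lines. Since $(\Tt\rest\eta)\conc c$ is via $\Sigma$, $M^{(\Tt\rest\eta)\conc c}_\eta$ is a $\Sigma$-iterate of $M$. Coiterate $M^\Xx_\zeta$ against $M^{(\Tt\rest\eta)\conc c}_\eta$ via $\Sigma$; by standard pure-$L[\es]$ comparison this terminates in a common model $N$, yielding two $\Sigma$-iteration maps $\tau_b,\tau_c\colon M\to N$, one factoring through $\Xx\rest(\zeta+1)$ (i.e.\ through $b$) and the other through $(\Tt\rest\eta)\conc c$. The splitting of $b$ and $c$ at some $\nu<\eta$, combined with the minimality of $\Pi$ (which forces realization maps to be trivial), propagates to a concrete disagreement between $\tau_b$ and $\tau_c$ at a specific ordinal, and applying (weak) DJ to this pair of $\Sigma$-iteration maps yields the required contradiction in the standard manner.

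The main obstacle is the careful bookkeeping of drops along $b$ and $c$: DJ applies only to iteration maps whose main branches do not drop in model or degree, so if either $b$ or $c$ drops, one must restrict to the tail iteration past the last drop. Here the dropdown preservation Lemma \ref{lem:ult_dropdown} together with properties T3--T5 of Definition \ref{dfn:min_inf_coherent} guarantee that the minimal tree embedding tracks drops correctly between $\Tt$ and $\Xx$, so the relevant sub-iterations are genuine $\Sigma$-iterations of a common initial segment of $M$, which is what DJ needs. As recorded in Remark \ref{rem:wDJ_implies_cond}, the entire argument is indifferent to MS- versus $\lambda$-indexing, and therefore covers both cases uniformly.
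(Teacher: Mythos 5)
Your overall template---reduce to the countable case, locate the least bad limit $\eta$ with $b=[0,\eta)_\Tt\neq c=\Sigma(\Tt\rest\eta)$, and run a Dodd--Jensen comparison argument---is the right shape, and the paper's own proof is in fact just a pointer to the first part of the proof of \cite[Theorem 4.47]{iter_for_stacks}. But your central step is false. For a minimal tree embedding $\Pi:\Tt\hookrightarrow_\minim\Xx$, the realization map $\pi_{\eta 0}:M^{(\Tt\rest\eta)\conc b}_\eta\to M^\Xx_{\gamma_\eta}$ is \emph{not} the identity: by Definition \ref{dfn:min_inf_ults_maps} it is the degree-$\deg^\Tt(\eta)$ ultrapower map by the inflationary extender sequence $\vec{F}_{\gamma_\eta}$, which is nontrivial as soon as $\Xx$ uses any $\Tt$-inflationary extenders below $\gamma_\eta$ --- and that is the generic situation (otherwise minimal hull condensation would be nearly vacuous, since $\Tt$ would just be a re-indexed initial piece of $\Xx$). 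The slogan ``full normalization makes the final realization map the identity'' applies to the specific normalization tree $\Xx(\vec{\Tt})$ of a stack, as in Theorem \ref{tm:full_norm_stacks_strategy}(\ref{item:X_for_Tvec}), not to arbitrary minimal tree embeddings. So $M^{(\Tt\rest\eta)\conc b}_\eta\neq M^\Xx_\zeta$ in general; what you actually have, from \ref{dfn:min_inf_coherent}(\ref{item:T_commutativity}), is the commutativity $\pi_{\eta 0}\com i^{(\Tt\rest\eta)\conc b}_{0\eta}=i^\Xx_{0\zeta}$ (modulo drops). Since you later lean on ``the minimality of $\Pi$ (which forces realization maps to be trivial)'' to manufacture the disagreement fed to Dodd--Jensen, this error is load-bearing rather than cosmetic.

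The comparison step also hides a real issue. ``Coiterate $M^\Xx_\zeta$ against $M^{(\Tt\rest\eta)\conc c}_\eta$ via $\Sigma$'' is not meaningful as written: $\Sigma$ is a strategy for normal trees on $M$, and it does not by itself provide iteration strategies for the iterates $M^\Xx_\zeta$ and $M^{(\Tt\rest\eta)\conc c}_\eta$ --- producing such tail strategies is precisely what the normalization machinery of this paper delivers, and it presupposes the condensation property you are proving. The argument has to set up the comparison so that each map to which (weak) DJ is applied lands in an initial segment of the last model of a single \emph{normal} tree via $\Sigma$, and so that, once DJ forces the two maps $M\to N$ to agree, the hypothesis $b\neq c$ (together with the genuinely nontrivial realization map) still yields a contradiction; moreover a comparison need not terminate in a ``common model,'' so the case where one side ends strictly below the other must be handled. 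These are exactly the points where the details of the adaptation live, and your sketch does not supply them.
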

\begin{proof}
A routine adaptation of the first part of the proof of \cite[Theorem 4.47]{iter_for_stacks}.
\end{proof}

\subsection{Further minimal inflation terminology}

We adapt some further terminology from \cite{iter_for_stacks}:

\begin{dfn}\label{dfn:inflation_notation}
Let $\Tt\mininflatearrow\Xx$. Let
\[ (t,C,C^-,f,\Pivec)=(t,C,C^-,f,\Pivec)^{\Tt\mininflatearrow\Xx} \]
and let $\gamma_{\alpha;\beta}$, etc, be as in 
\ref{dfn:min_inflation}.
Suppose that $\Xx$ has successor length $\alpha+1$.

We say that $\Xx$ is:
\begin{enumerate}[label=--]
 \item \emph{\tu{(}$\Tt$\tu{)}-pending} iff $\alpha\in C^-$.\index{pending}
 \item \emph{non-\tu{(}$\Tt$\tu{)}-pending} iff $\alpha\notin C^-$.
\item \emph{\tu{(}$\Tt$\tu{)}-terminal} iff $\Tt$ has successor length and $\Xx$ 
is non-$\Tt$-pending.\index{terminal}
\end{enumerate}

We say that $\Xx$ is:\footnote{The terminology here is slightly different
to that in \cite{iter_for_stacks},
because we only deal with \emph{non-dropping},
as opposed to both \emph{non-dropping} and \emph{non-model-dropping},
and here, $\alpha\in C\cut C^-$ requires no drop in model or degree,
whereas only no drop in model in \cite{iter_for_stacks}.}
\begin{enumerate}[label=--]\index{terminally-non-dropping}
\item \emph{$\Tt$-terminally-non-dropping} iff
$\Tt$-terminal and $\alpha\in C$; hence, 
$f(\alpha)+1=\lh(\Tt)$ and  
\[ (\gamma_{\alpha;f(\alpha)},\delta_{\alpha;f(\alpha)}]_{\Xx}\inter\dropset_{
\deg}^\Xx=\emptyset, \]
\item \emph{$\Tt$-terminally-dropping} iff $\Tt$-terminal
and $\alpha\notin C$.
\end{enumerate}

Suppose $\Xx$ is $\Tt$-terminally-non-dropping and let 
$\alpha+1=\lh(\Xx)$ and 
$\beta=f(\alpha)$. Then we define\index{$\pi_\infty^{\Tt\mininflatearrow\Xx}$}
\[ \pi_\infty^{\Tt\mininflatearrow\Xx}:M^\Tt_\beta\to M^\Xx_\alpha \]
by $\pi_\infty^{\Tt\mininflatearrow\Xx} =
\pi_{\alpha;\beta 0\alpha}$.
\end{dfn}

\begin{rem}\label{rem:non-drop_inf_comm}
Suppose $\Xx$ is $\Tt$-terminally-non-dropping 
and $\Tt,\Xx$ are $m$-maximal. Note that
$\pi_\infty=\pi_\infty^{\Tt\mininflatearrow\Xx}$ is an $n$-embedding,
where $n=\deg^\Xx(\infty)$. 
If $\Tt$ is  also
terminally non-dropping, then note that $\Xx$ is terminally non-dropping,
$n=m$ and $\pi_\infty\com i^\Tt=i^\Xx$.
\end{rem}

\section{The factor tree $\Xx/\Tt$}\label{sec:factor_tree}

We now discuss the \emph{minimal} analogue of the factor tree of \cite[\S8]{iter_for_stacks}.
For the first part of the discussion there is essentially nothing new,
so we refer the reader to \cite{iter_for_stacks} for most of it.

\subsection{The factor tree order $<_{\Xx/\Tt}$}

Define \emph{$\Tt$-unravelling},
\emph{minimal-$\Tt$-good} (or just 
\emph{good}), as in \cite[Definition 8.1]{iter_for_stacks}
(with minimal inflations replacing inflations throughout).
For a good 
minimal inflation 
$\Xx$ of $\Tt$,
define the 
associated objects
$\lambda^\alpha$, $\zeta^\alpha$, $L^\alpha$
$\eta_\delta$, $\Xx^\alpha$,
$(t^\alpha,C^\alpha,\ldots)$, $\theta^\alpha$, 
$(\lambda^\alpha,\zeta^\alpha,L^\alpha,\Xx^\alpha,t^\alpha,\ldots)^{
\Tt\inflatearrow_{\minim}\Xx}$,
$(I^\alpha_{\xi})^{\Tt\inflatearrow_{\minim}\Xx}$,
$(\pi^\alpha_{\xi 
i})^{\Tt\inflatearrow_{\minim}\Xx}$,
etc,
as in \cite[Definition 8.2]{iter_for_stacks}.
Define ${<^{\Xx/\Tt}}$ as in \cite[8.3]{iter_for_stacks},
and 
$\Vv^{\geq\alpha}$,
$<_0^{(\alpha)}$ as in \cite[8.5]{iter_for_stacks}.
Then \cite[Lemmas 8.4, 8.6]{iter_for_stacks} hold,
after replacing inflations with minimal inflations.
We restate \cite[8.7]{iter_for_stacks},
because we use $\xi^\alpha_\kappa$
here, as opposed to the
$\gamma^\alpha_{\theta\kappa}$ of \cite{iter_for_stacks}
(recall $\xi^\alpha_\kappa=\xi_{\Pi_\alpha\kappa}$
and $\pi^\alpha_\kappa=\pi_{\Pi_\alpha\kappa}$
were specified in Definition \ref{dfn:pi_kappa}):

\begin{lem}\label{lem:<_Xx/Tt}
Let $\Tt\inflatearrow_{\minim}\Xx$ be good.
Adopt notation as above.
Let $\alpha\leq_{\Xx/\Tt}\beta<\lh(\Xx/\Tt)$ with $\lambda^\beta\in C^\beta$ \tu{(}so
 $\lambda^\alpha\in C^\alpha$ by \cite[8.6]{iter_for_stacks}\tu{)}.
Then:

\begin{enumerate}[label=\arabic*.,ref=\arabic*]
 \item\label{item:<_Xx/Tt_is_it_tree_order} $<_{\Xx/\Tt}$ is an iteration tree order on $\lh(\Xx/\Tt)$.
   \item\label{item:<_Xx/Tt_respects_<_Xx} For all 
$\mu<_\Xx\lambda<\lh(\Xx)$, we have
$\eta_\mu\leq_{\Xx/\Tt}\eta_\lambda$.
 \item\label{item:xi_class_is_Xx/Tt_below}
For all $(\theta,\kappa)$
with $\theta<\lh(\Tt)$
and $\kappa\leq\OR(M^\Tt_\theta)$
and $\theta=\alpha^\Tt_\kappa$, either 
$\xi^\alpha_{\kappa}\in[\lambda^\alpha,\lh(\Xx^\alpha))$ or 
$\xi^\alpha_{\kappa}\in L^\delta$ for some $\delta<_{\Xx/\Tt}\alpha$.
\item\label{item:gamma_agmt_along_branch} Suppose $\alpha<\beta$. Let $\xi+1=\successor^{\Xx/\Tt}(\alpha,\beta)$
and ${\chi}=\pred^{\Xx}(\lambda^{\xi+1})$. Then:
\begin{enumerate}[label=\tu{(}\alph*\tu{)}]
\item  $\chi\in L^{\alpha}$ and $\theta^\alpha\leq\theta\eqdef 
f^\alpha(\chi)\leq\theta^\beta$.
 \item For  ${\theta'}<\theta$, we have 
$I^\alpha_{\theta'}=I^\beta_{\theta'}\sub\chi$
and for $\kappa\leq\OR(M^\Tt_{\theta'})$
with $\theta'=\alpha^\Tt_\kappa$, we have
 $\xi^\alpha_{\kappa}=\xi^\beta_{{\kappa}}<\chi$,
 \item $\gamma^\alpha_{{\theta}}=\gamma^\beta_{\theta}$
 but $\delta^\alpha_{\theta}=\chi<_\Xx\delta^\beta_{\theta}$,
 \item If $\theta+1<\lh(\Tt)$ then for
$\kappa<\nutilde(\exit^\Tt_{\theta})$
with $\theta=\alpha^\Tt_\kappa$,
if
$\pi^\alpha_{\kappa}(\kappa)<\crit(E^\Xx_{\zeta^\xi})$ then
$\xi^\alpha_{\kappa}=\xi^\beta_{\kappa}$, and if
$\pi^\alpha_{\kappa}(\kappa)\geq\crit(E^\Xx_{\zeta^\xi})$ then
$\xi^\alpha_{\kappa}=\chi<_\Xx\xi^\beta_{\kappa}$.
\end{enumerate}
\end{enumerate}
\end{lem}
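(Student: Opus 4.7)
The plan is to mirror the proof of \cite[Lemma 8.7]{iter_for_stacks}, proceeding by induction along $<_{\Xx/\Tt}$ (simultaneously on all four parts), with the key adjustment being that wherever that proof tracks $\gamma^\alpha_{\theta\kappa}$ and its behaviour under copy maps, here I track the minimal object $\xi^\alpha_\kappa$ from Definition \ref{dfn:pi_kappa} and use the commutativity packaged in property \ref{item:T_commutativity} of the standardness of $\Pi_\alpha$ (together with Lemma \ref{lem:Pi_is_standard}). The setup in \cite[8.1--8.6]{iter_for_stacks} adapts line-by-line after replacing ``inflation'' with ``minimal inflation'', and the content of \cite[8.4, 8.6]{iter_for_stacks} gives the initial structural facts: $\eta_\mu$ is well-defined, $\lambda^\alpha\in C^\alpha$ propagates up $<_{\Xx/\Tt}$, and the classes $[\eta_\mu]$ partition $\lh(\Xx)$ appropriately.

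For parts \ref{item:<_Xx/Tt_is_it_tree_order} and \ref{item:<_Xx/Tt_respects_<_Xx} I first verify by induction that $\eta_\mu \leq_{\Xx/\Tt}\eta_\lambda$ whenever $\mu<_\Xx\lambda$, using at the successor step the definition of the factor class of $\lambda+1$ from $\lambda$ and the fact that $E^\Xx_\lambda$ either copies from $\Tt$ (same class) or is $\Tt$-inflationary with predecessor in $L^\alpha$ for the current $\alpha$ (jump to the next $<_{\Xx/\Tt}$-class). Transitivity and uniqueness of $<_{\Xx/\Tt}$-predecessors then follow by reducing to the corresponding statements for $<_\Xx$ using property \ref{item:pred_pres} of tree pre-embeddings. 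For part \ref{item:xi_class_is_Xx/Tt_below}: by the standardness of $\Pi_\alpha$, $\xi^\alpha_\kappa$ lies in some $I^\alpha_{\theta}$; either $\theta\geq\theta^\alpha$ (so $\xi^\alpha_\kappa \in [\lambda^\alpha,\lh(\Xx^\alpha))$) or $\theta<\theta^\alpha$, in which case by inspection of how the intervals $I^\alpha_{\theta'}$ were propagated along $<_{\Xx/\Tt}$-branches, $\xi^\alpha_\kappa$ lies in $L^\delta$ for a unique earlier $\delta<_{\Xx/\Tt}\alpha$.

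For part \ref{item:gamma_agmt_along_branch}, the successor-step analysis, I first identify $\chi=\pred^\Xx(\lambda^{\xi+1})$ and use that $E^\Xx_{\zeta^\xi}$ is $\Tt$-inflationary (otherwise $\xi+1$ would not start a new $<_{\Xx/\Tt}$-class) to conclude $\chi\in L^{\xi}=L^\alpha$, whence $\chi\in I^\alpha_\theta$ for a well-defined $\theta=f^\alpha(\chi)$. The inequality $\theta^\alpha\leq\theta$ holds because $\chi\geq\lambda^\alpha$, and $\theta\leq\theta^\beta$ holds by transferring through $<_{\Xx/\Tt}$ to $\beta$ using condition \ref{item:inflation_internal_agreement} of Definition \ref{dfn:min_inflation}, which forces $I^\alpha_{\theta'}=I^\beta_{\theta'}$ for $\theta'<\theta$ and $I^\beta_\theta\supseteq I^\alpha_\theta\cap(\chi+1)$. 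The precise behaviour $\gamma^\alpha_\theta=\gamma^\beta_\theta$, $\delta^\alpha_\theta=\chi<_\Xx\delta^\beta_\theta$ then falls out, and for (d) I compare $\pi^\alpha_\kappa(\kappa)$ with $\crit(E^\Xx_{\zeta^\xi})$: if smaller, the agreement interval for $\xi^{(\cdot)}_\kappa$ already closes off below $\chi$, so $\xi^\alpha_\kappa=\xi^\beta_\kappa$; if not smaller, the minimality clause in the definition of $\xi^{(\cdot)}_\kappa$ (Definition \ref{dfn:min_inf_coherent}, property \ref{item:T_emb_agmt}) forces $\xi^\alpha_\kappa=\chi$ on the $\alpha$-side while the $\beta$-interval continues past $\chi$, so $\chi<_\Xx\xi^\beta_\kappa$.

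The main obstacle, and the point at which the argument genuinely differs from \cite[8.7]{iter_for_stacks}, is the analysis in (d): one must check that the minimal copy notation interacts correctly with the dropdown structure when $\kappa$ is close to $\nutilde(\exit^\Tt_\theta)$, so that the case split on $\pi^\alpha_\kappa(\kappa)$ versus $\crit(E^\Xx_{\zeta^\xi})$ really captures whether $\xi^\alpha_\kappa$ ``freezes'' at $\chi$ or propagates into the next inflation class. This relies essentially on the agreement clauses of property \ref{item:T_emb_agmt}, specifically on the fact that $\pi^\alpha_\kappa\rest\pow(\kappa)\subseteq\pi^\beta_\kappa$ and on $\widehat{\pi}$-preservation of indices, which already appear in \cite{iter_for_stacks} but whose correctness in the minimal setting is a consequence of Lemma \ref{lem:Pi_is_standard} and extender commutativity (Lemma \ref{lem:extender_comm}). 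Once that case split is verified, the remainder is bookkeeping.
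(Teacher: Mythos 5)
Your proposal matches the paper's approach exactly: the paper's entire proof of this lemma is a pointer to \cite[Lemma 8.7]{iter_for_stacks}, with the surrounding text indicating that the only substantive change is replacing the $\gamma^\alpha_{\theta\kappa}$ bookkeeping by $\xi^\alpha_\kappa$ and invoking the standardness/commutativity facts (Lemma \ref{lem:Pi_is_standard}, Lemma \ref{lem:extender_comm}) in the minimal setting, which is precisely the adaptation you describe. One small slip: in part \ref{item:gamma_agmt_along_branch}(a) you write ``$\chi\in L^\xi=L^\alpha$'', but the point is rather that $\alpha=\eta_\chi$ (the $<_{\Xx/\Tt}$-predecessor of $\xi+1$ is the class containing $\pred^\Xx(\lambda^{\xi+1})$, by the characterization from \cite[8.3--8.5]{iter_for_stacks}), not that the classes of $\xi$ and $\alpha$ coincide.
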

\begin{proof}
See the proof of \cite[Lemma 8.7]{iter_for_stacks}.\end{proof}

 \begin{lem}\label{lem:M^Xx^alpha_infty_ult_rep}
Let $\Tt\inflatearrow_{\minim}\Xx$ be good.
Adopt notation as above.
Let $\alpha<\lh(\Xx/\Tt)$ with $\lambda^\alpha\in C^{\Tt\mininflatearrow\Xx}$. Then \tu{(}i\tu{)}
for  $\lambda\in[\lambda^\alpha,\lh(\Xx^\alpha))$,
\[ 
\vec{F}^\alpha_\infty\eqdef\vec{F}^\alpha_{\lambda}=
\left<E^\Xx_{\zeta^\gamma}\right>_{\gamma+1
\leq_{\Xx/\Tt}\alpha}.\]
Therefore \tu{(}ii\tu{)}
$M^{\Xx^\alpha}_\infty=\Ult_n(M^\Tt_\infty,\vec{F}^\alpha_\infty)$
where $n=\deg^\Tt(\infty)$.
\end{lem}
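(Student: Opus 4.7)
The plan is to establish (i) by induction on $\alpha$ with respect to $<_{\Xx/\Tt}$, which is wellfounded by Lemma \ref{lem:<_Xx/Tt}(\ref{item:<_Xx/Tt_is_it_tree_order}), and then to deduce (ii) from the representation of $M^\Xx_{\lambda^\alpha}$ provided by the minimal tree embedding $\Pi_{\lambda^\alpha}$ (via Definition \ref{dfn:min_inf_ults_maps} and Lemma \ref{lem:Pi_is_standard}), using Extender Commutativity (Lemma \ref{lem:extender_comm}) to absorb any residual $\Tt$-copying segment of $\Xx^\alpha$ past $\lambda^\alpha$ into the degree-$n$ ultrapower.

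For the base case $\alpha=0$: here $\lambda^0=0$ and $\{\gamma:\gamma+1\leq_{\Xx/\Tt}0\}=\emptyset$; the trivial tree embedding $\Pi_0$ gives $\vec{F}^0_{\gamma_0}=\emptyset$ directly from Definition \ref{dfn:F-vec_tree_emb}, and throughout $[\lambda^0,\lh(\Xx^0))$ all extenders are $\Tt$-copying and do not contribute new entries to $\vec{F}^0$. For the successor case $\alpha=\gamma+1$, assume the claim at $\gamma$, so that $\vec{F}^\gamma_\infty=\langle E^\Xx_{\zeta^\beta}\rangle_{\beta+1\leq_{\Xx/\Tt}\gamma}$. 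By Lemma \ref{lem:<_Xx/Tt}(\ref{item:gamma_agmt_along_branch}) applied to $\gamma<_{\Xx/\Tt}\alpha$, the predecessor $\chi=\pred^\Xx(\lambda^\alpha)$ lies in the top interval of $\Pi_{\lambda^\gamma}$ and the critical point $\crit(E^\Xx_{\zeta^\gamma})$ is realized as $\pi^\gamma_\kappa(\kappa)$ for the appropriate $\kappa$ coming from $\Tt$; thus $E^\Xx_{\zeta^\gamma}$ appears as a $\Tt$-inflationary extender in the sense of Definition \ref{dfn:inflationary_extender}, and the corresponding minimal-$E^\Xx_{\zeta^\gamma}$-inflation of $\Pi_{\lambda^\gamma}$ coincides with the top segment of $\Pi_{\lambda^\alpha}$. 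Unwinding Definition \ref{dfn:F-vec_tree_emb} at $\lambda^\alpha$ via condition \ref{item:inflation_internal_agreement} of Definition \ref{dfn:min_inflation}, combined with the Shift Lemma content of Definition \ref{dfn:min_inf_coherent}(\ref{item:T_commutativity})(\ref{item:T_shift_lemma}), gives $\vec{F}^\alpha_{\lambda^\alpha}=\vec{F}^\gamma_\infty\concat\langle E^\Xx_{\zeta^\gamma}\rangle$; stability across $[\lambda^\alpha,\lh(\Xx^\alpha))$ is as in the base case. The limit case follows by the limit clause of Definition \ref{dfn:F-vec_tree_emb} together with continuity of $f^\alpha$ from Definition \ref{dfn:min_inflation}(\ref{item:inflation_limit_ordinal}), using Lemma \ref{lem:<_Xx/Tt}(\ref{item:xi_class_is_Xx/Tt_below},\ref{item:gamma_agmt_along_branch}) to locate cofinal branches and match the union with the claimed sequence.

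Once (i) holds, (ii) is nearly immediate: by Definition \ref{dfn:min_inf_ults_maps} and Lemma \ref{lem:Pi_is_standard} applied to $\Pi_{\lambda^\alpha}$ at its terminal node, $M^\Xx_{\lambda^\alpha}=U_{\lambda^\alpha;f(\lambda^\alpha),0}=\Ult_n(M^\Tt_\infty,\vec{F}^\alpha_{\lambda^\alpha})$, and by (i) together with Extender Commutativity (Lemma \ref{lem:extender_comm}) the extension from $M^\Xx_{\lambda^\alpha}$ to $M^{\Xx^\alpha}_\infty$ introduces no new inflationary extenders, yielding $M^{\Xx^\alpha}_\infty=\Ult_n(M^\Tt_\infty,\vec{F}^\alpha_\infty)$. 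The principal obstacle is the successor step, specifically verifying that \emph{exactly} the single extender $E^\Xx_{\zeta^\gamma}$ is appended, with no spurious shifts or omissions caused by the dropdown structure; this requires careful use of Lemma \ref{lem:<_Xx/Tt}(\ref{item:gamma_agmt_along_branch})(d) together with the minimality constraints built into Definition \ref{dfn:min_inf_coherent}, which distinguish the present setting from the embedding-normalization analysis of \cite{iter_for_stacks}. A secondary difficulty is the limit case, where one must ensure that no $E^\Xx_{\zeta^\gamma}$ is lost at the direct limit, but this is handled by the cofinality assertions in Lemma \ref{lem:<_Xx/Tt}(\ref{item:xi_class_is_Xx/Tt_below},\ref{item:gamma_agmt_along_branch}).
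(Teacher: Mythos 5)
Your proposal takes the same route as the paper, whose entire proof of this lemma is that part (i) "is verified by an easy induction on $\lh(\Xx/\Tt)$" and part (ii) "follows via Lemma \ref{lem:Pi_is_standard}"; your expansion supplies exactly the machinery that terse proof leaves implicit (Definition \ref{dfn:F-vec_tree_emb}, the agreement clauses of Lemma \ref{lem:<_Xx/Tt}, and standardness of the tree embeddings).

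One imprecision in your successor step: you announce the induction as being along $<_{\Xx/\Tt}$, but for $\alpha=\gamma+1$ you invoke the inductive hypothesis at $\gamma$ and conclude $\vec{F}^\alpha_\infty=\vec{F}^\gamma_\infty\conc\left<E^\Xx_{\zeta^\gamma}\right>$. That identity is only correct when $\gamma=\pred^{\Xx/\Tt}(\gamma+1)$. In general the inflationary extender $E^\Xx_{\zeta^\gamma}$ is applied over $L^\beta$ for $\beta=\pred^{\Xx/\Tt}(\gamma+1)$, which may be $<\gamma$ and need not even be $<_{\Xx/\Tt}$-comparable with $\gamma$; the correct step is $\vec{F}^{\gamma+1}_\infty=\vec{F}^\beta_\infty\conc\left<E^\Xx_{\zeta^\gamma}\right>$, which is what matches the indexing of the target sequence over the branch $\{\gamma':\gamma'+1\leq_{\Xx/\Tt}\alpha\}$ rather than over all ordinals below $\alpha$. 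The agreement statement you already cite, Lemma \ref{lem:<_Xx/Tt}(\ref{item:gamma_agmt_along_branch}), is precisely what identifies $\vec{F}^{\gamma+1}$ below $\chi=\pred^\Xx(\lambda^{\gamma+1})\in L^\beta$ with $\vec{F}^\beta_\infty$, so the repair is immediate; but as written the step would produce the wrong extender sequence whenever $\beta<\gamma$.
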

\begin{proof}
 Recall here that $\vec{F}^\alpha_\lambda=\vec{F}_{\Pi_\alpha\lambda}$
 was defined in \ref{dfn:F-vec_tree_emb}.
 Part (i) is verified by an easy induction on $\lh(\Xx/\Tt)$. Part (ii) follows 
via Lemma  \ref{lem:Pi_is_standard}.
\end{proof}

 \begin{dfn}
Let $\Tt\mininflatearrow\Xx$ be good, 
$C=C^{\Tt\mininflatearrow\Xx}$ and $n=\deg^\Tt(\infty)$. Adopt notation as above.
Given $\alpha\leq_{\Xx/\Tt}\beta$ with $\lambda^\alpha,\lambda^\beta\in C$,
then 
\[ \pi^{\alpha\beta}:M^{\Xx^\alpha}_\infty\to M^{\Xx^\beta}_\infty \]
denotes the natural factor map given by 
\ref{lem:M^Xx^alpha_infty_ult_rep};
that is,
$\pi^{\alpha\beta}=i^{M^{\Xx^\alpha},n}_{\vec{F}^\beta_\infty\cut\vec{F}
^\alpha_\infty}$.

Suppose instead $\alpha\leq_{\Xx/\Tt}\beta<\lh(\Xx/\Tt)$ with
$\lambda^\alpha\notin C$. So by \cite[8.6]{iter_for_stacks},
 $\lh(\Xx^\alpha)=\lambda^\alpha+1$
and $\lh(\Xx^\beta)=\lambda^\beta+1$ and $\lambda^\alpha\leq_\Xx\lambda^\beta$.
If $(\lambda^\alpha,\lambda^\beta]_\Xx\inter\dropset_{\deg}^{\Xx}=\emptyset$,
let
\[ 
\pi^{\alpha\beta}=i^\Xx_{\lambda^\alpha\lambda^\beta}:M^{\Xx^\alpha}_\infty\to 
M^{\Xx^\beta}_\infty.\]
If  $\alpha$ is also a successor ordinal, let
\[ 
\pi^{*\alpha\beta}=i^{*\Xx}_{\lambda^\alpha\lambda^\beta}:M^{*\Xx^\alpha}_{
\lambda^\alpha}\to M^{\Xx^\beta}_\infty; \]
note that $M^{*\Xx^\alpha}_{\lambda^\alpha}\ins M^{\Xx^\gamma}_\lambda$
for some $\lambda\in L^\gamma$ where $\gamma=\pred^{\Xx/\Tt}(\alpha)$,
and if $\lambda^\gamma\in(C^-)^\gamma$
then possibly $\lambda^\gamma<\zeta^\gamma$.
\end{dfn}

We can now easily describe 
the full factor tree $\Xx/\Tt$:

\begin{dfn}
Let $\Tt\mininflatearrow\Xx$ be good and adopt notation as before.
Let $(N,n)=(M^\Tt_\infty,\deg^\Tt_\infty)$.
Then the \emph{factor tree} $\Xx/\Tt$
(or the \emph{flattening} of $(\Tt,\Xx)$) is the $n$-maximal tree 
$\Uu$ on $N$
such that  $\lh(\Uu)=\lh(\Xx/\Tt)$, ${<_\Uu}={<_{\Xx/\Tt}}$, and
$E^\Uu_\alpha=E^\Xx_{\zeta^\alpha}$ for each 
$\alpha+1<\lh(\Uu)$
(see Lemma \ref{lem:flattening_properties}).
\end{dfn}

\begin{lem}\label{lem:flattening_properties}
 Let $\Tt\mininflatearrow\Xx$ be good.  Then:
 \begin{enumerate}
 \item\label{item:flattening_exists} The factor tree $\Uu=\Xx/\Tt$ exists
  \tu{(}in particular, $\Uu$ has wellfounded models\tu{)}, and is unique.
 \item\label{U_drops_when_lambda_alpha} $[0,\alpha]_\Uu$ drops in model or 
degree iff $\lambda^\alpha\notin C^\alpha$, for all $\alpha<\lh(\Uu)$.
 \item\label{item:models_match} 
$(M^\Uu_\alpha,\deg^\Uu_\alpha)=(M^{\Xx^\alpha}_\infty,\deg^{\Xx^\alpha}
_\infty)$ for all $\alpha<\lh(\Uu)$.
 \item\label{item:main_embs_match} For $\alpha\leq^\Uu\beta$,
we have $(\alpha,\beta]_\Uu\inter\dropset^\Uu_{\deg}=\emptyset$
 iff $\pi^{\alpha\beta}$ is defined, and 
if $(\alpha,\beta]_\Uu\inter\dropset^\Uu_{\deg}=\emptyset$ then
 $i^\Uu_{\alpha\beta}=\pi^{\alpha\beta}$.
 \item\label{item:extra_embs_match} 
 If $\alpha+1\leq^\Uu\beta$ and 
$(\alpha+1,\beta]^\Uu\inter\dropset^\Uu_{\deg}=\emptyset$
then
$i^{*\Uu}_{\alpha+1,\beta}=\pi^{*\alpha+1,\beta}$.
\item\label{item:factor_comm} Suppose $\Tt,\Uu$ have successor length
and $\Xx$ is non-$\Tt$-pending,
so $\Xx$ also has successor length.
Let $\Ttvec=(\Tt,\Uu)$.
Then $b^{\Ttvec}\inter\dropset_{\deg}^{\Ttvec}=\emptyset$
iff $b^\Xx\inter\dropset_{\deg}^\Xx=\emptyset$.
If $b^{\Ttvec}\inter\dropset_{\deg}^{\Ttvec}=\emptyset$
then $i^{\Ttvec}_{0\infty}=i^\Xx_{0\infty}$.
 \end{enumerate}
\end{lem}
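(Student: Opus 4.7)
The plan is to establish parts (\ref{item:flattening_exists})--(\ref{item:extra_embs_match}) simultaneously by transfinite induction on $\alpha<\lh(\Xx/\Tt)$, and then derive part (\ref{item:factor_comm}) as a separate consequence. Uniqueness of $\Uu$ reduces to forcing at each step: by \ref{lem:<_Xx/Tt}(\ref{item:<_Xx/Tt_is_it_tree_order}) the tree order ${<_{\Xx/\Tt}}$ is a valid iteration tree order, the extenders $E^\Uu_\alpha=E^\Xx_{\zeta^\alpha}$ are prescribed, and $n$-maximality of $\Uu$ determines $M^{*\Uu}_{\alpha+1}$ once $M^\Uu_{\pred^\Uu(\alpha+1)}$ is known. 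So the induction need only verify existence, wellfoundedness, and the stated agreements.

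For the successor step $\alpha\to\alpha+1$, I would set $\chi=\pred^\Xx(\lambda^{\alpha+1})$, $\beta=\pred^{\Xx/\Tt}(\alpha+1)$ and $\kappa=\crit(E^\Xx_{\zeta^\alpha})$. Lemma \ref{lem:<_Xx/Tt}(\ref{item:gamma_agmt_along_branch}) places $\chi\in L^\beta$ and pins down $\theta\eqdef f^\beta(\chi)$ and the relevant $\xi^\beta_\kappa$. By Lemma \ref{lem:M^Xx^alpha_infty_ult_rep} and the inductive hypothesis, $M^\Uu_\beta=M^{\Xx^\beta}_\infty=\Ult_n(M^\Tt_\infty,\vec{F}^\beta_\infty)$. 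The standardness clause \ref{dfn:min_inf_coherent}(\ref{item:T_commutativity}) identifies $M^{*\Xx}_{\lambda^{\alpha+1}}$ as $U_{\chi;\theta,i,\xi}$, an ultrapower of an element of the $((M^\Tt_\theta,\deg^\Tt(\theta)),(\exit^\Tt_\theta,0))$-dropdown. Applying the dropdown-preservation Lemma \ref{lem:ult_dropdown} along the ultrapower $M^\Tt_\infty\to M^\Uu_\beta$ then identifies this segment with the corresponding element of the dropdown of $M^\Uu_\beta$, which is exactly what $n$-maximality of $\Uu$ prescribes as $M^{*\Uu}_{\alpha+1}$. One more application of extender commutativity \ref{lem:extender_comm} to $E^\Xx_{\zeta^\alpha}$ then yields $M^\Uu_{\alpha+1}=M^{\Xx^{\alpha+1}}_\infty$ and identifies the ultrapower maps with $\pi^{\alpha,\alpha+1}$ and $\pi^{*\alpha+1,\alpha+1}$. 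The drop dichotomy in (\ref{U_drops_when_lambda_alpha}) falls out directly: whether the prescribed dropdown element is a proper segment of $M^\Uu_\beta$ or is the full model with full degree matches, by the two clauses of the successor case in Definition \ref{dfn:min_inflation}, exactly the condition $\lambda^{\alpha+1}\notin C^{\alpha+1}$ versus $\lambda^{\alpha+1}\in C^{\alpha+1}$.

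At a limit $\alpha$, Lemma \ref{lem:M^Xx^alpha_infty_ult_rep} gives $M^{\Xx^\alpha}_\infty=\Ult_n(M^\Tt_\infty,\vec{F}^\alpha_\infty)$, and this ultrapower decomposes naturally as the direct limit of the $\Ult_n(M^\Tt_\infty,\vec{F}^\gamma_\infty)=M^\Uu_\gamma$ for $\gamma<^\Uu\alpha$ under the factor maps $\pi^{\gamma\alpha}$. This simultaneously delivers wellfoundedness of $M^\Uu_\alpha$ (inherited from that of $M^{\Xx^\alpha}_\infty$), part (\ref{item:models_match}), and the embedding agreement (\ref{item:main_embs_match}); the drop status at $\alpha$ is inherited from tails of $[0,\alpha)_{\Xx/\Tt}$ via the adapted minimal version of \cite[Lemma 8.6]{iter_for_stacks}.

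For part (\ref{item:factor_comm}), non-$\Tt$-pendingness together with $\Xx$ of successor length forces $\Uu$ to have successor length, and then $b^{\vec\Tt}$ is the concatenation of $b^\Tt$ with $b^\Uu$, so its drop structure decomposes accordingly; by (\ref{U_drops_when_lambda_alpha}) and Lemma \ref{lem:<_Xx/Tt}(\ref{item:<_Xx/Tt_respects_<_Xx}) this matches that of $b^\Xx$. Assuming no drops, $i^{\vec\Tt}_{0\infty}=i^\Uu_{0\infty}\com i^\Tt_{0\infty}=\pi^{0,\infty}\com i^\Tt_{0\infty}$ by (\ref{item:main_embs_match}), and the commutativity clause \ref{dfn:min_inf_coherent}(\ref{item:T_commutativity})(\ref{item:emb_comm_to_eps}), applied to $\Pi_{\lh(\Xx)-1}$ with $\chi=0$ (so $\pi_{0;0}=\id$), identifies this with $i^\Xx_{0\infty}$. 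The main obstacle throughout is the successor-step alignment: the $\Xx$-side starting segment $M^{*\Xx}_{\lambda^{\alpha+1}}$ is determined by the standardness bookkeeping inside $\Pi_{\lambda^{\alpha+1}}$, while the $\Uu$-side segment is determined by $n$-maximality applied to $M^\Uu_\beta$, and matching them hinges on the dichotomy in \ref{lem:<_Xx/Tt}(\ref{item:gamma_agmt_along_branch})(d) between $\pi^\alpha_\kappa(\kappa)<\crit(E^\Xx_{\zeta^\xi})$ and $\geq$, which must be threaded carefully through \ref{lem:ult_dropdown} to keep the dropdown indexing consistent with the inductive hypothesis.
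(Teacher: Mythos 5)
Your proposal is correct and follows essentially the same route as the paper: induction on the length of the factor tree, using the ultrapower representation $M^{\Xx^\alpha}_\infty=\Ult_n(M^\Tt_\infty,\vec{F}^\alpha_\infty)$ from Lemma \ref{lem:M^Xx^alpha_infty_ult_rep} together with $\vec{F}^{\alpha+1}_\infty=\vec{F}^\beta_\infty\conc\left<E^\Xx_{\zeta^\alpha}\right>$ at successors, direct limits at limits, and the commutativity of minimal tree embeddings for part (\ref{item:factor_comm}). The only cosmetic difference is that you route the successor-step drop alignment through Lemma \ref{lem:ult_dropdown}, where the paper argues it directly by cases on whether $E^\Xx_{\zeta^\alpha}$ is total over $\exit^{\Xx^\beta}_\lambda$ (noting $(\exit^{\Xx^\beta}_\lambda)^\passive\pins_{\card}M^{\Xx^\beta}_\infty$ or the superstrong exception); both amount to the same verification.
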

\begin{proof}
The uniqueness in part \ref{item:flattening_exists} is clear.
Parts \ref{U_drops_when_lambda_alpha}--\ref{item:extra_embs_match}
are by induction on segments $(\Xx/\Tt)\rest\eta$ and $\Uu\rest\eta$
of $\Xx/\Tt$ and $\Uu$ (see below), and then
part \ref{item:factor_comm}
follows easily from those things and the commutativity
properties of minimal tree embeddings (which we leave to the reader).

If $\eta=1$, everything is trivial, as we have $\Xx^0=\Tt$.

Now suppose we have the inductive hypotheses at $\eta=\alpha+1$
(so $M^\Uu_\alpha=M^{\Xx^\alpha}_\infty$ etc);
we want to extend to $\alpha+2$.
Let $E=E^\Xx_{\zeta^\alpha}$. We have $E\in\es_+(M^{\Xx^\alpha}_{\zeta^\alpha})$,
but $\lh(E)<\lh(E^{\Xx^\alpha}_{\zeta^\alpha})$ if $\zeta^\alpha+1<\lh(\Xx^\alpha)$,
as $E$ is $\Tt$-inflationary.
So $E\in\es_+(M^{\Xx^\alpha}_\infty)=\es_+(M^\Uu_\alpha)$.
And  $E$ is $\Uu\rest(\alpha+1)$-normal,
since $\zeta^\beta<\zeta^\alpha$  for 
$\beta<\alpha$, so
$\lh(E^\Uu_\beta)=\lh(E^\Xx_{\zeta^\beta})\leq\lh(E)$.

Let $\beta=\pred^\Uu(\alpha+1)$,
so $\beta$ is least such that $\crit(E)<\nutilde(\exit^\Uu_\beta)$.
So by \cite[8.4]{iter_for_stacks}
$\beta=\pred^{\Xx/\Tt}(\alpha+1)$, also as desired.\footnote{In 
\cite[8.4]{iter_for_stacks}
it says $\crit(E)<\iota(\exit^\Uu_\beta)$, but this is equivalent
saying $\crit(E)<\nutilde(\exit^\Uu_\beta)$ here,
as $\iota(\exit^\Uu_\beta)=\nutilde(\exit^\Uu_\beta)$
unless $\exit^\Uu_\beta$ is MS-indexed type 2, in which
case $\iota(\exit^\Uu_\beta)=\OR(\exit^\Uu_\beta)$.}
Let $\lambda=\pred^\Xx(\lambda^{\alpha+1})$. So $\lambda\in L^\beta$.
If $\lambda+1=\lh(\Xx^\beta)$
then by induction,
\[ 
(M^\Uu_\beta,\deg^\Uu_\beta)=
(M^{\Xx^\beta}_\infty,\deg^{\Xx^\beta}_\infty)=(M^{\Xx}_\lambda,\deg^{\Xx}_
\lambda), \]
and since $E^\Uu_\alpha=E=E^{\Xx^\alpha}_{\zeta^\alpha}$,
the inductive hypotheses are immediately maintained
(note there can be a drop in model or degree in this case).
So suppose $\lambda+1<\lh(\Xx^\beta)$, 
so $\lambda^\beta,\lambda\in(C^-)^{\beta}$
and $[0,\beta]_\Uu\inter\dropset_{\deg}^\Uu=\emptyset$.
But either $(\exit^{\Xx^\beta}_\lambda)^\passive$
is a cardinal proper
segment of $M^{\Xx^\beta}_\infty=M^\Uu_\beta$,
or we have MS-indexing, $E^{\Xx^\beta}_\lambda$ is superstrong,
$\lh(\Xx^\beta)=\lambda+2$ and 
$\OR(M^{\Xx^\beta}_{\lambda+1})=\lh(E^{\Xx^\beta}_\lambda)$). So either:
\begin{enumerate}
\item\label{item:no_drop_in_model} $E$ is total over 
$\exit^{\Xx^\beta}_\lambda$,
$\lambda^{\alpha+1}\in(C^-)^{\alpha+1}$, and $\Uu$ does not drop in model or degree at $\alpha+1$, or
\item\label{item:drop_in_model} $\lambda^{\alpha+1}\in\dropset^\Xx$ and $\alpha+1\in\dropset^\Uu$ and $M^{*\Xx}_{\lambda^{\alpha+1}}=M^{*\Uu}_{\alpha+1}\pins\exit^{\Xx^\beta}_\lambda$
(so note then that $\zeta^\beta=\lambda$, as $E$ is total over $\exit^\Uu_\beta$)
and
$\deg^\Xx(\lambda^{\alpha+1})=\deg^\Uu(\alpha+1)$.
\end{enumerate}
In case \ref{item:drop_in_model}, it is again routine to see that the hypotheses are maintained.
In case \ref{item:no_drop_in_model} (recalling
$[0,\beta]_\Uu\inter\dropset_{\deg}^\Uu=\emptyset$),
$M^\Uu_{\alpha+1}=\Ult_n(M^\Uu_\beta,E)$
and $i^\Uu_{\beta,\alpha+1}$ is the ultrapower map.
But also, using facts about minimal tree embeddings,
\[ M^{\Xx^\beta}_\infty=\Ult_n(N,\vec{F}^\beta_\infty),\]
\[ M^{\Xx^{\alpha+1}}_\infty=\Ult_n(N,\vec{F}^{\alpha+1}_\infty),\]
and $\vec{F}^{\alpha+1}_\infty=\vec{F}^\beta_\infty\conc\left<E\right>$,
so
$M^{\Xx^{\alpha+1}}_\infty=\Ult_n(M^{\Xx^\beta}_\infty,E)$
and $\pi^{\beta,\alpha+1}$ is the ultrapower map, as desired.

Now suppose that we have the inductive hypotheses below a limit $\eta$,
and we consider $\eta+1$.
Since $<_{\Xx/\Tt}$ is an iteration tree order
and ${<_{\Xx/\Tt}}\rest\eta={<_\Uu}\rest\eta$,
$[0,\eta)_\Uu=[0,\eta)_{\Xx/\Tt}$ does indeed give a $\Uu\rest\eta$-cofinal branch.
If for some $\alpha<_\Uu\eta$, we have $\lambda^\alpha\notin(C^-)^\alpha$,
then everything is easy, so suppose otherwise.
In particular, $\Uu$ does not drop in model or degree in $[0,\eta)_\Uu$.
Now $M^\Uu_\eta$ is just the direct limit as usual, and $i^\Uu_{\alpha\eta}$
the associated direct limit map.
By induction then,
\[ M^\Uu_\eta=\dirlim_{\alpha\leq_\Uu\beta<_\Uu\eta}\left(M^{\Xx^\alpha}_\infty,M^{\Xx^\beta}_\infty;\pi^{\alpha\beta}\right), \]
and $i^\Uu_{\alpha\eta}$ is the associated direct limit map.
But we have $\lambda^\eta\in C^{\eta}$,
$\vec{F}^\eta_\infty=\lim_{\alpha<_\Uu\eta}\vec{F}^\alpha_\infty$
and
$M^{\Xx^\eta}_\infty=\Ult_n(N,\vec{F}^\eta_\infty)$
and $\pi^{\alpha\eta}$ is an associated factor map,
and likewise for the $\pi^{\alpha\beta}$ for $\beta<_\Uu\eta$.
But these match the direct limit and associated maps just described,
as desired.
\end{proof}

\section{Minimal comparison}\label{sec:min_comp}

In this section we quickly adapt the techniques of comparative and genericity inflation
from \cite[\S5]{iter_for_stacks} to minimal inflations.

\subsection{Minimal comparison}\label{subsec:min_inf}

\begin{dfn}\label{dfn:min_comp}\index{minimal comparison}
Let $\Omega>\om$ be regular. Let $M$ be $m$-standard.
Let $\mathscr{T}$ be a set of 
$m$-maximal trees on $M$,
with each $\Tt\in\mathscr{T}$ of length $\leq\Omega+1$.
Let $\Xx$ be  $m$-maximal on $M$.
We say that $\Xx$ is a  \dfnemph{minimal comparison} of $\mathscr{T}$
(with respect to 
$\Omega$) 
iff:
\begin{enumerate}[label=--]
 \item $\Xx$ is a minimal inflation of each $\Tt\in\mathscr{T}$; let 
$t^\Tt=t^{\Tt\mininflatearrow\Xx}$, etc,
 for $\Tt\in\mathscr{T}$, 
 \item for each $\alpha+1<\lh(\Xx)$ there is $\Tt\in\mathscr{T}$ such that $t^{\Tt}(\alpha)=0$,
 \item $\Xx$ has successor length $\beta+1\leq\Omega+1$,
 \item if $\beta+1<\Omega$ then there is no $\Tt\in\mathscr{T}$
 with
$\beta\in(C^-)^{\Tt}$.\qedhere
 \end{enumerate}
\end{dfn}

The proof of \cite[Lemma 5.2]{iter_for_stacks} gives:

\begin{lem}[Minimal comparison]\label{lem:min_comp}
Let $\Omega>\om$ be regular.
Let $M$ be $m$-standard and $\Sigma$ be an $(m,\Omega+1)$-strategy for $M$
with minimal inflation condensation.
Let $\mathscr{T}$ be as in \ref{dfn:min_comp},
and suppose  $\card(\mathscr{T})<\Omega$
and each $\Tt\in\mathscr{T}$ is via $\Sigma$
with $\lh(\Tt)\leq\Omega+1$.
Then there is a unique minimal comparison $\Xx$ of $\mathscr{T}$
 via $\Sigma$.
Moreover, there is $\Tt\in\mathscr{T}$ such that, letting $\Tt'=\Tt$
if $\Tt$ has successor length, and
$\Tt'=\Tt\conc\Sigma(\Tt)$ otherwise, we have
\begin{enumerate}[label=--]
 \item  $\Xx$ is $\Tt'$-terminally-non-dropping, and
 \item if $\lh(\Xx)=\Omega+1$ then $\lh(\Tt')=\Omega+1$.
\end{enumerate}
\end{lem}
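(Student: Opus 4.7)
The plan is to follow the template of \cite[Lemma 5.2]{iter_for_stacks} verbatim, replacing every appeal to inflation condensation with its minimal analogue and every use of (non-minimal) tree embeddings/inflations with the minimal versions developed in \S\ref{sec:factor_tree} and Definitions \ref{dfn:min_tree_emb}, \ref{dfn:min_inflation}. So I would build $\Xx$ by transfinite recursion of length ${\leq}\Omega+1$, simultaneously maintaining, for each $\Tt\in\mathscr{T}$, the unique minimal inflation witness $(t^\Tt,C^\Tt,(C^-)^\Tt,f^\Tt,\Pivec^\Tt)^{\Tt\mininflatearrow\Xx\rest(\alpha+1)}$. Uniqueness of $\Xx$ will be automatic from the uniqueness clause for inflation witnesses plus the fact that the extender-choice rule below is forced.

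At a successor stage $\alpha+1$, assuming $\Xx\rest(\alpha+1)$ is built and the inflation witnesses have been defined, I declare the process terminated if there is no $\Tt\in\mathscr{T}$ with $\alpha\in(C^-)^\Tt$; otherwise I choose $E^\Xx_\alpha$ to be $E^{\Tt\mininflatearrow\Xx\rest(\alpha+1)}_\alpha$ for a $\Tt\in\mathscr{T}$ minimizing $\lh(E_{\Pi^\Tt_\alpha\alpha})$. This is precisely the copy extender for those $\Tt$ that attain the minimum (for such $\Tt$, we take $t^\Tt(\alpha)=0$ and use Definition \ref{dfn:one-step} and Lemma \ref{lem:one-step_copy_extension_is_standard}), and it is an inflationary extender for all other $\Tt\in\mathscr{T}$ (here we use Definition \ref{dfn:inflationary_extender} and Lemma \ref{lem:E-inflation_is_minimal}, checking the bounding condition $\lh(E^\Xx_\alpha)\leq\lh(E_{\Pi^\Tt_\alpha\alpha})$ by our choice of minimizer). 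Since $\Xx$ is being built via $\Sigma$, the model $M^\Xx_{\alpha+1}$ is wellfounded, so the minimal one-step extensions/inflations are legitimate. If $E^\Xx_\alpha$ happens to be inflationary for $\Tt$ but fails the condition $Q_{\Pi^\Tt_\alpha\xi}\ins M^{*\Xx}_{\alpha+1}$ or $\alpha+1\in\dropset^\Xx_{\deg}$ when $f^\Tt(\xi)+1=\lh(\Tt)$, then $\alpha+1\notin C^\Tt$, consistent with Definition \ref{dfn:min_inflation}.

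At a limit stage $\lambda\leq\Omega$, I set $b=\Sigma(\Xx\rest\lambda)$ and $E^\Xx_\lambda$ determined via $\Sigma$ on the successor stage that follows (or stop at $\lambda$ if we have reached the target length). The key point is that for each $\Tt\in\mathscr{T}$, the minimal inflation witness extends through $\lambda$: either $b\not\sub C^\Tt$, in which case $\lambda\notin C^\Tt$ and the inflation data is simply copied upward in the obvious way, or $b\sub C^\Tt$, in which case minimal inflation condensation (Definition \ref{dfn:minimal_inflation_condensation}) together with Lemma \ref{lem:inflationary_limit_continuation} tells us exactly what $f^\Tt(\lambda)$ and $\gamma^\Tt_{\lambda;f^\Tt(\lambda)}$ must be, and furnishes $\Pi^\Tt_\lambda$ as a minimal tree embedding via the limit construction of \cite{iter_for_stacks}. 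Since $\Tt$ is via $\Sigma$ and $\Sigma$ has mic, the branch induced on $\Tt$ agrees with $\Sigma(\Tt\rest\eta)$, so we remain via $\Sigma$ on the $\Tt$-side as well, permitting the recursion to continue.

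The main obstacle is termination at stage $\leq\Omega+1$ and the identification of the witnessing $\Tt$ in the ``moreover'' clause. Termination is the standard argument: assuming the recursion runs to length $\Omega+1$, consider for each $\alpha<\Omega$ some $\Tt_\alpha\in\mathscr{T}$ with $t^{\Tt_\alpha}(\alpha)=0$; since $\card(\mathscr{T})<\Omega$ and $\Omega$ is regular, by the pigeonhole principle some fixed $\Tt\in\mathscr{T}$ has $t^{\Tt}(\alpha)=0$ for an unbounded set of $\alpha<\Omega$, whence $f^\Tt:C^\Tt\to\lh(\Tt)$ becomes unbounded in $\lh(\Tt)$ at the copying stages, forcing $\lh(\Tt)=\Omega+1$ and $[0,\Omega)_\Xx\sub C^\Tt$ with $f^\Tt$ continuous; applying mic at $\lambda=\Omega$ and the uniqueness of $\Sigma(\Tt)$ shows that (setting $\Tt'=\Tt$ or $\Tt'=\Tt\conc\Sigma(\Tt)$ according to whether $\lh(\Tt)$ is a successor) $\Xx$ is $\Tt'$-terminally-non-dropping. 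For termination below $\Omega$, if the construction stops at some $\beta+1<\Omega$, then by the stopping rule no $\Tt\in\mathscr{T}$ has $\beta\in(C^-)^\Tt$, so every $\Tt\in\mathscr{T}$ has already reached its top, and we can take any such $\Tt$ (whose top segment satisfies $f^\Tt(\beta)+1=\lh(\Tt)$ and $(\gamma^\Tt_{\beta;f^\Tt(\beta)},\beta]_\Xx$ does not drop in model or degree, by Definition \ref{dfn:inflation_notation}) as the witness.
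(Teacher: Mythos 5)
Your overall plan is exactly the paper's: the paper proves this lemma by simply asserting that the proof of \cite[Lemma 5.2]{iter_for_stacks} adapts, and your recursion (choose $E^\Xx_\alpha$ as the copy extender of minimal index among the pending trees, so that it is copying for the minimizers and inflationary for the rest; extend the witnesses at limits via mic and Lemma \ref{lem:inflationary_limit_continuation}; terminate by pigeonhole on $\card(\mathscr{T})<\Omega$ and regularity) is precisely that adaptation, with the right minimal-analogue lemmas cited at each step.

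The one place where your sketch asserts something false is the justification of the ``moreover'' clause when the comparison halts at some $\beta+1<\Omega$: it is \emph{not} the case that ``every $\Tt\in\mathscr{T}$ has already reached its top,'' and you cannot ``take any such $\Tt$.'' Halting only means no $\Tt$ has $\beta\in(C^-)^\Tt$; a given $\Tt$ may instead have fallen out of $C^\Tt$ entirely (the inflation is $\Tt$-terminally-\emph{dropping}), in which case it does not witness the clause. What you must produce is some $\Tt$ with $\beta\in C^\Tt$ and $f^\Tt(\beta)+1=\lh(\Tt')$ (the no-drop condition on the last interval then comes for free from clause (3) of Definition \ref{dfn:min_tree_emb}). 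For successor $\beta=\gamma+1$ this is immediate: take the tree whose copy extender was used as $E^\Xx_\gamma$. For limit $\beta$ one needs the argument from \cite[Lemma 5.2]{iter_for_stacks}: a tree can first leave $C^\Tt$ only at a drop of $\Xx$, and $[0,\beta)_\Xx$ has only finitely many drops, so if every tree left $C$ along $[0,\beta)_\Xx$ there would be a tail of that branch on which no tree is in $C$ and hence no extender is copying --- contradicting your choice rule. A similar care is needed for your claim that unboundedly many copying stages for a fixed $\Tt$ force $f^\Tt$ unbounded in $\lh(\Tt)$ (one argues along the branch $[0,\Omega]_\Xx$, where $f^\Tt$ is monotone, rather than over arbitrary copying stages). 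These are exactly the points carrying the content of the cited proof, so the gap is minor and standard, but as written your sketch does not establish the ``moreover'' clause.
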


\subsection{Minimal genericity inflation}

The minimal version of genericity inflation
works essentially identically
to the non-minimal version  in \cite[\S5]{iter_for_stacks}. The key difference is of course that we inflate extenders minimally, as elsewhere in this paper.
The remaining details are as in \cite{iter_for_stacks}, so we omit further discussion. (Also recall that  \cite{iter_for_stacks} deals with u-fine structure for Mitchell-Steel indexing,
whereas that is not relevant here.)

\section{Minimal inflation stacks}\label{sec:inf_comm}

\subsection{Commutativity}

We need the adaptation of  commutativity of inflation 
\cite[Lemma 6.2]{iter_for_stacks}.
Properties \ref{item:f_comm}--\ref{item:gamma_comm,internal_coverage} are just 
as in
\cite{iter_for_stacks}.
But there are some differences in property
\ref{item:extended_comm_at_end}:
drops in model in \cite{iter_for_stacks}
correspond more to drops in model or degree here,
the conclusions
of \ref{item:extended_comm_at_end}\ref{item:map_commutativity}
are crucially different, because of minimality, and 
\ref{item:extended_comm_at_end}\ref{item:F-vec_description} is new.

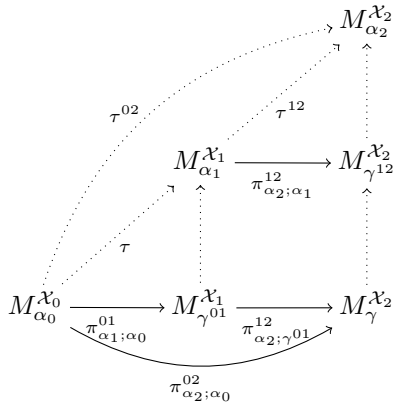
\begin{figure}
\centering
\begin{tikzpicture}
 [mymatrix/.style={
    matrix of  nodes,
    row sep=1.2cm,
    column sep=1.2cm}]
   \matrix(m)[mymatrix]{
{}&{}&{$M^{\Xx_2}_{\alpha_2}$}\\
 {}&{$M^{\Xx_1}_{\alpha_1}$}&{$M^{\Xx_2}_{\gamma^{12}}$}\\
{$M^{\Xx_0}_{\alpha_0}$}&{$M^{\Xx_1}_{\gamma^{01}}$}&{$M^{\Xx_2}_\gamma$}\\};
\path[->,font=\scriptsize,shorten >= -1pt, shorten <= -1pt]
(m-3-1) edge node[below] {$\pi^{01}_{\alpha_1;\alpha_0}$} (m-3-2)
(m-3-1) edge[bend right] node[below] {$\pi^{02}_{\alpha_2;\alpha_0}$} (m-3-3)
(m-3-1) edge[dotted] node[below] {$\ \ \tau$} 
(m-2-2)
(m-3-1) edge[dotted,bend left] node[left] 
{$\tau^{02}$} (m-1-3)
(m-3-2) edge node[below] {$\pi^{12}_{\alpha_2;\gamma^{01}}\ \ $} (m-3-3)
(m-3-2) edge[dotted] node {} (m-2-2)
(m-3-3) edge[dotted] node {} (m-2-3)
(m-2-2) edge node[below] {$\pi^{12}_{\alpha_2;\alpha_1}$} (m-2-3)
(m-2-2) edge[dotted] node[below] 
{$\ \ \tau^{12}$} 
(m-1-3)
(m-2-3) edge[dotted] node {} (m-1-3);
\end{tikzpicture}
\caption{Commutativity of minimal inflation.
We have
$\alpha_2\in C^{02}$,
$\alpha_1=f^{12}(\alpha_2)$, $\alpha_0=f^{02}(\alpha_2)=f^{01}(\alpha_1)$,
$\gamma^{k\ell}=\gamma^{k\ell}_{\alpha_\ell;\alpha_k}$,
$\gamma=\gamma^{12}_{\alpha_2;\gamma^{01}}$,
$\tau^{k\ell}=\pi^{k\ell}_{\alpha_\ell;\alpha_k i^{k\ell}\alpha_\ell}$
where $i^{k\ell}=i^{k\ell}_{\alpha_\ell;\alpha_k\alpha_\ell}$, and
$\tau=\pi^{01}_{\alpha_1;\alpha_0 i^{02}\alpha_1}$.
(So possibly $\dom(\tau)\neq\dom(\tau^{01})$, and possibly 
$\tau\not\sub\tau^{01}$.)
Note $\alpha_2=\delta^{02}_{\alpha_2;\alpha_0}=\delta^{12}_{\alpha_2;\alpha_1}$
and $\alpha_1=\delta^{01}_{\alpha_1;\alpha_0}$
and $\gamma^{01}\leq^{\Xx_1}\alpha_1$ and
$\gamma\leq^{\Xx_2}\gamma^{12}\leq^{\Xx_2}\alpha_2$.
Solid arrows indicate total embeddings, and dotted arrows 
indicate partial embeddings (with domain and codomain 
 initial segments of the models in the figure).
The vertical arrows 
depict ultrapowers by branch extenders;
for example, the left-most depicts
the ultrapower map corresponding
to $\Ult_n(U^{01}_{\alpha_1;\alpha_0 
i^{02}},\vec{E}^{\Xx_1}_{\gamma^{01}\alpha_1})$
where $n=m^{\Xx_0}_{\alpha_0 i^{02}}$,
and we refer to $i^{02}$ here (not $i^{01}$)
as we are considering factoring $\tau^{02}$.
The diagram commutes, after restricting to the relevant 
domains.}\label{fgr:inflation_commutativity}
\end{figure}

\begin{lem}[Commutativity of minimal inflation]\label{lem:inflation_commutativity}
Let $M$ be $m$-standard and $\Xx_0,\Xx_1,\Xx_2$ be such that:
\begin{enumerate}[label=--]
 \item each $\Xx_i$ is $m$-maximal on $M$,
 \item  $\Xx_0,\Xx_1$ have successor length,
 \item each $\Xx_{i+1}$ is a minimal inflation of $\Xx_i$,
 \item $\Xx_1$ is non-$\Xx_0$-pending
\end{enumerate}
 \tu{(}but $\Xx_2$ could have limit length or be $\Xx_1$-pending\tu{)}.
Then $\Xx_2$ is a minimal inflation of $\Xx_0$, and things commute in a reasonable fashion.
That is, let 
\[ (t^{ij},C^{ij},(C^-)^{ij},f^{ij},\left<\Pi^{ij}_\alpha\right>_{\alpha\in C^{ij}})
=(t,C,\ldots)^{\Xx_i\mininflatearrow\Xx_j}\]
for $i<j$; we also use analogous notation for other associated objects.
Let $\alpha_2<\lh(\Xx_2)$. If $k<2$ and $\alpha_2\in C^{k2}$ let $\alpha_k=f^{k2}(\alpha_2)$.
Then \tu{(}cf.~Figure \ref{fgr:inflation_commutativity}, which depicts a key case of the lemma\tu{)}:

\begin{enumerate}[label=C\arabic*.,ref=C\arabic*]
\item\label{item:f_comm} If $\alpha_2\in C^{02}$ then
$\alpha_2\in C^{12}$, $\alpha_1\in C^{01}$ and
$\alpha_0=f^{02}(\alpha_2)=f^{01}(f^{12}(\alpha_2))=f^{01}(\alpha_1)$.
 \item\label{item:t_equiv}
   $\alpha_2\in (C^-)^{02}$ and
$t^{02}(\alpha_2)=0$ iff

$\alpha_2\in (C^-)^{12}$ and $t^{12}(\alpha_2)=0$ and
$\alpha_1\in (C^-)^{01}$ and $t^{01}(\alpha_1)=0$.
\item\label{item:gamma_in_C} Suppose $\alpha_2\in C^{12}$ and $\alpha_1\in 
C^{01}$. Then:
\begin{enumerate}[label=\tu{(}\alph*\tu{)}]
\item If $\alpha_1+1=\lh(\Xx_1)$ then $\alpha_2\in C^{02}$.
\item\label{item:gamma_gamma_in_C^02} If $\beta\leq f^{01}(\alpha_1)$ and 
$\xi\in I^{01}_{\alpha_1;\beta}$ then
$\gamma^{12}_{\alpha_2;\xi}\in C^{02}$.
\item If $\beta<f^{01}(\alpha_1)$ and
$\xi=\delta^{01}_{\alpha_1;\beta}$ then $\delta^{12}_{\alpha_2;\xi}\in C^{02}$.
\end{enumerate}

\item\label{item:gamma_comm,internal_coverage}
Suppose $\alpha_2\in C^{02}$. Then:
\begin{enumerate}[label=\tu{(}\alph*\tu{)}]
 \item\label{item:gamma_comm_in} If $\beta\leq\alpha_0$ and $\gamma=\gamma^{01}_{\alpha_1;\beta}$
then
$\gamma^{02}_{\alpha_2;\beta}=\gamma^{12}_{\alpha_2;\gamma}$ and 
$\pi^{02}_{\alpha_2;\beta}=\pi^{12}_{\alpha_2;\gamma}\com\pi^{01}_{\alpha_1;\beta}$.
\item\label{item:interval_coverage_in}
$\bigcup_{\beta\leq 
\alpha_0}I^{02}_{\alpha_2;\beta}\sub\bigcup_{\beta\leq \alpha_1}
I^{12}_{\alpha_2;\beta}\sub C^{12}$.
\item\label{item:internal_coverage_in_2} If $\beta\leq\alpha_0$ and $\gamma\in I^{02}_{\alpha_2;\beta}$ then 
$f^{12}(\gamma)\in I^{01}_{\alpha_1;\beta}$.
\end{enumerate}

\item\label{item:extended_comm_at_end} Suppose $\alpha_2\in C^{02}$.
For $k<\ell\leq 2$ let:
\begin{enumerate}[label=--]
 \item $\gamma^{k\ell}=\gamma^{k\ell}_{{\alpha_\ell};\alpha_k\alpha_\ell}$
 \item $i^{k\ell}=i^{k\ell}_{{\alpha_\ell};\alpha_k\alpha_\ell}$
 \item $\tau^{k\ell}=
 \pi^{k\ell}_{{\alpha_\ell};\alpha_k i^{k\ell}\alpha_\ell}
\colon
M^{\Xx_k}_{\alpha_k i^{k\ell}}\to 
M^{\Xx_\ell}_{\alpha_\ell}$
\end{enumerate}
\tu{(}maybe $\gamma^{12}\neq\gamma^{12}_{\alpha_2;\gamma^{01}}$\tu{)}.
Then:
\begin{enumerate}[label=\tu{(}\alph*\tu{)}]
\item $i^{01}\leq i^{02}$ \tu{(}so
$(M^{\Xx_0}_{\alpha_0 i^{02}},m_{\alpha_0 i^{02}})\ins 
(M^{\Xx_0}_{\alpha_0 i^{01}},m_{\alpha_0 i^{01}})$,
with equality iff $i^{02}=i^{01}$\tu{)}.
\item $i^{01}+i^{12}=i^{02}$.
\item $i^{01}=i^{02}_{\gamma^{12};\alpha_0}$; that is, $i^{01}$ is the least $i'$ such that $\gamma^{12}\in 
I^{02}_{{\alpha_2};\alpha_0 i'}$.
\item If $i^{01}=i^{02}$ \tu{(}which holds iff $i^{12}=0$ iff 
$(\gamma^{12},\alpha_2]_{\Xx_2}\inter\dropset^{\Xx_2}_{\deg}=\emptyset$\tu{)}
then
$\tau^{02}=\tau^{12}\com\tau^{01}$.
\item\label{item:map_commutativity} Suppose $i^{01}<i^{02}$ \tu{(}which 
holds iff $i^{12}>0$ iff $(\gamma^{12},{\alpha_2}]_{\Xx_2}\inter
\dropset^{\Xx_2}_{\deg}\neq\emptyset$
iff 
$(M^{\Xx_0}_{\alpha_0 i^{02}},m^{\Xx_0}_{\alpha_0 i^{02}})
\pins
(M^{\Xx_0}_{\alpha_0 i^{01}},m^{\Xx_0}_{\alpha_0 i^{01}})$\tu{)}. Then
\[ M^{\Xx_1}_{\alpha_1 i^{12}}=U^{01}_{\alpha_1;\alpha_0 i^{02}\alpha_1}
=\Ult_{m^{\Xx_0}_{\alpha_0 i^{02}}}(M^{\Xx_0}_{\alpha_0 i^{02}},
\vec{F}^{01}_{\alpha_1;\alpha_1}), \]
and
$\tau^{02}=\tau^{12}\com \pi^{01}_{{\alpha_1};\alpha_0 
i^{02}\alpha_1}$.
\item\label{item:F-vec_description} 
$\vec{F}^{02}_{\alpha_2;\alpha_2}=\vec{F}^{01}_{\alpha_1;\alpha_1}\cd
\vec{F}^{12}_{\alpha_2;\alpha_2}$
(see Definitions \ref{dfn:F-vec_tree_emb} and \ref{dfn:G*F}).
\end{enumerate}
\end{enumerate}
\end{lem}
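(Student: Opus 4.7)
The overall strategy is induction on $\lh(\Xx_2)$, mimicking the structure of the proof of \cite[Lemma 6.2]{iter_for_stacks} but replacing the single-extender copying step with the minimal one based on Definition \ref{dfn:one-step} and Lemma \ref{lem:one-step_copy_extension_is_standard}. I would construct the tuple $(t^{02},C^{02},(C^-)^{02},f^{02},\Pivec^{02})$ witnessing $\Xx_0\mininflatearrow\Xx_2$ by a stage-by-stage definition, simultaneously verifying C1--C5 up to stage $\alpha_2$. The base case $\alpha_2=0$ is trivial since $\Pi^{ij}_0$ is the trivial embedding.

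For the successor step, at stage $\alpha_2+1$ with $\alpha_2\in C^{12}$ and $\alpha_1=f^{12}(\alpha_2)\in C^{01}$, there are four subcases according to whether $t^{12}(\alpha_2)$ and $t^{01}(\alpha_1)$ (if defined) are $0$ or $1$. Here $t^{02}(\alpha_2)=0$ iff both are $0$, giving C2. When $t^{12}(\alpha_2)=0$ and $t^{01}(\alpha_1)=0$, the extender $E^{\Xx_2}_{\alpha_2}=E_{\Pi^{12}_{\alpha_2}\alpha_2}$ is itself $F^{Q}$ for the top node of the lifted dropdown, which by induction is the ultrapower image of $F^{Q'}$ for the corresponding lift under $\Pi^{01}_{\alpha_1}$; so the $\Xx_0$-copy is the same extender and Definition \ref{dfn:one-step} applies to $\Pi^{02}_{\alpha_2}$ to produce $\Pi^{02}_{\alpha_2+1}$. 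In the other three subcases, $E^{\Xx_2}_{\alpha_2}$ is viewed as $\Xx_0$-inflationary via Definition \ref{dfn:inflationary_extender}, taking the predecessor $\pred^{\Xx_2}(\alpha_2+1)$ and reading off into $I^{02}_{\pred,\beta}$ for the appropriate $\beta$; well-foundedness and minimality propagation follow from Lemmas \ref{lem:E-inflation_is_minimal} and \ref{lem:Pi_is_standard}. Properties C3 and C4 are then direct from the explicit construction of $\Pi^{02}_{\alpha_2+1}$ combined with the interval identifications already present in C4(b)(c) inductively.

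For the limit step, one uses the canonical-limit characterization noted after Definition \ref{dfn:min_inflation}: $\Pi^{02}_{\alpha_2}$ is uniquely determined from $\langle\Pi^{02}_\beta\rangle_{\beta<_{\Xx_2}\alpha_2}$ together with $\Xx_0$ and $\Xx_2\rest(\alpha_2+1)$. By induction the diagram at each prior stage commutes, so the limit-cofinal data forces membership in $C^{02}$ exactly as in C3 and the commutativities of C4. The key content of C5 is extracted at the final stage $\alpha_2$: parts (a)--(d) follow from the fact that $i^{k\ell}$ is determined by which segment of the dropdown lift contains the image of $\gamma^{12}$, combined with Lemma \ref{lem:ult_dropdown} for how dropdowns propagate through ultrapowers. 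Part (e), the main novelty relative to \cite{iter_for_stacks}, is proved by factoring the map $\tau^{02}$: by the induced dropdown correspondence, $M^{\Xx_1}_{\alpha_1i^{12}}$ is identified with $\Ult_{m^{\Xx_0}_{\alpha_0i^{02}}}(M^{\Xx_0}_{\alpha_0 i^{02}},\vec F^{01}_{\alpha_1;\alpha_1})$ via Lemma \ref{lem:ult_dropdown}, and then $\tau^{02}$ factors as $\tau^{12}\com\pi^{01}_{\alpha_1;\alpha_0 i^{02}\alpha_1}$ by the commutativity property \ref{item:T_commutativity} within $\Pi^{02}_{\alpha_2}$. Part (f), the $\cd$-product description of $\vec F^{02}_{\alpha_2;\alpha_2}$, follows from the iterated extender commutativity diagram of Figure \ref{fgr:iterated_extender_comm}, applied to the normal sequences $\vec F^{01}_{\alpha_1;\alpha_1}$ and $\vec F^{12}_{\alpha_2;\alpha_2}$: the nested/unnested classification in Definition \ref{dfn:G*F} matches exactly which extenders of $\vec F^{01}_{\alpha_1;\alpha_1}$ get absorbed into the lifted exit extenders along $\Xx_2$.

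The main obstacle will be C5(e) and C5(f) together: one must verify that the dropdown sequence at stage $\alpha_0 i^{02}$ passes correctly through the intermediate ultrapowers in the combined construction, and that the ordering of critical points prescribed by the $\cd$-product agrees with the order in which extenders actually appear along the branch $[0,\alpha_2]_{\Xx_2}$. The delicate point is when $i^{01}<i^{02}$ so that the branch $(\gamma^{12},\alpha_2]_{\Xx_2}$ drops in model or degree; there, one must carefully use that the exit segments $Q^\cd_{\alpha;\xi}$ from Definition \ref{dfn:min_inf_ults_maps} behave coherently under the two successive minimal inflations, so that an inflationary extender of $\Xx_1\mininflatearrow\Xx_2$ whose critical point lies below $\nutilde$ of a lifted exit segment from $\Xx_0\mininflatearrow\Xx_1$ is correctly classified as nested, matching its behavior in the $\cd$-product. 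Once this bookkeeping is pinned down, the rest is a tracking of identifications already established in Lemmas \ref{lem:ult_dropdown} and \ref{lem:Pi_is_standard}.
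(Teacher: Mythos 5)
Your proposal follows essentially the same route as the paper: induction on $\lh(\Xx_2)$, a successor-step case split according to whether $E^{\Xx_2}_{\alpha_2}$ is $\Xx_1$-copying/inflationary and $E^{\Xx_1}_{\alpha_1}$ is $\Xx_0$-copying/inflationary (the paper merges your four subcases into three, since the $\Xx_1$-inflationary case needs no split on $\alpha_1$), the limit step via the canonical determination of tree embeddings at limits as in \cite[Lemma 6.2]{iter_for_stacks}, and part (f) via the $\cd$-product/iterated extender commutativity with exactly the nested/unnested bookkeeping you flag as the delicate point. The one emphasis worth adding is that (f) is not merely "extracted at the end" but is the inductive engine: the paper proves C2 in the copy--copy case by applying (f) at the earlier node $\gamma^{12}$ to identify $Q^{12}_{\gamma^{12};\gamma^{12}}$ with $Q^{02}_{\gamma^{12};\gamma^{12}}$, which is implicit but not explicit in your sketch.
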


\begin{proof}[Proof of Lemma \ref{lem:inflation_commutativity}]
By induction on $\lh(\Xx_2)$. Fix $\alpha+1<\lh(\Xx_2)$,
and suppose the lemma holds for $\Xx_2\rest(\alpha+1)$.
Note first that the lemma for $\Xx_2\rest(\alpha+2)$
says the same things 
with respect to $\alpha_2\leq\alpha$ as does the lemma
for $\Xx_2\rest(\alpha+1)$, except for  \ref{item:f_comm}
when $\alpha_2=\alpha$,
as  
$\alpha\in\dom(t^{\Xx_i\mininflatearrow\Xx_2\rest(\alpha+2)})$
for $i=0,1$, 
but $\alpha\notin\dom(t^{\Xx_i\mininflatearrow\Xx_2'})$.
So letting $\alpha_2=\alpha$, 
we just need to verify
to verify \ref{item:f_comm} for $\alpha_2$,
and then verify the other parts for $\alpha_2+1$.
We consider three cases.

\begin{casetwo}\label{case:copy_copy}
$\alpha_2$ is $\Xx_1$-copying
and $\alpha_1$ is $\Xx_0$-copying; that is,
$\alpha_2\in(C^-)^{12}$ and $t^{12}(\alpha_2)=0$ and 
$\alpha_1\in(C^-)^{01}$ and $t^{01}(\alpha_1)=0$.
 
We first establish part \ref{item:t_equiv} for $\alpha_2$.
(Cf.~Figure 
\ref{fgr:inflation_commutativity},
which is related.)
Note that $\gamma^{12}\in C^{02}$,
by induction with part \ref{item:gamma_in_C}\ref{item:gamma_gamma_in_C^02},
applied with $\beta=\alpha_0'$
and $\xi=\delta_{\alpha_1;\beta}^{12}=\alpha_1$.
So by induction with
$\Xx_2\rest(\gamma^{12}+1)$,
we have $f^{02}(\gamma^{12})=\alpha_0'$
and  (by part \ref{item:extended_comm_at_end}\ref{item:F-vec_description}) 
\begin{equation}\label{eqn:ext_seqs_corresp}
 \vec{F}^{02}_{\gamma^{12};\gamma^{12}}=
\vec{F}^{01}_{\alpha_1;\alpha_1}\cd\vec{F}^{12}_{\gamma^{12};\gamma^{12}}.
\end{equation}
Since $t^{01}(\alpha_1)=0$, we have
$\exit^{\Xx_1}_{\alpha_1}=Q^{01}_{\alpha_1;\alpha_0'}
 =\Ult_0(\exit^{\Xx_0}_{\alpha_0'},\vec{F}^{01}_{\alpha_1;\alpha_1})$,
so by line (\ref{eqn:ext_seqs_corresp}), 
\[ Q\eqdef Q^{12}_{\gamma^{12};\gamma^{12}}=
 \Ult_0(\exit^{\Xx_1}_{\alpha_1},\vec{F}^{12}_{\gamma^{12};\gamma^{12}})
 =\Ult_0(\exit^{\Xx_0}_{\alpha_0'},\vec{F}^{02}_{\gamma^{12};\gamma^{12}})=
 Q^{02}_{\gamma^{12};\gamma^{12}}.\]
Moreover, since $\alpha_2=\delta^{12}_{\alpha_2;\alpha_1}$,
$\vec{E}\eqdef\vec{E}^{\Xx_2}_{\gamma^{12}\alpha_2}$
is $(Q,0)$-good and
$\exit^{\Xx_2}_{\alpha_2}=\Ult_0(Q,\vec{E})$.
But since $t^{12}(\beta)=1$
for every $\beta+1\in(\gamma^{12},\alpha_2]_{\Xx_2}$,
by induction $t^{02}(\beta)=1$ also,
and it follows that $\alpha_2\in C^{02}$ and
$\alpha_0'=f^{02}(\alpha_2)$
and $\delta^{02}_{\alpha_2;\alpha_0'}=\alpha_2$
and $Q^{02}_{\alpha_2;\alpha_0'}=\exit^{\Xx_2}_{\alpha_2}$,
establishing part \ref{item:t_equiv}.

So let $\alpha_0=\alpha_0'$.
It follows that  $\alpha_2+1\in C^{02}\inter C^{12}$
and $\alpha_1+1\in C^{01}$ and
$I^{ij}_{\alpha_i+1;\alpha_j+1}=\{\alpha_j+1\}$ 
and $f^{ij}(\alpha_j+1)=\alpha_i+1$ 
for $0\leq i<j\leq 2$.
This gives part \ref{item:f_comm} (for 
for $\Xx_2\rest(\alpha_2+2)$, with $\alpha_2+1$ replacing $\alpha_2$).
Part \ref{item:t_equiv} for $\alpha_2+1$ is trivial
(for the reasons discussed at the start of the proof).
Part \ref{item:gamma_in_C} is clear by induction.
For part \ref{item:gamma_comm,internal_coverage},
use induction and the considerations just mentioned,
and to see that 
\begin{equation}\label{eqn:pi_comm_in_inf_comm} 
\pi^{02}_{\alpha_2+1;\alpha_0+1}=
\pi^{12}_{\alpha_2+1;\alpha_1+1}\com
\pi^{01}_{\alpha_1+1;\alpha_0+1},\end{equation}
just use that $\pi^{ij}_{\alpha_j+1;\alpha_i+1}$
is the ultrapower map
$i^{M^{\Xx_0}_{\alpha_0+1},\deg^{\Xx_0}_{\alpha_0+1}}_{\vec{F}}$
where
\[ \vec{F}=
 \vec{F}^{ij}_{\alpha_j+1;\alpha_j+1}=\vec{F}^{ij}_{\alpha_j+1;\alpha_j}
=\vec{F}^{ij}_{\alpha_j;\alpha_j}, \]
which by part \ref{item:extended_comm_at_end}\ref{item:F-vec_description}
gives line (\ref{eqn:pi_comm_in_inf_comm}).
 These considerations also give part \ref{item:extended_comm_at_end}
 for $\alpha_2+1$ (in this case, with notation as there,
 we have $i^{02}=i^{01}=i^{12}=0$).
\end{casetwo}

\begin{casetwo}\label{case:Xx_1-inflationary} $\alpha_2$ is 
$\Xx_1$-inflationary (that is,
$t^{12}(\alpha_2)=1$).
 
Part  \ref{item:t_equiv} for $\alpha_2$:  We claim
$t^{02}(\alpha_2)=1$.
For if $\alpha_2\in(C^-)^{02}$ then by induction,
$\alpha_2\in C^{12}$ and $\alpha_1\in C^{01}$ and $f^{01}(\alpha_1)=\alpha_0$,
hence $\alpha_1\in(C^-)^{01}$, but then since $\Xx_1$ is non-$\Xx_0$-pending,
$\alpha_1+1<\lh(\Xx_1)$ and 
$\lh(E^{\Xx_1}_{\alpha_1})\leq\lh(E^{01}_{\alpha_1})$,
so $\alpha_2\in(C^-)^{12}$ and
 (as $t^{12}(\alpha_2)=1$
and considering part 
\ref{item:extended_comm_at_end}\ref{item:F-vec_description},
much as in the previous case)
$\lh(E^{\Xx_2}_{\alpha_2})<\lh(E^{12}_{\alpha_2}
)\leq\lh(E^{02}_{\alpha_2})$,
so $t^{02}(\alpha_2)=1$.

Let $\xi_2=\pred^{\Xx_2}(\alpha_2+1)$.

 Part \ref{item:f_comm}:
 Suppose $\alpha_2+1\in C^{02}$. Then
$\xi_2\in C^{02}$; let $\xi_0=f^{02}(\xi_2)$ and $\xi_1=f^{12}(\xi_2)$,
so also $\xi_1\in C^{01}$ and $\xi_0=f^{01}(\xi_1)$.

Suppose $\xi_0+1<\lh(\Xx_0)$.
Then $\xi_1+1<\lh(\Xx_1)$, since $\xi_1\in C^{01}$
and $\Xx_1$ is non-$\Xx_0$-pending.
So
$\exit^{\Xx_1}_{\xi_1}\ins Q^{01}_{\xi_1;\xi_0}$,
which implies $Q^{12}_{\xi_2;\xi_1}\ins Q^{02}_{\xi_2;\xi_0}$
(again using  part 
\ref{item:extended_comm_at_end}\ref{item:F-vec_description}),
and since $\alpha_2+1\in C^{02}$,
$E^{\Xx_2}_{\alpha_2}$ is $(Q^{02}_{\xi_2;\xi_0},0)$-good,
but then also $(Q^{12}_{\xi_2;\xi_1},0)$-good,
so $\alpha_2+1\in C^{12}$, and $f^{12}(\alpha_2+1)=f^{12}(\xi_2)=\xi_1$,
and since $\xi_1\in C^{01}$ and $f^{01}(\xi_1)=\xi_0$, this
gives part \ref{item:f_comm}.

If instead $\xi_0+1=\lh(\Xx_0)$ then
$\alpha_2+1\notin\dropset_{\deg}^{\Xx_2}$,
and it is straightforward (and similar to before).

Parts \ref{item:gamma_in_C} and \ref{item:gamma_comm,internal_coverage}
for $\alpha_2+1$
are easy by induction.

Part \ref{item:extended_comm_at_end}: Suppose $\alpha_2+1\in C^{02}$.
Then
$\Pi^{i2}_{\alpha_2+1}$ is the
minimal $E^{\Xx_2}_{\alpha_2}$-inflation of $\Pi^{i2}_{\xi_2}$ for $i=0,1$,
and part \ref{item:extended_comm_at_end}
at $\alpha_2+1$ follows that part at $\xi_2$.
In particular,
Figure \ref{fgr:inflation_commutativity} at stage $\alpha_2+1$
is derived easily from the corresponding figure at $\xi_2$, by 
simply adding one further step of iteration above $M^{*\Xx_2}_{\alpha_2+1}\ins 
M^{\Xx_2}_{\xi_2}$, and regarding part 
\ref{item:extended_comm_at_end}\ref{item:F-vec_description}, we have
 $\vec{F}^{i2}_{\alpha_2+1;\alpha_2+1}=
\vec{F}^{i2}_{\xi_2;\xi_2}\conc\left<E\right>$
where $E=E^{\Xx_2}_{\alpha_2}$
for $i=0,1$,
and by induction,
\[ \vec{F}^{01}_{\xi_1;\xi_1}\cd \vec{F}^{12}_{\xi_2;\xi_2}=
 \vec{F}^{02}_{\xi_2;\xi_2},\]
and note that by normality of $\Xx_2$
(and that all
inflated $\vec{F}^{12}_{\xi_2;\xi_2}$-images of 
the extenders
in $\vec{F}^{01}_{\xi_1;\xi_1}$ are either used along $(0,\xi_2]_{\Xx_2}$
or are nested into some other extender used along that branch),
$E$ is non-nested
in the $\cd$-product
\[ 
\vec{F}^{01}_{\xi_1;\xi_1}\cd(\vec{F}^{12}_{\xi_2;\xi_2}\conc\left<E\right>),\]
so this $\cd$-product is just
$(\vec{F}^{01}_{\xi_1;\xi_1}\cd\vec{F}^{12}_{\xi_2;\xi_2})\conc\left<E\right>=
\vec{F}^{02}_{\alpha_2+1;\alpha_2+1}$.
 \end{casetwo}

 \begin{casetwo}\label{case:copy_inflation}
 $\alpha_2$ is $\Xx_1$-copying but $\alpha_1$ is $\Xx_0$-inflationary 
($\alpha_2\in(C^-)^{12}$ and $t^{12}(\alpha_2)=0$ and $t^{01}(\alpha_1)=1$).

So $\alpha_2+1\in C^{12}$ and $f^{12}(\alpha_2+1)=\alpha_1+1$
and $\gamma^{12}_{\alpha_2+1;\alpha_1+1}=\alpha_2+1$.
Note $t^{02}(\alpha_2)=1$,
so \ref{item:t_equiv} holds for $\alpha_2$.
Let $\xi_i=\pred^{\Xx_i}(\alpha_i+1)$ for $i=1,2$.
Then $\xi_2\in C^{12}$ and $f^{12}(\xi_2)=\xi_1$.
Applying induction to stage $\xi_2$,
and with calculations similar to before,
we get that $\alpha_2+1\in C^{02}$
iff $\alpha_1+1\in C^{01}$; and if $\alpha_2+1\in C^{02}$ then, letting $\xi_0=f^{02}(\xi_2)=f^{01}(\xi_1)$,
we have $f^{02}(\alpha_2+1)=\xi_0=f^{01}(\alpha_1+1)$.
So part \ref{item:f_comm} holds.

Parts \ref{item:gamma_in_C} and \ref{item:gamma_comm,internal_coverage}
for $\alpha_2+1$ 
follow easily from the preceding remarks and induction.
Part \ref{item:extended_comm_at_end}: Suppose $\alpha_2+1\in C^{02}$,
so as discussed above, $\alpha_1+1\in C^{01}$,
and $\xi_i\in C^{0i}$ for $i=1,2$.
Note 
$i^{12}=0$ (notation as in \ref{item:extended_comm_at_end}).
The diagram at $\alpha_2+1$
is given by adding a commuting square
to the top of the diagram at $\xi_2$,
using
$E^{\Xx_1}_{\alpha_1}$ and $E^{\Xx_2}_{\alpha_2}$.
Consider part \ref{item:extended_comm_at_end}\ref{item:F-vec_description}.
Since $t^{0i}(\alpha_i)=1$ for $i=1,2$,
we have
$\vec{F}^{0i}_{\alpha_i+1;\alpha_i+1}=\vec{F}^{0i}_{\xi_i;\xi_i}\conc
\left<E^{\Xx_i}_{\alpha_i}\right>$
for $i=1,2$.
By induction,
$\vec{F}^{02}_{\xi_2;\xi_2}=
\vec{F}^{01}_{\xi_1;\xi_1}\cd\vec{F}^{12}_{
\xi_2;\xi_2}$,
so
\[  \vec{F}^{02}_{\alpha_2+1;\alpha_2+1}=
(\vec{F}^{01}_{\xi_1;\xi_1}\cd\vec{F}^{12}_{
\xi_2;\xi_2})\conc\left<E^{\Xx_2}_{\alpha_2}\right>, \]
and letting 
$\vec{F}^{12}_{\alpha_2;\alpha_2}=\vec{F}^{12}_{\xi_2;\xi_2}\conc\vec{F}$
(note $\vec{F}^{12}_{\xi_2;\xi_2}\ins\vec{F}^{12}_{\alpha_2;\alpha_2}$), note 
that
$\crit(E)<\crit(E^{\Xx_2}_{\alpha_2})<\crit(F)$
for all $E\in\vec{F}^{12}_{\xi_2;\xi_2}$
and  $F\in\vec{F}$,
\[ \exit^{\Xx_2}_{\alpha_2}=\Ult_0(
\exit^{\Xx_1}_{\alpha_1},\vec{F}^{12}_{\xi_2;\xi_2}\conc\vec{F}),\]
and all $F\in\vec{F}$ are nested in this product, so
\[ \vec{F}^{01}_{\alpha_1+1;\alpha_1+1}\cd
 \vec{F}^{12}_{\alpha_2+1;\alpha_2+1}=
(\vec{F}^{01}_{\xi_1;\xi_1}\cd\vec{F}^{12}_{\xi_2;\xi_2})\conc
\left<E^{\Xx_2}_{\alpha_2}\right>=\vec{F}^{02}_{\alpha_2+1;\alpha_2+1},\]
as desired.
\end{casetwo}

This completes the successor case. The limit case is analogous,
and like in the proof of  \cite[Lemma 6.2]{iter_for_stacks},
so the reader should refer there.
\end{proof}

An easy consequence is (for terminology etc see Definition \ref{dfn:inflation_notation}):

\begin{cor}\label{cor:terminal_dropping_equiv} Let $\Xx_0,\Xx_1,\Xx_2$ be as in 
\ref{lem:inflation_commutativity}.
Suppose that $\Xx_2$ is $\Xx_1$-terminal and $\Xx_1$ is 
$\Xx_0$-terminal.
Then $\Xx_2$ is $\Xx_0$-terminal.
Moreover, $\Xx_2$ is 
$\Xx_0$-terminally-dropping 
iff either $\Xx_1$ is $\Xx_0$-terminally-dropping
or $\Xx_2$ is $\Xx_1$-terminally-dropping.
Moreover, if $\Xx_2$ is $\Xx_0$-terminally-non-dropping then
$\pi_\infty^{02}=\pi_\infty^{12}\com\pi_\infty^{01}$.
\end{cor}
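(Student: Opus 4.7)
The plan is to derive all three assertions of the corollary directly from Lemma \ref{lem:inflation_commutativity} together with the basic definitions of \emph{terminal}, \emph{pending}, and \emph{terminally-(non-)dropping} from \ref{dfn:inflation_notation}. Set $\alpha_i+1=\lh(\Xx_i)$ and abbreviate $f^{ij}=f^{\Xx_i\mininflatearrow\Xx_j}$, $C^{ij}=C^{\Xx_i\mininflatearrow\Xx_j}$, $(C^-)^{ij}=(C^-)^{\Xx_i\mininflatearrow\Xx_j}$.

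First I would show that $\Xx_2$ is $\Xx_0$-terminal. Since $\Xx_1$ is $\Xx_0$-terminal, $\Xx_0$ has successor length, so the only point is that $\Xx_2$ is non-$\Xx_0$-pending, i.e.\ $\alpha_2\notin (C^-)^{02}$. If $\alpha_2\notin C^{02}$ this is immediate; otherwise \ref{lem:inflation_commutativity}\ref{item:f_comm} yields $\alpha_1\in C^{01}$ with $f^{02}(\alpha_2)=f^{01}(\alpha_1)$, and the non-$\Xx_0$-pending hypothesis on $\Xx_1$ forces $f^{01}(\alpha_1)+1=\lh(\Xx_0)$, so $f^{02}(\alpha_2)+1=\lh(\Xx_0)$ and $\alpha_2\notin(C^-)^{02}$.

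Next I would prove the dropping dichotomy, which, by unravelling definitions, reduces to the equivalence
\[ \alpha_2\in C^{02}\iff \alpha_1\in C^{01}\text{ and }\alpha_2\in C^{12}.\]
The forward direction is immediate from \ref{lem:inflation_commutativity}\ref{item:f_comm}. For the converse, note that non-$\Xx_0$-pending-ness of $\Xx_1$ together with $\alpha_1\in C^{01}$ gives $f^{01}(\alpha_1)=\alpha_0$, and similarly non-$\Xx_1$-pending-ness of $\Xx_2$ with $\alpha_2\in C^{12}$ gives $f^{12}(\alpha_2)=\alpha_1$. In particular $\alpha_1+1=\lh(\Xx_1)$, so \ref{lem:inflation_commutativity}\ref{item:gamma_in_C}(a) applies and gives $\alpha_2\in C^{02}$.

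Finally, assume $\Xx_2$ is $\Xx_0$-terminally-non-dropping, so $\alpha_2\in C^{02}$ and (by the previous step) $\alpha_1\in C^{01}$ and $\alpha_2\in C^{12}$, with all three main branches non-dropping in model or degree. By \ref{dfn:min_inf_coherent}\ref{item:only_drop_at_eps}, non-dropping on $(\gamma^{ij}_{\alpha_j;\alpha_i},\alpha_j]_{\Xx_j}$ forces $i^{ij}_{\alpha_j;\alpha_i\alpha_j}=0$; so in the notation of \ref{lem:inflation_commutativity}\ref{item:extended_comm_at_end} we have $i^{01}=i^{12}=i^{02}=0$, and the maps $\tau^{ij}=\pi^{ij}_{\alpha_j;\alpha_i 0\alpha_j}$ coincide with $\pi_\infty^{ij}$ by the definition in \ref{dfn:inflation_notation}. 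Then clause \ref{item:extended_comm_at_end}(d) of the commutativity lemma gives $\tau^{02}=\tau^{12}\com\tau^{01}$, i.e.\ $\pi_\infty^{02}=\pi_\infty^{12}\com\pi_\infty^{01}$, as required.

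No genuine obstacle is anticipated: everything follows by carefully tracking which part of Lemma \ref{lem:inflation_commutativity} applies in each case. The one spot requiring the most care is verifying that the hypothesis of \ref{item:gamma_in_C}(a) is met in the reverse direction of the dropping equivalence, which hinges on correctly using non-pending-ness to identify $f^{ij}(\alpha_j)$ with $\alpha_i$.
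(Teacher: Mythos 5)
Your proposal is correct and follows exactly the route the paper intends: the corollary is stated there as an immediate consequence of Lemma \ref{lem:inflation_commutativity} with no written proof, and your argument is a sound unwinding of that, using non-pendingness to identify $f^{12}(\alpha_2)$ and $f^{01}(\alpha_1)$ with the top indices, clause \ref{item:f_comm} and \ref{item:gamma_in_C}(a) for the dropping equivalence, and \ref{item:extended_comm_at_end}(d) (with $i^{01}=i^{12}=i^{02}=0$ forced by \ref{dfn:min_inf_coherent}\ref{item:only_drop_at_eps}) for the commutativity of the $\pi_\infty$ maps.
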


\subsection{Continuous stacks}

\begin{dfn}\label{dfn:Xxvec_good}
 Let $M$ be $m$-standard and 
$\vec{\Xx}=\left<\Xx_\alpha\right>_{\alpha<\lambda}$ a sequence of $m$-maximal
trees on $M$.
 We say $\vec{\Xx}$ is a \emph{\tu{(}degree $m$, on $M$\tu{)} terminal minimal 
inflation stack} iff $\Xx_\beta$ is an $\Xx_\alpha$-terminal minimal inflation
 of $\Xx_\alpha$ for all $\alpha<\beta<\lambda$,
and $\lh(\Xx_\alpha)$ a successor
 for each $\alpha<\lambda$.
 If $\vec{\Xx}$ is a terminal minimal inflation stack, $\vec{\Xx}$ is 
\emph{continuous} iff for each limit $\eta<\lambda$,
 $\Xx_\eta$ is a minimal comparison of 
$\left<\Xx_\alpha\right>_{\alpha<\eta}$.\end{dfn}

\begin{lem}\label{lem:cont_term_stack_event}
 Let $\vec{\Xx}$ be a continuous terminal minimal inflation stack.
 For $\nu<\eta<\lh(\vec{\Xx})$, write $C^{\nu\eta}=C^{\Xx_\nu\mininflatearrow\Xx_\eta}$, etc.
 Let $\eta<\lh(\vec{\Xx})$ be a limit.
 Then:
 \begin{enumerate}
  \item\label{item:every_xi_associated} For every $\xi<\lh(\Xx_\eta)$, there is $\nu<\eta$
  such that for all $\alpha\in[\nu,\eta)$:
   \begin{enumerate}
   \item\label{item:assoc_no_drop} $\xi\in C^{\alpha\eta}$ and $\xi\in I^{\alpha\eta}_{\xi;f^{\alpha\eta}(\xi)0}$, and
   \item\label{item:copying} if $\xi+1<\lh(\Xx_\eta)$ then $t^{\alpha\eta}(\xi)=0$.
   \end{enumerate}
  \item\label{item:ev_term_non-drop} There is $\nu<\eta$ such that 
   $\Xx_\eta$ is $\Xx_\nu$-terminally-non-dropping (and hence,
   $\Xx_\eta$ is $\Xx_\alpha$-terminally-non-dropping for each $\alpha\in[\nu,\eta)$).
   \item\label{item:charac_dropping} For all $\beta<\gamma<\lh(\vec{\Xx})$,
   $\Yy_\gamma$ is $\Yy_\beta$-terminally-non-dropping
   iff $\Yy_{\alpha+1}$ is $\Yy_\alpha$-terminally-non-dropping
   for all $\alpha\in[\beta,\gamma)$.
 \end{enumerate}
\end{lem}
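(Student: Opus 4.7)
The plan is to prove the three parts in sequence, with part \ref{item:every_xi_associated} carrying most of the content. The two key ingredients I will use are the definition of minimal comparison (Definition \ref{dfn:min_comp}), which produces, for each $\xi + 1 < \lh(\Xx_\eta)$, some $\alpha_0 < \eta$ with $t^{\alpha_0 \eta}(\xi) = 0$, and commutativity of minimal inflation (Lemma \ref{lem:inflation_commutativity}, specifically clause \ref{item:t_equiv}, together with its consequence Corollary \ref{cor:terminal_dropping_equiv}), which will let me propagate such stability from $\alpha_0$ to every $\alpha \in [\alpha_0, \eta)$.

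For part \ref{item:every_xi_associated}, fix $\xi < \lh(\Xx_\eta)$. If $\xi + 1 < \lh(\Xx_\eta)$, I first obtain $\alpha_0$ as above. Then for each $\alpha \in [\alpha_0, \eta)$, $\Xx_\alpha$ is $\Xx_{\alpha_0}$-terminal (hence non-$\Xx_{\alpha_0}$-pending), so I apply Lemma \ref{lem:inflation_commutativity}, clause \ref{item:t_equiv}, to the triple $(\Xx_{\alpha_0}, \Xx_\alpha, \Xx_\eta)$ to deduce $t^{\alpha \eta}(\xi) = 0$, giving clause \ref{item:copying}. From $t^{\alpha\eta}(\xi) = 0$ I get $\xi \in (C^-)^{\alpha\eta} \subseteq C^{\alpha\eta}$, and direct inspection of the minimal one-step copy extension (Definition \ref{dfn:one-step}) shows $I^{\alpha\eta}_{\xi; f^{\alpha\eta}(\xi)} = \{\xi\}$ with trivial dropdown at the top, so $\xi \in I^{\alpha\eta}_{\xi; f^{\alpha\eta}(\xi)\, 0}$, giving clause \ref{item:assoc_no_drop}. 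For the terminal case $\xi + 1 = \lh(\Xx_\eta)$ (where \ref{item:copying} is vacuous), I use the Moreover clause of Lemma \ref{lem:min_comp} to get some $\alpha_0 < \eta$ witnessing that $\Xx_\eta$ is $\Xx_{\alpha_0}$-terminally-non-dropping, and propagate this to all $\alpha \in [\alpha_0, \eta)$ via commutativity. For $\xi$ a limit in $\Xx_\eta$, I reduce to the already-handled cases via continuity of minimal tree embeddings at limits, as discussed following Definition \ref{dfn:min_inflation}.

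Part \ref{item:ev_term_non-drop} will follow from the terminal case above: letting $\xi_0 + 1 = \lh(\Xx_\eta)$, I have $\nu < \eta$ with $\xi_0 \in C^{\nu \eta}$, and chaining \ref{item:assoc_no_drop} across the indices on the branch $(\gamma^{\nu\eta}_{\xi_0; f^{\nu\eta}(\xi_0)}, \xi_0]_{\Xx_\eta}$, using the dropdown-correspondence structure from Lemma \ref{lem:Pi_is_standard}, yields absence of drops in model or degree. For part \ref{item:charac_dropping}, I will induct on $\gamma$: at successor $\gamma + 1$, apply Corollary \ref{cor:terminal_dropping_equiv} to $(\Xx_\beta, \Xx_\gamma, \Xx_{\gamma+1})$; at limit $\gamma$, use part \ref{item:ev_term_non-drop} (already established) together with the inductive hypothesis applied at cofinally many $\gamma' < \gamma$.

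The hard part will be verifying $\xi \in I^{\alpha\eta}_{\xi; f^{\alpha\eta}(\xi)\, 0}$ in the less obvious cases --- the terminal index and limit $\xi$ in $\Xx_\eta$ --- where one must carefully unpack the dropdown structure of $\Pi^{\alpha\eta}_\xi$ via Lemma \ref{lem:Pi_is_standard} and argue that, after stabilization from the minimal comparison property, any residual dropdown at $f^{\alpha\eta}(\xi)$ would contradict either the copying behavior (terminal case) or the continuity of the limit embedding (limit $\xi$).
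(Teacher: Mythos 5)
There is a genuine gap in your main case of part \ref{item:every_xi_associated}. The inference from $t^{\alpha\eta}(\xi)=0$ to $\xi\in I^{\alpha\eta}_{\xi;f^{\alpha\eta}(\xi)0}$ does not work: the minimal one-step copy extension (Definition \ref{dfn:one-step}) is what creates the \emph{next} index $\delta_\alpha+1$ as a singleton interval; it says nothing about the interval $I^{\alpha\eta}_{\xi;f^{\alpha\eta}(\xi)}=[\gamma^{\alpha\eta}_{\xi;f^{\alpha\eta}(\xi)},\xi]_{\Xx_\eta}$ \emph{ending} at $\xi$, which in general is a long $<_{\Xx_\eta}$-interval containing $\Xx_\alpha$-inflationary extenders. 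Some of those can have critical points above the relevant projecta of the lift of the full model $M^{\Xx_\alpha}_{f^{\alpha\eta}(\xi)}$, producing drops in $(\gamma^{\alpha\eta}_{\xi;f^{\alpha\eta}(\xi)},\xi]_{\Xx_\eta}$ and hence $\delta^{\alpha\eta}_{\xi;f^{\alpha\eta}(\xi)0}<_{\Xx_\eta}\xi$, while $t^{\alpha\eta}(\xi)=0$ still holds --- that condition only concerns the lift of $\exit^{\Xx_\alpha}_{f^{\alpha\eta}(\xi)}$, i.e.\ the top of the dropdown, whose good interval does reach all the way to $\xi$. The condition $\xi\in I^{\alpha\eta}_{\xi;f^{\alpha\eta}(\xi)0}$ is precisely the assertion that this interval is drop-free, and your choice of $\alpha_0$ (which stabilizes only the copying behaviour at $\xi$ itself, not at the indices $<_{\Xx_\eta}$-below $\xi$) does not ensure it.

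The missing idea --- and the paper's proof --- is an induction on $\xi$: it suffices to produce a witness for clause \ref{item:assoc_no_drop} alone, since clause \ref{item:copying} can then be added by taking the maximum with any $\nu'$ for which $t^{\nu'\eta}(\xi)=0$ (supplied by minimal comparison) and propagating by commutativity, as you do. For $\xi=\zeta+1$ one takes $\nu$ witnessing \emph{both} clauses at $\zeta$; then $t^{\alpha\eta}(\zeta)=0$ for $\alpha\in[\nu,\eta)$ means $\xi$ arises by a one-step copy extension in $\Xx_\alpha\mininflatearrow\Xx_\eta$, so $I^{\alpha\eta}_{\xi;f^{\alpha\eta}(\xi)}$ really is the singleton $\{\xi\}$. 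For limit $\xi$ one takes the witness for some $\xi'<_{\Xx_\eta}\xi$ with $(\xi',\xi)_{\Xx_\eta}\inter\dropset^{\Xx_\eta}_{\deg}=\emptyset$. Your parts \ref{item:ev_term_non-drop} and \ref{item:charac_dropping} are then essentially right (though the paper obtains part \ref{item:ev_term_non-drop} directly from clause \ref{item:assoc_no_drop} at the last index, without appealing to the ``Moreover'' clause of Lemma \ref{lem:min_comp}, whose applicability is unclear here since no strategy is assumed in the statement); but as written the core of part \ref{item:every_xi_associated} is not established.
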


For the first two parts we only use the continuity of the stack at $\eta$, not other limits.

\begin{proof}
Part \ref{item:every_xi_associated}:
The proof is by induction on $\xi$. In general it suffices
to find $\nu$ witnessing part \ref{item:assoc_no_drop},
because then if $\xi+1<\lh(\Xx_\eta)$
and we take $\nu'<\eta$ with
$t^{\nu'\eta}(\xi)=0$, then $\max(\nu,\nu')$
works, because by commutativity of inflation \ref{lem:inflation_commutativity},
then $t^{\alpha\eta}(\xi)=0$ for all $\alpha\in[\nu',\eta)$.
Part \ref{item:assoc_no_drop} for $\xi=0$ is trivial.
For $\xi=\zeta+1$, note that if $\nu$ witnesses both parts
for $\zeta$,
then $\nu$ witnesses part \ref{item:assoc_no_drop} for $\xi+1$.
For limit $\xi$,
let $\xi'<^{\Xx_\eta}\xi$ be such that
$(\xi',\xi)_{\Xx_\eta}\inter\dropset_{\deg}^{\Xx_\eta}=\emptyset$
and let $\nu$ be a witness for $\xi'$; then $\nu$
also works for part \ref{item:assoc_no_drop} for $\xi$.

Part \ref{item:ev_term_non-drop}: Since $\Xx_\eta$ is 
to be $\Xx_\nu$-terminal for all $\nu<\eta$,
this follows
immediately from part \ref{item:assoc_no_drop}
at stage $\xi$ where $\xi+1=\lh(\Xx_\eta)$.

Part \ref{item:charac_dropping}:
If $\Yy_\gamma$ is $\Yy_\beta$-terminally-non-dropping
then it follows that for each $\alpha\in[\beta,\gamma)$,
$\Yy_{\alpha+1}$ is $\Yy_\alpha$-terminally-non-dropping,
by iterated application of Corollary 
 \ref{cor:terminal_dropping_equiv}
to the stack $(\Yy_\beta,\Yy_\alpha,\Yy_{\alpha+1},\Yy_\gamma)$.
Conversely, supposing that $\Yy_{\alpha+1}$ is $\Yy_\alpha$-terminally-non-dropping
for each $\alpha\in[\beta,\gamma)$,
proceed by induction on $\eta\in(\beta,\gamma]$ to show
that $\Yy_\eta$ is $\Yy_\beta$-terminally-non-dropping,
again using the same corollary, together with part \ref{item:ev_term_non-drop} to handle limits $\eta$.
\end{proof}

\begin{dfn}
 Let $\vec{\Xx}$ be a continuous terminal minimal inflation stack
 of length $\lambda$. Write
 $C^{\nu\eta}$, etc, as above.
 Let $\eta<\lambda$ with $\eta$ a limit and  $\xi<\lh(\Xx_\eta)$.
Fix
$\nu<\eta$ with
  $\xi\in C^{\nu\eta}$. For $\alpha\in[\nu,\eta]$ (note 
$\xi\in C^{\alpha\eta}$, by commutativity
of minimal inflation \ref{lem:inflation_commutativity})
  let $\xi_\alpha=f^{\alpha\eta}(\xi)$ (so $\xi=\xi_\eta$ and $\xi_\alpha\in C^{\nu\alpha}$ and $\xi_\alpha=\delta^{\nu\alpha}_{\xi_\alpha;\xi_\nu}$).
  Let $\nu\leq\alpha\leq\beta\leq\eps\leq\eta$. Now
\begin{enumerate}[label=(\roman*)]
 \item   If (*i) $\xi_\eta\in(C^-)^{\nu\eta}$ and 
$t^{\nu\eta}(\xi_\eta)=0$, then let
$\omega^{\alpha\beta}=\omega^{\alpha\beta}_{\xi_\beta;\xi_\alpha\xi_\beta}
:\exit^{\Xx_\alpha}_{\xi_\alpha}\to\exit^{\Xx_\beta}_{\xi_\beta}$
(so 
$\omega^{\alpha\eps}=\omega^{\beta\eps}\com\omega^{\alpha\beta}$).
  \item If (*ii) $(\gamma^{\nu\eta}_{\xi_\eta;\xi_\nu},\xi_\eta]_{\Xx_\eta}$ 
does not drop in model or degree
(so neither does 
$(\gamma^{\alpha\beta}_{\xi_\beta;\xi_\alpha},\xi_\beta]_{\Xx_\beta}$), then let
 \[ \pi^{\alpha\beta}=\pi^{\alpha\beta}_{\xi_\beta;\xi_\alpha 0\xi_\beta}\colon M^{\Xx_\alpha}_{\xi_\alpha}\to M^{\Xx_\beta}_{\xi_\beta} \]
(so $\pi^{\alpha\eps}=\pi^{\beta\eps}\com\pi^{\alpha\beta}$).
\end{enumerate}

Given such $\eta,\xi,\nu,\left<\xi_\alpha\right>_{\nu\leq\alpha\leq\eta}$,
we say that $\vec{\Xx}$ is \emph{exit-good at $(\eta,\xi,\nu)$} iff,
if ($*$i) holds then
\[ \exit^{\Xx_\eta}_{\xi_\eta}=\dirlim_{\nu\leq\alpha\leq\beta<\eta}\left(\exit^{\Xx_\alpha}_{\xi_\alpha},\exit^{\Xx_\beta}_{\xi_\beta};\omega^{\alpha\beta}\right)\text{ and }\]
\[ \omega^{\alpha\eta}\text{ is the direct limit map for }\nu\leq\alpha<\eta,\]
and \emph{model-good at $(\eta,\xi,\nu)$} iff,
if ($*$ii) holds then
\[ M^{\Xx_\eta}_{\xi_\eta}=\dirlim_{\nu\leq\alpha\leq\beta<\eta}\left(M^{\Xx_\alpha}_{\xi_\alpha},M^{\Xx_\beta}_{\xi_\beta};\pi^{\alpha\beta}\right)\text{ and } \]
\[ \pi^{\alpha\eta}\text{ is the direct limit map for }\nu\leq\alpha<\eta.\]
We say that $\vec{\Xx}$ is \emph{good} iff $\vec{\Xx}$ is exit-
and model-good at all such $(\eta,\xi,\nu)$.
\end{dfn}

\begin{lem}\label{lem:cont_term_stack_good}
 Let $\vec{\Xx}=\left<\Xx_\alpha\right>_{\alpha<\lambda}$ be a continuous terminal minimal inflation stack.
 Then $\vec{\Xx}$ is good.
\end{lem}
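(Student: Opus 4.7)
The plan is to prove exit-goodness and model-goodness simultaneously by transfinite induction on $\xi < \lh(\Xx_\eta)$, uniformly over limit $\eta < \lambda$. The base case $\xi = 0$ is trivial: all $M^{\Xx_\alpha}_0 = M$ and the relevant embeddings restrict to identities.

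For the successor case $\xi = \zeta + 1$, apply Lemma \ref{lem:cont_term_stack_event}(\ref{item:every_xi_associated}) to choose $\nu < \eta$ witnessing stability both for $\zeta$ and for $\psi \eqdef \pred^{\Xx_\eta}(\xi)$, and large enough that $t^{\alpha\eta}(\zeta) = 0$ for all $\alpha \in [\nu, \eta)$. For each such $\alpha$, $E^{\Xx_\eta}_\zeta$ is the lift $E_{\Pi^{\alpha\eta}_\zeta\, \zeta}$ of $E^{\Xx_\alpha}_{\zeta_\alpha}$, and the Shift Lemma content of Lemma \ref{lem:Pi_is_standard} identifies $\pi^{\alpha\eta}_{\xi;\xi_\alpha 0 \xi}$ as the Shift Lemma map built from $\omega^{\alpha\eta}$ at $\zeta$ and from $\pi^{\alpha\eta}$ at $\psi$ (restricted to the starred predecessor). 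Since ultrapower commutes with direct limits of its base embeddings, the inductive hypotheses (exit-goodness at $\zeta$ and model-goodness at $\psi$) yield model-goodness at $\xi$; where applicable, exit-goodness at $\xi$ follows from the analogous commutation of the exit-segment identification.

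The limit case is the heart of the argument. Choose $\zeta_0 <_{\Xx_\eta} \xi$ past all drops in model or degree below $\xi$ along the branch, so that $M^{\Xx_\eta}_\xi$ is the internal direct limit of $(M^{\Xx_\eta}_\zeta;\, i^{\Xx_\eta}_{\zeta\zeta'})$ for $\zeta_0 \leq_{\Xx_\eta} \zeta \leq_{\Xx_\eta} \zeta' <_{\Xx_\eta} \xi$. By the inductive hypothesis, each $M^{\Xx_\eta}_\zeta$ is the direct limit $\dirlim_{\nu \leq \alpha < \eta} M^{\Xx_\alpha}_{\zeta_\alpha}$ under the maps $\pi^{\alpha\beta}_{\zeta_\beta;\zeta_\alpha 0 \zeta_\beta}$. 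Commutativity of minimal inflation (Lemma \ref{lem:inflation_commutativity}, especially parts \ref{item:gamma_comm,internal_coverage} and \ref{item:extended_comm_at_end}\ref{item:F-vec_description}) shows that the resulting doubly-indexed system commutes, so interchanging the two direct limits --- equivalently, taking the limit along a cofinal diagonal subsystem --- yields $M^{\Xx_\eta}_\xi = \dirlim_\alpha M^{\Xx_\alpha}_{\xi_\alpha}$ with $\pi^{\alpha\eta}$ as the direct limit map. Exit-goodness at a limit $\xi$ is handled in the same way, using the $\cd$-product description of $\vec{F}^{\alpha\eta}_{\xi;\xi}$ from Lemma \ref{lem:inflation_commutativity}\ref{item:extended_comm_at_end}\ref{item:F-vec_description} and Definition \ref{dfn:G*F} to realize $\exit^{\Xx_\eta}_\xi$ as the corresponding ultrapower direct limit.

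The main obstacle will be the bookkeeping in the limit case: verifying that the double system commutes in the precise sense needed to interchange the direct limits, and that dropdown indices are aligned correctly as both $\alpha$ and $\zeta$ vary. Once the relevant commuting squares are established (via Lemma \ref{lem:inflation_commutativity}\ref{item:gamma_comm,internal_coverage}\ref{item:gamma_comm_in} together with standard commutativity of minimal tree embeddings with main iteration maps), the cofinality of the diagonal closes the argument, with no further fine-structural input required beyond Lemma \ref{lem:Pi_is_standard}.
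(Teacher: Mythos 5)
Your proposal is correct and follows essentially the same route as the paper: induction on $\xi$ (the paper organizes it as an outer induction on limits $\eta$ with a sub-induction on $\xi$), reduction to sufficiently large $\nu$ via Lemma \ref{lem:cont_term_stack_event} and commutativity of minimal inflation, and a double direct-limit interchange at limit $\xi$ using the internal direct limit along $[0,\xi_\eta)_{\Xx_\eta}$. The only presentational difference is at successors, where the paper works with the abstract representation $M^{\Xx_\beta}_{\xi_\beta}=\Ult_k(M^{\Xx_\alpha}_{\xi_\alpha},\vec{F}^{\alpha\beta}_{\xi_\beta;\xi_\beta})$ (with $\vec{F}^{\alpha\beta}_{\xi_\beta;\xi_\beta}=\vec{F}^{\alpha\beta}_{\upsilon_\beta;\upsilon_\beta}$) and shows the factor map into $M^{\Xx_\eta}_{\xi_\eta}$ is the identity because it fixes $\nu(E^{\Xx_\eta}_{\upsilon_\eta})$ pointwise, rather than invoking the Shift Lemma at the (possibly dropped) tree-predecessor; this sidesteps the dropdown bookkeeping you flag but is the same computation.
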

\begin{proof}
We prove exit- and model-goodness at each $(\eta,\xi,\nu)$, by induction on 
limits
$\eta<\lh(\vec{\Xx})$, with a sub-induction $\xi<\lh(\Xx_\eta)$.
So fix $\eta$ and $\xi<\lh(\Xx_\eta)$ and $\nu<\eta$ with $\xi_\eta=\xi\in 
C^{\nu\eta}$
 and adopt notation as in \ref{dfn:Xxvec_good}.

 \begin{casethree} $\xi=0$
  
  This case is trivial.
 \end{casethree}
\begin{casethree} $\xi=\upsilon+1$.
 
 Suppose \ref{dfn:Xxvec_good}($*$ii) holds;
 that is, $(\gamma^{\nu\eta}_{\xi_\eta;\xi_\nu},\xi_\eta]_{\Xx_\eta}$ does not drop in model or degree;
 we must verify model-goodness at $(\eta,\xi,\nu)$.
 
 Suppose first that in fact, ($*$) $\upsilon\in(C^-)^{\nu\eta}$ and
 $t^{\nu\eta}(\upsilon)=0$. Let $\upsilon_\alpha=f^{\alpha\eta}(\upsilon)$ for $\nu\leq\alpha\leq\eta$.
 Then by exit-goodness at $(\eta,\upsilon,\nu)$,
 \[ \exit^{\Xx_\eta}_{\upsilon_\eta}=\dirlim_{\nu\leq\alpha\leq\beta<\eta}\left(\exit^{\Xx_\alpha}_{\upsilon_\alpha},\exit^{\Xx_\beta}_{\upsilon_\beta};\omega^{\alpha\beta}_{\upsilon_\beta;\upsilon_\alpha\upsilon_\beta}\right)\text{ and } \]
\[ \om^{\alpha\eta}_{\upsilon_\eta;\upsilon_\alpha\upsilon_\eta}\text{ is the direct limit map for }\nu\leq\alpha<\eta.\]
But by properties of minimal inflation, for $\nu\leq\alpha\leq\beta\leq\eta$,
\[ 
\exit^{\Xx_\beta}_{\upsilon_\beta}=\Ult_0(\exit^{\Xx_\alpha}_{\upsilon_\alpha},
\vec{F}^{\alpha\beta}_{\upsilon_\beta;\upsilon_\beta})\text{ and }\]
\[ \om^{\alpha\beta}_{\upsilon_\beta;\upsilon_\alpha\upsilon_\beta}\text{ is the ultrapower map},\]
and letting $k=\deg^{\Xx_\alpha}(\xi_\alpha)$ (note $k$ is independent of $\alpha$),
\[ 
M^{\Xx_\beta}_{\xi_\beta}=\Ult_k(M^{\Xx_\alpha}_{\xi_\alpha},\vec{F}^{
\alpha\beta}_{\xi_\beta;\xi_\beta})\text{ and 
}\vec{F}^{\alpha\beta}_{\xi_\beta;\xi_\beta}=
\vec{F}^{\alpha\beta}_{\upsilon_\beta;\upsilon_\beta}\text{ and}\]
\[ \pi^{\alpha\beta}\text{ is the ultrapower map}.\]
So let
\[ 
\bar{M}=\dirlim_{\nu\leq\alpha\leq\beta<\eta}\left(M^{\Xx_\alpha}_{\xi_\alpha},
M^{\Xx_\beta}_{\xi_\beta};\pi^{\alpha\beta}\right),\]
\[ \bar{\pi}_\alpha:M^{\Xx_\alpha}_{\xi_\alpha}\to\bar{M}\text{ be the direct limit map and}\]
\[ \sigma:\bar{M}\to M^{\Xx_\eta}_{\xi_\eta}\text{ the map with }
\sigma\com\bar{\pi}_\alpha=\pi^{\alpha\eta}\text{ for all }\alpha.
\]
(the latter existing since 
$\pi^{\beta\eta}\com\pi^{\alpha\beta}=\pi^{\alpha\eta}$).
Note that
$\sigma\rest\nu(E^{\Xx_\eta}_{\upsilon_\eta})=\id$,
and so by commutativity (and the degree of elementarity of the maps),
therefore
$\bar{M}=M^{\Xx_\eta}_{\xi_\eta}$ and $\sigma=\id$ and 
$\bar{\pi}_\alpha=\pi^{\alpha\eta}$,
which gives model-goodness at $(\eta,\xi,\nu)$.

Next suppose instead that ($*$) above fails.
Let $\nu'\in(\nu,\eta)$ be such that ($*$) holds at $\nu'$.
Still $\xi_\nu\in C^{\nu\nu'}$ and 
$[\gamma^{\nu\nu'}_{\xi_\nu;\xi_{\nu'}},\xi_{\nu'})_{\Xx_{\nu'}}$
does not drop in model or degree (and $\xi_{\nu'}=\delta^{\nu\nu'}_{\xi_\nu;\xi_{\nu'}}$).
By model-goodness at $(\eta,\xi,\nu')$, it suffices to verify 
that $\pi^{\nu\eta}$ is an appropriate direct limit map.
But by commutativity, $\pi^{\nu\eta}=\pi^{\nu'\eta}\com\pi^{\nu\nu'}$,
and since $\pi^{\nu'\eta}$ is an appropriate direct limit map,
so is $\pi^{\nu\eta}$.

Note that by model-goodness,
for $\nu\leq\alpha<\eta$ such that \ref{dfn:Xxvec_good}($*$ii) holds
for $\nu$, ($\dagger$) $\vec{F}^{\alpha\eta}_{\xi_\eta;\xi_\eta}$
is derived from $\pi^{\alpha\eta}$.

Now suppose that \ref{dfn:Xxvec_good}($*$i) holds,
that is, $\xi_\eta\in(C^-)^{\nu\eta}$ and $t^{\nu\eta}(\xi_\eta)=0$;
 we must verify exit-goodness at $(\eta,\xi,\nu)$.
If \ref{dfn:Xxvec_good}($*$ii) also holds, then
exit-goodness follows from
model-goodness, because
\[ \exit^{\Xx_\eta}_{\xi_\eta}=\Ult_0(\exit^{\Xx_\alpha}_{\xi_\alpha},
\vec{F}^{\alpha\eta}_{\xi_\eta;\xi_\eta}) \]
and by $(\dagger)$,
and because of the agreement between
the direct limit maps relevant to exit-goodness
with those relevant to model-goodness.
Now suppose that \ref{dfn:Xxvec_good}($*$ii) fails.
Let $\nu'\in(\nu,\eta)$ be such that \ref{dfn:Xxvec_good}($*$ii) holds at $(\eta,\xi,\nu')$.
Then by commutativity, much as in the previous paragraph, model-goodness at $(\eta,\xi,\nu')$
implies exit-goodness at $(\eta,\xi,\nu)$.
\end{casethree}

\begin{casethree}
 $\xi$ is a limit.

 Suppose that \ref{dfn:Xxvec_good}($*$ii) holds,
 that is, $(\gamma^{\nu\eta}_{\xi_\eta;\xi_\nu},\xi_\eta]_{\Xx_\eta}$ does not drop in model or degree;
 we must verify model-goodness at $(\eta,\xi,\nu)$.
  
 Suppose first that $\gamma^{\nu\eta}_{\xi_\eta;\xi_\nu}=\xi_\eta$.
 Then model-goodness at $(\eta,\xi,\nu)$ follows easily by induction,
 using the commutativity of the various maps and the fact that $M^{\Xx_\eta}_{\xi_\eta}$ is the direct limit under iteration maps of $\Xx_\eta$.
 Here is some more detail.
 For $\nu\leq\alpha\leq\beta\leq\eps\leq\eta$ and $\pi^{\alpha\beta}$ as before,
 we have
 \begin{equation}\label{eqn:pis_comm} \pi^{\alpha\eps}=\pi^{\beta\eps}\com\pi^{\alpha\beta},\end{equation}
 and for $\upsilon_\nu<_{\Xx_\nu}\xi_\nu$ such that $(\upsilon_\nu,\xi_\nu]_{\Xx_\nu}$
 does not drop in model or degree,
 and $\upsilon_\alpha=\gamma^{\nu\alpha}_{\upsilon_\alpha;\upsilon_\nu}$,
 so $\upsilon_\beta=\gamma^{\alpha\beta}_{\upsilon_\beta;\upsilon_\alpha}$ and
 \[ \bar{\pi}^{\alpha\beta}\eqdef\pi^{\alpha\beta}_{\upsilon_\beta;\upsilon_\alpha0\upsilon_\beta}:M^{\Xx_\alpha}_{\upsilon_\alpha}\to M^{\Xx_\beta}_{\upsilon_\beta},\] 
 we have $\upsilon_\alpha<_{\Xx_\alpha}\xi_\alpha$ and $(\upsilon_\alpha,\xi_\alpha]_{\Xx_\alpha}$
 does not drop in model or degree and
 \[ \pi^{\alpha\beta}\com i^{\Xx_\alpha}_{\upsilon_\alpha\xi_\alpha}=i^{\Xx_\beta}_{\upsilon_\beta\xi_\beta}\com\bar{\pi}^{\alpha\beta} \]
 and
 $\bar{\pi}^{\alpha\eps}=\bar{\pi}^{\beta\eps}\com\bar{\pi}^{\alpha\beta}$.
 By induction (using model-goodness) we also have
 \[ M^{\Xx_\eta}_{\upsilon_\eta}=\bigcup_{\alpha<\eta}\rg(\bar{\pi}^{\alpha\eta}).\]
But then since
$M^{\Xx_\eta}_{\xi_\eta}=\bigcup_{\rho<_{\Xx_\eta}\xi_\eta}\rg(i^{\Xx_\eta}_{\rho\xi_\eta})$,
and
$\Gamma^{\nu\eta}_{\xi_\eta}``[0,\xi_\nu)_{\Xx_\nu}$
is cofinal in $\xi_\eta$, therefore
\[ M^{\Xx_\eta}_{\xi_\eta}=\bigcup_{\alpha<\eta}\rg(\pi^{\alpha\eta})\]
and so also by line (\ref{eqn:pis_comm}), $\pi^{\alpha\eta}$ is the direct limit map,
giving model-goodness at $(\eta,\xi,\nu)$, as desired.

Now suppose instead that $\gamma^{\nu\eta}_{\xi_\eta;\xi_\nu}<\xi_\eta$.
If there is $\nu'\in(\nu,\eta)$ such that $\gamma^{\nu'\eta}_{\xi_\eta;\xi_{\nu'}}=\xi_\eta$
then we can deduce model-goodness at $(\eta,\xi,\nu)$ from model-goodness at $(\eta,\xi,\nu')$ as in the successor case.
So suppose there is no such $\nu'$. The argument here is fairly similar to the previous subcase.
We have $\pi^{\alpha\beta}$ for $\nu\leq\alpha\leq\beta\leq\eta$, commuting
like before,
and it suffices to see 
\[ M^{\Xx_\eta}_{\xi_\eta}=\bigcup_{\alpha<\eta}\rg(\pi^{\alpha\eta}).\]
So let $x\in M^{\Xx_\eta}_{\xi_\eta}$. Let $\upsilon_\eta<_{\Xx_\eta}\xi_\eta$
and $\bar{x}$ be such that $x=i^{\Xx_\eta}_{\upsilon_\eta\xi_\eta}(\bar{x})$
and $\upsilon_\eta=\gamma^{\alpha\eta}_{\xi_\eta;\xi_\alpha}$ for some $\alpha<\eta$.
Write $\upsilon_\beta=\gamma^{\alpha\beta}_{\xi_\beta;\xi_\alpha}$
for $\alpha\leq\beta\leq\eta$,
so $\upsilon_\beta\leq_{\Xx_\beta}\xi_\beta$ and $(\upsilon_\beta,\xi_\beta]_{\Xx_\beta}$
does not drop in model or degree, and $\upsilon_\eps=\gamma^{\beta\eps}_{\xi_\eps;\upsilon_\beta}$
for $\beta\leq\eps\leq\eta$.
Write
\[ \bar{\pi}^{\beta\eps}=\pi^{\beta\eps}_{\xi_\eps;\upsilon_\beta 0\upsilon_\eps}:M^{\Xx_\beta}_{\upsilon_\beta}\to M^{\Xx_\eta}_{\upsilon_\eps}.\]
By induction (with model-goodness) we can fix $\beta\in[\alpha,\eta)$
and $\bar{\bar{x}}$ such that $\bar{x}=\bar{\pi}^{\beta\eta}(\bar{\bar{x}})$.
Then by commutativity,
\[ \pi^{\beta\eta}(i^{\Xx_\beta}_{\upsilon_\beta\xi_\beta}(\bar{\bar{x}}))=i^{\Xx_\eta}_{\upsilon_\eta\xi_\eta}(\bar{\pi}^{\beta\eta}(\bar{\bar{x}}))=x, \]
so $x\in\rg(\pi^{\beta\eta})$, which suffices.

The rest of the limit case is dealt with like in the successor case.
\end{casethree}

This completes the proof.
\end{proof}

\section{Normalization of transfinite stacks}\label{sec:normalization}

In this section we put things
together to prove Theorem 
\ref{tm:full_norm_stacks_strategy},
and also Theorem \ref{tm:stacks_iterability_2} below.

\begin{rem}
The proof will in fact give an explicit construction of a specific such strategy
$\Sigma^*$ from $\Sigma$, and we denote this $\Sigma^*$ by 
$\Sigma^{\stk}_{\minim}$
(or just $\Sigma^{\stk}$ for short,
though it seems this may be in conflict with the notation in 
\cite{iter_for_stacks}).
Given $\Ttvec,\Xx$  as above, we denote $\Xx$
by $\Xx_\Sigma(\Ttvec)$ (note that $\Xx$ is uniquely determined by $\Ttvec,\Sigma$).
 \end{rem}

\begin{tm}\label{tm:stacks_iterability_2}
 Let $\Omega>\om$ be regular. Let $m\leq\om$, $M$ be $m$-standard
 and $\Sigma$
 be an $(m,\Omega)$-strategy for $M$ with minimal inflation condensation.\footnote{If $m=\om$,
 this means that $\Sigma'$ has minimal inflation condensation,
 where $\Sigma'$ is the corresponding $(0,\Omega)$-strategy for $\J(M)$.
 See \S\ref{subsec:deg_om_deg_0}.}
 Then there is an
 optimal-$(m,{<\om},\Omega)$-strategy $\Sigma^*$ for $M$
 with $\Sigma\sub\Sigma^*$, such that for every stack
$\Ttvec=\left<\Tt_i\right>_{i<n}$ via $\Sigma^*$
with $n<\om$ and  $\lh(\Tt_i)$ a successor ${<\Omega}$
for each $i<n$,
 there is an \tu{(}$m$-maximal\tu{)} tree $\Xx$ via $\Sigma$ 
with $M^\Ttvec_\infty=M^\Xx_\infty$
 and $\deg^\Ttvec_\infty=\deg^\Xx_\infty$,
 such that $b^{\Ttvec}\inter\dropset_{\deg}^{\Ttvec}=\emptyset$
 iff $b^\Xx\inter\dropset_{\deg}^\Xx=\emptyset$,
and if $b^{\Ttvec}\inter\dropset_{\deg}^{\Ttvec}=\emptyset$
then
$i^{\Ttvec}=i^\Xx$.
\end{tm}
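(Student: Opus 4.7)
I will define $\Sigma^*$ by recursion on the stack length $n$, simultaneously associating to each stack $\Ttvec=\langle\Tt_0,\ldots,\Tt_{n-1}\rangle$ via $\Sigma^*$ a single normal tree $\Xx=\Xx_\Sigma(\Ttvec)$ via $\Sigma$, with the advertised properties. The base case $n=1$ takes $\Sigma^*$-on-single-trees to be $\Sigma$ and $\Xx_\Sigma(\langle\Tt_0\rangle)=\Tt_0$. Inductively, suppose $\Sigma^*$ and $\Xx_\Sigma$ are defined on stacks of length $\leq n$, with $\Xx_{n-1}\eqdef\Xx_\Sigma(\langle\Tt_0,\ldots,\Tt_{n-1}\rangle)$ a tree via $\Sigma$ of successor length ${<}\Omega$ satisfying $M^{\Xx_{n-1}}_\infty=M^{\langle\Tt_0,\ldots,\Tt_{n-1}\rangle}_\infty$ with matching degree, and with $i^{\langle\Tt_0,\ldots,\Tt_{n-1}\rangle}=i^{\Xx_{n-1}}$ when no drops occur along either main branch. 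We then define $\Sigma^*$ on stacks of length $n+1$ whose initial segment of length $n$ is $\langle\Tt_0,\ldots,\Tt_{n-1}\rangle$.

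\textbf{Parallel construction of $\Tt_n$ and $\Yy$.} Given that $\Tt_n$ is played on $M^{\Xx_{n-1}}_\infty$ at degree $\deg^{\Xx_{n-1}}_\infty$, I build $\Tt_n$ together with a tree $\Yy$ on $M$ such that at every stage, $\Xx_{n-1}\mininflatearrow\Yy$, $\Yy$ is $\Xx_{n-1}$-terminally-non-dropping (up to that stage), and the factor tree $\Yy/\Xx_{n-1}$ equals $\Tt_n$ as far as built. Each extender $E^{\Tt_n}_\alpha$ dictates an $\Xx_{n-1}$-inflationary extender to be applied to $\Yy$: identifying $E^{\Tt_n}_\alpha$ with an extender on the appropriate $M^\Yy_{\zeta^\alpha}$ via Lemma \ref{lem:M^Xx^alpha_infty_ult_rep}, its index position in $\Yy$ is forced by $\Tt_n$'s tree structure, and player~II's response within $\Yy$ at subsequent ordinals is governed by $\Sigma$. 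At successor steps the extension of $\Yy$ is immediate by \ref{lem:E-inflation_is_minimal} (and $\Sigma$ dictates any intervening $\Yy$-branches at limit length, using minimal inflation condensation by \ref{lem:inflationary_limit_continuation}). At a limit stage $\eta$ of $\Tt_n$, the corresponding $\Yy$ also has limit length; I define $\Sigma^*(\Ttvec\rest\text{current})$ to be the translation of $\Sigma(\Yy)$ under the factor-tree bijection of Lemma \ref{lem:flattening_properties}, so that $\Tt_n\rest(\eta+1)=(\Yy\rest(\text{new length}))/\Xx_{n-1}$ still holds. The continuation of $\Yy$ at limit stages of $\Yy$ is provided by minimal inflation condensation via \ref{lem:inflationary_limit_continuation}, keeping $\Yy$ via $\Sigma$.

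\textbf{Closing the induction.} When $\Tt_n$ arrives at successor length, so does $\Yy$; set $\Xx_n=\Xx_\Sigma(\Ttvec)\eqdef\Yy$. Lemma \ref{lem:flattening_properties}(\ref{item:models_match}) at the top ordinal gives $M^{\Xx_n}_\infty=M^{\Xx_n/\Xx_{n-1}}_\infty=M^{\Tt_n}_\infty$ with matching degrees, and part (\ref{item:factor_comm}) of that lemma yields that $b^{(\Xx_{n-1},\Tt_n)}$ drops in model or degree iff $b^{\Xx_n}$ does, with $i^{(\Xx_{n-1},\Tt_n)}=i^{\Xx_n}$ when no such drops occur. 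Composing with the inductive hypothesis at stage $n$ (and using the associativity of iteration-map composition across concatenated stacks), we conclude $M^{\Ttvec}_\infty=M^{\Xx_n}_\infty$, matching degrees, equivalence of main-branch drops, and $i^{\Ttvec}=i^{\Xx_n}$ in the no-drop case. The optimal-strategy requirement (no artificial drops, per \cite[\S1.1.5]{iter_for_stacks}) is preserved throughout because in the inductive step we only restrict to moves of player~I that are in fact genuine tree extensions on $M^{\Xx_{n-1}}_\infty$.

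\textbf{Main obstacle.} The substantive point is the coherence of branch choices at limit stages: one must verify that translating $\Sigma(\Yy)$ back through the factor tree yields a legitimate and unique branch choice for $\Tt_n$, and moreover that two different ways of arriving at the same stack position produce the same $\Sigma^*$-value (so that $\Sigma^*$ is well-defined, not dependent on the particular $\Yy$ constructed). Both points are handled by the factor tree framework established in Section \ref{sec:factor_tree} (especially Lemmas \ref{lem:<_Xx/Tt} and \ref{lem:flattening_properties}), together with minimal inflation condensation applied to $\Xx_{n-1}\mininflatearrow\Yy$. Because $n$ is finite, we never invoke continuity of stacks in the sense of Section \ref{sec:inf_comm}; commutativity of minimal inflation (Lemma \ref{lem:inflation_commutativity}) gives the needed associativity directly, so no further limiting argument is required.
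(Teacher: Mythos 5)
Your proposal is correct and follows essentially the same route as the paper: the tail strategy after each round is defined by requiring that the next normal tree $\Tt_n$ be the factor tree $\Yy/\Xx_{n-1}$ of a minimal inflation $\Yy$ of the current normalization $\Xx_{n-1}$ via $\Sigma$ (with minimal inflation condensation supplying the limit-stage continuations and uniqueness of $\Yy$), and the induction is closed by Lemma \ref{lem:flattening_properties}(\ref{item:factor_comm}). The paper's own argument is just a terser statement of this iteration; your filling-in of the limit-stage branch translation and well-definedness matches what the paper leaves implicit.
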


\begin{cor}
Let $\Omega>\om$ be regular.
Let $M$ be an $\om$-standard pure $L[\es]$-premouse with $\rho_\om^M=\om$,
and $\Sigma$ be a (hence the unique) $(\om,\Omega+1)$-strategy
for $M$.
Then $M$ has an optimal-$(\om,\Omega,\Omega+1)^*$-strategy
$\Sigma^*$
such that every $\Sigma^*$-iterate of $M$
of size ${<\Omega}$ is a $\Sigma$-iterate of $M$.
\end{cor}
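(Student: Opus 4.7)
The plan is as follows. First, observe that $\Sigma$ is the unique $(\om,\Omega+1)$-strategy for $M$: this is a standard consequence of $M$ being $\omega$-sound with $\rho_\om^M=\om$, via the Dodd-Jensen property and comparison applied to any two putative such strategies. Therefore, by Lemma \ref{lem:unique_strat_has_inf_cond}, $\Sigma$ has minimal hull condensation, which immediately yields minimal inflation condensation (as mhc is stronger than mic).

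Second, apply Theorem \ref{tm:full_norm_stacks_strategy} directly with this $\Sigma$ to obtain an optimal-$(\om,\Omega,\Omega+1)^*$-strategy $\Sigma^*$ for $M$ extending $\Sigma$, with the full normalization properties listed in that theorem.

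Third, I would verify the claim about iterates. Fix a $\Sigma^*$-iterate $N$ of $M$ with $\card(N)<\Omega$, witnessed by some stack $\vec{\Tt}=\left<\Tt_\alpha\right>_{\alpha<\lambda}$ via $\Sigma^*$ with $M^{\vec{\Tt}}_\infty=N$. I would reduce to the case that $\lambda<\Omega$ and $\lh(\Tt_\alpha)<\Omega$ for every $\alpha<\lambda$. If in the given witness some $\lh(\Tt_\alpha)=\Omega+1$ (which by the rules of the $(\om,\Omega,\Omega+1)^*$-game forces $\alpha+1=\lambda$), or $\lambda=\Omega$, then since $\card(N)<\Omega$ and $\Omega$ is regular, the ranges of the (final) iteration or ``$*$''-maps into $N$ are bounded below $\Omega$, so by reflecting the collection of models and extenders used on the branch producing $N$ to a set of size $<\Omega$, one extracts a cofinal sub-stack of length $<\Omega$ whose trees each have length $<\Omega$ and still produces $N$ as its last model. (Any drop structure on the final branch is reflected as well, since there are only finitely many drops.) Apply part (\ref{item:X_for_Tvec}) of Theorem \ref{tm:full_norm_stacks_strategy} to the reduced stack; since all tree lengths and the stack length are $<\Omega$, we get a normal tree $\Xx$ via $\Sigma$ with $\lh(\Xx)<\Omega$ and $M^\Xx_\infty=N$, as required.

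The hard part will be the reduction in the third step, which requires some care with the mechanics of the $(\om,\Omega,\Omega+1)^*$-game when a final tree of length $\Omega+1$ or a stack of length $\Omega$ is involved. The essential point is that the hull of $N$ under the terminal iteration map (or its appropriate starting-model variant, in the dropping case) is absorbed by some $<\Omega$-bounded piece of the iteration machinery, by regularity of $\Omega$; the detailed bookkeeping is routine once one writes out the direct limit presentation of $N$. Once this reduction is in hand, the application of the main theorem is immediate.
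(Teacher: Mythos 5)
Your steps 1 and 2 are exactly the paper's proof: the corollary is proved simply by noting that uniqueness of $\Sigma$ (which, as you say, follows from $\om$-soundness with $\rho_\om^M=\om$) gives minimal hull condensation via Lemma \ref{lem:unique_strat_has_inf_cond}, hence minimal inflation condensation, so Theorem \ref{tm:full_norm_stacks_strategy} applies; the only point you omit is the convention of \S\ref{subsec:deg_om_deg_0}, by which ``$\Sigma$ has mic'' officially means that the corresponding $(0,\Omega+1)$-strategy for $\J(M)$ does (the paper flags this explicitly, but it is cosmetic). Your step 3, however, is both unnecessary and, as sketched, not sound. Unnecessary: a tree of length $\Omega+1$ has $\sup_{\beta<\Omega}\lh(E^\Tt_\beta)\geq\Omega$ below the ordinal height of its last model, so that model has size $\geq\Omega$ and cannot witness a counterexample; and after $\Omega$ rounds the game is simply over (the construction does not even define $\Yy_\Omega$), so a $\Sigma^*$-iterate of size ${<\Omega}$ is by definition the last model of a continuable stack, to which Theorem \ref{tm:full_norm_stacks_strategy}\eqref{item:X_for_Tvec} applies directly. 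Not sound: extracting a ``cofinal sub-stack'' of a play and asserting it is still via $\Sigma^*$ does not work --- a subsequence of rounds of a play by a strategy is not in general a play by that strategy (the trees in later rounds are on models determined by the omitted intermediate rounds), so if this reduction were actually needed the argument would have a real gap. As it stands, deleting step 3's reduction and keeping only its trivial case leaves a correct proof identical to the paper's.
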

\begin{proof}
 The  $(\om,\Omega+1)$-strategy for $M$ has minimal
 inflation condensation, as it is unique.
 (Recall this mean that the $(0,\Omega+1)$-strategy for $\J(M)$
 has minimal inflation condensation, as it is unique.)
\end{proof}

One can also prove the natural minimal
version of \cite[Theorem 9.6]{iter_for_stacks}
by combining proofs.

\begin{proof}[Proof of Theorems \ref{tm:full_norm_stacks_strategy}, 
\ref{tm:stacks_iterability_2}]
We will construct an appropriate stacks strategy $\Sigma^*$ for $M$, extending $\Sigma$.
Modulo what we have already established regarding minimal inflation,
the construction of the strategy is a simplification of the analogous construction
in \cite{iter_for_stacks}.
\footnote{The absorption maps $\varrho_\alpha$
and $\varsigma_\alpha$
of \cite{iter_for_stacks} reduce here to the identity, which renders certain issues in \cite{iter_for_stacks} trivial.}

We start with the successor case:
converting a stack of two normal trees into a single normal tree.
Let $\Tt$ be an
$m$-maximal tree on $M$ of successor length $<\Omega$, via $\Sigma$. Let 
$N=M^\Tt_\infty$ and $n=\deg^\Tt_\infty$.
Note that we get a unique
$(n,\Omega)$-strategy ($(n,\Omega+1)$-strategy
respectively) $\Psi=\Psi^\Sigma_\Tt$ for $N$
by demanding that whenever
 $\Uu$ is via $\Psi$ with $\lh(\Uu)<\Omega$,
there is a  tree $\Yy$ on $M$ via $\Sigma$
such that $\Tt\mininflatearrow\Yy$ and $\Uu$
is the factor tree $\Yy/\Tt$
(note also that $\Yy$ is determined uniquely
by this requirement);
and if $\Sigma$ is an $(m,\Omega+1)$-strategy
and $\Uu$ has length $\Omega+1$,
then there is likewise such a $\Yy$,
except that
now we can only demand that $\Yy\rest\Omega+1$
is via $\Sigma$
(and $\Yy$ is the $\Tt$-unravelling
of $\Yy\rest(\Omega+1)$, which has wellfounded models
as $\cof(\Omega)>\om$).

We can repeat this process finitely often,
and using Lemma \ref{lem:flattening_properties}
part \ref{item:factor_comm} for the commutativity etc, this
yields Theorem \ref{tm:stacks_iterability_2}.

We now complete the proof
of Theorem \ref{tm:full_norm_stacks_strategy}.
Assume $\Sigma$ is an 
$(m,\Omega+1)$-strategy.
We define an optimal-$(m,\Omega,\Omega+1)^*$-strategy $\Sigma^*$ for $M$.
Given $\alpha<\Omega$, at the start of round $\alpha$, neither player 
having yet lost, we 
will  have sequences
$\vec{\Tt}=\left<\Tt_\beta\right>_{\beta<\alpha}$,
$\vec{\Yy}=\left<\Yy_\beta\right>_{\beta\leq\alpha}$ such that:

\begin{enumerate}[label=S\arabic*.,ref=S\arabic*]
\item\label{item:Ttvec_is_m-max_stack} $\Ttvec$ is an optimal $m$-maximal stack 
on $M$,
\item $\Yyvec$ is a continuous terminal minimal inflation stack of degree $m$ on 
$M$, with each $\Yy_\beta$ via $\Sigma$,
\item for each $\beta<\alpha$, $\lh(\Tt_\beta)$
and $\lh(\Yy_\beta)$ are successors ${<\Omega}$,
\item for each $\beta<\alpha$,
$\Tt_\beta$ is the factor tree $\Yy_{\beta+1}/\Yy_\beta$,
\item $M^{\Ttvec\rest\alpha}_\infty$ is well-defined
and $=M^{\Yy_\alpha}_\infty$,
$\deg^{\Ttvec\rest\alpha}_\infty=\deg^{\Yy_\alpha}_\infty$,
[$b^{\Ttvec\rest\alpha}\inter\dropset_{\deg}^{\Ttvec\rest\alpha}=\emptyset$
iff $b^{\Yy_\alpha}\inter\dropset_{\deg}^{\Yy_\alpha}=\emptyset$],
and if 
$b^{\Ttvec\rest\alpha}\inter\dropset_{\deg}^{\Ttvec\rest\alpha}=\emptyset$
then
 $i^{\Ttvec\rest\alpha}_{0\infty}=i^{\Yy_\alpha}_{0\infty}$.
 \item for all $\beta<\gamma\leq\alpha$, we have
\[b^{\Ttvec\rest[\beta,\gamma)}\inter
\dropset_{\deg}^{\Ttvec\rest[\beta,\gamma)}
=\emptyset\iff\Yy_\gamma\text{ is }\Yy_\beta\text{-terminally-non-dropping},\]
and if $b^{\Ttvec\rest[\beta,\gamma)}\inter\dropset_{\deg}
^{\Ttvec\rest[\beta,\gamma)}
=\emptyset$
then 
$i^{\Ttvec\rest[\beta,\gamma)}=
(\pi_\infty)^{\Yy_\beta\mininflatearrow\Yy_\gamma}$. 
\end{enumerate}

This does not break down at successor
stages (unless $\Tt_\alpha$ is produced of length $\Omega+1$,
in which case the game is over and player II has won),
by the discussion above and again using Lemma \ref{lem:flattening_properties},
Lemma \ref{lem:cont_term_stack_event},
by which
$\Yy_\gamma$ is $\Yy_\beta$-terminally-dropping
iff there is some $\alpha\in[\beta,\gamma)$
such that $\Yy_{\alpha+1}$ is $\Yy_\alpha$-terminally-dropping,
and Lemma \ref{cor:terminal_dropping_equiv},
by which $\pi_\infty^{\beta,\gamma+1}=\pi_\infty^{\gamma,\gamma+1}\com\pi_\infty^{\beta\gamma}$
when $\Yy_{\gamma+1}$ is $\Yy_\beta$-terminally-non-dropping.

So suppose $\eta<\Omega$ is a limit
and we have produced $\left<\Tt_\alpha\right>_{\alpha<\eta}$
and $\left<\Yy_\alpha\right>_{\alpha<\eta}$.
We must produce $\Yy_\eta$ and verify the conditions.
Let $\Yy_\eta$ be the unique minimal comparison
of $\left<\Yy_\alpha\right>_{\alpha<\eta}$
via $\Sigma$. This exists and has 
length $\lambda+1<\Omega$
by Lemma \ref{lem:min_comp}. 
By Lemma \ref{lem:cont_term_stack_event},
there is $\nu<\eta$ such that 
$\Yy_\gamma$ is $\Yy_\beta$-terminally-non-dropping
for all $\beta,\gamma$ such that $\nu\leq\beta<\gamma\leq\eta$.
So by induction, for all $\beta\in[\nu,\eta)$,
$b^{\Tt_\beta}$ does not drop in model or degree.
And by Lemma \ref{lem:cont_term_stack_good},
$\left<\Yy_\alpha\right>_{\alpha\leq\eta}$ is good,
so
\[\begin{array}{rcl}
M^{\Yy_\eta}_\infty&=&\dirlim_{\nu\leq\beta\leq\gamma<\eta}\left(M^{\Yy_\beta}_\infty,M^{\Yy_\gamma}_\infty;\pi_\infty^{\beta\gamma}\right)\\
&=&\dirlim_{\nu\leq\beta\leq\gamma<\eta}\left(M^{\Ttvec\rest\beta}_\infty,M^{\Ttvec\rest\gamma}_\infty;i^{\Ttvec\rest[\beta,\gamma)}\right)\\
& =&M^{\Ttvec\rest\eta}_\infty,\end{array}\]
and $\pi_\infty^{\beta\eta}=i^{\Ttvec\rest[\beta,\eta)}$ is the direct limit map.

This completes the construction and analysis of the strategy
through $\Omega$ rounds.
Finally for the limit stage $\Omega$,
because $\Omega$ is regular
(in fact $\cof(\Omega)>\om$ suffices), note that player II wins
(but in this case we do not try to define any tree $\Yy_\Omega$).
\end{proof}

The following corollary now follows easily:
\begin{cor} Let $(M,m,\Omega,\Sigma)$
	be appropriate, $\Tt,\Xx$ on $M$ via $\Sigma$, each of successor length ${<\Omega}$,
	such that $\Xx$ is a $\Tt$-terminal minimal inflation of $\Tt$.
	Then there is a unique
	$\Uu$ such that $(\Tt,\Uu)$ is via $\Sigma^{\stk}_{\min}$
	and $\Xx=\Xx_\Sigma(\Tt,\Uu)$.
	Moreover, if $b^\Uu$ does not drop
	in model or degree
	\tu{(}so $\Xx$ is $\Tt$-terminally-non-dropping\tu{)}
	then $i^\Uu_{0\infty}=\pi_\infty^{\Tt\mininflatearrow\Xx}$ \tu{(}Definition \ref{dfn:inflation_notation}\tu{)},
	and the extenders used along $[0,\infty]_\Uu$ are just those in $\vec{F}_\infty^{\Tt\mininflatearrow\Xx}$.
	\end{cor}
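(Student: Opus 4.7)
The plan is to identify $\Uu$ with the factor tree $\Xx/\Tt$ defined in Section \ref{sec:factor_tree}, and then to read off everything from the already-established properties of factor trees and from the explicit construction of $\Sigma^{\stk}_{\min}$ given in the proof of Theorem \ref{tm:full_norm_stacks_strategy}.

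First, I would take $\Uu = \Xx/\Tt$. Since $\Xx$ is $\Tt$-terminal, $\Xx$ is (minimal-)good with respect to $\Tt$, so Lemma \ref{lem:flattening_properties} applies: $\Uu$ exists, is a well-defined $n$-maximal tree on $N = M^\Tt_\infty$ (where $n=\deg^\Tt_\infty$) with wellfounded models, and is uniquely determined by the data $(\Tt,\Xx)$. Part (6) of that lemma already gives the dropping equivalence and the equality of main branch embeddings asserted in the moreover.

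Next I would check that $(\Tt,\Uu)$ is via $\Sigma^{\stk}_{\min}$ with $\Xx_\Sigma(\Tt,\Uu)=\Xx$. Inspecting the construction of $\Sigma^{\stk}_{\min}$, the second-round strategy after playing $\Tt$ is exactly the strategy $\Psi^\Sigma_\Tt$ defined at the start of the proof of Theorems \ref{tm:full_norm_stacks_strategy}, \ref{tm:stacks_iterability_2}: a tree $\Uu'$ on $N$ of length ${<\Omega}$ is via $\Psi^\Sigma_\Tt$ precisely when there exists a tree $\Yy$ on $M$ via $\Sigma$ with $\Tt\mininflatearrow\Yy$ and $\Uu' = \Yy/\Tt$. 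Taking $\Yy=\Xx$ witnesses this for $\Uu=\Xx/\Tt$; and by the definition of $\Xx_\Sigma(\Tt,\Uu)$ as the unique such $\Yy$ (uniqueness is immediate from the fact that $\Yy$ is obtained as the inflation determined step-by-step by $\Uu$ and $\Sigma$), we get $\Xx_\Sigma(\Tt,\Uu) = \Xx$.

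For uniqueness of $\Uu$: suppose $\Uu'$ also satisfies $(\Tt,\Uu')$ via $\Sigma^{\stk}_{\min}$ and $\Xx_\Sigma(\Tt,\Uu') = \Xx$. Then by the characterization of $\Psi^\Sigma_\Tt$ just recalled, $\Uu' = \Xx_\Sigma(\Tt,\Uu')/\Tt = \Xx/\Tt = \Uu$, using the uniqueness clause in Lemma \ref{lem:flattening_properties}(\ref{item:flattening_exists}). Finally, the description of the extenders along $[0,\infty]_\Uu$ in the non-dropping case is immediate from the definition of the factor tree, which sets $E^\Uu_\alpha = E^\Xx_{\zeta^\alpha}$, combined with Lemma \ref{lem:M^Xx^alpha_infty_ult_rep}(i), which identifies $\vec{F}^{\Tt\mininflatearrow\Xx}_\infty$ with exactly the sequence $\langle E^\Xx_{\zeta^\gamma}\rangle_{\gamma+1\leq_{\Xx/\Tt}\infty}$ of extenders used on the main branch of $\Uu$; the identification $i^\Uu_{0\infty} = \pi_\infty^{\Tt\mininflatearrow\Xx}$ is then just the last clause of Lemma \ref{lem:flattening_properties}(\ref{item:main_embs_match}) applied along the main branch.

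Since every ingredient needed is already stated as a lemma (Lemmas \ref{lem:flattening_properties}, \ref{lem:M^Xx^alpha_infty_ult_rep}) or is part of the definition of $\Sigma^{\stk}_{\min}$, there is no serious obstacle; the only thing to write carefully is the bijection between trees $\Uu'$ via $\Psi^\Sigma_\Tt$ and minimal inflations $\Yy$ of $\Tt$ via $\Sigma$, which is what gives both existence and uniqueness in one stroke.
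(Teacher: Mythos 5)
Your proposal is correct and follows essentially the route the paper intends: the paper offers no written proof (it states the corollary "now follows easily" from the construction of $\Sigma^{\stk}_{\min}$), and the intended argument is precisely yours — identify $\Uu$ with the factor tree $\Xx/\Tt$, use the characterization of the second-round strategy $\Psi^\Sigma_\Tt$ for existence, uniqueness, and $\Xx_\Sigma(\Tt,\Uu)=\Xx$, and read off the "moreover" clause from Lemma \ref{lem:flattening_properties}(\ref{item:main_embs_match}) and Lemma \ref{lem:M^Xx^alpha_infty_ult_rep}(i). The only nitpick is your early attribution of the map identity to part (\ref{item:factor_comm}) of Lemma \ref{lem:flattening_properties} (which concerns $i^{(\Tt,\Uu)}_{0\infty}=i^{\Xx}_{0\infty}$ rather than $i^{\Uu}_{0\infty}=\pi_\infty^{\Tt\mininflatearrow\Xx}$), but you cite the correct clause in your final paragraph.
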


	\begin{rem}
	 It is straightforward to adapt the
	  foregoing normalization techniques
	 assuming only a \emph{partial} normal iteration strategy with condensation which applies only to some restricted collection $\mathscr{C}$ of normal trees, for normalizing stacks from some collection $\mathscr{C}'$, assuming  that the normalization process for stacks in $\mathscr{C}'$ yields trees in $\mathscr{C}$.
	 For example, one might start with a mouse $M$ and some $M$-cardinal $\delta$,
	 and consider only trees and stacks thereof which are based on $M|\delta$,
	 or consider only (stacks of)  nice normal trees, etc. There is a more extended discussion of similar considerations for normal realization in \cite{iter_for_stacks}.
	\end{rem}

\section{Analysis of comparison}\label{sec:analysis_of_comparison}

Let $\Omega>\om$ be regular.
Let $M$ be $m$-standard and $\Sigma$ be an 
$(m,\Omega+1)$-strategy
for $M$. Let $\Tt_0,\Tt_1$ be trees on $M$ according to $\Sigma$,
each of successor length ${<\Omega}$. Let $N_i=M^{\Tt_i}_\infty$
and  
$n_i=\deg^{\Tt_i}_\infty$.
Let $\Sigma_i$ be the $(n_i,\Omega+1)$-strategy
for $N_i$ which is just the second round of $\Sigma^{\stk}_{\minim}$ following
$\Tt_i$. We now analyze the comparison of $(N_0,N_1)$
via $(\Sigma_0,\Sigma_1)$. Let $(\Uu_0,\Uu_1)$
be this comparison, with padding as usual (such that
if $\alpha+1<\beta+1<\lh(\Uu_0,\Uu_1)$
and $E^{\Uu_i}_\alpha\neq\emptyset\neq 
E^{\Uu_{1-i}}_\beta$ then 
$\nutilde(E^{\Uu_i}_\alpha)<\nutilde(E^{\Uu_{1-i}}_\beta)$).

Let $\Xx$ be the minimal comparison of $(\Tt_0,\Tt_1)$.
So $\Tt_i\mininflatearrow\Xx$ for $i=0,1$.
Let $C^i=C^{\Tt_i\mininflatearrow\Xx}$ for $i=0,1$.
For each $\alpha<\lh(\Xx)$ we have
$\alpha\in C^0\cup C^1$
and if $\alpha+1<\lh(\Xx)$
then $t^i(\alpha)=0$ for $i=0$ or $i=1$.

Now we claim that $\Uu_i$ is the factor tree $\Xx/\Tt_i$
for $i=0,1$. For given $\Xx\rest(\alpha+1)$
where $\alpha+1<\lh(\Xx)$,
suppose $E^\Xx_\alpha=E^{\Tt_0\mininflatearrow\Xx}_\alpha$.
Then $\alpha\in(C^-)^0$ and if $\alpha\in(C^-)^1$
then $\lh(E^{\Tt_0\mininflatearrow\Xx}_\alpha)\leq
\lh(E^{\Tt_1\mininflatearrow\Xx}_\alpha)$.
 But if $\lambda\eqdef\lh(E^{\Tt_0\mininflatearrow\Xx}_\alpha)=
\lh(E^{\Tt_1\mininflatearrow\Xx}_\alpha)$
then  $\lambda$ is a cardinal in the corresponding
models $M^{\Uu_i}_\beta$ (for the appropriate $\beta$)
and
\[ M^{\Uu_0}_\beta|\lambda=M^{\Uu_1}_\beta|\lambda=
 (\exit^\Xx_\alpha)^\passive.
\]
However, if $\lambda\eqdef\lh(E^{\Tt_0\mininflatearrow\Xx}_\alpha)<
\lh(E^{\Tt_1\mininflatearrow\Xx}_\alpha)$
then $\lambda$ is a cardinal $M^{\Uu_0}_\beta$
and
\[ M^{\Uu_0}_\beta|\lambda= (\exit^\Xx_\alpha)^\passive, \]
but
\[ M^{\Uu_1}_\beta|\lambda= \exit^\Xx_\alpha, \]
so $E^{\Uu_0}_\beta=\emptyset$
and $E^{\Uu_1}_\beta=E^\Xx_\alpha$.

These considerations easily lead to the fact that
$\Uu_i=\Xx/\Tt_i$.

It is easy to see that the same argument
works
for an arbitrary collection of iterates
(given we have enough iterability for the comparison).

We now mention a simple corollary.
Suppose $M=M_1^\#\in L[x]$,
where $x\in\RR$.
Suppose $\Tt_0,\Tt_1$ are $\om$-maximal trees on $M_1$,
 both are maximal, in the sense that they have limit length and
 $\delta(\Tt_i)$ is Woodin in $L[M(\Tt_i)]$,
and both are countable in $L[x]$.
Woodin has asked whether the pseudo-comparison
of $(L[M(\Tt_0)],L[M(\Tt_1)])$ terminates in countably many steps in $L[x]$.
It seems one might hope to use the analysis
of comparison above to answer this question affirmatively.
There is a simple case where this does work:
\begin{cor}
 Suppose $M_1^\#\in L[x]$ where $x\in\RR$
 and $\Tt_0,\Tt_1$ are as in the previous paragraph, and
 \[ \lh(\Tt_0)=\lh(\Tt_1)=\om.\]
 Then the pseudo-comparison
 of $(L[M(\Tt_0)],L[M(\Tt_1)])$ lasts exactly $\omega$ many steps.
\end{cor}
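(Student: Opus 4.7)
The plan is to identify the pseudo-comparison with a factor tree and reduce the question to bounding $\lh(\Xx)$ for the minimal comparison $\Xx$ of $(\Tt_0,\Tt_1)$.

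First, let $\Sigma$ be the (unique) $(\om,\om_1+1)$-strategy for $M_1^\#$. Uniqueness gives minimal hull condensation via Lemma \ref{lem:unique_strat_has_inf_cond}, hence minimal inflation condensation; both $\Tt_0,\Tt_1$ are via $\Sigma$. Since $M_1^\#\in L[x]$, the restriction of $\Sigma$ to countable trees is definable over $L[x]$, so the construction that follows takes place in $L[x]$. Form the minimal comparison $\Xx=\Xx_\Sigma(\Tt_0,\Tt_1)$ of Section \ref{subsec:min_inf}. By the analysis of \S\ref{sec:analysis_of_comparison}, the factor trees $\Uu_i=\Xx/\Tt_i$ are precisely the $L[M(\Tt_i)]$-side pseudo-comparison trees arising from the induced strategies $\Sigma_{\Tt_i}$. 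So the pseudo-comparison of $L[M(\Tt_0)],L[M(\Tt_1)]$ (in $L[x]$) is $(\Uu_0,\Uu_1)$, and ``lasts exactly $\om$ many steps'' translates to $\lh(\Uu_0,\Uu_1)=\om+1$, with a common last model.

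Next, I would bound $\lh(\Xx)$. By definition of minimal comparison, every $\alpha+1<\lh(\Xx)$ has some $i\in\{0,1\}$ with $t^i(\alpha)=0$; thus $E^\Xx_\alpha$ is a copy (via the coherent $\cd$-process of Definition \ref{dfn:G*F}) of some $E^{\Tt_i}_{f^i(\alpha)}$ with $f^i(\alpha)<\om$. Since each $\Tt_i$ has only $\om$-many extenders and the map $\alpha\mapsto f^i(\alpha)$ is strictly increasing on $(C^-)^i$ (each $\Tt_i$-extender is copied at most once), and moreover for each $\alpha<\lh(\Xx)$ at least one of $f^0(\alpha),f^1(\alpha)$ is defined and strictly below $\om$, elementary counting gives $\lh(\Xx)\le\om+1$. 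For the lower bound, note that neither $\Tt_i$ is an initial segment of the other as a minimal inflation (since both are maximal at their Woodin and already of length $\om$), so finite termination is impossible; the copying process must exhaust each $\Tt_i$, forcing $\lh(\Xx)\ge\om+1$. At stage $\om$, both $f^0,f^1$ take value $\om=\lh(\Tt_i)$, so $\om\in C^i\cut (C^-)^i$ for $i=0,1$, which is exactly the stopping condition in Definition \ref{dfn:min_comp}. Hence $\lh(\Xx)=\om+1$ and $\Xx$ is $\Tt_i$-terminally-non-dropping for both $i$ (using that $\delta(\Tt_i)$ is Woodin in $L[M(\Tt_i)]$, so the cofinal branch does not drop).

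Finally, translating back through Lemma \ref{lem:flattening_properties}, we get $\lh(\Uu_i)=\om+1$ with $M^{\Uu_0}_\om=M^\Xx_\om=M^{\Uu_1}_\om$ a common last model, so the pseudo-comparison terminates at stage $\om$ and not earlier. The main obstacle I anticipate is the combinatorial bound $\lh(\Xx)\le\om+1$: one must verify that at each $\alpha$ the normality requirement on $\Xx$ (which forces $\lh(E^\Xx_\beta)$ to increase) together with the ``bounding'' clause (Definition \ref{dfn:min_tree_emb}) for the two inflation witnesses really does force the copies of $\Tt_0$- and $\Tt_1$-extenders to interleave into a single $\om$-sequence, rather than stacking into an order type $\om+\om$. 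This should follow because the lifted indices $\lh(E^{\Tt_i\mininflatearrow\Xx\rest(\alpha+1)}_\alpha)$ computed via the $\cd$-product of Definition \ref{dfn:G*F} grow strictly with $\alpha$ on $(C^-)^i$ and mutually dominate across $i=0,1$ (since both lift into the same background tree $\Xx$), but the careful bookkeeping of exchange ordinals and dropdowns via Lemmas \ref{lem:ult_dropdown} and \ref{lem:inflation_commutativity} is where the real work lies.
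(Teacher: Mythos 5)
Your setup---forming the minimal comparison $\Xx$ of $(\Tt_0,\Tt_1)$ via the unique (hence minimal-hull-condensing) strategy and identifying the pseudo-comparison with the factor trees $\Xx/\Tt_i$---matches the paper. The gap is in your upper bound $\lh(\Xx)\leq\om+1$. First, the injectivity claim ``each $\Tt_i$-extender is copied at most once'' is not available: after $E^{\Tt_i}_\beta$ is copied at stage $\alpha$ (so $\delta_{\alpha;\beta}=\alpha$ and $t^i(\alpha)=0$), a later $\Tt_i$-inflationary extender $E^\Xx_\eta$ with $\pred^\Xx(\eta+1)=\alpha$ extends the interval $I_{\cdot\,;\beta}$ to $\eta+1$, and the further-inflated copy of the \emph{same} $E^{\Tt_i}_\beta$ may then be selected again at stage $\eta+1$. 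Second, and decisively, even a correct count only bounds the \emph{cardinality} of the set of stages, not the order type of $\lh(\Xx)$; you flag the $\om+\om$ danger yourself, but the fix you sketch (monotonicity of lifted indices) cannot close it, because the claim is simply false without the maximality hypothesis. If the branch $[0,\om)_\Xx$ exhausts $\Tt_1$ (so $f^1(\om)=\om$ and $\om\notin(C^-)^1$) while using only finitely many $\Tt_0$-copies (so $f^0(\om)=k<\om$), then $\om\in(C^-)^0$ and the comparison can legitimately continue by copying $E^{\Tt_0}_k$ at stage $\om$. Any argument for the upper bound that never invokes maximality of the $\Tt_i$ must therefore be wrong.

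The paper's proof is a short contradiction using maximality at exactly this point: if $\Xx$ has more than $\om$ extenders, then since every $E^\Xx_\alpha$ with $\alpha<\om$ is $\Tt_i$-copying for some $i$, the branch $[0,\om)_\Xx$ lies in $C^i$ with $f^i(\om)=\om$ for some $i$; that is, $\Xx\rest\om$ cofinally unravels $\Tt_i$ together with its $\Sigma$-branch. Maximality of $\Tt_i$ then transfers to give that $\delta(\Xx\rest\om)$ is Woodin in $L[M(\Xx\rest\om)]$, so the pseudo-comparison has already terminated by stage $\om$---contradicting that $\Xx$ continues. You should replace the counting argument with this use of maximality; the rest of your reduction can stand.
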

\begin{proof}
 Let $\Xx$ be the minimal comparison of $(\Tt_0,\Tt_1)$.
 It suffices to see $\Xx$ lasts only $\om$ many steps.
 Suppose not. Then note that there is $i<2$
 such that $\om\in C^i$ and $f^i(\om)=\om$.
 Say $i=0$. But
 then by the maximality of $\Tt_0$,
 $\Xx$ is maximal, which implies the pseudo-comparison
 has finished at stage $\om$, a contradiction.
\end{proof}

It follows that the collection of such trees (maximal
of length $\om$) is closed under comparison of pairs
and Neeman genericity iteration. But note that
the direct limit $M_\infty$ of all such iterates of $M_1$
is not $\sub\HOD^{L[x]}$, because
the least measurable of $M_\infty$ is ${<\om_1^{L[x]}}$.

\section{Generic absoluteness of iterability (minimal version)}\label{sec:gen_abs_it}

\subsection{Extending strategies to generic extensions}

As we work in $\ZF$, we define the \emph{$\Omega$-chain condition}
as in \cite[Definition 7.1]{iter_for_stacks}.

\begin{tm}\label{thm:strat_with_cond_extends_to_generic_ext}
Let $\Omega>\om$ be regular.
Let $\PP$ be an $\Omega$-cc forcing and $G$ be $V$-generic for $\PP$.
Let $M$ be an $\ell$-sound premouse
and $\Gamma$ be an $(\ell,\Omega+1)$-strategy for $M$ with minimal hull 
condensation.
Then:

\begin{enumerate}[label=\arabic*.,ref=\arabic*]
\item\label{item:Gamma'_exists_unique} In $V[G]$ there is a unique 
$(\ell,\Omega+1)$-strategy $\Gamma'$ such that $\Gamma\sub\Gamma'$ and 
$\Gamma'$ 
has minimal inflation condensation.
\item\label{item:Gamma'_shc} In $V[G]$, $\Gamma'$ has minimal hull condensation.
\item\label{item:Gamma,Gamma'_DJ} Suppose $M$ is countable
and let $e$ be an enumeration of $M$ in ordertype $\om$.
Then:
\begin{enumerate}[label=--]
\item $\Gamma$ has Dodd-Jensen iff $\Gamma'$ has Dodd-Jensen in $V[G]$.
\item $\Gamma$ has weak Dodd-Jensen with respect to $e$
iff $\Gamma'$ has weak Dodd-Jensen with respect to $e$ in $V[G]$.
\end{enumerate}

\item\label{item:trees_via_Sigma'_embed} For every tree $\Tt\in V[G]$ via 
$\Gamma'$, there is a $\Tt$-terminally-non-dropping minimal inflation $\Xx$ of $\Tt$
such that $\Xx\in V$ and $\Xx$ is via $\Gamma$. Moreover,
if $\lh(\Tt)<\Omega$ then we can take $\lh(\Xx)<\Omega$.
\end{enumerate}
\end{tm}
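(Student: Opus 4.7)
The plan is to adapt the proof of \cite[Theorem 7.3]{iter_for_stacks} to the minimal setting by systematic replacement of \emph{inflation}, \emph{strong hull condensation}, and \emph{extra inflationary} with their \emph{minimal} counterparts from \S\ref{sec:min_strat_cond}--\S\ref{sec:inf_comm}. The core construction produces, in $V$, a minimal inflation $\Xx$ via $\Gamma$ of any tree $\Tt\in V[G]$ via the putative $\Gamma'$, such that $\Gamma(\Xx)$ determines $\Gamma'(\Tt)$ through the factor tree analysis of \S\ref{sec:factor_tree}.

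To define $\Gamma'$, given a putative $m$-maximal $\Tt\in V[G]$ of limit length ${<\Omega}$, I would build $\Xx\in V$ inductively. At successor stages I alternate (i) copy steps drawn from $\Tt$ using extenders $E^{\Tt\mininflatearrow\Xx}_\alpha$ whenever $\alpha\in(C^-)^{\Tt\mininflatearrow\Xx}$ and $\Tt$ has been extended there, and (ii) minimal genericity inflation steps (the minimal analogue of \S\ref{subsec:min_inf}), chosen to absorb the forcing $\PP$ into the extender sequence of $\Xx$. At limit stages, apply $\Gamma$ to $\Xx$, using that $\Xx\in V$ has length ${<\Omega}$. Using $\Omega$-cc, this terminates in $V$ with $\Xx$ a $\Tt$-terminally-non-dropping minimal inflation (the termination argument runs as in \cite[\S7]{iter_for_stacks}: every $\PP$-name for a potential extender of $\Tt$ gets absorbed at some bounded stage by the $\Omega$-cc, so the inflation is forced to become terminally non-dropping). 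The factor tree analysis (Lemma \ref{lem:flattening_properties}\ref{item:factor_comm}) then yields the branch of $\Tt$ we declare to be $\Gamma'(\Tt)$; this yields part \ref{item:trees_via_Sigma'_embed}.

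For uniqueness and for mic of $\Gamma'$: if $\Gamma''$ is another extension with mic, fix a minimal tree $\Tt$ where they disagree and let $b,b'$ be the proposed branches. Build $\Xx\in V$ as above with $\Xx$ terminally-non-dropping over both $(\Tt,b)$ and $(\Tt,b')$; this contradicts mic since $f``b$ must equal $\Sigma(\Tt\rest\sup f``b)$, uniquely determining the branch. Minimal hull condensation of $\Gamma'$ (part \ref{item:Gamma'_shc}): given $\Pi:\Tt\tembto_\minim\Yy$ in $V[G]$ with $\Yy$ via $\Gamma'$, take the minimal inflation $\Xx\in V$ of $\Yy$ via $\Gamma$ supplied by part \ref{item:trees_via_Sigma'_embed}; composition with $\Pi$ (using that minimal tree embeddings compose, by the diagrammatic analysis of \S\ref{sec:inf_comm}) makes $\Xx$ a minimal inflation of $\Tt$ as well, so $\Tt$ is via $\Gamma'$. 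The DJ preservation in part \ref{item:Gamma,Gamma'_DJ} reduces to the non-dropping clause of part \ref{item:trees_via_Sigma'_embed}, which guarantees $M^\Tt_\infty=M^\Xx_\infty$ and $i^\Tt_{0\infty}=i^\Xx_{0\infty}$, so any hypothetical DJ-violating copy in $V[G]$ already produces one in $V$ along $\Xx$, and conversely; the weak-DJ version uses the same enumeration $e\in V$.

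The main obstacle is verifying that the minimal genericity inflation of \S\ref{subsec:min_inf} truly suffices to absorb $\PP$-names in the way that the non-minimal version does in \cite[\S7]{iter_for_stacks}. The subtlety is that a copy step no longer has the freedom of choosing $\pi(E^\Tt_\alpha)$; rather we must use the $\cd$-shifted extender $E_{\Pi_\alpha\alpha}=F^{Q_{\alpha\alpha}}$, and at a $\Tt$-inflationary step the candidate extender $E^\Xx_\alpha$ must satisfy $\lh(E^\Xx_\alpha)\leq\lh(E^{\Tt\mininflatearrow\Xx}_\alpha)$ when $\alpha\in(C^-)$, as well as be appropriate for $Q_{\alpha\xi}$ as in Definition \ref{dfn:inflationary_extender}. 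One must check that these stricter constraints still leave enough room to enumerate $\PP$-names; this follows from the commutativity lemma \ref{lem:inflation_commutativity}\ref{item:extended_comm_at_end}\ref{item:F-vec_description}, which ensures the inflation extender sequences compose cleanly via $\cd$-products and hence the minimal inflation process is as flexible as the non-minimal one with respect to the bookkeeping of names. The remaining details are then as in \cite[\S7]{iter_for_stacks}.
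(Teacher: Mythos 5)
Your proposal assembles the right machinery (minimal tree embeddings, genericity/comparison inflation in $V$, the $\Omega$-cc antichain bound, reading off branches via the factor tree and mic, and reducing Dodd--Jensen to the terminally-non-dropping correspondence), but it departs from the paper on the one point that actually matters: how $\Gamma'$ is \emph{defined}. The paper sets $(\Tt,b)\in\Gamma'$ iff there exist $(\Xx,c)\in V$ via $\Gamma$ and a minimal (or almost minimal) tree embedding $\Pi:(\Tt,b)\hookrightarrow_{\min}(\Xx,c)$; the constructive production of a terminally-non-dropping minimal inflation in $V$ is then the \emph{totality} half of the argument, not the definition. With the embedding-existence definition, consistency of $\Gamma'$ (that two witnessed branches of the same $\Tt$ agree) and part \ref{item:Gamma'_shc} (close under composition of minimal tree embeddings) come almost for free from minimal hull condensation of $\Gamma$. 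Your constructive definition leaves both of these with gaps: (a) you never argue that the branch read off from your constructed $\Xx$ is independent of the run of the construction and of the choice of name for $\Tt$ (different interleavings of copy and genericity-inflation steps produce different $\Xx$'s, and identifying their induced branches is exactly a Claim-6-type consistency statement); and (b) your part-\ref{item:Gamma'_shc} argument ends with ``so $\Tt$ is via $\Gamma'$'' after exhibiting a minimal tree embedding of $\Tt$ into a $V$-tree via $\Gamma$ --- which is the conclusion you want only under the paper's definition of $\Gamma'$, not under yours, so the argument is circular unless you first prove the two definitions equivalent.

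Relatedly, you never locate where minimal \emph{hull} condensation (as opposed to mere minimal inflation condensation) of $\Gamma$ is used in parts \ref{item:Gamma'_exists_unique} and \ref{item:trees_via_Sigma'_embed}. This is not a cosmetic hypothesis: the paper's remark after the theorem explains that it is open whether mic alone suffices, precisely because the verification that a condition forces the constructed $\Xx$ to be a minimal inflation of $\dot{\Tt}$ reaches its contradiction by violating the embedding-based consistency claim, which is where mhc enters. Your sketch of the absorption/termination argument (``every $\PP$-name for a potential extender gets absorbed'') silently passes over this step. The fix is simply to adopt the paper's embedding-existence definition of $\Gamma'$ at the outset, derive consistency and part \ref{item:Gamma'_shc} from mhc, and use your construction only to prove totality and part \ref{item:trees_via_Sigma'_embed}; the rest of your outline then goes through essentially as in \cite[Theorem 7.3]{iter_for_stacks}.
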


\begin{proof}[Proof sketch]
Work in $V[G]$.
Let $\Gamma'$ be the set of all pairs $(\Tt,b)$
such that $\Tt$ is an $\ell$-maximal tree on $M$
of limit length $\leq\Omega$ and $b$ is $\Tt$-cofinal and there is a limit length tree
$\Xx\in V$ and $\Xx$-cofinal branch $c\in V$ with $(\Xx,c)$ via
$\Gamma$,
and there is a minimal tree embedding
$\Pi:(\Tt,b)\hookrightarrow_{\min}(\Xx,c)$.
Defining \emph{almost minimal tree embedding} by direct analogy
with \emph{almost tree embedding}, \cite[Definition 4.26]{iter_for_stacks},
the direct analogue of \cite[Lemma 4.28]{iter_for_stacks} holds.
So in the definition of $\Gamma'$ we could 
replace the requirement that  $\Pi$ be a minimal tree embedding
with the requirement that $\Pi:(\Tt,b)\hookrightarrow_{\mathrm{alm}\min}(\Xx,c)$ be an
almost minimal tree embedding, without changing the result.
Then  almost identically to
the proof of  \cite[Theorem 7.3]{iter_for_stacks}, one can prove
that $\Gamma'$ has the right properties.
Moreover, for each such $(\Tt,b)$, one
can actually find a witnessing
$(\Xx,c)\in V$
which is a terminally-non-dropping minimal inflation of $(\Tt,b)$.
This completes the sketch.
\end{proof}

\begin{rem}
As in \cite{iter_for_stacks},
we do not know whether minimal inflation condensation (instead of minimal hull condensation) is enough to prove some version
of the preceding theorem. However,
it is enough in the following context. Suppose $W$ is a proper class inner model of $\ZF$, $M\in W$,
and $M$ is iterable in $V$,
and both $W$ and $V$ satisfy the assumptions of the theorem,
but with minimal hull condensation
weakened to minimal inflation condensation,
and the strategy $\Gamma^W$ used in $W$ is just $\Gamma^V\rest W$, where $\Gamma^V$ is that in $V$. Suppose that for each $p\in\PP$ (we have $\PP\in W$)
there is $G\in V$ such that $G$ is $(W,\PP)$-generic and $p\in G$. Then $W$
satisfies that the conclusion of the theorem holds, excluding part \ref{item:Gamma'_shc}.
To see this, we define $\Gamma'\in W[G]$
as above, except that we also demand
that $(\Xx,c)$ is a terminally-non-dropping minimal inflation of $(\Tt,b)$, and $\Pi$
is the associated minimal tree embedding.
We claim that this works. However,
the proof has to differ a little: we can't
expect \cite[Claim 6, Theorem 7.3]{iter_for_stacks} to hold.
For most purposes, \cite[Claim 7, Theorem 7.3]{iter_for_stacks} is enough. But
there a point in the proof of \cite[Claim 8, Theorem 7.3]{iter_for_stacks} where it is not enough: when verifying
that $p_0$ forces that $\Xx$ is a
(now minimal) inflation of $\dot{\Tt}$.
In that argument, a contradiction
is reached by violating \cite[Claim 6, Theorem 7.3]{iter_for_stacks} (see also Footnote 44 there). But in the present context, note that
the situation in that paragraph (under the contradictory assumption) violates our hypotheses about $W$ and $V$. That is, 
if in $W[G]$ we can get such a pair of trees,
then since this is forced, we may assume $G\in V$, so we get such a pair of trees in $V$,
contradicting that $\Sigma$ has minimal inflation condensation in $V$
(since $\Xx\in W$ is via $\Gamma^W$,
hence via $\Gamma^V$). Note here that
if $G\in V$, then $\Gamma^{W[G]}\sub\Gamma^V$.

As a corollary, let $P$ be a proper class model of $\ZF$ and $\delta$ a countable ordinal and $P=\Hull^P(\mathscr{I}\cup\delta)$ where
 $\mathscr{I}$ is a (the) class of Silver indiscernibles for $P$ (with respect to ordinals $\leq\delta$). Suppose $M$ is fully iterable via strategy $\Sigma$ with minimal inflation condensation, $M\in P$, and $P$ is closed under $\Sigma$ and $\Sigma\rest P$ is a $P$-class. Then the theorem goes through
 for all generic extensions of $P$
 (excluding part \ref{item:Gamma'_shc} again).
 This is because, by indiscernibility,
 we may assume that $\PP\in P$ and $\pow(\PP)\cap P$ is countable, and so the preceding discussion applies. (Moreover,
 again if $G\in V$ then $\Gamma^{P[G]}\sub\Gamma^V$.)
 \end{rem}

\section{Properties of $\Sigma^{\stk}_{\minim}$}\label{sec:properties}

In this section, we say that a tuple $(M,m,\Omega,\Sigma)$
is \emph{appropriate} iff $m\leq\omega$,
$M$ is $m$-standard, $\Omega>\om$ is regular,
 $\Sigma$ is either an $(m,\Omega+1)$-strategy
 or an $(m,\Omega)$-strategy for $M$, and $\Sigma$ has
minimal inflation condensation. Let then $\Sigma_{\min}^{\stk}$
denote the $(m,\Omega,\Omega+1)^*$- or $(m,{<\omega},\Omega)$-strategy for $M$
constructed in the proof of Theorem \ref{tm:full_norm_stacks_strategy}
or \ref{tm:stacks_iterability_2}
 respectively. We abbreviate and write $\Sigma^{\stk}=\Sigma_{\min}^{\stk}$ (this abbreviation
is in conflict with
the notation of \cite{iter_for_stacks}).
Say a stack 
$\vec{\Tt}=\left<\Tt_\alpha\right>_{\alpha<\lambda}$ via $\Sigma^{\stk}$ is \emph{continuable}
if $\lambda<\Omega$ or $\lambda<\omega$ respectively and
  each $\Tt_\alpha$ has length ${<\Omega}$.
Suppose $\vec{\Tt}$ is continuable and has a last model.
Let $(N,n)=(M^{\vec{\Tt}}_\infty,\deg^{\vec{\Tt}}_\infty)$.
Then $(\Sigma^{\stk})_N$ denotes the $(n,\Omega,\Omega+1)^*$-
or $(n,{<\omega},\Omega)$-strategy for $N$ which is the tail
of $\Sigma^{\stk}$ from $N$; i.e.
$(\Sigma^{\stk})_N(\vec{\Uu})=\Sigma^{\stk}(\vec{\Tt}\conc\vec{\Uu})$.
Then (i) this depends only on $N$ (not otherwise on $\vec{\Tt}$ or $n$),
so the notation makes sense. Morever, (ii) letting $\Xx=\Xx(\vec{\Tt})$
be the normalization of $\vec{\Tt}$ via $\Sigma$,
then we get the same strategy $(\Sigma^{\stk})_N$ if we replace
$\vec{\Tt}$ by $\Xx$. Note that (ii) is just by construction of $\Sigma^{\stk}$, and (i) is a immediate consequence,
since the normal tree $\Xx$ is determined uniquely by $(M,m,\Sigma,N)$. We also write $\Sigma_N$ for the (normal)
$(n,\Omega+1)$- or $(n,\Omega)$-strategy for $N$
which is the first round of $(\Sigma^{\stk})_N$
(so with $\Xx$ as above,  $\Sigma_N(\Uu)$
is defined via normalization of the stack $(\Xx,\Uu)$).

\begin{lem}\label{lem:pres_vshc}
	Let $(M,m,\Omega,\Sigma)$ be appropriate.
	
Let $\Tt$ be via $\Sigma$, of successor length $<\Omega$,
and $(N,n)=(M^\Tt_\infty,\deg^\Tt_\infty)$.
Let $\Uu_0,\Uu_1$ be $n$-maximal on $N$, 
where $\Uu_1$ is via $\Sigma_N$ and both have successor length,
and
\[ \Pi:\Uu_0\hookrightarrow_{\min}\Uu_1,\]
such that letting $\alpha+1=\lh(\Uu_0)$,
then $\gamma_\alpha^{\Pi}+1=\lh(\Uu_1)$.
Let $\Xx_1=\Xx(\Tt,\Uu_1)$, i.e.~$\Tt\inflatearrow_{\min}\Xx_1$ is the $\Tt$-terminal minimal inflation via $\Sigma$ with
 $\Uu_1=\Xx_1/\Tt$. Let $\Xx_0=\Xx(\Tt,\Uu_0)$, stopping at the least ill-defined/illfounded model,
 if we reach one.
 Then $\Xx_0$ is a true tree and there is a unique
 \[\Pi':\Xx_0\hookrightarrow_{\min}\Xx_1 \]
such that $\vec{F}^{\Pi'}_\infty=\vec{F}^\Pi_\infty$. Therefore,
if $\Sigma$ has minimal hull condensation then
so does $\Sigma_N$, $\Xx_0$ is via $\Sigma$
and $\Uu_0$ via $\Sigma_N$.\end{lem}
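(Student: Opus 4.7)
The plan is to build the minimal tree embedding $\Pi'$ together with $\Xx_0$ by a simultaneous recursion on the length of $\Xx_0$, using the given data $\Tt\mininflatearrow\Xx_1$ (witnessed by some $(t^1,C^1,f^1,\Pivec^1)$) and $\Pi:\Uu_0\hookrightarrow_\minim\Uu_1$ as templates. Each stage of $\Xx_0$ is either $\Tt$-copying (inherited from a stage of $\Tt$) or $\Tt$-inflationary (corresponding to some $\beta<\lh(\Uu_0)$, namely the inflation of $E^{\Uu_0}_\beta$). The matching rule for $\Pi'$ is natural: $\Tt$-copying stages of $\Xx_0$ are paired with the $\Tt$-copying stages of $\Xx_1$ sitting over the same stage of $\Tt$; and each $\Tt$-inflationary stage of $\Xx_0$ coming from $\beta$ is paired with the $\Tt$-inflationary stage of $\Xx_1$ coming from $\gamma^\Pi_\beta$. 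The intervals $I^{\Pi'}_\xi$ then absorb exactly those $\Tt$-inflationary stages of $\Xx_1$ that correspond (via $\Pi$) to positions strictly between $\gamma^\Pi_\beta$ and $\delta^\Pi_\beta$ in $\Xx_1$ but do not come from stages in $\Uu_0$.

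The central technical tool is commutativity of minimal inflation, Lemma \ref{lem:inflation_commutativity}, applied to the triple $(\Tt,\Xx_0,\Xx_1)$ as it is being built up. Inductively at stage $\xi$ of $\Xx_0$, with $\gamma^{\Pi'}_\xi\in\lh(\Xx_1)$ defined as above, commutativity gives
\[ \vec{F}^{\Tt\mininflatearrow\Xx_1}_{\gamma^{\Pi'}_\xi}=\vec{F}^{\Tt\mininflatearrow\Xx_0}_\xi\cd\vec{F}^{\Pi'}_{\gamma^{\Pi'}_\xi}, \]
and the analogous identity (with $\Uu_i$ in place of $\Xx_i$) relates $\vec{F}^\Pi$ to the inflationary sequences for $\Uu_0,\Uu_1$. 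These two identities, together with the fact that the $\Tt$-inflationary extenders of $\Xx_i$ are precisely the images of the $\Uu_i$-extenders under the appropriate dropdown lifts (Lemma \ref{lem:ult_dropdown}), let us define each $E^{\Xx_0}_\xi$ as either a copy from $\Tt$ (when the matched stage of $\Xx_1$ is $\Tt$-copying) or as the lift of an $E^{\Uu_0}_\beta$ (when the matched stage is $\Tt$-inflationary), while maintaining the correspondence between $\vec{F}^{\Pi'}$ and $\vec{F}^\Pi$ at every stage. At the end this yields $\vec{F}^{\Pi'}_\infty=\vec{F}^\Pi_\infty$ automatically, which also gives uniqueness of $\Pi'$ subject to that condition, since a minimal tree embedding is determined by its inflationary sequence together with the underlying trees.

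Wellfoundedness of the models of $\Xx_0$, i.e.\ that $\Xx_0$ is a true tree and never terminates at an illfounded model, follows from the existence of the puta-minimal tree embedding $\Pi':(\Xx_0,\theta)\hookrightarrow_{\putamin}\Xx_1$ produced at each inductive stage $\theta$: by Lemma \ref{lem:Pi_is_standard} applied inside a sufficiently large $m$-standard base, such a puta-minimal embedding into a tree $\Xx_1$ via $\Sigma$ is automatically standard, which yields commuting embeddings from the models of $\Xx_0$ into the wellfounded models of $\Xx_1$. Thus the construction never breaks down, and upon completion $\Pi':\Xx_0\hookrightarrow_\minim\Xx_1$ as desired.

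The ``therefore'' clause is then immediate. Assuming $\Sigma$ has minimal hull condensation and given $\Uu_1$ via $\Sigma_N$, we have $\Xx_1$ via $\Sigma$; by minimal hull condensation applied to $\Pi':\Xx_0\hookrightarrow_\minim\Xx_1$, $\Xx_0$ is also via $\Sigma$, and so $\Uu_0=\Xx_0/\Tt$ is via $\Sigma_N$ by the very definition of $\Sigma_N$. The main obstacle in carrying out the plan is the bookkeeping across the three transition types ($\Tt$-copying steps, $\Tt$-inflationary steps arising from the range of $\Pi$, and $\Tt$-inflationary steps lying in the interior of some $I^\Pi_\beta$), and verifying at limit stages that the continuity of $\Xx_0$, $\Xx_1$, $\Uu_0$, $\Uu_1$ and $\Pi$ all mesh; but this reduces, via continuity of minimal tree embeddings at limits combined with the commutativity lemma, to checks analogous to those already carried out in \S\ref{sec:inf_comm}.
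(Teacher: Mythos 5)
Your proposal is correct and follows essentially the same route as the paper's proof: in both, $\Pi'$ is pinned down by the requirement $\vec{F}^{\Pi'}_\infty=\vec{F}^\Pi_\infty$, with the intervals $I'_\alpha$ read off from which initial segments of $\vec{F}^\Pi_\infty$ are $(\exit^{\Xx_0}_\alpha,0)$-(pre-)good, uniqueness coming from the fact that a minimal tree embedding is determined by this data. The paper dismisses the verification as ``straightforward''; your recursion through the $\Tt$-copying/$\Tt$-inflationary cases via the $\cd$-product identities of Lemma \ref{lem:inflation_commutativity}, and the appeal to Lemma \ref{lem:Pi_is_standard} for wellfoundedness of $\Xx_0$, is exactly how that verification goes.
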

\begin{proof}
In order to specify $\Pi'$, we just need to specify $I_\alpha'=[\gamma_\alpha',\delta_\alpha']_{\Xx_1}$ for each $\alpha<\lh(\Xx_0)$.
Note that these are uniquely determined by the requirement that
$\vec{F}^{\Pi'}_\infty=\vec{F}$ where $\vec{F}=\vec{F}^\Pi_\infty$.
That is, for each $\alpha+1<\lh(\Xx_0)$,
$\vec{F}^{\Pi'}_{\delta_\alpha'}=\vec{F}\rest\eta_\alpha$
where $\eta_\alpha$ is largest such that $\vec{F}\rest\eta_\alpha$
is $(\exit^{\Xx_0}_\alpha,0)$-pre-good (and then it must be in fact $(\exit^{\Xx_0}_\alpha,0)$-good), and if $\alpha+1=\lh(\Xx_0)$
then $\vec{F}^{\Pi'}_{\delta_\alpha'}=\vec{F}$,
and these conditions determine $\Pi'$.
It is straightforward to verify that this works.
\end{proof}

Given a finite stack $(\Tt_0,\Tt_1,\Tt_2)$
and some form of normalization/normal realization $\Xx(\Tt,\Uu)$, it is natural to ask whether that process is associative; that is,
whether $\Xx(\Tt_0,\Xx(\Tt_1,\Tt_2))=\Xx(\Xx(\Tt_0,\Tt_1),\Tt_2)$.
Siskind first proved such a result; see \cite[***Remark 3.82]{str_comparison}.
Theorem \ref{tm:associativity} below includes a version of this in our present context.

\begin{tm}\label{tm:associativity}
	Let $(M,m,\Omega,\Sigma)$ be appropriate.
Let $\Tt$ be via $\Sigma$, of successor length $<\Omega$,
and $(N,n)=(M^\Tt_\infty,\deg^\Tt_\infty)$.
Then:
\begin{enumerate}
\item\label{item:Sigma_N_mic} $\Sigma_N$ has minimal inflation condensation,
\item\label{item:Sigma_N_vshc} if $\Sigma$ has minimal hull condensation then so does $\Sigma_N$,
 \item\label{item:strat_agmt} $(\Sigma^{\stk})_N=(\Sigma_N)^{\stk}$.
\end{enumerate}
\end{tm}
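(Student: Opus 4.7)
The plan is to prove the three parts in order, leveraging the normalization apparatus already developed.

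Part \ref{item:Sigma_N_vshc} is essentially immediate from Lemma \ref{lem:pres_vshc}: that lemma already asserts the preservation of minimal hull condensation from $\Sigma$ to $\Sigma_N$, by lifting a minimal tree embedding $\Pi\colon\Uu_0\tembto_\minim\Uu_1$ with $\Uu_1$ via $\Sigma_N$ to $\Pi'\colon\Xx_0\tembto_\minim\Xx_1$ on $M$, where $\Xx_i=\Xx_\Sigma(\Tt,\Uu_i)$, and then invoking mhc of $\Sigma$.

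For part \ref{item:Sigma_N_mic}, suppose $\Uu_0\mininflatearrow\Uu_1$ with both via $\Sigma_N$, witnessed by $(t,C,f,\vec\Pi)$, $\Uu_1$ of limit length, $b=\Sigma_N(\Uu_1)\sub C$, and $\eta=\sup f``b$ a limit. Let $\Xx_i=\Xx_\Sigma(\Tt,\Uu_i)$ for $i=0,1$. The key step is to show by induction on $\lh(\Uu_1)$ that $\Xx_0\mininflatearrow\Xx_1$, reading off the witnessing data directly from that for $\Uu_0\mininflatearrow\Uu_1$ via the factor tree correspondence of Lemma \ref{lem:flattening_properties}: a $\Uu_0$-copying step lifts to an $\Xx_0$-copying step, since $E^{\Uu_0}_\alpha$ sits in the factor tree as $E^{\Xx_0}_{\zeta^\alpha}$ and its copy in $\Uu_1$ is the factor-tree image of the corresponding copy in $\Xx_1$; likewise a $\Uu_0$-inflationary step lifts to an $\Xx_0$-inflationary one. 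Commutativity of minimal inflation (Lemma \ref{lem:inflation_commutativity}) guarantees that the $\Tt$-side and $N$-side pictures compose coherently, and limit steps go through by goodness of continuous terminal minimal inflation stacks (Lemma \ref{lem:cont_term_stack_good}). Once $\Xx_0\mininflatearrow\Xx_1$ is established, note that $\Sigma(\Xx_1)$ is the $M$-side branch corresponding to $b$ under the factor tree, so applying mic of $\Sigma$ to $\Xx_0\mininflatearrow\Xx_1$ at the appropriate limit length $\eta^\Xx$ (which corresponds to $\eta$ under $f^\Xx$) yields a branch of $\Xx_0\rest\eta^\Xx$; translating back via the factor tree gives $f``b=\Sigma_N(\Uu_0\rest\eta)$, as required.

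For part \ref{item:strat_agmt}, parts \ref{item:Sigma_N_mic} and \ref{item:Sigma_N_vshc} make $(\Sigma_N)^{\stk}$ well-defined by the construction from the proof of Theorem \ref{tm:full_norm_stacks_strategy} (or \ref{tm:stacks_iterability_2}). Both $(\Sigma^{\stk})_N$ and $(\Sigma_N)^{\stk}$ select branches by the same recipe, namely, normalize and consult the underlying normal strategy. I would show by induction on $\lh(\vec\Uu)$ that the $\Sigma_N$-normalization of $\vec\Uu$ on $N$ coincides with the factor tree $\Xx_\Sigma(\Tt\conc\vec\Uu)/\Tt$ of the $\Sigma$-normalization of $\Tt\conc\vec\Uu$ on $M$, where the successor step reduces to the same lifting argument as in part \ref{item:Sigma_N_mic} and the limit step uses Lemma \ref{lem:cont_term_stack_good} on both sides. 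This identifies the normal tree produced at each round, hence the branch selected by each stacks strategy, giving $(\Sigma^{\stk})_N=(\Sigma_N)^{\stk}$.

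The main obstacle will be the lifting argument underpinning parts \ref{item:Sigma_N_mic} and \ref{item:strat_agmt}: turning an inflation $\Uu_0\mininflatearrow\Uu_1$ at the $N$-level into an inflation $\Xx_0\mininflatearrow\Xx_1$ at the $M$-level, with all copy/inflation bookkeeping — tree embeddings, extender lifts, dropdown-sequence tracking — correctly propagated. While matching individual extenders across the factor tree correspondence is transparent, verifying that the full minimal-inflation witnessing data $(t^\Xx,C^\Xx,f^\Xx,\vec\Pi^\Xx)$ transfers correctly, so that the propagated tree embeddings are themselves minimal and the bounding requirements on extender indices are respected, requires care. Limit stages, where continuity of minimal inflation stacks must be reconciled on the two sides via Lemma \ref{lem:cont_term_stack_good}, are the most delicate piece.
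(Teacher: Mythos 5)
Your proposal is correct and follows essentially the same route as the paper: part (2) is read off from Lemma \ref{lem:pres_vshc}, part (1) is obtained by lifting $\Uu_0\mininflatearrow\Uu_1$ to $\Xx_0\mininflatearrow\Xx_1$ exactly as the paper indicates (``much like the proof of Lemma \ref{lem:pres_vshc}''), and part (3) is the associativity of normalization, which the paper itself only asserts (with a note that a proof or citation is still to be added). If anything, your sketch supplies more of the bookkeeping than the paper's own proof does.
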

\begin{proof}
 Part \ref{item:Sigma_N_mic}: This is much like the proof of Lemma \ref{lem:pres_vshc},
 except now we have $\Uu_0\mininflatearrow\Uu_1$,
 and we get $\Xx_0\mininflatearrow\Xx_1$.
 The witnessing tree embeddings are of course
 related as in Lemma \ref{lem:pres_vshc}.

 Part \ref{item:Sigma_N_vshc}: This is by Lemma \ref{lem:pres_vshc}.
 
 Part \ref{item:strat_agmt}: This is essentially an associativity fact for the normalization considered in this paper. 
 It can
essentially be deduced as consequences of what has been developed already in the paper.\footnote{
The proof of associativity is simpler for full normalization than normal realization (which we do not consider here), because of how $\Sigma$ and $M^\Tt_\infty$ determine $\Tt$ for normal $\Tt$ via $\Sigma$.}

Let
 $\left<\Tt_{i,i+1}\right>_{i<3}$ be
 an $m$-optimal stack of length 3 on $M$,
 with $\Tt_{i,i+1}$ an $m_i$-maximal tree on $M_i$ of length ${<\Omega}$, where $(N_0,n_0)=(M,m)$ and
 $(N_{i+1},n_{i+1})=(M^{\Tt_{i,i+1}}_\infty,\deg^{\Tt_{i,i+1}}_\infty)$,
and
 such that $\Tt_{01}$ is via $\Sigma$,
 $\Tt_{12}$ is via $\Sigma_{N_1}$,
 and $\Tt_{23}$ is via $\Sigma'_{N_2}=(\Sigma_{N_1})_{N_2}$ (we use here
 that by part \ref{item:Sigma_N_mic}, $\Sigma_{N_1}$ has minimal inflation condensation, so $(\Sigma_{N_1})_{N_2}$ is well-defined). Let $\Tt_{02}=
\Xx(\Tt_{01},\Tt_{12})$, the normalization of $(\Tt_{01},\Tt_{12})$ (which is via $\Sigma$).
 So $(N_2,n_2)=(M^{\Tt_{02}}_\infty,\deg^{\Tt_{02}}_\infty)$.
 We must see that $\Tt_{23}$
 is also via $\Sigma_{N_2}$.
 Let $\eta<\lh(\Tt_{23})$ be a limit,
 and assume by induction that $\Tt_{23}\rest\eta$ is via $\Sigma_{N_2}$.
 Let $b=\Sigma_{N_2}(\Tt_{23}\rest\eta)$
 and $b'=\Sigma'_{N_2}(\Tt_{23}\rest\eta)$. We must see that $b=b'$.
 
 Let $\Uu=(\Tt_{23}\rest\eta)\conc b$, 
 $\Uu_{03}=\Xx(\Tt_{02},\Uu)$,
 which is via $\Sigma$. 
 
 Let $\Uu'=(\Tt_{23}\rest\eta)\conc b'=\Tt_{23}\rest(\eta+1)$,
$\Uu_{13}'=\Xx(\Tt_{12},\Uu')$,
which is via $\Sigma_{N_1}$,
and $\Uu_{03}'=\Xx(\Tt_{01},\Uu_{13}')$,
which is via $\Sigma$.

Let $\delta=\delta(\Tt_{23}\rest\eta)$.
Let $\lambda$ be the limit
such that $\delta=\delta(\Uu_{03}\rest\lambda)$,
and $\lambda'$ likewise for $\Uu_{03}'$.
Then
\[M^{\Uu_{03}}_\infty|\delta=
M(\Uu_{03}\rest\lambda)=
M(\Tt_{23}\rest\eta)=
M(\Uu_{03}'\rest\lambda')=
M^{\Uu_{03}'}_\infty|\delta,\]
 and 
since $\Uu_{03}\rest(\lambda+1)$
and $\Uu_{03}'\rest(\lambda'+1)$
are also both via $\Sigma$,
therefore $\lambda=\lambda'$
and $\Uu_{03}\rest(\lambda+1)=\Uu_{03}'\rest(\lambda'+1)$.

Also let $\xi$ be the limit such that
$\delta=\delta(\Tt_{13}\rest\xi)$.

Let $c=[0,\lambda)_{\Uu_{03}}=[0,\lambda')_{\Uu'_{03}}$. Then:
\begin{enumerate}[label=--]
	\item $c$ determines
$b$
via the factor tree $(\Uu_{03}\rest(\lambda+1))/\Tt_{02}$,
\item  $c$ determines $d=[0,\xi)_{\Tt_{13}}$
via the factor tree $(\Uu_{03}\rest(\lambda+1))/\Tt_{01}$,
and
\item $d$ determines $b'$
via the factor tree $(\Tt_{13}\rest(\xi+1))/\Tt_{12}$.
\end{enumerate}

We can now easily deduce $b=b'$.
Let $\left<L_\alpha\right>_{\alpha<\eta}$ be the
sequence of intervals 
$L_\alpha=[\lambda_\alpha,\zeta_\alpha]\sub\lambda$ defined using the stack $(\Tt_{02},\Tt_{23}\rest\eta)$, so the ordinals $\zeta_\alpha$
enumerate exactly those $\zeta<\lambda$
such that $\lh(E^{\Uu_{03}}_\zeta)=\lh(E^{\Tt_{23}}_\alpha)$ for some $\alpha<\eta$, and
$\lambda_\alpha=\sup_{\gamma<\alpha}(\zeta_\gamma+1)$, with $\lambda_0=0$.
These intervals partition $\lambda$,
and if $\alpha<\eta$ and $c\cap L_\alpha\neq\emptyset$
then $\alpha\in b$, and since there are cofinally many such $\alpha<\eta$,
this determines $b$. (Recall though
that there can be $\alpha\in b$
with $c\cap L_\alpha=\emptyset$.)
We similarly have $\left<L'_\beta\right>_{\beta<\xi}$,
defined with respect to $(\Tt_{01},\Tt_{13}
\rest\xi)$,
and $\left<L''_\alpha\right>_{\alpha<\eta}$,
with respect to $(\Tt_{12},\Tt_{23}\rest\eta)$.
The former sequence
partitions $\lambda$,
while the latter partitions
$\xi$, and moreover,
for each $\alpha<\eta$
we have $L_\alpha=\bigcup_{\beta\in L''_\alpha}L'_\beta$;
this is just because
every extender used in $\Tt_{23}\rest\eta$
is also used in $\Tt_{13}\rest\xi$.
But then if $c\cap L'_\beta\neq\emptyset$
then $\beta\in d$,
so $d\cap L''_\alpha\neq\emptyset$
and $\alpha\in b'$
where $\beta\in L''_\alpha$.
But since $L'_\beta\sub L_\alpha$,
we also get $\alpha\in b$,
so $\alpha\in b\cap b'$.
So $b\cap b'$ is cofinal in $\eta$,
so $b=b'$, as desired.
\end{proof}
\begin{dfn}
	Under the assumptions of the theorem below,
	given an above-$\delta$,
	$m$-maximal
	tree $\Uu$ on $M$, the \emph{minimal $i^{\vec{\Tt}}$-copy of $\Uu$} is the (putative) $m$-maximal tree $\Vv$
	on $M^{\vec{\Tt}}_\infty$ given by
	copying extenders as in minimal 
	tree embeddings (and preserving tree order); i.e. for each $\alpha<\lh(\Uu)$, we set
	\[ \exit^{\Vv}_\alpha=\Ult_0(\exit^\Uu_\alpha,E)
\] where $E$
	is the $(\delta,i^{\vec{\Tt}}(\delta))$-extender derived from $i^{\vec{\Tt}}$.
	Given an above-$i^{\vec{\Tt}}(\delta)$,
	$m$-maximal strategy $\Gamma$ for $M^{\vec{\Tt}}_\infty$, the \emph{minimal
		$i^{\vec{\Tt}}$-pullback of $\Gamma$}
	is computed by copying in this manner.
	\end{dfn}
\begin{tm}\label{tm:easy_pullback_con}
	Let $(M,m,\Sigma,\Omega)$ be appropriate.
	Then $\Sigma^{\stk}$ is weakly pullback consistent, meaning that if $\delta$ is a strong cutpoint of $M$
	and $\vec{\Tt}$ is via $\Sigma^{\stk}$
	and continutable,
	 based on $M|\delta$,
	 has a last model, and
	 $b^{\vec{\Tt}}$ does not drop in model or degree,
	 	then the restriction of $\Sigma^{\stk}$
	to above-$\delta$ trees is just the
	minimal $i^{\vec{\Tt}}$-pullback of $\Sigma_{M^{\vec{\Tt}}_\infty}$. 
\end{tm}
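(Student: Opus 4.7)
The plan is to reduce the claim to the associativity of normalization (Theorem \ref{tm:associativity}\ref{item:strat_agmt}) together with an analysis of how the normalization of a stack $(\vec{\Tt},\Uu)$ decomposes when $\vec{\Tt}$ is based on $M|\delta$ and $\Uu$ is above-$\delta$. Write $N=M^{\vec{\Tt}}_\infty$, $i=i^{\vec{\Tt}}$, and let $\Xx_0=\Xx_\Sigma(\vec{\Tt})$ be the normalization of $\vec{\Tt}$ via $\Sigma$. By Theorem \ref{tm:full_norm_stacks_strategy}, $\Xx_0$ is via $\Sigma$, $b^{\Xx_0}$ does not drop in model or degree, $M^{\Xx_0}_\infty=N$, and $i^{\Xx_0}_{0\infty}=i$. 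Given an above-$\delta$, $m$-maximal tree $\Uu$ on $M$, let $\Vv$ denote its minimal $i$-copy on $N$; note $\Vv$ is above $i(\delta)$ and $i(\delta)$ is a strong cutpoint of $N$. By construction of $\Sigma^{\stk}$ and by Theorem \ref{tm:associativity}\ref{item:strat_agmt}, $\Uu$ is via $\Sigma^{\stk}$ iff $\Xx\eqdef\Xx_\Sigma(\Xx_0,\Uu)$ is via $\Sigma$, and $\Vv$ is via $\Sigma_N$ iff $\Xx_\Sigma(\Xx_0,\Vv)$ is via $\Sigma$. So the proof reduces to the identification
\[ \Xx_\Sigma(\Xx_0,\Uu)=\Xx_\Sigma(\Xx_0,\Vv)=\Xx_0\conc\Vv, \]
where the last expression means that the minimal inflation $\Xx_0\mininflatearrow\Xx$ witnessing this normalization is a straight above-$\Xx_0$ extension whose extenders on top of $\Xx_0$ form precisely the extender sequence of $\Vv$, viewed as a normal tree on $N$. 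Once this is in hand the normal-tree case of the theorem is immediate, and the stacks case follows by induction on the length of the stack, invoking Theorem \ref{tm:associativity}\ref{item:strat_agmt} at each round to identify the tail strategy with the normalization of the previously-computed normal tree.

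To verify the identification I would proceed by induction on the length of $\Uu$. At the $(\alpha+1)$st successor stage, $E=E^\Uu_\alpha$ has $\crit(E)\geq\delta$; since $\delta$ is a strong cutpoint and $\vec{\Tt}$ is based on $M|\delta$, no extender of $\Xx_0$ overlaps $E$, so $E$ (or rather, its natural image on the $\Xx$-side) is $\Xx_0$-inflationary in the sense of Definition \ref{dfn:inflationary_extender} and its minimal copy into the growing normalization is produced by a single ultrapower step above the main branch of $\Xx_0$. Iterated application of Lemma \ref{lem:extender_comm} to the extender sequence inducing $i=i^{\Xx_0}_{0\infty}$ together with the single extender $E$ shows that this minimal copy coincides with the active extender of $\Ult_0(\exit^\Uu_\alpha,i)$, which is $F^{\exit^\Vv_\alpha}$ by definition of the minimal $i$-copy. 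The tree order, dropdown structure, and branches $[0,\alpha+1]_\Uu$, $[0,\alpha+1]_\Vv$ and the corresponding branch in $\Xx$ all match because $\Uu$ is above-$\delta$ (so all $\Uu$-predecessors lift, via $i$, into models lying strictly above $\Xx_0$'s last model along its non-dropping main branch), and because dropdown sequences are preserved by ultrapower via Lemma \ref{lem:ult_dropdown}. Limit stages of $\Uu$ pose no new difficulty: the direct-limit computations on the $\Uu$-, $\Vv$- and $\Xx$-sides match by Lemma \ref{lem:Pi_is_standard} applied to the evolving minimal tree embedding, together with a standard cofinality argument using that $\Vv$'s tree order is copied verbatim from $\Uu$'s and that $\Xx$'s corresponding branch sits above the unmoving $\Xx_0$.

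The main obstacle will be the successor-stage identification just sketched: though essentially forced by the strong-cutpoint assumption, the formal check requires carefully matching the dropdown-lift bookkeeping of Definition \ref{dfn:min_inf_coherent} on the $\Uu$ and $\Vv$ sides with the extenders and maps produced by the minimal inflation of $\Xx_0$. This is cleanest via Lemma \ref{lem:extender_comm} and its generalization to normal sequences of extenders, observing that the strong-cutpoint hypothesis ensures that, in the $\cd$-product defining the normalization, no extender of $\Xx_0$ is nested into any extender copied up from $\Uu$, so the product degenerates to a simple concatenation. With that step done, the identification propagates through $\Uu$ by a routine induction and the theorem follows.
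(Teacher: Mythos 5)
Your structural analysis is on the right track and in fact matches the paper's: the paper's proof also proceeds by lifting all relevant structures of $\Uu$ via ultrapowers by the extender derived from $i^{\vec{\Tt}}$, so that $\Uu\inflatearrow_{\minim}\Xx(\vec{\Tt},i^{\vec{\Tt}}``\Uu)$, with the strong-cutpoint hypothesis ensuring every new extender is inflationary and applied on top. But there is a genuine gap in how you pass from this structural identification to agreement of the \emph{strategies}. The construction of $\Sigma^{\stk}$ gives you only the second of your two biconditionals: $\Vv$ is via $\Sigma_N$ iff $\Xx_\Sigma(\Xx_0,\Vv)$ is via $\Sigma$. The first, ``$\Uu$ is via $\Sigma^{\stk}$ iff $\Xx_\Sigma(\Xx_0,\Uu)$ is via $\Sigma$'', is not given by the construction and is not even well-typed as written ($\Uu$ is a tree on $M$, so $(\Xx_0,\Uu)$ is not a stack); if you read $\Xx_\Sigma(\Xx_0,\Uu)$ as $\Xx_\Sigma(\Xx_0,\Vv)$, the assertion is exactly the theorem and you have assumed what is to be proved. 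Neither Theorem \ref{tm:associativity}(\ref{item:strat_agmt}) nor the identification $\Xx_\Sigma(\Xx_0,\Vv)=\Xx_0\conc\Vv$ says anything about which branch $\Sigma$ chooses at a limit stage of the above-$\delta$ tree $\Uu$ on $M$.

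What is missing is the one appeal to the hypothesis on $\Sigma$ that can transfer branch choices across the inflation, namely minimal inflation condensation, which you never invoke. Concretely, having set up $\Uu\rest\bar\lambda\inflatearrow_{\minim}\Xx_0\conc(\Vv\rest\lambda)$ with every node in $C$ and $f$ cofinal, one argues by minimizing the length of a putative disagreement: if $\Vv\rest\lambda$ is via $\Sigma_N$ but $c=\Sigma_N(\Vv\rest\lambda)$ differs from the copy of $b=\Sigma(\Uu\rest\bar\lambda)$, then $\Xx_0\conc(\Vv\rest\lambda)\conc c$ is via $\Sigma$ and is a minimal inflation of $\Uu\rest\bar\lambda$, and mic forces $f``c=b$, a contradiction (the same argument, run in the other direction, is what shows $i^{\vec{\Tt}}``\Uu$ has wellfounded models and is via $\Sigma_N$ in the first place). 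This is exactly how the paper closes the argument. With that step added --- and the first biconditional of your opening paragraph deleted --- your proof goes through; the remaining verifications in your second and third paragraphs are the same bookkeeping the paper summarizes as ``lifting all relevant structures from $\Uu$ by taking ultrapowers by the extender derived from $i^{\vec{\Tt}}$''.
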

\begin{proof}
	Let $\Uu$ be an above-$\delta$
	normal tree on $M$ via $\Sigma$.
	Then note that
	\[ \Uu\inflatearrow_{\min}\Xx(\vec{\Tt},i^{\vec{\Tt}}``\Uu), \]
	by lifting all relevant structures from $\Uu$
	to $i^{\vec{\Tt}}``\Uu$
	by taking ultrapowers by the extender
	derived from $i^{\vec{\Tt}}$,
	at the appropriate degrees
	(that is, assuming
	that $i^{\vec{\Tt}}``\Uu$
	has wellfounded models). 
	Then $i^{\vec{\Tt}}``\Uu$
	is via $\Sigma_{M^{\vec{\Tt}}_\infty}$
	(and hence has wellfounded models).
	For otherwise by minimizing on length,
	we easily produce a counterexample
	to minimal inflation condensation.
	
	Since the normal strategies $\Sigma$
	and $(\Sigma_{M^{\vec{\Tt}}_\infty})^{\stk}=(\Sigma^{\stk})_{M^{\vec{\Tt}}_\infty}$,
	this easily extends to stacks.
\end{proof}

\begin{tm}
	Let $(M,m,\Omega,\Sigma)$
	be appropriate.
	Then $\Sigma^{\stk}$ is commuting.
		In fact, if $\vec{\Tt}=\left<\Tt_\alpha\right>_{\alpha<\lambda}$,
		$\vec{\Uu}=\left<\Uu_\alpha\right>_{\alpha<\eta}$,
		$\vec{\Vv}=\left<\Vv_\alpha\right>_{\alpha<\xi}$ are such that
		 $\vec{\Tt}\conc\vec{\Uu}$
		and $\vec{\Tt}\conc\vec{\Vv}$
		are both  stacks via $\Sigma^{\stk}$, with normal 
		rounds given by the trees
		$\Tt_\alpha$ and $\Uu_\alpha$,
		and the trees $\Tt_\alpha$ and $\Vv_\alpha$, respectively,
		both stacks having a final model, $b^{\vec{\Uu}}$
		does not drop in model or degree,
		and $M^{\vec{\Uu}}_\infty=M^{\vec{\Vv}}_\infty$,
		then  $b^{\vec{\Vv}}$ does not drop in model or degree,
		$\deg^{\vec{\Uu}}_\infty=\deg^{\vec{\Vv}}_\infty$,
		and $i^{\vec{\Uu}}=i^{\vec{\Vv}}$.
	\end{tm}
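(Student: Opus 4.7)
The plan is to reduce to the case $\vec{\Tt}=\emptyset$ by appeal to Theorem \ref{tm:associativity}, normalize each of $\vec{\Uu}$ and $\vec{\Vv}$ via Theorem \ref{tm:full_norm_stacks_strategy} to obtain normal $\Sigma$-trees $\Xx_U,\Xx_V$ on $M$, and then show $\Xx_U=\Xx_V$ by appeal to the minimal comparison of $\{\Xx_U,\Xx_V\}$ combined with the analysis in \S\ref{sec:analysis_of_comparison}.

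For the reduction, I would set $N^0=M^{\vec{\Tt}}_\infty$ and $m^0=\deg^{\vec{\Tt}}_\infty$, and use Theorem \ref{tm:full_norm_stacks_strategy} to obtain a normal representative $\Xx^0$ of $\vec{\Tt}$ so that Theorem \ref{tm:associativity}(\ref{item:Sigma_N_mic}),(\ref{item:strat_agmt}) may be applied to $\Xx^0$; this gives that $\Sigma_{N^0}$ has minimal inflation condensation and $(\Sigma_{N^0})^{\stk}=(\Sigma^{\stk})_{N^0}$. Replacing $(M,m,\Sigma)$ by $(N^0,m^0,\Sigma_{N^0})$ and $\vec{\Uu},\vec{\Vv}$ by their natural continuations from $N^0$, I may assume $\vec{\Tt}=\emptyset$. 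Applying Theorem \ref{tm:full_norm_stacks_strategy}(\ref{item:X_for_Tvec}) to $\vec{\Uu}$ now produces a normal $\Sigma$-tree $\Xx_U$ of successor length ${<}\Omega$ with $M^{\Xx_U}_\infty=M^{\vec{\Uu}}_\infty=N$, $b^{\Xx_U}$ non-dropping, $\deg^{\Xx_U}_\infty=m$, and $i^{\Xx_U}_{0\infty}=i^{\vec{\Uu}}$; applying it to $\vec{\Vv}$ produces a normal $\Sigma$-tree $\Xx_V$ with $M^{\Xx_V}_\infty=M^{\vec{\Vv}}_\infty=N$. It then suffices to prove $\Xx_U=\Xx_V$, since the corresponding clauses of the main theorem applied to $\vec{\Vv}$ will then deliver the desired non-dropping of $b^{\vec{\Vv}}$, matching final degrees, and $i^{\vec{\Vv}}=i^{\Xx_V}_{0\infty}=i^{\Xx_U}_{0\infty}=i^{\vec{\Uu}}$.

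To prove $\Xx_U=\Xx_V$, I would form the minimal comparison $\Xx$ of $\{\Xx_U,\Xx_V\}$ via $\Sigma$ using Lemma \ref{lem:min_comp}, and examine the factor trees $\Uu_U=\Xx/\Xx_U$ and $\Uu_V=\Xx/\Xx_V$. By the analysis in \S\ref{sec:analysis_of_comparison}, $(\Uu_U,\Uu_V)$ is the (padded) comparison of $N$ against itself via the respective tail strategies associated with $\Xx_U$ and $\Xx_V$, and by Theorem \ref{tm:associativity}(\ref{item:strat_agmt}) these tail strategies depend only on $N$ and the final degree. The ``moreover'' clause of Lemma \ref{lem:min_comp} together with Remark \ref{rem:non-drop_inf_comm} and the non-dropping of $b^{\Xx_U}$ will make $\Xx$ itself terminally non-dropping of final degree $m$, with $\pi_\infty^{\Xx_U\mininflatearrow\Xx}\circ i^{\Xx_U}=i^\Xx$. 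The main obstacle is then to argue that both $\Uu_U$ and $\Uu_V$ must have length $1$, which forces $\Xx=\Xx_U=\Xx_V$. In the case $\deg^{\Xx_V}_\infty=m$ this reduces cleanly to the triviality of comparing $N$ against itself via the common strategy $(\Sigma^{\stk})_N$. The harder residual case $\deg^{\Xx_V}_\infty<m$ will require a finer argument: I expect to show, via the factor-tree structure of Lemma \ref{lem:flattening_properties} and the commutativity of inflation in Lemma \ref{lem:inflation_commutativity}, that any nontrivial dropping $\Uu_V$ produces an extender usage in $\Xx$ inconsistent with the non-dropping inflation of $\Xx_U$, contradicting minimal inflation condensation of $\Sigma$.
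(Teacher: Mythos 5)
Your opening move -- reducing to $\vec{\Tt}=\emptyset$ via Theorem \ref{tm:associativity} and then normalizing $\vec{\Uu},\vec{\Vv}$ to normal $\Sigma$-trees $\Xx_U,\Xx_V$ with common last model $N$ -- is exactly the paper's proof, which then declares the conclusion "immediate." The hidden content in that "immediate" is the uniqueness of a normal tree via $\Sigma$ with a prescribed last model (the fact invoked at the start of \S\ref{sec:properties}: "the normal tree $\Xx$ is determined uniquely by $(M,m,\Sigma,N)$", and the "standard uniqueness argument" cited in the first corollary of the introduction). So the real task is to prove $\Xx_U=\Xx_V$, and it is here that your proposal has two problems.

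First, a circularity: you justify the triviality of the comparison by saying the two tail strategies "depend only on $N$", but the paper's justification for that very claim is the uniqueness $\Xx_U=\Xx_V$ you are trying to prove. A priori, $\Sigma_{N}$ computed as the tail of $\Sigma^{\stk}$ after $\Xx_U$ could differ from the one computed after $\Xx_V$; Theorem \ref{tm:associativity}(\ref{item:strat_agmt}) does not give their equality. Second, you explicitly leave the case $\deg^{\Xx_V}_\infty<m$ (and more generally $b^{\Xx_V}$ dropping) as something you "expect to show" -- that is a genuine gap, and the comparison machinery is the wrong tool for closing it. The correct (and elementary) argument needs neither minimal comparison nor tail strategies: induct on the length of agreement of $\Xx_U$ and $\Xx_V$. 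They agree at limit stages since both are via $\Sigma$; at a least divergence $\alpha$, either both use extenders $E=E^{\Xx_U}_\alpha\neq F=E^{\Xx_V}_\alpha$ from $\es_+(M^{\Xx_U}_\alpha)=\es_+(M^{\Xx_V}_\alpha)$, in which case (WLOG $\lh(E)<\lh(F)$, as equal indices force $E=F$) coherence and normality give $E\in\es(N)$ via $\Xx_V$ while $\lh(E)$ is a cardinal of $N$ via $\Xx_U$, a contradiction; or one tree stops at $\alpha$ while the other continues with some $F$, in which case $\lh(F)$ is a cardinal of $M^{\Xx_V}_\infty=N=M^{\Xx_V}_\alpha$, again impossible. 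This handles all drop/degree configurations at once, and then all conclusions (non-dropping of $b^{\vec{\Vv}}$, matching degrees, $i^{\vec{\Uu}}=i^{\vec{\Vv}}$) read off from Theorems \ref{tm:full_norm_stacks_strategy} and \ref{tm:stacks_iterability_2} exactly as you intended.
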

\begin{proof}
	By Theorem \ref{tm:associativity}
	we can assume $\vec{\Tt}=\emptyset$.
	But then the conclusion follows
	immediately from Theorem \ref{tm:full_norm_stacks_strategy}
	and \ref{tm:stacks_iterability_2}.
	\end{proof}

\begin{tm}\label{tm:independence_from_tree_tail}
	Let $(M,m,\Omega,\Sigma)$
	be appropriate. Let $\vec{\Tt},\vec{\Tt}'$
	be stacks via $\Sigma^{\stk}$ with
	$\Xx=\Xx(\vec{\Tt})$ and $\Xx'=\Xx(\vec{\Tt}')$ each of successor length $<\Omega$, such that  $b^{\vec{\Tt}}$
	and $b^{\vec{\Tt'}}$ do not drop in model or degree
	(so neither do $b^\Xx$  or $b^{\Xx'}$).
	Let $\eta<\OR^M$ and $\alpha\in b^\Xx$
	be least such that  either $\alpha+1=\lh(\Xx)$ or $i^\Xx_{0\alpha}(\eta)<\crit(i^\Xx_{\alpha\infty})$, and $\alpha'\in b^{\Xx'}$ likewise. Suppose  $\alpha=\alpha'$
	and $\Xx\rest(\alpha+1)=\Xx'\rest(\alpha+1)$. Let $\Gamma$ be the restriction
	of $\Sigma_{M^\Xx_\infty}$
	to trees based on $M^\Xx_\infty|i^\Xx_{0\infty}(\eta)$
	and $\Gamma'$ likewise for $\Xx'$.
	Then $\Gamma=\Gamma'$.
	\end{tm}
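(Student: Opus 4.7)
The plan is to reduce, via Theorem \ref{tm:associativity}(3), to the case $\alpha=\alpha'=0$, and then show that both $\Gamma$ and $\Gamma'$ coincide with $\Sigma$ restricted to trees based on $M|\eta$ (under the canonical identification that only the base model changes), whence $\Gamma=\Gamma'$. For the reduction, set $N=M^\Xx_\alpha=M^{\Xx'}_\alpha$, $n=\deg^\Xx(\alpha)=\deg^{\Xx'}(\alpha)$, $\Yy=\Xx\rest(\alpha+1)=\Xx'\rest(\alpha+1)$, and $\theta_0=i^\Yy_{0\alpha}(\eta)$. Theorem \ref{tm:associativity}(3) with $\Tt=\Yy$ gives $(\Sigma^{\stk})_N=(\Sigma_N)^{\stk}$, and Theorem \ref{tm:associativity}(1) gives that $\Sigma_N$ has minimal inflation condensation. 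Viewing the tails of $\Xx,\Xx'$ beyond stage $\alpha$ as successor-length normal trees $\Xx_1,\Xx'_1$ on $N$ via $\Sigma_N$ with non-dropping main branches, the hypotheses transfer: ``$\alpha$'' in the role of $0$, with $(M,m,\Sigma,\eta,\alpha)$ replaced by $(N,n,\Sigma_N,\theta_0,0)$. We have $\Sigma_{M^\Xx_\infty}=(\Sigma_N)_{M^{\Xx_1}_\infty}$ and likewise for $\Xx'$, so $\Gamma$ and $\Gamma'$ in the original setting coincide with their counterparts in the reduced one. Thereafter we may assume $\alpha=\alpha'=0$, $\Yy=(M)$ is trivial, $\theta=\eta$, and $\eta<\crit(i^\Xx_{0\infty}),\crit(i^{\Xx'}_{0\infty})$ (with the usual exception $\lh(\Xx)=1$ or $\lh(\Xx')=1$); in particular $M|\eta=M^\Xx_\infty|\eta=M^{\Xx'}_\infty|\eta$.

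For a tree $\Uu$ on $M^\Xx_\infty$ based on $M^\Xx_\infty|\eta$, let $\Uu_M$ denote $\Uu$ regarded as an $m$-maximal tree on $M$ (extenders lie in $M|\eta$, tree predecessors are determined by extender data, and the natural lift maps $M^{\Uu_M}_\beta\to M^\Uu_\beta$ induced by $i^\Xx_{0\infty}$ are the identity on the content used). I argue, by induction on the limit ordertype of $\Uu$, that $\Gamma(\Uu)=\Sigma(\Uu_M)=\Gamma'(\Uu)$. Fix such $\Uu$ of limit length and set $\Xx^*:=\Xx_\Sigma(\Xx,\Uu\conc\Gamma(\Uu))$, the $\Sigma$-normalization of the stack $(\Xx,\Uu\conc\Gamma(\Uu))$; so $\Xx^*$ is a successor-length normal tree via $\Sigma$ witnessing $\Xx\mininflatearrow\Xx^*$. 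The key claim is that $\Xx^*$ also witnesses $\Uu_M\conc\Gamma(\Uu)\mininflatearrow\Xx^*$, via a ``dual'' witness $(t',C',f',\vec{\Pi}')$ that types positions of $\Xx^*$ using $\Xx$-extenders as inflationary and positions using $\Uu$-extenders as copying. The essential point for the copy lifts is that at each $\gamma\in b^{\Xx^*}$ using a $\Uu$-extender $E^\Uu_k$, the accumulated inflationary sequence $\vec{F}^{\Pi'}_\gamma$ consists of $\Xx$-extenders along $b^{\Xx^*}$, all with critical point greater than $\eta$, while $\crit(E^\Uu_k)<\eta$; thus $\Ult_0(\exit^{\Uu_M}_k,\vec{F}^{\Pi'}_\gamma)=\exit^{\Uu_M}_k$, matching the extender appearing at $\gamma$ in $\Xx^*$. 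Granted the claim, minimal inflation condensation applied to $\Uu_M\mininflatearrow\Xx^*$ yields $f'``b^{\Xx^*}=\Sigma(\Uu_M)$ where $b^{\Xx^*}=\Sigma(\Xx^*)$, while the defining $\Xx$-inflation construction gives $f``b^{\Xx^*}=\Gamma(\Uu)$. Since $f$ and $f'$ read off the same positions of $\Xx^*$ (those using $\Uu$-extenders) under the same indexing in $\lh(\Uu)=\lh(\Uu_M)$, the two derived branches agree, so $\Gamma(\Uu)=\Sigma(\Uu_M)$; symmetrically $\Gamma'(\Uu)=\Sigma(\Uu_M)$, giving $\Gamma=\Gamma'$.

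The main obstacle will be the detailed verification of the dual inflation witness $(t',C',f',\vec{\Pi}')$, particularly at off-main-branch positions of $\Xx^*$ and when $\Xx$ contains off-main-branch extenders with critical points below $\eta$ (the hypothesis only controls $b^\Xx$). At such positions the trivial-lift assertion must be supplemented by appeal to the ultrapower commutativity of Lemma \ref{lem:extender_comm} and the commutativity of minimal inflation in Lemma \ref{lem:inflation_commutativity}, to show position-by-position that the extender data $\Xx^*$ carries via the $\Xx$-inflation matches the data prescribed by the $\Uu_M$-inflation.
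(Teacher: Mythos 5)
Your proposal is correct in substance and shares the paper's core mechanism, but it reaches it by a different route, so a comparison is in order. The paper does not reduce to $\alpha=0$; instead it notes that it suffices to treat the case $\Xx=\Xx\rest(\alpha+1)$ (so that $\Xx\ins\Xx'$), and then, for a limit-length $\Uu$ with $b=\Gamma(\Uu)$ and $b'=\Gamma'(\Uu)$, directly compares the two normalizations $\Yy=\Xx(\Xx,\Uu\conc b)$ and $\Yy'=\Xx(\Xx',\Uu\conc b')$: letting $\lambda$ be the limit with $\delta(\Yy\rest\lambda)=\delta(\Uu)$, one has $\Yy\rest\lambda=\Yy'\rest\lambda$, both are via $\Sigma$, so the branches chosen at $\lambda$ agree and these determine $b=b'$. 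Your argument proves the sharper statement that (after reduction) both tails agree with $\Sigma$ itself on trees based on $M|\eta$, which is appealing; but the reduction to $\alpha=0$ costs you the stack-decomposition corollary (that $\Xx$ is a $\Yy$-terminal minimal inflation of $\Yy=\Xx\rest(\alpha+1)$, hence $\Xx=\Xx_\Sigma(\Yy,\Xx_1)$ for a unique $\Xx_1$ via $\Sigma_N$) together with Theorem \ref{tm:associativity}(\ref{item:strat_agmt}), and the latter is precisely the part of that theorem whose proof is deferred in this draft; the paper's argument is self-contained modulo the inflation machinery. Two further remarks. First, your flagged ``main obstacle'' is vacuous: since $\eta<\crit(i^\Xx_{0\infty})<\nutilde^\Xx_0\leq\lh(E^\Xx_\gamma)$ for every $\gamma$, \emph{every} extender of $\Xx$ (on or off $b^\Xx$) has length above $\eta$, while every extender of $\Uu$ has index below $\eta$; so in $\Xx^*=\Xx_\Sigma(\Xx,\Uu\conc b)$ normality forces all $\Uu$-extenders to precede all copied $\Xx$-extenders, the accumulated inflationary sequences at the copying positions are empty, and $\Uu_M\conc b$ is literally an initial segment of $\Xx^*$ (up to re-reading the base model). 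In particular no appeal to Lemma \ref{lem:extender_comm} or Lemma \ref{lem:inflation_commutativity} is needed at that point, and you do not even need minimal inflation condensation for the final step: $\Uu_M\conc b=\Xx^*\rest(\lambda^*+1)$ is via $\Sigma$ simply because $\Xx^*$ is. Second, when transferring the hypotheses to the reduced instance you should check that $\crit(i^{\Xx_1}_{0\infty})>i^\Yy_{0\alpha}(\eta)$; this follows from Lemmas \ref{lem:M^Xx^alpha_infty_ult_rep} and \ref{lem:flattening_properties}, which identify $i^{\Xx_1}_{0\infty}$ with $i^\Xx_{\alpha\infty}$ (the inflationary extenders used along $b^{\Xx}$ beyond $\alpha$ are exactly the extenders used along $(\alpha,\infty]_\Xx$), but it is not automatic from the commutativity $i^{\Xx_1}_{0\infty}\com i^\Yy_{0\alpha}=i^\Xx_{0\infty}$ alone.
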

\begin{proof}[Proof Sketch]
	We may assume that $\Xx=\Xx\rest(\alpha+1)$, so $\Xx\ins\Xx'$. Let $\Uu$ be a limit
	length tree via both $\Gamma$ and $\Gamma'$, and let $b=\Gamma(\Uu)$
	and $b'=\Gamma'(\Uu)$; we want to se $b=b'$. Let $\Yy=\Xx(\Xx,\Uu\conc b)$
	and $\Yy'=\Xx(\Xx',\Uu\conc b')$.
	Let $\lambda$ be the limit
	such that $\delta(\Yy\rest\lambda)=\delta(\Uu)$,
	and $\lambda'$ likewise for $\Yy'$.
	Then just note that $\lambda=\lambda'$
	and $\Yy\rest\lambda=\Yy'\rest\lambda$,
	so $\Yy\rest(\lambda+1)=\Yy'\rest(\lambda+1)$, and hence $b=b'$.
	\end{proof}

\begin{tm}Let $M$ be $m$-standard and $\delta$ be a strong cutpoint of $M$. Let $\Tt_0\conc\Tt_1$ and $\Xx_0\conc\Xx_1$ be successor length $m$-maximal trees on $M$,  such that $\Tt_0,\Xx_0$ are each based on $M|\delta$,
and	$b^{\Tt_0},b^{\Xx_0}$ do not drop in model or degree,
		and $\Tt_1,\Xx_1$ are above $i^{\Tt_0}(\delta)$ and $i^{\Xx_0}(\delta)$ respectively. Suppose
	\[ \Tt_0\conc\Tt_1\inflatearrow_{\min}\Xx_0\conc\Xx_1 \]
and $\Xx_0$ is a $\Tt_0$-terminally-non-dropping minimal inflation of $\Tt_0$. Let $\pi:M^{\Tt_0}_\infty\to M^{\Xx_0}_\infty$ be the final copy map associated to the inflation, so $\pi$ is also an iteration map.
	Let $\widehat{\Tt_1}=\pi``\Tt_1$ be the minimal $\pi$-copy of $\Tt_1$ to an $m$-maximal tree on $M^{\Xx_0}_\infty$. Then
	\[ \Xx_0\conc\widehat{\Tt_1}\inflatearrow_{\min}\Xx_0\conc\Xx_1.\]	
	\end{tm}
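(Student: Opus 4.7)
The plan is to define a witness $(t',C',(C^-)',f',\vec{\Pi}')$ for $\Xx_0\conc\widehat{\Tt_1}\mininflatearrow\Xx_0\conc\Xx_1$ by ``subtracting'' the $\Tt_0\mininflatearrow\Xx_0$ initial segment of the given witness $(t,C,C^-,f,\vec{\Pi})$ for $\Tt_0\conc\Tt_1\mininflatearrow\Xx_0\conc\Xx_1$. Let $\tau_0,\xi_0$ be such that $\lh(\Tt_0)=\tau_0+1$ and $\lh(\Xx_0)=\xi_0+1$. By the $\Tt_0$-terminally-non-dropping hypothesis, $\xi_0\in C$, $f(\xi_0)=\tau_0$, and $\pi=\pi_{\xi_0;\tau_0 0}$. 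On the $\Xx_0$-part the new witness is to be the identity: $I'_\alpha=\{\alpha\}$, $t'(\alpha)=0$, $f'(\alpha)=\alpha$, $\alpha\in C'$, for $\alpha\leq\xi_0$. For each $\beta<\lh(\widehat{\Tt_1})$, the $\Xx_0\conc\Xx_1$-interval associated by $(t',C',f',\ldots)$ to the $\beta$-th extender of $\widehat{\Tt_1}$ is declared to be the same subset of $\Xx_0\conc\Xx_1$ as the interval that the original witness associates to the $\beta$-th extender of $\Tt_1$, with the corresponding $t'$-value inherited from $t$.

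The key technical input is a factoring of the extender sequences used by the original inflation along its $\Tt_1$-portion. For each $\beta<\lh(\Tt_1)$ and each $\xi$ in the $\Xx_0\conc\Xx_1$-interval that the given witness associates to $E^{\Tt_1}_\beta$, the sequence $\vec{F}_\xi$ with $Q_\xi\eqdef\Ult_0(\exit^{\Tt_1}_\beta,\vec{F}_\xi)\ins M^{\Xx_0\conc\Xx_1}_\xi$ splits as $\vec{F}_\xi=\vec{F}_{\xi_0}\conc\vec{G}_\xi$, where $\vec{F}_{\xi_0}$ is the branch extender sequence along $b^{\Xx_0}$ (critical points $<\delta$) and $\vec{G}_\xi$ collects the $\Xx_1$-extenders contributing to $\vec{F}_\xi$ (critical points $\geq i^{\Xx_0}(\delta)$). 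Since $\delta$ is a strong cutpoint of $M$ and $\Tt_1$ is above $i^{\Tt_0}(\delta)$, the ultrapower $\Ult_0(\exit^{\Tt_1}_\beta,\vec{F}_{\xi_0})$ agrees with the Shift-Lemma $\pi$-copy of $\exit^{\Tt_1}_\beta$ and hence equals $\exit^{\widehat{\Tt_1}}_\beta$ by definition of $\widehat{\Tt_1}$. Combined with Lemma \ref{lem:extender_comm}, this yields
\[Q_\xi=\Ult_0(\exit^{\widehat{\Tt_1}}_\beta,\vec{G}_\xi),\]
which is exactly the ultrapower the new witness demands upon setting $\vec{F}'_\xi=\vec{G}_\xi$.

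The remaining task is to verify that $(t',C',(C^-)',f',\vec{\Pi}')$ satisfies Definitions \ref{dfn:tree_embedding}, \ref{dfn:min_tree_emb}, \ref{dfn:min_inf_coherent}, and \ref{dfn:min_inflation}, by induction on $\lh(\widehat{\Tt_1})$. Tree-order, predecessor, and type clauses transfer mechanically since $\Xx_0\conc\Xx_1$ and its tree structure are fixed. The new copy maps at the starts of the $\widehat{\Tt_1}$-intervals are the factor maps through which the original copy maps from $M^{\Tt_1}_\beta$ factor via the natural $\pi$-copy maps $M^{\Tt_1}_\beta\to M^{\widehat{\Tt_1}}_\beta$; these factor maps exist and are well-defined by extender commutativity. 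Dropdown-lifting follows from Lemma \ref{lem:ult_dropdown} applied to the factoring, since the dropdowns of the $\widehat{\Tt_1}$-models are the $\pi$-images of those of $\Tt_1$, and the subsequent $\vec{G}_\xi$-ultrapower lifts them to the dropdowns on the $\Xx_0\conc\Xx_1$-side. The main obstacle is executing this bookkeeping cleanly: at each successor step applying Lemma \ref{lem:extender_comm} to the splitting $\vec{F}_\xi=\vec{F}_{\xi_0}\conc\vec{G}_\xi$, and at each limit step using a direct limit argument as in the proof of Lemma \ref{lem:Pi_is_standard}. This verification is morally a corollary of Lemma \ref{lem:inflation_commutativity}, and could alternatively be packaged by first constructing the easy inflation $\Tt_0\conc\Tt_1\mininflatearrow\Xx_0\conc\widehat{\Tt_1}$ (identity on $\Xx_0$, singleton-interval $\pi$-copying on $\Tt_1$) and invoking that lemma together with the uniqueness of inflation witnesses.
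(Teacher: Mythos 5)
Your proposal is correct and follows essentially the same route as the paper's proof sketch: take the new witness to be the identity on the $\Xx_0$-part, reuse the original intervals on the $\widehat{\Tt_1}$-part, and factor each inflationary sequence as $\vec{F}_{\xi_0}\conc\vec{G}_\xi$ with $\vec{F}_{\xi_0}$ equivalent to the extender derived from $\pi$, so that the lifted structures become $\vec{G}_\xi$-ultrapowers of the $\pi$-copied exit models, exactly as in the paper. The only caveat is your closing aside: Lemma \ref{lem:inflation_commutativity} runs in the transitivity direction (from $\Xx_0\mininflatearrow\Xx_1\mininflatearrow\Xx_2$ to $\Xx_0\mininflatearrow\Xx_2$), not the factoring direction needed here, so the direct verification you give first is the right packaging rather than that alternative.
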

\begin{proof}[Proof sketch]
	Let $(C,f,\ldots)$ be the objects associated to the inflation of $\Tt_0\conc\Tt_1$ and $(C',f',\ldots)$ to that of $\Xx_0\conc\widehat{\Tt_1}$.
	Then $f'\rest\lh(\Xx_0)=\id$ and if $f(\lh(\Xx_0)+\alpha)=\lh(\Tt_0)+\beta$
	then $f'(\lh(\Xx_0)+\alpha)=\lh(\Xx_0)+\beta$, and 
 $\gamma'_{\lh(\Xx_0)+\alpha;\xi}=\xi$ for $\xi<\lh(\Xx_0)$ and $\gamma'_{\lh(\Xx_0)+\alpha;\lh(\Xx_0)+\xi}=\gamma_{\lh(\Xx_0)+\alpha;\lh(\Tt_0)+\xi}$.
 It follows that the domain structures ``above $\delta$'' for the minimal tree embedding $\Pi'_{\lh(\Xx_0)+\alpha}$ are just the $\pi$-ultrapowers of the corresponding ones for $\Pi_{\lh(\Xx_0)+\alpha}$, with ultrapowers taken at the appropriate degree.
 	Further, a sequence $\vec{F}\conc\vec{G}$ of inflationary extenders used in the  inflation of $\Tt_0\conc\Tt_1$,
 	where $\vec{F}$ is equivalent to the $(\delta,\pi(\delta))$-extender derived from $\pi$, corresponds to $\vec{G}$ being used in the inflation of $\Xx_0\conc\widehat{\Tt_1}$.
	\end{proof}

\begin{rem}
	\cite[Remark 9.12]{iter_for_stacks}
	adapts to our context here directly.
	(The existence of the branch $c=c_b$  was
	not discussed there. It probably should have been,
	and one comment there confuses the question somewhat.
	Let $\theta$ be as there,
	and consider the case that $f(\lambda^\beta)<\theta$ 
	for all $\beta\in b$ (otherwise it is easy);
	in particular, $\theta$ is a limit.
	In this case, \cite{iter_for_stacks}  says ``Note that for $c$ as desired to exist,
	we must have $\theta<\lh(\Tt)$''. But in fact, we \emph{do} have $\theta<\lh(\Tt)$, because the entire construction assumes that $\Tt$ has successor length.
	This is moreover important for the existence of $c$:
	let $a=[0,\theta)_{\Tt}$. Then the pair $(a,b)$ uniquely determines a branch $c$ with the right properties.)
	
	The original version of this fact, observed by Steel,
	also holds in our present context:
 Let $(M,m,\Omega,\Sigma)$ be appropriate. Let $\Tt$ be via $\Sigma$, of successor length ${<\Omega}$.
 Let $(N,n)=(M^{\Tt}_\infty,\deg^\Tt_\infty)$.
	Let $\Uu$ be on $N$, 
	via $\Sigma_N$ (hence $n$-maximal),
	of limit length ${<\Omega}$.
	Let $\Xx=\Xx(\Tt,\Uu)$, which has limit length $\lambda$.
 Then there is a one-to-one correspondence between $\Xx$-cofinal branches $c$
	and pairs $(a,b)$ such that $b$ is a $\Uu$-cofinal branch,
	and letting $C,C^-,f$ be as above, either:
	\begin{enumerate}
		\item there is $\beta\in b$ such that $\beta\notin C$,
and $a=\emptyset$,  or
\item otherwise, and letting $\theta=\sup_{\beta\in b}f(\lambda^\beta)$, then:
\begin{enumerate}
\item there is $\beta\in b$ such that $f(\lambda^\beta)=\theta$, and $a=[0,\theta]_{\Tt}$, or
\item\label{item:interesting_case} otherwise (so $\theta$ is a limit and $\theta<\lh(\Tt)$),
and $a$ is some $\Tt\rest\theta$-cofinal branch.
 \end{enumerate}
\end{enumerate}

(Note that in clause \ref{item:interesting_case}, $a$ might or might not be $[0,\theta)_{\Tt}$.) Moreover, we can pass from $c$ to $(a,b)$ as done for minimal inflation. For given an $\Xx$-cofinal branch
$c$, the pair $(a,b)$ is determined as in that process;
and given a pair $(a,b)$ as above, it is straightforward to
construct the corresponding $c$.

Now suppose that also, $\delta<\rho_m^M$ is inaccessible in $M$,
$\Tt$ is based on $M|\delta$, $[0,\infty]_{\Tt}$ does not drop
in model or degree, and $\Uu$ is based on $N|i^{\Tt}_{0\infty}(\delta)$. Let $(a,b),c$ be a corresponding pair. Then $c$ drops in model or degree iff either $a$ does or $b$ does. Suppose there are no such drops on $a,b,c$. Say
that $a$ is \emph{$\Tt$-good}
iff $i^\Tt_{a}(\delta)=\delta(\Tt)$
and $M^\Tt_a$ is $(\delta(\Tt)+1)$-wellfounded;
likewise for $b,c$. Then $c$ is $\Xx$-good
iff $a$ is $\Tt$-good and $b$ is $\Uu$-good, and moreover,
when these things hold,
we have $i^\Xx_c\rest(\delta+1)=i^\Uu_b\com i^\Tt_a\rest(\delta+1)$. In fact, note that these things are direct consequences of the properties of minimal inflation,
when generalized to allow trees with illfounded last model.
(For this, one needs to talk about the standard
fine structural concepts, like $n$-soundness,
$\rho_n$, etc, allowing illfounded models.
But we only need it for models which arise as direct limits
of wellfounded models, along the branch of an iteration tree.
This ensures that the relevant fine structural notions
can be appropriately defined; see for example \cite[3.11--3.14]{extmax}.
\end{rem}

\bibliographystyle{plain}
\bibliography{../bibliography/bibliography}

\end{document}